\documentclass[10pt,a4paper]{article}

\usepackage[utopia]{mathdesign}

\usepackage{float}
\usepackage{graphicx}
\usepackage{longtable}

\usepackage{amsmath}

\usepackage{cite}

\usepackage{theorem}
\usepackage{enumerate}
\usepackage[dvipsnames]{xcolor}
\usepackage{tikz}
\usetikzlibrary{arrows.meta,calc}

\usepackage[left=2.5cm,right=2.5cm,top=2.5cm,bottom=2.5cm]{geometry}
\usepackage{microtype}
\usepackage{hyperref}
\usepackage[disable]{todonotes}

\definecolor{linkblue}{HTML}{1F3B73}  

\definecolor{urlblue}{HTML}{2E75B6}   

\definecolor{citeblue}{HTML}{1F3B73}   
\hypersetup{colorlinks, breaklinks,
  linkcolor   = linkblue,
  urlcolor    = urlblue,
  anchorcolor = linkblue,
  citecolor   = citeblue}
\definecolor{labelkey}{rgb}{0.6,0.6,0.6}
\definecolor{refkey}{rgb}{0,0.6,0.0}
\definecolor{Brown}{rgb}{0.45,0.0,0.05}
\definecolor{lime}{rgb}{0.00,0.8,0.0}
\definecolor{lblue}{rgb}{0.5,0.5,0.99}

\usepackage{titlesec}
\titlespacing*{\section}{0pt}{8pt}{4pt}
\titlespacing*{\subsection}{0pt}{6pt}{3pt}
\titlespacing*{\paragraph}{0pt}{4pt}{0.5em}

\usepackage{array}
\newcolumntype{L}[1]{>{\raggedright\let\newline\\\arraybackslash\hspace{0pt}}m{#1}}
\newcolumntype{C}[1]{>{\centering\let\newline\\\arraybackslash\hspace{0pt}}m{#1}}
\newcolumntype{R}[1]{>{\raggedleft\let\newline\\\arraybackslash\hspace{0pt}}m{#1}}
\renewcommand{\arraystretch}{1.2}

\def\disp{\displaystyle}
\def\h{\hfill\Box}
\def\la{\left\langle}
\def\ra{\right\rangle}

\def\eq{\begin{equation}}
\def\eeq{\end{equation}}

\def\al{\alpha}
\def\be{\beta}
\def\gg{\gamma}

\def\ve{\varepsilon}
\def\th{\theta}

\def\vTh{\vartheta}

\def\lm{\lambda}
\def\Lm{\Lambda}
\def\O{\Omega}

\def\ph{\varphi}

\def\Lto{\Longrightarrow}

\def\tto{\rightrightarrows}
\def\dn{\downarrow}

\def\R{\mathbb{R}}
\def\N{\mathbb{N}}

\def\B{\mathbb{B}}

\def\Hat{\widehat}
\def\Tilde{\widetilde}
\def\Bar{\overline}

\def\oa{\overline{a}}

\def\ox{\overline{x}}
\def\oy{\overline{y}}

\def\ou{\overline{u}}
\def\ov{\overline{v}}

\def\oT{\overline{T}}
\def\T{\overline{T}}

\def\cA{\mathcal{A}}
\def\cU{\mathcal{U}}

\DeclareMathOperator{\co}{co}
\DeclareMathOperator{\gph}{gph}
\DeclareMathOperator{\epi}{epi}

\DeclareMathOperator{\dom}{dom}

\DeclareMathOperator{\dist}{dist}

\DeclareMathOperator{\bd}{bd}
\DeclareMathOperator{\inter}{int}

\DeclareMathOperator{\var}{var}
\DeclareMathOperator{\supp}{supp}

\DeclareMathOperator*{\Limsup}{Lim\,sup}

\newtheorem{theorem}{Theorem}[section]
\newtheorem{lemma}[theorem]{Lemma}
\newtheorem{corollary}[theorem]{Corollary}

\newtheorem{proposition}[theorem]{Proposition}
\newtheorem{definition}[theorem]{Definition}
\theoremstyle{plain}{\theorembodyfont{\rmfamily}
}
\theoremstyle{plain}{\theorembodyfont{\rmfamily}
}
\theoremstyle{plain}{\theorembodyfont{\rmfamily}
}
\theoremstyle{plain}{\theorembodyfont{\rmfamily}
\newtheorem{example}[theorem]{Example}}

\theoremstyle{plain}{\theorembodyfont{\rmfamily}
\newtheorem{remark}[theorem]{Remark}}

\allowdisplaybreaks

\title{Optimal Control of Prox-Regular Sweeping Processes\\
with Variable Time}
\author{%
Tan H.\ Cao\thanks{Department of Applied Mathematics and Statistics,
Stony Brook University -- SUNY Korea (Asian Campus), Yeonsu-Gu, Incheon,
Republic of Korea. Email: \texttt{tan.cao@stonybrook.edu}.}
\and
Boris  S.\ Mordukhovich\thanks{Department of Mathematics and Institute for Artificial Intelligence and Data Science, Wayne State University, Detroit, Michigan, USA.
Email: \texttt{boris@math.wayne.edu}. Research of this author was partly supported by the US National Science Foundation under grant DMS-2204519 and by the Australian Research Council under Discovery Projects DP-190100555 and DP-250101112.}
\and
Trang Nguyen\thanks{Department of Mathematics and Statistics,
South Dakota State University, Brookings, South Dakota 57007, USA.
Email: \texttt{trang.nguyen@sdstate.edu}. Research of this author was partly supported by CAPITAL Services Inc.} 
\and
Thi Phung\thanks{Department of Mathematics, Wayne State University,
Detroit, Michigan, USA.
Email: \texttt{thi.pn@wayne.edu}.}
}
\date{\today}

\begin{document}
\maketitle
{\let\thefootnote\relax\footnotetext{\textit{2020 Mathematics Subject Classification.} 34A60, 49J52, 49J53, 49K21, 49K24, 49M25.}\footnotetext{\textit{Keywords.} Optimal control, sweeping process, prox-regular set, variable time, discrete approximation, generalized differentiation, necessary optimality condition, nondegeneracy.}}

\begin{abstract}

The paper is devoted to a broad class of optimal control problems governed by constrained sweeping processes with controlled dynamics, controlled prox-regular moving sets, and variable time included in the optimization process. Employing an advanced version of the method of discrete approximations, we verify its well-posedness with appropriate strong convergence of feasible and optimal solutions. This approach leads us to considering a parametric family of discrete-time controlled sweeping processes, which are reduced in turn to nondynamic mathematical programs with special types of geometric and functional constraints. Invoking powerful tools of first-order and second-order variational analysis and generalized differentiation, we derive necessary optimality conditions for discrete-time and then---by passing to the limit over the diminishing step of discretization---continuous-time controlled sweeping processes. A crucial role in our device is played by explicit coderivative evaluations of set-valued mapping associated with the sweeping dynamics. In this way, we establish novel necessary optimality conditions for local minimizers of continuous-time controlled sweeping processes that contain absolutely continuous and BV adjoint arcs, local and global maximization conditions, $L^2$ active set multipliers, vector measures generated by sweeping and control constraints, and free-time transversality condition expressed through the averaged Hamiltonian. For suitably selected multipliers, we also obtain nondegenerate forms of these conditions when the optimal trajectory starts on the boundary of the moving set. The efficiency of the obtained results is illustrated by some examples related to practical modeling.
\end{abstract}
\vspace*{-0.1in}

\section{Introduction and Problem Formulation}\label{S1}
\setcounter{equation}{0}

The sweeping process, introduced by Moreau in the 1970s \cite{Moreau1974}, is the discontinuous differential inclusion
\[
\dot{x}(t)\in -N_{C(t)}\bigl(x(t)\bigr)\quad\text{a.e. }t\in[0,T],
\]
where $C(t)$ is a convex moving set and the normal cone is taken in the sense of convex analysis. Originally motivated by applications to elastoplasticity and quasistatic mechanics, the sweeping process model is now used well beyond those settings; namely, in hysteresis, electrical circuits, traffic equilibria, crowd motion, etc.; see, e.g., \cite{Adly2017,MauryVenel2011,ColomboHenrionHoangMordukhovich2016} and the references therein. The characteristic feature of sweeping processes is that the state is not steered toward a target but \emph{swept} by the boundary of $C(t)$: the trajectory evolves freely while it lies in the interior, and the normal reaction acts only upon contact.

The normal cone mapping in the description of the sweeping dynamics is heavily {\em discontinuous} and unbounded, which places sweeping processes outside the scope of well-developed results for differential inclusions with Lipschitzian right-hand sides. Furthermore, the state constraint $x(t)\in C(t)$ is not imposed but \emph{generated} by the dynamics. When the moving set is fixed, the Cauchy problem is well-posed  while admitting a {\em unique} solution; thus it doesn't allow any optimization. A meaningful optimization problem requires therefore extra {control} actions centering either the moving set, or additive perturbations of the dynamics, or both.

Optimal control problems for sweeping processes with control actions entering polyhedral moving sets were introduced and analyzed in \cite{ColomboHenrionHoangMordukhovich2012,ColomboHenrionHoangMordukhovich2016}. Polyhedral sweeping processes with controls in the additive perturbations of the dynamics were studied in \cite{CaoMordukhovich2017,ColomboMordukhovichNguyen2020,HoangMordukhovich2019}. The aforementioned papers employed the {\em method of discrete approximations} to derive necessary optimality conditions of the {\em generalized Euler-Lagrange} and {\em Hamiltonian} types for local minimizers of controlled sweeping processes by passing to the limit from their well-posed finite-difference counterparts by using advanced tools of variational analysis and generalized differentiation. The established optimality conditions 
were subsequently applied to {\em controlled models} in robotics, traffic flows, crowd motions, nonsmooth mechanics, etc.; see \cite{CaoMordukhovich2017,ColomboMordukhovichNguyen2019,HoangMordukhovich2019}. A nonpolyhedral sweeping control problem  with control functions acting simultaneously in an additive perturbation/drift and in a nonconvex {\em prox-regular} moving set was investigated in \cite{CaoMordukhovich2019} and applied therein to a planar crowd motion model. Subsequent developments along these lines appeared in, e.g.,  \cite{CaoMordukhovich2019a,CaoColomboMordukhovich2021a,CaoColomboMordukhovich2021b,CaoKhalilMordukhovich2022b,CaoEtAl2025}. Yet another version of the discrete approximation method was developed in \cite{ColomboGidoni2021} to derive necessary optimality conditions for a sweeping control problem with an uncontrolled moving set of smooth boundary under controlled perturbations. The results of \cite{ColomboGidoni2021} were applied to optimal control of soft crawlers.

Complementing to the method of discrete approximations, some {\em continuous approximation} methodologies were developed in a number of publications; see, e.g., \cite{ArroudColombo2018,DePinhoEtAl2019,DePinhoEtAl2022,PinhoEtAl2023,HermosillaPalladino2022,ChamounZeidan2025,Bettiol2025,NourZeidan2022,NourZeidan2023a}. They are based on {\em exact penalization} procedures using either the distance function, or the exponential penalization, or both. In this way, certain sweeping control counterparts of the Pontryagin maximum principle (PMP) were established in the aforementioned publications under various assumptions. In contrast to local maximization conditions obtained by using the method of discrete approximations that are global under some convexity, the PMP versions established by continuous approximations are global without imposing convexity. On the other hand, employing discrete approximations with the subsequent passage to the limit led us to necessary optimality conditions for controlled sweeping processes that were not available via using continuous approximations.

The above discussions concerned sweeping control problems on the {\em fixed time} interval $[0,T]$. The main attention of this paper is paid to optimal control of the sweeping dynamics on the {\em variable time} interval (labeled as {\em free-time} problems), where the horizon $T$ is included in the optimization process. While it has been well understood in classical optimal control theory that free-time problems can be reduced to fixed-time ones by a certain transformation, this is not the case for sweeping optimal control; see a detailed discussion in \cite{CaoMordukhovich2026}. Therefore, free-time sweeping optimal control problems require their own consideration.

The method of discrete approximations was employed in \cite{Mordukhovich1996} to derive generalized Euler-Lagrange and Hamiltonian necessary conditions for free-time optimal control problems governed by nonconvex Lipschitzian differential inclusions. Then modifications of this method were developed in \cite{CaoKhalilMordukhovich2022a,ColomboMordukhovichNguyen2024,Mordukhovich2026} to establish optimality conditions in free-time sweeping control problems with polyhedral moving sets. The polyhedral structure appears to be rather restrictive from both viewpoints of variational theory and applications. In particular, it forces constraint surfaces to be flat, which significantly simplifies
the structure of optimality conditions and results in missing important relationships between ingredients of the optimality system, especially for free-time problems; see below. 

This paper addresses the following Bolza-type dynamic optimization problem $(P)$ for {\em free-time controlled sweeping systems} with controls acting in both additive perturbations and {\em nonconvex moving sets}:
\begin{equation}\label{e1}
\mbox{minimize }\;J[x,u,a,T]:=\varphi\big(x(T),T\big)+\int_0^T \ell\big(t,x(t),u(t),a(t),\dot x(t),\dot u(t)\big)\,dt
\end{equation}
over the two groups of control functions $u(\cdot)$ and $a(\cdot)$ {and the corresponding trajectories $x(\cdot)$} together with the terminal time $T\ge 0$ subject to the constrained sweeping differential inclusion
\begin{equation}\label{e2}
\begin{cases}
\dot x(t)\in -N_{C_u(t)}\big(x(t)\big)+f\big(x(t),a(t)\big) & \text{ a.e. } t\in[0,T],\\[0.4ex]
x(0)=x_0\in C_u(0),\\[0.4ex]
(x(T),T)\in \Xi_x\times \Xi_T\subset \R^n\times[0,\infty).
\end{cases}
\end{equation}
where the moving set $C_u$ is structured as
\begin{equation}\label{e3}
C_{{u}}(t)=C+u(t)\;\mbox{ with }\;C=\bigcap_{i=1}^m\big\{x\;\big|\;g_i(x)\ge 0\big\},\quad t\in[0,T],
\end{equation}
and where the normal cone in \eqref{e2} is understood in an appropriate sense of variational analysis; see Section~\ref{S2}. 

The admissible controls in the moving sets and dynamic perturbations are given by, respectively,
\begin{equation}\label{e4}
\cU:=\bigcup_{T\ge0}\left\{ u:[0,T]\to \R^n\,\left|\,
\begin{array}{l}
u(\cdot)\text{ is absolutely continuous and}\\
\nu_i(u(t))\le 0\ \text{for all\ }t\in[0,T],\ i=1,\ldots,s
\end{array}\right.\right\},
\end{equation}
\begin{equation}\label{e5}
\cA:=\bigcup_{T\ge0}\left\{ a:[0,T]\to \R^d\;\; \text{is measurable and}\;\; a(t)\in A \;\;\text{a.e. } t\in [0,T] \right\}.
\end{equation}
Observe that the sweeping dynamics in \eqref{e2} automatically generates the mixed state-control constraints
\begin{equation}\label{e6}
g_i\big(x(t)-u(t)\big)\ge 0\quad\text{for all }\quad t\in[0,T],\quad i=1,\ldots,m.
\end{equation}

The main contribution of the paper is a comprehensive analysis of problem $(P)$ by constructing its well-posed discrete approximations with deriving necessary optimality conditions for discrete-time problems and then, by passing to the limit from them, necessary optimality conditions for appropriate local minimizers of the original sweeping control problem. In this way, by applying advanced tools of variational analysis and (first- and second-order) generalized differentiations, we arrive at novel optimality conditions for both discrete-time and continuous-time controlled sweeping processes with prox-regular moving sets. The obtained nondegenerate conditions of the generalized Euler-Lagrange and modified Hamiltonian types reflect the interaction between various ingredients of the primal and adjoint systems, measure multipliers, and the optimal horizon. They are new even for fixed-interval and polyhedral controlled sweeping processes.

The rest of the paper is organized as follows. Section~\ref{S2} collects the needed preliminaries from variational analysis and generalized differentiation. In Section~\ref{S2a}, we formulate and discuss the standing assumptions on the data of $(P)$. Section~\ref{S2b} constructs a relaxed version of problem $(P)$ and defines the class of local minimizers for $(P)$ studied in the paper. Section~\ref{S3}, based on the preceding publication \cite{CaoMordukhovich2026}, presents the design, convergence results, and necessary optimality conditions for discrete approximations of the original sweeping control problem $(P)$ and its designated local minimizer. In Section~\ref{S4}, we formulate the main results of the paper---necessary optimality conditions for the original continuous-time sweeping optimal control problem---and discuss their new content. Section~\ref{S6a} is devoted to the proofs of the main necessary conditions for sweeping optimal solutions. In Section~\ref{S6b}, we provide important consequences of the main results related to the extended Hamiltonian maximization and related conditions. Section~\ref{S6c} presents major specifications of the developed necessary optimality conditions and discusses their advances in various settings including fixed-time, polyhedral problems, etc. Section~\ref{S9b} establishes the nondegeneracy of the optimality conditions at the endpoints for suitably selected multipliers. In Section~\ref{S7}, we illustrate applications of optimality conditions for controlled sweeping processes to solve a minimum-time navigation problem around a circular keep-out zone and a two-agent corridor crowd motion model. The concluding Section~\ref{S8} summarizes the obtained results and discusses some topics of our future research.\vspace*{-0.05in}

\section{Prox-Regularity and Generalized Differentiation}\label{S2}

In this paper, we work in a finite-dimensional space $\R^n$. Unless otherwise stated, $\|\cdot\|$ denotes, respectively, the Euclidean norm, $\la\cdot,\cdot\ra$ the standard inner product, and
\[
B(x,\ve):=\big\{y\in\R^n\;\big|\;\|y-x\|<\ve\big\}
\]
the open ball centered at $x\in\R^n$ with radius $\ve>0$. For a nonempty set $\O\subset\R^n$, $\bd\O$ signifies its boundary.

We start with recalling the notion of {\em uniformly prox-regular sets} that plays an important role in our study of controlled sweeping  processes. Given a nonempty set $\O\subset\R^n$, consider the associated {\em distance function}
\[
\dist(x;\O):=\inf_{y\in\O}\|x-y\|,\quad x\in\R^n,
\]
and the {\em Euclidean projection} of $x$ onto $\O$ given by
\[
\Pi(x;\O):=\big\{w\in\O\;\big|\;\|x-w\|=\dist(x;\O)\big\},\quad x\in\R^n.
\]
If the set $\O$ is closed around $\ox\in\O$, then the Euclidean projection $\Pi(x;\O)$ is nonempty for all $x$ near $\ox$. Using this, we define the {\em proximal normal cone} to $\O$ at $\ox\in\O$ by
\begin{equation*}
N^P_\O(\ox):=
\big\{v\in\R^n\ \big|\ \exists\,\al>0\ \text{ such that }\ \ox\in\Pi(\ox+\al v;\O)\big\}
\end{equation*}
with $N^P_\O(\ox):=\emptyset$ when $\ox\notin\O$, and then proceed with the definition of uniformly prox-regular sets.

\begin{definition}
\label{Def3}
{\rm\textbf{(uniform prox-regularity)}}
Given $\eta>0$, a closed set $\O\subset\R^n$ is said to be {\sc $\eta$-uniformly prox-regular} if for every $x\in\bd\O$ and every $v\in N^P_\O(x)$ with $\|v\|=1$, we have
\[
B(x+\eta v,\eta)\cap\O=\emptyset,
\]
{\rm which can be equivalently written as}
\[
\la v, y-x\ra\le \frac{\|v\|}{2\eta}\|y-x\|^2
\quad\text{for all }\;y\in\O,\;x\in\bd\,\O,\; v\in N^P_\O(x).
\]
\end{definition}

Uniform prox-regularity is a quantitative nonconvex substitute for convexity: an exterior ball of fixed radius rolls along the boundary of $\O$. Every closed convex set is uniformly prox-regular. Moreover, on a sufficiently small tubular neighborhood of a uniformly prox-regular set, the metric projection is single-valued. In the sweeping framework, the resulting projection stability and normal cone regularity are precisely what allow the nonconvex moving set $C_u(t)$ to be handled within the same variational calculus as the convex case.

The proximal normal cone reflects local metric projection geometry and is always convex while lacking robustness needed for variational calculus in the nonconvex setting. The following construction, known as the {\em limiting/Mordukhovich normal cone}, is always robust and enjoys comprehensive calculus rules  based on {\em variational/extremal principles} of variational analysis. We refer the reader to the books \cite{Mordukhovich2006I,Mordukhovich2026,Rockafellar1998} 
and the bibliographies therein for equivalent representations of $N_\O$, detailed studies and a variety of applications.

\begin{definition}
\label{DefLimNormal}
\textbf{\rm(\bf limiting normal cone)} Let $\O\subset\R^n$ be a nonempty set that is closed around $\ox\in\O$. The $($limiting$)$ {\sc normal cone} to $\O$ at $\ox$ is defined by
\begin{equation}\label{lnc}
N_\O(\ox):=
\big\{v\in\R^n\ \big|\ \exists\,x_k\to\ox,\ w_k\in\Pi(x_k;\O),\ \al_k\ge 0\ \text{ such that }\ \al_k(x_k-w_k)\to v\big\}.
\end{equation}
We put $N_\O(\ox):=\emptyset$ when $\ox\notin\O$.
\end{definition}

Due to the aforementioned advantages of the limiting normal cone, we use it as the basic construction in this paper. It stands as the normal cone in the description of the sweeping dynamics \eqref{e2}. Observe to this end that, in the case of uniformly prox-regular moving sets $C_u$ in \eqref{e2} (in particular, for convex sets), the limiting normal cone agrees with the proximal (and, respectively, classical convex) one.

Next we recall the subdifferential and coderivative constructions for functions and multifunctions/set-valued mappings associated with the limiting normal cone \eqref{lnc}. 

Given an extended-real-valued 
function $\phi:\R^n\to\Bar\R:=(-\infty,\infty]$ lower semicontinuous (l.s.c.) around $\ox\in\dom\phi:=\{x\in\R^n\mid \phi(x)<\infty\}$, the (limiting) {\em subdifferential} of $\phi$ at $\ox$ is defined by
\begin{equation}\label{sub}
\partial\phi(\ox):=\big\{v\in\R^n\ \big|\ (v,-1)\in N_{\epi\phi}\big(\ox,\phi(\ox)\big)\big\},
\end{equation}
where $\epi\phi:=\{(x,\al)\in\R^{n+1}\mid \al\ge \phi(x)\}$ is the epigraph of $\phi$.

Let $F:\R^n\tto\R^m$ be a set-valued mapping whose graph is closed around $(\ox,\oy)\in\gph F$, i.e., with $\oy\in F(\ox)$. The (limiting) {\em coderivative} of $F$ at $(\ox,\oy)$ is
\begin{equation}\label{cod}
D^\ast F(\ox,\oy)(u):=\big\{v\in\R^n\ \big|\ (v,-u)\in N_{\gph F}(\ox,\oy)\big\},\qquad u\in\R^m,
\end{equation}
where $\oy$ is skipped if ${F(\ox)=\{\oy\}}$ is a singleton. If $F:\R^n\to\R^m$ is single-valued and continuously differentiable around $\ox$, then $D^\ast F(\ox)(u)=\{\nabla F(\ox)^\ast u\}$ for all $u\in\R^m$, where $^*$ on the right-hand side stands for the matrix transposition/adjoint operator. The reader is referred to the books \cite{Mordukhovich2006I,Mordukhovich2026,Rockafellar1998} for various analytic representations, comprehensive calculus rules and numerous applications of the subdifferential and coderivative constructions in \eqref{sub} and \eqref{cod} to optimization, control, and other areas.

Yet another basic construction of variational analysis is broadly used in the paper. The {\em second-order subdifferential/generalized Hessian} of a function $\phi:\R^n\to\Bar\R$ at $\ox,\ov$ with $\ov\in\partial\phi(\ox)$ is defined as the coderivative \eqref{cod} of the first-order subdifferential mapping \eqref{sub} by
\begin{equation}\label{2nd}
\partial^2\phi(\ox,\ov)(u):=(D^\ast\partial\phi)(\ox,\ov)(u),\qquad u\in\R^n.
\end{equation}
If the function $\phi$ is twice continuously differentiable ($C^2$-smooth) around $\ox$, then the generalized Hessian in \eqref{2nd} reduces to the standard Hessian mapping
$$
\partial^2\phi(\ox,\ov)(u)=\big\{\nabla^2\phi(\ox)u\big\},\qquad u\in\R^n.
$$
If $\phi$ is of class $C^{1,1}$ around $\ox$, i.e., it is $C^1$-smooth with locally Lipschitzian gradients around $\ox$, then we have 
$$
\partial^2\phi(\ox)(u)=\partial\la u,\nabla\phi(\ox)\ra,\qquad u\in\R^n.
$$
In general, the second-order subdifferential \eqref{2nd} is a positively homogeneous set-valued mapping, which satisfies extended calculus rules and is expressed in terms of the given data for broad classes of functions appearing in variational analysis and its applications; see the book \cite{Mordukhovich2024} with the vast bibliographies therein.

Note that the second-order subdifferential \eqref{2nd} naturally arises in sweeping processes, which are described by the normal cone mapping, i.e., by the subdifferential mapping of the set {\em indicator function} $\delta_C$ equal to 0 on $C$ and $\infty$ otherwise. As we see below, the adjoint system for the sweeping dynamics in \eqref{e2} is expressed via the coderivative of $N_C$, i.e., via the second-order subdifferential of the indicator function. Explicit second-order subdifferential calculations for $\delta_C$ play a crucial role in our subsequent developments.

\section{Standing Assumptions}\label{S2a}

Here we formulate and discuss the standing assumptions on the given data of the control problem $(P)$. 

\medskip
\noindent\textbf{(H1)} The perturbation mapping $f\colon\R^n\times\R^d\to\R^n$ in \eqref{e2} is {\em globally Lipschitz continuous}, i.e., there exists a constant $L_f>0$ such that
\begin{equation}\label{e10}
\|f(x_1,a_1)-f(x_2,a_2)\|\le L_f\big(\|x_1-x_2\|+\|a_1-a_2\|\big)
\end{equation}
for all $(x_1,a_1),(x_2,a_2)\in\R^n\times\R^d$. Furthermore, there exists a constant $M>0$ such that the {\em growth condition}
\begin{equation}\label{e11}
\|f(x,a)\|\le M(1+\|x\|)\qquad\text{for all }x\in\R^n\ \text{ and all } a\in A
\end{equation}
is satisfied, where $A\subset\R^d$ is the control set taken from \eqref{e5}.

\medskip
\noindent\textbf{(H2)} The constraint functions $g_i\colon\R^n\to\R$, $i=1,\ldots,m$, defining the sets
\[
C=\bigcap_{i=1}^m C_i,\qquad C_i:=\big\{x\in\R^n\;\big|\;g_i(x)\ge 0\big\},
\]
satisfy the following conditions:

\smallskip
\noindent\hspace*{0.2in}\textbf{(H2.1)} Each $g_i$ is twice continuously differentiable ($C^2$-smooth) on $\R^n$.

\smallskip
\noindent\hspace*{0.2in}\textbf{(H2.2)} There exist constants $c,M_1,M_2,M_3>0$ and open sets $V_i\supset C_i$, $i=1,\ldots,m$, ensuring the estimates
\[
\dist\!\big(C_i,\,\R^n\setminus V_i\big)>c,
\]
\begin{equation}\label{e12}
M_1\le \|\nabla g_i(x)\|\le M_2,\qquad \|\nabla^2 g_i(x)\|\le M_3
\end{equation}
for all $x\in V_i$ and all $i=1,\ldots,m$, where $\dist(\O_1,\O_2):=\inf\{\|y_1-y_2\|\mid y_1\in\O_1,\,y_2\in\O_2\}$ denotes the usual distance between two sets $\O_1,\O_2\subset\R^n$.

\smallskip
\noindent\hspace*{0.2in}\textbf{(H2.3)} There are constants $\be>0$ and $\rho>0$ such that

\begin{equation}\label{e13}
\sum\limits_{i\in I_\rho(x)}\lm_i\|\nabla g_i(x)\|
\le
\be\Big\|\sum\limits_{i\in I_\rho(x)}\lm_i\nabla g_i(x)\Big\|
\end{equation}
for all $x\in C$ and all $\lm_i\ge 0$, where
\begin{equation}\label{e14}
I_\rho(x):=\big\{i\in\{1,\ldots,m\}\;\big|\;g_i(x)\le \rho\big\}.
\end{equation}

\medskip
\noindent\textbf{(H3)} The admissible control set $U\subset\R^n$ is described by continuously differentiable inequality constraints. Specifically, there exist $C^1$-smooth functions $\nu_i\colon\R^n\to\R$, $i=1,\ldots,s$, for which
\[
U:=\bigcap_{i=1}^s U_i,\qquad U_i:=\nu_i^{-1}\!\big((-\infty,0]\big)=\big\{u\in\R^n\;\big|\;\nu_i(u)\le 0\big\}.
\]

\smallskip
\noindent\hspace*{0.2in}\textbf{(H3.1)} There exists a constant $L_\nu>0$ such that
\begin{equation}\label{e15}
|\nu_i(u_2)-\nu_i(u_1)|\le L_\nu\|u_2-u_1\|
\end{equation}
for all $u_1,u_2\in\R^n$ and all $i=1,\ldots,s$. Moreover, $\max_{1\le i\le s}\|\nabla\nu_i(u)\|\le L_\nu$ for all $u\in\R^n$.

\smallskip
\noindent\hspace*{0.2in}\textbf{(H3.2)} If $u\in U$ and $v_i\in N_{U_i}(u)$, $i=1,\ldots,s$, satisfy $\sum\limits_{i=1}^s v_i=0$, then $v_i=0$ for all $i=1,\ldots,s$.

\smallskip
\noindent\hspace*{0.2in}\textbf{(H3.3)} There exists $\delta_\nu>0$ such that $\nabla\nu_i(u)\ne0$ whenever $|\nu_i(u)|\le\delta_\nu$, $i=1,\ldots,s$.

\medskip
\noindent\textbf{(H4)} The set $A$ from \eqref{e5} is compact in $\R^d$, and for each $x\in\R^n$ the set $f(x,A):=\{f(x,a)\mid a\in A\}$ is convex.

\medskip
\noindent\textbf{(H5)} The terminal cost $\ph\colon\R^n\times[0,\infty)\to\Bar\R$ is lower semicontinuous. The running cost $\ell\colon[0,\infty)\times\R^{4n+d}\to\Bar\R$ in \eqref{e1} --- defined on the half-line in $t$ since the horizon $T$ is itself a decision variable---is such that the section $\ell(t,\cdot)$ is lower semicontinuous and bounded from below on bounded sets for a.e.\ $t\ge 0$, and that the composition $t\mapsto\ell(t,x(t),u(t),a(t),\dot x(t),\dot u(t))$ is integrable on $[0,T]$ for every feasible quadruple $(x(\cdot),u(\cdot),a(\cdot),T)$. The endpoint constraint sets $\Xi_x\subset\R^n$ and $\Xi_T\subset[0,\infty)$ in \eqref{e2} are nonempty and closed. 

\medskip

Assumptions \textbf{(H1)}--\textbf{(H5)} ensure well-posedness of the sweeping system \eqref{e2} under admissible controls and guarantee the regularity and stability properties required for the discrete approximation scheme developed in Section~\ref{S3}. Their individual roles are as follows. Assumption \textbf{(H2)} is the principal geometric condition of the model. The $C^2$-smoothness in \textnormal{(H2.1)} and the uniform gradient and Hessian bounds in \textnormal{(H2.2)} together yield {\em uniform prox-regularity} for each $C_i$ and for their intersection $C$. The inverse-triangle inequality \textnormal{(H2.3)}, imposed on the enlarged near-active index set $I_\rho(x)$ in \eqref{e14}, is a robust {\em positive linear independence constraint qualification} (PLICQ) of the Mangasarian--Fromovitz type; it preserves prox-regularity of the intersection and the calculus of the normal cone mapping $x\mapsto N_C(x)$ used throughout.

Observe further that assumption \textbf{(H1)} governs the perturbation term $f$. The Lipschitz property \eqref{e10} yields the continuous dependence of trajectories on the state and control, while the linear growth bound \eqref{e11} provides uniform a priori estimates that preclude finite-time blow-up and supply the compactness needed for the approximating sequences. Assumption \textbf{(H3)} concerns the structure of the admissible set $U$ for the moving set controls $u(\cdot)$. The Lipschitz condition \eqref{e15} guarantees the required regularity of the generators $\nu_i$, whereas \textnormal{(H3.2)} is the parallel positive linear independence constraint qualification on the admissible control side ruling out {\em degeneracies} among the active control constraints. Together with the inverse triangle inequality \textnormal{(H2.3)} on the moving set side, the two PLICQ-type conditions make {\em normal cone calculus} available for $U$ and for the translated sets $C_u(t)=C+u(t)$. Note that \textnormal{(H3.2)} alone does not give a gradient form of the normals to each $U_i$. This is the role of \textnormal{(H3.3)}: it ensures that $N_{U_i}(u)=\{t\nabla\nu_i(u)\mid t\ge0\}$ at every boundary point of $U_i$. The same holds for the relaxed sets $U^i_k$ in \eqref{e:ua-dc} once $L_\nu\delta_k\le\delta_\nu$. This gradient representation is used for the control multipliers $\psi^u$ below.

Assumptions \textbf{(H4)} and \textbf{(H5)} are compactness--semicontinuity conditions: the compactness of the set $A$ in \eqref{e5} combined with the convexity of the image set $f(x,A)$ in \textbf{(H4)} enables the passage to the limit for the perturbation term, and the lower semicontinuity and summability of $\ell$ in \textbf{(H5)} yield the existence of optimal solutions and stability under convergence of feasible processes.

Under these hypotheses, the moving set $C_u(t)=C+u(t)$ is {\em uniformly prox-regular} for every $t\ge 0$; see \cite{AdlyNacryThibault2016,Venel2011}. {Indeed, the Hessian bound in \textnormal{(H2.2)} gives each set $C_i$ the exterior ball property, and \textnormal{(H2.3)} preserves it for the intersection $C$. No convexity of the functions $g_i$ is needed.} Therefore, the sweeping inclusion \eqref{e2} is well posed for every admissible control, and the proximal and limiting normal cones of $C_u(t)$ agree along feasible trajectories.

\section{Local Minimizers and Sweeping Relaxation}\label{S2b}

The main goal of this paper is to establish necessary optimality conditions for the sweeping control problem $(P)$. To proceed, let us specify the type of local minimizers for which we intend to derive such conditions.

Following the scheme originated for Lipschitzian differential inclusions in  \cite{Mordukhovich1995} and \cite[Chapter~6]{Mordukhovich2006I} and then extended to controlled sweeping processes in \cite{CaoMordukhovich2026,ColomboMordukhovichNguyen2020,Mordukhovich2026} among other publications, we define now the notion of intermediate local minimizers for the problem $(P)$ under consideration and its relaxed version.

A feasible solution $(\ox,\ou,\oa,\T)$ to $(P)$ is an {\em intermediate local minimizer} (i.l.m.) for this problem if there exists $\ve>0$ such that $J(\ox,\ou,\oa,\T)\le J(x,u,a,T)$ for all feasible quadruples $(x,u,a,T)$ in $(P)$ satisfying
\begin{equation}\label{local}
\int^{\T}_0\left(\left\| \dot x(t)-\dot\ox(t)\right\|^2+\left\| \dot u(t)-\dot\ou(t)\right\|^2 + \left\| a(t)-\oa(t)\right\|^2 \right)dt + |T-\T|^2< \ve.
\end{equation}
{Since $x(0)=\ox(0)$ and $u(0)=\ou(0)$, condition \eqref{local} also controls $\int_0^{\T}(\|x(t)-\ox(t)\|^2+\|u(t)-\ou(t)\|^2)\,dt$, which is at most $\T^2\ve$. Thus adding these terms to \eqref{local}, as in \cite{CaoMordukhovich2026}, gives the same class of i.l.m.\ with a different $\ve$.}
{If $T<\T$, then the functions $x,u,a$ in \eqref{local} are extended to $(T,\T]$ by $x(t):=x(T)$, $u(t):=u(T)$, and $a(t):=0$. Thus the velocities vanish there.}
If the $L^2$-norm of $\| \dot x(t)-\dot\ox(t)\|$ is replaced in \eqref{local} by the $C$-norm of $\|x(t)-\ox(t)\|$, we are talking about {\em strong local minimizers} for $(P)$. The latter notion clearly implies that $(\ox,\ou,\oa,\T)$ is an i.l.m.\ for $(P)$ but not vice versa.

In what follows, we need a certain {\em relaxation} of problem $(P)$ in the vein of Bogolyubov and Young in the calculus of variations. {Following \cite{CaoMordukhovich2026}, we restrict the running cost to the graph of the sweeping dynamics. Define $\ell_F(t,x,u,a,\dot x,\dot u):=\ell(t,x,u,a,\dot x,\dot u)$ if $-\dot x\in F(x,u,a)$ and $\ell_F(t,x,u,a,\dot x,\dot u):=\infty$ otherwise, where $F(x,u,a)=N_C(x-u)-f(x,a)$ is the mapping from \eqref{F-def} below. Let $\Hat\ell_F(t,x,u,a,\dot x,\dot u)$ denote the {\em convexification} of $\ell_F$ with respect to the {\em velocity} variables $(\dot x,\dot u)$, i.e., for each fixed $(t,x,u,a)$, the largest lower semicontinuous function of $(\dot x,\dot u)$ that is convex and majorized by $\ell_F(t,x,u,a,\cdot,\cdot)$.} Along with the original problem $(P)$ formulated in \eqref{e1}--\eqref{e5}, consider the
\textit{relaxed sweeping optimal control problem $(R)$} defined by:
\begin{equation}
\label{e7}
\text{minimize } \Hat J[x,u,a,T] := \ph(x(T),T) + \int^T_0{\Hat\ell_F}(t,x(t),u(t),a(t),\dot x(t),\dot u(t))\, dt
\end{equation}
over $(x(\cdot), u(\cdot), a(\cdot),T)\in  W^{1,2}([0,T];\R^n\times\R^n)\times L^2([0,T];\R^d)\times [0,\infty)$ satisfying
\begin{equation}
\label{e8}
\begin{cases}
\dot x(t) \in -N_{C_u(t)}(x(t)) + { f(x(t), A)}\; \text{ a.e. }\;t \in [0,T],\\
x(0)=x_0\in C_u(0),\\
(x(T), T)\in \Xi_x\times \Xi_T\subset \R^n\times[0,\infty), \\
\nu_i(u(t)) \leq 0\;\text{ for all }\;t\in [0,T] \text{ and } i=1,\ldots, s,\\
a(t) \in A, \text{ a.e. } t\in [0,T],
\end{cases}
\end{equation}
The main notion of local minimizers for $(P)$ of our study below is formulated as follows.

\begin{definition} {\rm\textbf{(relaxed intermediate local minimizers)}}
\label{Def2}
Let $(\ox,\ou,\oa,\T)$ be a feasible solution to $(P)$.
We say that it is a \textsc{relaxed intermediate local minimizer} (r.i.l.m.) for problem $(P)$ if there exists $\ve>0$ such that
 \[
 J[\ox,\ou,\oa,\T] = \Hat J[\ox,\ou,\oa,\T] \leq \Hat J[x,u,a,T]
 \]
whenever $(x,u,a,T)$ satisfies \eqref{e8} and the localization condition \eqref{local}.
\end{definition}

Observe that the cost \eqref{e7} is finite only if $-\dot x(t)\in F(x(t),u(t),a(t))$ for a.e.\ $t$. Note that $F(x,u,a)$ is convex, since the normal cone to the prox-regular set $C$ is convex. Thus the relaxation convexifies the running cost in the velocity variables, while the control $a$ stays coupled with the velocity through the original dynamics. In particular, the relaxed problem $(R)$ agrees with $(P)$, and relaxed minimizers are the same as the original ones, whenever $\ell$ is convex and lower semicontinuous in $(\dot x,\dot u)$. This is the case for both models in Section~\ref{S7}. Actually, this phenomenon has been observed for broad classes of nonconvex variational problems due to the ``hidden convexity" that is exhibited by differential systems on continuous-time intervals (nonatomic measures); see more discussions in \cite{ColomboMordukhovichNguyen2020,Mordukhovich2006I,Mordukhovich2026} and the references therein.

\section{Discrete Approximations in Controlled Sweeping Processes}\label{S3}
\setcounter{equation}{0}

This section summarizes the main results for discrete approximations of the sweeping control problem $(P)$ obtained in \cite{CaoMordukhovich2026}. These results play an important role in the subsequent limiting procedure to establish necessary optimality conditions for relaxed intermediate local minimizers of problem $(P)$.

Denote the set-valued mapping $F:\R^n\times\R^n\times\R^d\tto\R^n$ by
\begin{equation}\label{F-def}
F(x,u,a):=N_C(x-u)-f(x,a)
\end{equation}
and the index set of active constraints by
\begin{equation}\label{a-index}
I(y):=\big\{i\in\{1,\ldots,m\}\;\big|\;g_i(y)=0\big\}.
\end{equation}
Using Motzkin's theorem of the alternative, we get the representation
\begin{equation}\label{F-rep}
F(x,u,a)=\Big\{-\sum\limits_{i\in I(x-u)}\lm_i\nabla g_i(x-u)\;\Big|\;\lm_i\ge 0\Big\}-f(x,a).
\end{equation}
Then the original sweeping differential inclusion can be equivalently written as
\begin{equation}\label{F-dyn}
-\dot x(t)\in F\big(x(t),u(t),a(t)\big)\quad\text{for a.e. }\;t\in[0,T].
\end{equation}

Recall that the assumptions in \textbf{(H2)} ensure that the translated set $C_u(t):=C+u(t)$ is uniformly prox-regular. The following theorem contains well-posedness results, which are consequences of \cite[Theorem~1]{EdmondThibault2005}.

\begin{theorem}[well-posedness and estimate for sweeping trajectories]
\label{prop:wellposed_sweeping}
Consider the perturbed sweeping process \eqref{e2} controlled by the functions
\[
u(\cdot)\in W^{1,2}([0,T];\R^n),\qquad a(\cdot)\in L^2([0,T];\R^d)\ \text{ with }\ a(t)\in A\quad\text{for a.e. }\;t\in[0,T]
\]
under the assumptions in {\rm(\bf H1)}--{\rm(\bf H3)}, and let the constant $M>0$ be taken from \eqref{e11}. Then, for every initial condition $x_0\in C_u(0)$, the sweeping differential inclusion
\[
\dot x(t)\in -N_{C_u(t)}\big(x(t)\big)+f\big(x(t),a(t)\big)
\quad\text{for a.e. }\;t\in[0,T],\qquad x(0)=x_0,
\]
admits a unique solution $x(\cdot)\in W^{1,2}([0,T];\R^n)$. Moreover, we have the estimate
\begin{equation}\label{e31}
\|x(t)\|\le M_x := \|x_0\| + e^{2MT}\left(2MT(1+\|x_0\|) + \int_0^T \|\dot u(s)\|\,ds \right)
\quad \text{for all }\;t\in[0,T].
\end{equation}
\end{theorem}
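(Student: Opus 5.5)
The plan is to reduce the statement to the existence--uniqueness result of Edmond and Thibault \cite[Theorem~1]{EdmondThibault2005} for perturbed sweeping processes with uniformly prox-regular moving sets. That result needs three inputs: the moving set $C_u(t)$ is $\eta$-uniformly prox-regular with $\eta$ independent of $t$; the map $t\mapsto C_u(t)$ is absolutely continuous in the Hausdorff sense with a summable modulus; and the perturbation is measurable in $t$, Lipschitz in $x$, and satisfies a linear growth bound.

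\emph{Verifying the hypotheses.} Uniform prox-regularity of $C$, with a constant $\eta$ depending only on $M_1,M_2,M_3,\be,\rho$, follows from \textbf{(H2)} as recalled at the end of Section~\ref{S2a} via \cite{AdlyNacryThibault2016,Venel2011}. Translation preserves this constant, so $C_u(t)=C+u(t)$ is $\eta$-uniformly prox-regular for every $t$. For the variation of the moving set, $d_H(C+u(t),C+u(s))\le\|u(t)-u(s)\|\le\int_s^t\|\dot u(\tau)\|\,d\tau$. Since $u\in W^{1,2}\subset W^{1,1}$ on $[0,T]$, the set-valued map has absolutely continuous variation with modulus $v(t)=\int_0^t\|\dot u\|$. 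The perturbation $h(t,x):=f(x,a(t))$ is measurable in $t$ because $f$ is continuous and $a$ is measurable. It is $L_f$-Lipschitz in $x$ by \eqref{e10} and satisfies $\|h(t,x)\|\le M(1+\|x\|)$ by \eqref{e11}, since $a(t)\in A$. Edmond--Thibault then yields a unique absolutely continuous solution with $\|\dot x(t)+h(t,x(t))\|\le\|\dot u(t)\|+\|h(t,x(t))\|$ a.e. Together with the boundedness of $x$ established below, this shows $\dot x\in L^2$, so $x\in W^{1,2}$.

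\emph{The bound \eqref{e31}.} From the velocity estimate, $\|\dot x(t)\|\le\|\dot u(t)\|+2M(1+\|x(t)\|)$ a.e. Set $r(t):=\|x(t)\|$, which is absolutely continuous with $\dot r\le\|\dot x\|$. Integrating gives
\[
r(t)\le \|x_0\|+\int_0^t\|\dot u\|+2Mt+2M\int_0^t r(s)\,ds .
\]
Applying Gronwall's inequality in the form $r(t)\le \|x_0\| + e^{2Mt}\bigl(2Mt(1+\|x_0\|)+\int_0^t\|\dot u\|\bigr)$ should produce \eqref{e31}, possibly after slightly generous absorption of constants. To do this, write $r(t)=\|x_0\|+w(t)$ with $w(t)\le 2Mt(1+\|x_0\|)+\int_0^t\|\dot u\|+2M\int_0^t w$. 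Here the first two terms are nondecreasing in $t$, so Gronwall gives $w(t)\le e^{2Mt}\bigl(2Mt(1+\|x_0\|)+\int_0^t\|\dot u\|\bigr)$, which is at most the right-hand side of \eqref{e31} at time $T$.

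\emph{Main obstacle.} The delicate step is the velocity estimate: it must be stated in exactly the form $\|\dot x+h\|\le\|\dot u\|+\|h\|$ for the factor $2M$ to appear. The first thing I would try is to derive it directly. Since $-(\dot x-h)\in N_{C_u(t)}(x)$, one needs the normal component to be bounded by the speed $\|\dot u\|$ of the moving set plus the drift $\|h\|$. This follows from the standard catching-up/Moreau argument with prox-regular hypomonotonicity: the term $\frac{1}{2\eta}\|\cdot\|\|x-y\|^2$ vanishes in the limit because the comparison is local. If the Edmond--Thibault statement gives only $\|\dot x\|\le\|\dot u\|+2\|h\|$, this is the same inequality, and the argument above goes through unchanged.
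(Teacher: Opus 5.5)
Your proposal is correct and follows the paper's route: the paper simply invokes \cite[Theorem~1]{EdmondThibault2005} with the uniformly prox-regular sets $C_u(t)=C+u(t)$, the variation modulus $v(t)=\int_0^t\|\dot u(s)\|\,ds$, and the growth function $\beta\equiv M$, and \eqref{e31} is exactly that theorem's a priori bound, which your Gronwall argument reproduces with no loss of constants. The only slip is a sign: with $h(t,x)=f(x,a(t))$, the normal component is $\dot x-h$, as you correctly write later, rather than $\dot x+h$; this is harmless, since either way the triangle inequality gives $\|\dot x\|\le\|\dot u\|+2M(1+\|x\|)$.
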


Now we fix the (uniform) {\em discrete mesh}, which is used throughout the paper. For each $k\in\N$ and each horizon $T^k>0$, consider the partition 
\begin{equation}\label{DP}
\Delta_k(T^k):=\big\{0=t^k_0<t^k_1<\cdots<t^k_{k-1}<t^k_k=T^k\big\},
\qquad h_k:=t^k_{j+1}-t^k_j=\frac{T^k}{k},\ \ j=0,\ldots,k-1.
\end{equation}
The next strong convergence theorem is taken from \cite[Theorem~3.1]{CaoMordukhovich2026}.

\begin{theorem}[strong discrete approximation of feasible sweeping trajectories]\label{ThDA}
Under the assumptions in {\rm\textbf{(H1)}--\textbf{(H4)}}, let $(x(\cdot),u(\cdot),a(\cdot),\T)$ be a feasible solution to the original problem \eqref{e2}--\eqref{e5}. On the fixed mesh $\Delta_k(\T)$ from \eqref{DP},
write $t_j^k=j\T/k$ and $h_k=\T/k$. Suppose in addition that $\dot x(\cdot)$, $\dot u(\cdot)$, and $a(\cdot)$ are of bounded variation on $[0,\T]$, and that there exists a constant $\mu>0$ such that we have the estimates
\begin{equation}\label{e:var}
\begin{aligned}
\max\bigl\{\var(\dot x(\cdot);[0,\T]),\ \var(\dot u(\cdot);[0,\T]),\ \var(a(\cdot);[0,\T])\bigr\} &\le \mu,\\[0.3ex]
\max\!\left\{\left\|\dfrac{u(t_1^k)-u(0)}{h_k}\right\|,\ \left\|\dfrac{u(\T)-u(t_{k-1}^k)}{h_k}\right\|\right\} &\le \mu
\end{aligned}
\end{equation}
for all $k\in\N$ sufficiently large. Then there exist piecewise linear functions $(x^k(\cdot),u^k(\cdot))$ and a piecewise constant function $a^k(\cdot)$ on $[0,\T]$ enjoying the following properties.

\smallskip
\noindent
{\bf(i)} For every mesh point $t_j^k$, $j=0,\ldots,k$, we have the relations
\begin{equation}\label{e:ua-dc}
\begin{cases}
u^k(t_j^k)\in U_k:=\bigcap_{i=1}^s U_k^i
:=\bigcap_{i=1}^s \nu_i^{-1}\!\big((-\infty,\,L_\nu\delta_k]\big),\\[0.8ex]
\big(x^k(\T),\T\big)\in \big(\Xi_x+\mu_k^x \overline{B}(0,1)\big)\times \Xi_T
\end{cases}
\end{equation}
with $\delta_k\dn 0$ and $\mu_k^x\dn 0$ as $k\to\infty$. Moreover, it holds that $a^k(t_j^k)\in A$ for $j=0,\ldots,k-1$, while the terminal value of the piecewise constant representative may be chosen arbitrarily in $A$.

\smallskip
\noindent
{\bf(ii)} For every $j=0,\ldots,k-1$ and every $t\in[t_j^k,t_{j+1}^k]$, we have that
\begin{equation}\label{e:disc-dyn}
x^k(t)=x^k(t_j^k)+(t-t_j^k)\,v_j^k,\qquad
-v_j^k\in F\!\big(x^k(t_j^k),u^k(t_j^k),a^k(t_j^k)\big).
\end{equation}

\smallskip
\noindent
{\bf(iii)} The approximating sequences converge strongly on $[0,\T]$ in the sense that
\begin{equation}\label{e:strongconv2}
\begin{array}{ll}
(x^k(\cdot),u^k(\cdot))\to (x(\cdot),u(\cdot))
\quad\text{uniformly on }[0,\T],\\
(\dot x^k(\cdot),\dot u^k(\cdot),a^k(\cdot))\to (\dot x(\cdot),\dot u(\cdot),a(\cdot))
\quad\text{in }L^2([0,\T]).
\end{array}
\end{equation}

\smallskip
\noindent
{\bf(iv)} For all $k\in\N$, we get the estimates
\begin{equation*}
\var(\dot u^k(\cdot);[0,\T])\le \Tilde\mu,\quad\max\!\left\{\left\|\dfrac{u^k(t_1^k)-u^k(0)}{h_k}\right\|,\ \left\|\dfrac{u^k(\T)-u^k(t_{k-1}^k)}{h_k}\right\|\right\}\le \Tilde\mu,
\end{equation*}
where the positive constant $\Tilde\mu$ is calculated by
\begin{equation}\label{e34}
\Tilde\mu := \max\!\left\{
\begin{array}{l}
3\mu + 1 + 4L_f\T\mu\,e^{L_f\T}(2L_f+1) + 2L_f\mu,\\[0.4ex]
2e^{L_f\T}(2L_f+1)(\mu+1) + \mu
\end{array}\right\}.
\end{equation}

\smallskip
\noindent
{\bf(v)} We have the velocity estimates
\begin{equation}\label{e:rate-L2}
\|\dot x^k-\dot x\|_{L^2([0,\T];\R^n)}+\|\dot u^k-\dot u\|_{L^2([0,\T];\R^n)}\;=\;\mathcal O\!\left(h_k^{1/2}\right),\quad k\in\N.
\end{equation}
\end{theorem}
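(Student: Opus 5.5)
Since Theorem~\ref{ThDA} is quoted from \cite[Theorem~3.1]{CaoMordukhovich2026}, the proof reduces to reconstructing that argument in the present notation. I plan to do this by building the discrete processes explicitly from the given feasible one. Then I would pass to the limit using Theorem~\ref{prop:wellposed_sweeping} together with the Lipschitz and prox-regularity structure from \textbf{(H1)}--\textbf{(H3)}.

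The construction goes as follows. Put $u^k(t_j^k):=u(t_j^k)$ and take $u^k$ piecewise linear between the mesh points. The constraint $\nu_i(u^k(t))\le L_\nu\delta_k$ at the mesh points (indeed on the whole interval) then follows from \eqref{e15}, with $\delta_k$ the modulus of continuity of $u$ on intervals of length $h_k$. Next, let $a^k$ be the piecewise constant function with $a^k(t)=a(t_j^k)$ on $[t_j^k,t_{j+1}^k)$, choosing representatives of $a$ with values in $A$. The BV bound in \eqref{e:var} gives $\|a^k-a\|_{L^1}\le h_k\mu$; since $A$ is compact, interpolation yields $\|a^k-a\|_{L^2}=\mathcal O(h_k^{1/2})$. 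For $x^k$ I would use the implicit catching-up scheme
\[
x^k(t_{j+1}^k)\in\Pi\big(x^k(t_j^k)+h_kf(x^k(t_j^k),a^k(t_j^k));\,C+u^k(t_{j+1}^k)\big),
\]
which is single-valued by uniform prox-regularity for $k$ large. To get the \emph{explicit} form \eqref{e:disc-dyn}, with the normal cone evaluated at $x^k(t_j^k)-u^k(t_j^k)$, I would rather define the velocity directly. Take $v_j^k:=\bigl(x(t_{j+1}^k)-x(t_j^k)\bigr)/h_k$ corrected by a projection onto $-F(x^k(t_j^k),u^k(t_j^k),a^k(t_j^k))$. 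Here the representation \eqref{F-rep} and the PLICQ \eqref{e13} give Lipschitz-type (Hoffman) stability of the cone generated by the active gradients.

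For the error estimates I would compare $\dot x^k$ with $\dot x$ on each subinterval. The BV bound on $\dot x$ lets me replace $\dot x(t)$ by its average at cost $\mathcal O(h_k)\var(\dot x)$ in $L^1$. The Lipschitz bounds \eqref{e10} and \eqref{e12} then control how $F$ changes between $(x(t_j^k),u(t_j^k),a(t_j^k))$ and $(x^k(t_j^k),u^k(t_j^k),a^k(t_j^k))$. A discrete Gronwall inequality closes the loop on $\max_j\|x^k(t_j^k)-x(t_j^k)\|$, giving a bound of order $e^{L_f\T}(2L_f+1)\mu h_k$. Because all the velocities involved are uniformly bounded, the $L^1$ bounds upgrade to $L^2$ bounds of order $h_k^{1/2}$, which proves (v) and hence (iii). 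Item (iv) is immediate: $\var(\dot u^k)\le\var(\dot u)$ for piecewise-linear interpolants, and the endpoint quotients coincide with those in \eqref{e:var}. The constant \eqref{e34} records the Gronwall bookkeeping. For the endpoint condition in (i), I would take $\mu_k^x:=\|x^k(\T)-x(\T)\|\to0$.

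The main obstacle is the stability step. The active index set $I(x-u)$ can jump under perturbation, so the normal cone $N_C$ is not Lipschitz. One must therefore show that the perturbed velocity still lies in $-F$ at the discrete node with error $\mathcal O(\|\text{perturbation}\|)$. I would handle this with the enlarged index set $I_\rho$ and \eqref{e13}. These give bounded multipliers $\lm_i\le\be\|v+f\|/M_1$, so gradients of near-active constraints can be added with small multipliers. Any constraint that becomes inactive at the node is dropped, and the resulting error is controlled by $M_3$ times the displacement.
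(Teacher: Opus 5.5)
The gap is in the step you call the main obstacle, and it is more basic than jumps of the active index set. You fix $u^k$ as the nodal interpolant of $u$ and generate $x^k$ by explicit steps, projecting the velocity onto $-F(x^k(t^k_j),u^k(t^k_j),a^k(t^k_j))$. But \eqref{e:disc-dyn} requires $x^k(t^k_j)-u^k(t^k_j)\in C$ at every node $j\le k-1$; otherwise $N_C$, and hence $F$, is empty. An explicit step with the cone frozen at the left node does not preserve this when $u^k$ is prescribed in advance.

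Take $n=2$, $C=\{y\mid y_1\ge0\}$, $U=\R^2$, $f\equiv0$, $u(t)=(t,0)$, and $x_0=(\epsilon,0)$ with $0<\epsilon<\T$ and $\epsilon/\T$ irrational. The reference $x(t)=(\max\{\epsilon,t\},0)$ satisfies all the hypotheses with $\mu=1$. Start from $x^k(0)=x_0$. Every node with $t^k_j<\epsilon$ is interior, so $F=\{0\}$ there and $v^k_j=0$ is forced, whatever correction rule you use. At the first node with $t^k_{j+1}>\epsilon$ (not the terminal node for large $k$), you get $x^k(t^k_{j+1})-u^k(t^k_{j+1})=(\epsilon-t^k_{j+1},0)\notin C$. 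No admissible $v^k_{j+1}$ then exists.

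Your repair cannot help. First, \eqref{e:disc-dyn} uses the exact cone at the node, so you may not add gradients of near-active but inactive constraints; that would be an $I_\rho$-enlarged cone, not $F$. Second, dropping a constraint that is active for the reference but inactive at the node removes its term $\eta_i\nabla g_i$, which is of order one. The bound $M_3$ controls the motion of the generators $\nabla g_i$, not their disappearance, because $N_C$ is not inner semicontinuous. Hence neither node feasibility nor your $\mathcal O(h_k)$ Gr\"onwall estimate is available.

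The missing idea is to let the moving-set control absorb the state error. The statement is built for this. If $u^k$ interpolated $u$, its nodal values would already lie in $U$, and neither the relaxation $U_k$ in \eqref{e:ua-dc} nor the $L_f$- and $e^{L_f\T}$-terms in \eqref{e34} would be needed. The paper imports the result from \cite{CaoMordukhovich2026} without proof, but this is the device its form dictates.

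The right limits $\dot x(t^k_j+)$ and $a(t^k_j+)\in A$ exist by the BV assumption. By the closedness of $\gph N_C$, they satisfy $\dot x(t^k_j+)-f(x(t^k_j),a(t^k_j+))\in-N_C(x(t^k_j)-u(t^k_j))$. Put $a^k_j:=a(t^k_j+)$, $v^k_j:=\dot x(t^k_j+)+f(x^k_j,a^k_j)-f(x(t^k_j),a^k_j)$, $x^k_{j+1}:=x^k_j+h_kv^k_j$ with $x^k_0=x_0$, and $u^k_j:=x^k_j-x(t^k_j)+u(t^k_j)$. Then $x^k_j-u^k_j=x(t^k_j)-u(t^k_j)\in C$, and the inclusion in \eqref{e:disc-dyn} holds exactly, with no stability of $N_C$ needed.

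For $e_j:=x^k_j-x(t^k_j)$ you get $\|e_{j+1}\|\le(1+h_kL_f)\|e_j\|+h_k\var(\dot x;[t^k_j,t^k_{j+1}])$, hence $\|e_j\|\le e^{L_f\T}\mu h_k$. Since $u^k_j-u(t^k_j)=e_j$, \eqref{e15} gives $\delta_k$, and $e_k$ gives $\mu^x_k$.

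Item (iv) is then no longer immediate. On the $j$th cell, $\dot u^k$ equals $\dot x(t^k_j+)$, minus the cell average of $\dot x$, plus the cell average of $\dot u$, plus $f(x^k_j,a^k_j)-f(x(t^k_j),a^k_j)$. Its variation is therefore bounded by three variations of order $\mu$ plus $k$ increments of size $\mathcal O(L_fe^{L_f\T}\mu h_k)$. This is where $L_f$ and $e^{L_f\T}$ enter $\Tilde\mu$. Your treatment of $a^k$, the $L^1$-to-$L^2$ upgrade, and (v) go through for this construction.
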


Next we construct a sequence of discrete approximations
of problem $(P)$ associated with its given r.i.l.m.\ $(\ox(\cdot),\ou(\cdot),\oa(\cdot),\T)$. For each $k\in\N$, let $T^k>0$ be a decision variable approximating $\T$ on the uniform mesh $\Delta_k(T^k)$ from \eqref{DP}. Fix $\ve>0$ as in Definition~\ref{Def2}{. Without loss of generality, $\ve\le1$. Choose a horizon radius $\rho\in(0,\min\{\ve/2,\T/2\})$ so small that
\[
\int_{\T-\rho}^{\T}\Bigl(\|\dot\ox(t)\|^2+\|\dot\ou(t)\|^2+\|\oa(t)\|^2\Bigr)dt<\frac{\ve}{4}.
\]
This is possible by the absolute continuity of the integral. Further,} let $\Tilde\mu>0$, $\delta_k\dn 0$, $\mu^{x}_k\dn 0$ be the constants taken from Theorem~\ref{ThDA}. {Here Theorem~\ref{ThDA} is applied to the reference process. Thus we assume throughout that $\dot\ox$, $\dot\ou$, and $\oa$ are of bounded variation on $[0,\T]$ and that \eqref{e:var} holds for $(\dot\ox,\dot\ou,\oa)$ and $\ou$ in place of $(\dot x,\dot u,a)$ and $u$ with some $\mu>0$. In particular, $\var(\oa;[0,\T])\le\mu$. When $T^k>\T$, the reference functions in \eqref{e:Pk-loc} and \eqref{e:discconv} are extended to $(\T,\T+\ve]$ by $\ox(t):=\ox(\T)$, $\ou(t):=\ou(\T)$, and $\oa(t)=\dot\ox(t)=\dot\ou(t):=0$. The extension of $\ox$ and $\ou$ agrees with \cite{CaoMordukhovich2026}, and the choice for $\oa$ is harmless since the extra interval shrinks to zero length.} Denote the discrete velocity variables by
\begin{equation}\label{e:disc-vel}
V^x_j:=\frac{x^k_{j+1}-x^k_j}{h_k},\qquad
V^u_j:=\frac{u^k_{j+1}-u^k_j}{h_k},\qquad j=0,\ldots,k-1,
\end{equation}
and collect the full discrete decision vector into
\begin{equation}\label{e:disc-var}
\sigma^k:=\bigl(x^k_0,\ldots,x^k_k,\,u^k_0,\ldots,u^k_k,\,a^k_0,\ldots,a^k_{k-1},\,T^k\bigr).
\end{equation}

For each $k\in\N$, the {\em discrete approximation problem}  $(P_k)$ of sweeping optimal control is defined by
\begin{equation}\label{e:Pk-cost}
\begin{aligned}
\mbox{minimize }\;J_k[\sigma^k]
:={}& \ph\bigl(x^k_{k},T^k\bigr)
+ h_k\sum\limits_{j=0}^{k-1}\ell\bigl(t^k_j,x^k_j,u^k_j,a^k_j,V^x_j,V^u_j\bigr)
+ \tfrac{1}{2}\bigl(T^k-\T\bigr)^2\\
& + \tfrac{1}{2}\sum\limits_{j=0}^{k-1}\int_{t^k_j}^{t^k_{j+1}}\!
\Bigl\|{\bigl(a^k_j,V^x_j,V^u_j\bigr)
-\bigl(\oa,\dot\ox,\dot\ou\bigr)\bigl(t\T/T^k\bigr)}\Bigr\|^2\,dt
+ \dist^2\!\Bigl(\textstyle\sum\limits_{j=1}^{k-1}\bigl\|V^u_j-V^u_{j-1}\bigr\|;\,(-\infty,\Tilde\mu\,]\Bigr)
\end{aligned}
\end{equation}
subject to the four families of constraints described as follows. The discrete sweeping inclusion
\begin{equation}\label{e:Pk-dyn}
x^k_{j+1}\in x^k_j - h_k\,F\bigl(x^k_j,u^k_j,a^k_j\bigr),\qquad j=0,\ldots,k-1,
\end{equation}
is the Euler-type discretization of \eqref{F-dyn}. The boundary and pointwise admissibility conditions are
\begin{equation}\label{e:Pk-bc}
\begin{cases}
\bigl(x^k_0,u^k_0\bigr)=\bigl(x_0,\ou(0)\bigr),\\[0.3ex]
x^k_k-u^k_k\in C,\\[0.3ex]
\bigl(x^k_{k},T^k\bigr)\in\bigl(\Xi_x+\mu^x_k \overline{B}(0,1)\bigr)\times\bigl(\Xi_T+\mu^x_k[-1,1]\bigr),\\[0.3ex]
{|T^k-\T|\le\rho},\\[0.3ex]
u^k_j\in U_k\;\mbox{ and}\;a^k_j\in A\;\mbox{ as }\;j=0,\ldots,k-1.
\end{cases}
\end{equation}
Fix the initial data, relax the endpoint constraint by the tolerance $\mu^x_k\dn 0$, localize the horizon around $\T$ from both sides, and enforce the relaxed control admissibility \eqref{e:ua-dc}. The $L^2$-localization constraints
\begin{equation}\label{e:Pk-loc}
\begin{cases}
\displaystyle\sum\limits_{j=0}^{k-1}\int_{t^k_j}^{t^k_{j+1}}\bigl\|(x^k_j,u^k_j,a^k_j)-(\ox(t),\ou(t),\oa(t))\bigr\|^2\,dt\le{\tfrac{\ve}{4}},\\[1.0ex]
\displaystyle\sum\limits_{j=0}^{k-1}\int_{t^k_j}^{t^k_{j+1}}\bigl\|(V^x_j,V^u_j)-(\dot\ox(t),\dot\ou(t))\bigr\|^2\,dt\le{\tfrac{\ve}{4}}
\end{cases}
\end{equation}
confine $\sigma^k$ to an $L^2$-neighborhood of the reference trajectory and its velocity in accordance with the r.i.l.m.\ property of $(\ox,\ou,\oa,\T)$. Finally, the discrete counterparts of the BV bound \eqref{e:var} defined by
\begin{equation}\label{e:Pk-bv}
\sum\limits_{j=1}^{k-1}\bigl\|V^u_j-V^u_{j-1}\bigr\|\le\Tilde\mu+1{,\qquad
\sum\limits_{j=1}^{k-1}\bigl\|a^k_j-a^k_{j-1}\bigr\|\le\Tilde\mu+1}
\end{equation}
supply the outer feasibility constraint, while $\Tilde\mu$ itself is the proximity target penalized by the distance square term in the cost function expression \eqref{e:Pk-cost}. {The last inequality in \eqref{e:Pk-bv} is a discrete counterpart of the bound $\var(\oa;[0,\T])\le\mu$ in \eqref{e:var}. It yields the strong compactness of the discrete perturbation controls, which plays the role of the $W^{1,2}$ bound on controls in \cite{CaoMordukhovich2019}; see Step~2 in the proof of Theorem~\ref{conver}. Note that no separate bound on $V^u_0$ is needed. Indeed, the second constraint in \eqref{e:Pk-loc} tells us that $\|V^u\|_{L^2(0,T^k)}\le\|\dot\ou\|_{L^2}+(\ve/4)^{1/2}$, where $\|\dot\ou\|\le2\mu$ by \eqref{e:var}. Since $\|V^u_j\|\le\|V^u_i\|+\sum_{l=1}^{k-1}\|V^u_l-V^u_{l-1}\|$ for all $i,j$, averaging over $i$ \eqref{e:Pk-bv} gives us
\begin{equation}\label{e:CV}\tag{5.19$'$}
\|V^u_j\|\le C_V:=(\T-\rho)^{-1/2}\bigl(2\mu(\T+\rho)^{1/2}+1\bigr)+\Tilde\mu+1\quad\mbox{for all }j=0,\ldots,k-1.
\end{equation}}

{Observe that the localization term in \eqref{e:Pk-cost} compares the discrete velocities with the reference velocities {\em rescaled} to the horizon $T^k$. With the substitution $s=t\T/T^k$, this term equals
\[
\frac{T^k}{\T}\cdot\frac12\sum_{j=0}^{k-1}\int_{j\T/k}^{(j+1)\T/k}\bigl\|(a^k_j,V^x_j,V^u_j)-(\oa,\dot\ox,\dot\ou)(s)\bigr\|^2\,ds,
\]
so it depends {\em linearly} on $T^k$. This is the only difference from the discrete problems in \cite{CaoMordukhovich2026}, and the two terms agree when $T^k=\T$. The rescaling removes the nodal evaluations of the reference velocities from the free-time condition; see Lemma~\ref{lem:free-time-rates} and Remark~\ref{rem:Riemann-integrable}. Theorem~\ref{ThDiscNOC} below remains valid for this modification with the same proof, since the cost depends smoothly on $T^k$. For Theorem~\ref{conver}, we give a proof adapted to $(P_k)$ below. It shows where the two-sided horizon constraint and the thresholds $\ve/4$ in \eqref{e:Pk-loc} are used. An upper bound on $T^k$ alone is not enough. Indeed, a discrete optimal horizon may stay away from $\T$ when the terminal cost decreases faster than $\tfrac12(T^k-\T)^2$ grows. By \eqref{e:discconv}, the constraint $|T^k-\T|\le\rho$ and the localization constraints \eqref{e:Pk-loc} are inactive for large $k$. Thus they add no multipliers to Theorem~\ref{ThDiscNOC}.}\vspace*{0.05in}

The next theorem on explicit coderivative/second-order calculations for sweeping dynamics is taken from \cite[Theorem~6.2]{CaoMordukhovich2026} and plays a crucial role in deriving necessary optimality conditions in both discrete-time and continuous-time controlled sweeping processes.

\begin{theorem}[coderivative calculations for sweeping dynamics]\label{Prop1}
Let $F(x,u,a):=N_C(x-u)-f(x,a)$, where $C$ is the nonconvex set defined in \eqref{e3}, and where $f$ is $C^1$-smooth with respect to $(x,a)$. Denote
\[
g(\cdot):=\bigl(g_1(\cdot),\ldots,g_m(\cdot)\bigr),\qquad
Q_\lm:=\sum\limits_{i=1}^{m}\lm_i\,\nabla^2 g_i(x-u)
\]
and fix a point $(x,u,a,w)\in\R^n\times\R^n\times\R^d\times\R^n$ such that $x-u\in C$ and $w+f(x,a)\in N_C(x-u)$. Let
\[
\Lambda(x,u,a,w):=\bigl\{\lm\in\R^m_+\bigm|\,-\nabla g(x-u)^*\lm=w+f(x,a),\ \lm_i g_i(x-u) = 0 \text{ for } i=1,\ldots,m\bigr\}
\]
be the set of KKT multipliers compatible with $w$. Assume further that:
\begin{itemize}
\item[\bf(a)] The gradients of the active constraints $\{\nabla g_i(x-u)\}_{i \in I(x-u)}$ are linearly independent.
\item[\bf(b)] For every $\lm\in\Lambda(x,u,a,w)$, the multifunction
\[
M:\R^{2m}\tto\R^n\times\R^m\quad\mbox{defined by }\;
M(\vTh):=\bigl\{(y,\lm)\bigm|\bigl(-g(y),\lm\bigr)+\vTh\in\gph N_{\R^m_-}\bigr\},
\]
is calm at the point $\vTh=0,\ y=x-u,\ \lm\in\Lambda(x,u,a,w)$ in the sense of {\rm\cite{Rockafellar1998}}.
\end{itemize}
Then for every $y\in\dom D^*N_C\bigl(x-u,\,w+f(x,a)\bigr)$, we have the coderivative formula
\begin{equation}\label{e:prop-coderiv}
D^*F(x,u,a,w)(y)=\bigcup_{\lm\in\Lambda(x,u,a,w)}\ \bigcup_{\gg\in\Gamma(x,u,\lm,y)}\bigl\{\bigl(\alpha_x(\lm,\gg,y),\,\alpha_u(\lm,\gg,y),\,\alpha_a(y)\bigr)\bigr\},
\end{equation}
where the ingredients are given by
\begin{equation*}\label{e:prop-components}
\begin{aligned}
\alpha_x(\lm,\gg,y):=-\nabla_x f(x,a)^*y-Q_\lm\,y-\nabla g(x-u)^*\gg,\\
\alpha_u(\lm,\gg,y):=Q_\lm\,y+\nabla g(x-u)^*\gg,\quad
\alpha_a(y):=-\nabla_a f(x,a)^*y,\\
\Gamma(x,u,\lm,y):=D^*N_{\R^m_-}\bigl(-g(x-u),\lm\bigr)\bigl(-\nabla g(x-u)y\bigr),
\end{aligned}
\end{equation*}
and  the coderivative term for $\Gamma(x,u,\lm,y)$ is explicitly calculated by
\begin{equation}\label{e:prop-DNR}
D^*N_{\R^m_-}(\bar x,\bar v)(\bar y)=
\begin{cases}
\varnothing & \text{if }\;\;\exists\,i\text{ with }\bar v_i\bar y_i\neq 0,\\[0.6ex]
\bigl\{\gg\in\R^m\bigm|\gg_i=0\ \forall i\in I_1(\bar y),\ \gg_i\ge 0\ \forall i\in I_2(\bar y)\bigr\} & \text{otherwise},
\end{cases}
\end{equation}
with the index sets expressed as
\begin{equation}\label{e17}
I_1(\bar y):=\{i\mid\bar x_i<0\}\cup\{i\mid\bar v_i=0,\,\bar y_i<0\},\qquad
I_2(\bar y):=\{i\mid\bar x_i=0,\,\bar v_i=0,\,\bar y_i>0\}.
\end{equation}
Moreover, the coderivative domain is evaluated by
\begin{equation*}
\begin{aligned}
\dom D^*N_C\!\bigl(x-u,\,w+f(x,a)\bigr)\subset\Bigl\{y\,\Bigm|\;
&\exists\,\lm\in\Lambda(x,u,a,w)\ \text{with}\\[-0.2ex]
&\lm_i\,\la\nabla g_i(x-u),y\ra=0,\ i=1,\ldots,m\Bigr\}.
\end{aligned}
\end{equation*}
\end{theorem}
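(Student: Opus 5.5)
The plan is to write $\gph F$ as the image of a simpler set under a smooth change of variables, and then apply the chain rule together with the second-order calculus for $N_C$. Observe that
\[
(x,u,a,w)\in\gph F\iff \Phi(x,u,a,w):=\bigl(x-u,\;w+f(x,a)\bigr)\in\gph N_C .
\]
The map $\Phi$ is $C^1$. Its Jacobian has full row rank, because the $(x,u)\mapsto x-u$ block and the identity in $w$ are both surjective. The basic preimage rule for limiting normals under such a smooth surjective map (\cite[Theorem~1.17]{Mordukhovich2006I}, or \cite[Exercise~6.7]{Rockafellar1998}) then holds with equality:
\[
N_{\gph F}(x,u,a,w)=\nabla\Phi(x,u,a,w)^*N_{\gph N_C}\bigl(x-u,w+f(x,a)\bigr).
\]
Set $z:=x-u$ and $v:=w+f(x,a)$, and take $(p,q)\in N_{\gph N_C}(z,v)$. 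Transposing the Jacobian gives the normal
\[
\bigl(p+\nabla_xf(x,a)^*q,\;-p,\;\nabla_af(x,a)^*q,\;q\bigr).
\]
By \eqref{cod}, $(\xi_x,\xi_u,\xi_a)\in D^*F(x,u,a,w)(y)$ precisely when $q=-y$ and $p\in D^*N_C(z,v)(y)$. This yields $\xi_x=p-\nabla_xf^*y$, $\xi_u=-p$, and $\xi_a=-\nabla_af^*y$. The formula for $\alpha_a$ and the $f$-part of $\alpha_x$ are now in place. What remains is to show that the elements $p\in D^*N_C(z,v)(y)$ are exactly the vectors $-Q_\lm y-\nabla g(z)^*\gg$ with $\lm\in\Lambda$ and $\gg\in\Gamma$. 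Once that is shown, $\xi_x=\alpha_x$ and $\xi_u=-p=\alpha_u$ follow with the stated signs.

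The second step computes $D^*N_C(z,v)$. I would use the representation
\[
\gph N_C=\bigl\{(z,v)\bigm|\exists\,\lm:\ v=-\nabla g(z)^*\lm,\ (-g(z),\lm)\in\gph N_{\R^m_-}\bigr\},
\]
which follows from \eqref{F-rep} and the complementarity $\lm_ig_i=0$. Because of (a), the gradients of the active constraints are linearly independent, so the multiplier $\lm$ is unique near $(z,v)$ and depends continuously on $v$. Hence $\Lambda$ is the singleton $\{\bar\lm\}$, and the union over $\lm$ in \eqref{e:prop-coderiv} is only formal. I would then view $\gph N_C$ as the projection onto $(z,v)$ of the lifted set
\[
\Omega:=\bigl\{(z,v,\lm)\bigm|v+\nabla g(z)^*\lm=0,\ (-g(z),\lm)\in\gph N_{\R^m_-}\bigr\}.
\]
The calmness assumption (b) on $M$ is exactly the metric subregularity qualification that makes the normal cone intersection rule for $\Omega$ valid with inclusion "$\subset$". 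That rule gives
\[
N_\Omega(z,v,\bar\lm)\subset\Bigl\{\bigl(Q_{\bar\lm}^*\eta-\nabla g(z)^*\zeta_1,\ \eta,\ \nabla g(z)\eta+\zeta_2\bigr)\Bigm|(\zeta_1,\zeta_2)\in N_{\gph N_{\R^m_-}}\bigl(-g(z),\bar\lm\bigr)\Bigr\}.
\]
The projection onto $(z,v)$ is inner semicompact at the point because $\lm$ is unique and locally bounded. A normal $(p,q,0)$ to $\Omega$ therefore produces a normal $(p,q)$ to $\gph N_C$. Setting the $\lm$-component to zero forces $\zeta_2=-\nabla g(z)\eta$. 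With $\eta=q=-y$, we get $\zeta_1\in D^*N_{\R^m_-}(-g(z),\bar\lm)(-\nabla g(z)y)$, up to the sign conventions in \eqref{cod}, and this is precisely $\gg\in\Gamma$. We also get $p=-Q_{\bar\lm}y-\nabla g(z)^*\gg$. The reverse inclusion, which is needed to have equality in \eqref{e:prop-coderiv}, should follow from (a). Linear independence lets us express $\lm$ locally as a smooth function of $(z,v)$ on each smooth piece. Equivalently, the map $(z,\lm)\mapsto(z,-\nabla g(z)^*\lm)$ is a local diffeomorphism onto its image, restricted to the relevant faces, and limiting normals pass through it exactly.

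The last step is the componentwise formula \eqref{e:prop-DNR}. The graph $\gph N_{\R^m_-}$ is the product of $m$ copies of the complementarity angle $\{(s,t)\mid s\le0,\ t\ge0,\ st=0\}$. Its limiting normal cone at each of the three point types (the two half-lines and the origin) is standard: at the origin it is $\{(\alpha,\beta)\mid\alpha\beta=0\}\cup\R^2_-$ in the appropriate orientation. Reading off this cone gives the index sets $I_1$ and $I_2$ in \eqref{e17}, and shows the coderivative is empty whenever $\bar v_i\bar y_i\ne0$. The same emptiness condition yields the domain estimate: $y\in\dom D^*N_C$ forces $\bar\lm_i\la\nabla g_i(z),y\ra=0$. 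I expect the main obstacle to be the equality (rather than inclusion) in the second step, that is, exactness of the lifted-set calculus. That is where (a) and (b) have to be used carefully together, and also where the sign conventions between $\gg$, $-\nabla g(z)y$ and $\Gamma$ must be checked against \eqref{cod}.
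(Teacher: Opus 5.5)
The paper does not prove this statement; it imports it from \cite[Theorem~6.2]{CaoMordukhovich2026}. Your outline follows the standard route for coderivatives of this type, and its main steps are sound.

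\begin{itemize}
\item The submersion $\Phi(x,u,a,w)=(x-u,\,w+f(x,a))$ reduces everything exactly to $D^*N_C(z,v)$. It gives the stated $\alpha_a$ and the split $\alpha_x=p-\nabla_xf(x,a)^*y$, $\alpha_u=-p$.
\item Your bookkeeping for the lifted set $\Omega$ gives $p=-Q_{\bar\lm}y-\nabla g(z)^*\zeta_1$ with $\zeta_1\in D^*N_{\R^m_-}(-g(z),\bar\lm)(-\nabla g(z)y)$. This is exactly $\Gamma$ under \eqref{cod}, so the hedge ``up to sign conventions'' is unnecessary.
\item The componentwise computation of $N_{\gph N_{\R_-}}$ yields \eqref{e:prop-DNR}, \eqref{e17}, and the domain estimate. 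The domain estimate needs only the inclusion $\subset$.
\end{itemize}
One small point on that inclusion. Assumption (b) perturbs only the complementarity part, not the equation $v+\nabla g(z)^*\lm=0$ in $\Omega$. This is harmless, because the diffeomorphism $(z,v,\lm)\mapsto(z,\,v+\nabla g(z)^*\lm,\,\lm)$ eliminates that equation, but you should say so.

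The step you only gesture at is the reverse inclusion. As phrased (``on each smooth piece'', ``restricted to the relevant faces''), it would not work, because limiting normals do not transfer face by face. Already for $\gph N_{\R_-}=(\R_-\times\{0\})\cup(\{0\}\times\R_+)$, the union of the normal cones to the two half-lines at the origin contains $(1,1)$, which is not in $N_{\gph N_{\R_-}}(0,0)$. Use one global map instead.

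Let $I:=I(z)$. By (a), the multiplier depends continuously on $v$ and its inactive components vanish. Hence near $(z,v)$ the graph $\gph N_C$ coincides with $\Psi(S)$, where
\[
S:=\bigl\{(z',\mu)\in\R^n\times\R^{|I|}\bigm|(-g_I(z'),\mu)\in\gph N_{\R^{|I|}_-}\bigr\},\qquad \Psi(z',\mu):=\bigl(z',\,-\nabla g_I(z')^*\mu\bigr).
\]
\begin{itemize}
\item By (a), $\Psi$ is an injective immersion. Choose $B$ whose columns complete $\{\nabla g_i(z)\}_{i\in I}$ to a basis of $\R^n$. Then $\hat\Psi(z',\mu,\omega):=\Psi(z',\mu)+(0,B\omega)$ is a local $C^1$ diffeomorphism (this uses $g\in C^2$), and locally $\gph N_C=\hat\Psi(S\times\{0\})$.
\item Consequently, $(p,q)\in N_{\gph N_C}(z,v)$ if and only if $(p-Q_{\bar\lm}q,\,-\nabla g_I(z)q)\in N_S(z,\bar\lm_I)$. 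The $\omega$-component $B^*q$ is unconstrained.
\item The map $(z',\mu)\mapsto(-g_I(z'),\mu)$ has surjective derivative by (a). Therefore $N_S(z,\bar\lm_I)$ equals $\{(-\nabla g_I(z)^*\zeta_1,\zeta_2)\mid(\zeta_1,\zeta_2)\in N_{\gph N_{\R^{|I|}_-}}(-g_I(z),\bar\lm_I)\}$ exactly.
\item Put $q=-y$ and extend $\zeta_1$ by zero on the inactive indices, which lie in $I_1$. This gives both inclusions at once.
\end{itemize}

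This single argument makes your projection, inner semicompactness, and calmness steps unnecessary. In fact, (b) follows from (a) by the same surjectivity, so it is redundant in the statement. Equivalently, you can quote the second-order chain rule with surjective inner derivative from \cite{Mordukhovich2006I}, applied to $\delta_C=\delta_{\R^{|I|}_-}\circ(-g_I)$ near $z$.
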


Substituting $\bar x=-g(x-u)$, $\bar v=\lm$, and $\bar y=-\nabla g(x-u)y$ into \eqref{e:prop-DNR} and \eqref{e17} yields the following pointwise description of the set $\Gamma(x,u,\lm,y)$ in the coderivative formula 
\eqref{e:prop-coderiv}: the component $\gg_i$  of $\gg$ is admissible only when $\lm_i\,\la\nabla g_i(x-u),y\ra=0$, in which case we have
\begin{equation*}
\gg_i\in
\begin{cases}
\{0\} & \text{if }g_i(x-u)>0,\\[0.3ex]
\{0\} & \text{if }g_i(x-u)=0,\ \lm_i=0,\ \la\nabla g_i(x-u),y\ra>0,\\[0.3ex]
[\,0,\infty) & \text{if }g_i(x-u)=0,\ \lm_i=0,\ \la\nabla g_i(x-u),y\ra<0,\\[0.3ex]
\R & \text{if }g_i(x-u)=0\ \text{and}\ \bigl(\lm_i>0\ \text{or}\ \la\nabla g_i(x-u),y\ra=0\bigr).
\end{cases}
\end{equation*}

Considering the sequence of discrete approximation problems $(P_k)$, we now formulate two results concerning an appropriate convergence of discrete optimal solutions and necessary conditions for them. The first two-part statement is taken from \cite[Proposition~5.1 and Theorem~5.2]{CaoMordukhovich2026}, respectively.

\begin{theorem}[\bf existence and strong convergence of discrete optimal solutions]\label{conver} For each $k\in\N$, let $(P_k)$ be the discrete approximation problem constructed above, where $(\ox(\cdot),\ou(\cdot),\oa(\cdot),\T)$ is the designated r.i.l.m.\ for the original problem $(P)$ under the standing assumptions in ${\bf(H1)-(H5)}$. {Suppose, as in Theorem~{\rm\ref{ThDA}}, that $\dot\ox(\cdot)$, $\dot\ou(\cdot)$, and $\oa(\cdot)$ are of bounded variation on $[0,\T]$, and that there exists a constant $\mu>0$ such that the estimates \eqref{e:var} hold for $(\dot\ox,\dot\ou,\oa)$ and $\ou$ in place of $(\dot x,\dot u,a)$ and $u$ for all $k\in\N$ sufficiently large.} The following assertions hold:

{\bf(i)} For all $k\in\N$ sufficiently large, each problem $(P_k)$ admits an optimal solution.

{\bf(ii)} Let $\{(\ox^k,\ou^k,\oa^k,\T^k)\}$ be an arbitrary sequence of optimal solutions to discrete problems $(P_k)$ such that $(\ox^k(\cdot),\ou^k(\cdot))$ and $\oa^k(\cdot)$ are extended as piecewise linear and piecewise constant functions on $[0,\T^k]$, respectively. Suppose that the assumptions in ${\bf(H1)-(H5)}$ are satisfied locally around the optimal quadruples $(\ox^k,\ou^k,\oa^k,\T^k)$ and that the terminal cost $\ph$ is continuous around {$(\ox(\T),\T)$. Suppose also that the following conditions hold:
\begin{itemize}
\item[\rm(c1)] $\ell$ is lower semicontinuous jointly in all its arguments, and for every bounded set $K$, there are $c_K\in\R^n$ and $\beta_K\in\R$ such that $\ell(t,x,u,a,\dot x,\dot u)\ge\la c_K,\dot x\ra-\beta_K$ whenever $(t,x,u,a,\dot u)\in K$;
\item[\rm(c2)] $\ell$ is continuous at the reference points, and the sampled integrands of the approximations from Theorem~{\rm\ref{ThDA}} are dominated by a summable function.
\end{itemize}} Then  we have the strong convergence of extended discrete optimal solutions in the following sense: 
\begin{equation}\label{e:discconv}
\begin{aligned}
&\T^k\to\T\;\mbox{ as }\;k\to\infty,\\[0.2ex]
&\max_{t\in[0,\T^k]}\bigl\|(\ox^k(t),\ou^k(t))-(\ox(t),\ou(t))\bigr\|\to 0\;\mbox{ as }\;k\to\infty,\\[0.2ex]
&\int_0^{\T^k}\Bigl\|\bigl(\dot{\ox}^k(t),\dot{\ou}^k(t),\oa^k(t)\bigr)-\bigl(\dot\ox(t),\dot\ou(t),\oa(t)\bigr)\Bigr\|^2dt\to 0\;\mbox{ as}\;k\to\infty
\end{aligned}
\end{equation}
together with the upper limit estimate
\begin{equation}\label{e18}
\limsup_{k\to\infty}\sum\limits_{j=1}^{k-1}\left\|\dfrac{\ou^k_{j+1}-2\ou^k_j+\ou^k_{j-1}}{h_k}\right\|\le\Tilde\mu.
\end{equation}
\end{theorem}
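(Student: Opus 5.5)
The plan is to follow the standard two-step scheme of the method of discrete approximations. Part (i) will come from a compactness argument in the finite-dimensional space of decision vectors $\sigma^k$. Part (ii) will come from comparing the optimal values of $(P_k)$ with the value at the approximating feasible sequence of Theorem~\ref{ThDA}, then passing to the limit and using the r.i.l.m.\ property of $(\ox,\ou,\oa,\T)$.

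\textbf{Existence.} For (i), first note that the feasible set of $(P_k)$ is nonempty for large $k$. The discrete process $(x^k,u^k,a^k)$ from Theorem~\ref{ThDA}, built on the mesh $\Delta_k(\T)$ with $T^k=\T$, satisfies \eqref{e:Pk-dyn} by \eqref{e:disc-dyn} and the endpoint and admissibility relations by \eqref{e:ua-dc}. The $L^2$ convergence in \eqref{e:strongconv2} makes \eqref{e:Pk-loc} hold with the margin $\ve/4$. Theorem~\ThDA(iv), together with a bound of the discrete variation of $a^k$ by $\var(\oa)\le\mu$, gives \eqref{e:Pk-bv}. Next, the feasible set is bounded. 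This follows from $|T^k-\T|\le\rho$, the localization bounds \eqref{e:Pk-loc}, the estimate \eqref{e:CV}, and compactness of $A$. It is also closed. The key point is that $\gph F$ is closed: $\gph N_C$ is closed by prox-regularity, and $f$ is continuous. Since $\ph$ and $\ell$ are l.s.c.\ and the penalty terms are continuous, the Weierstrass theorem gives an optimal solution.

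\textbf{Convergence.} For (ii), set $\Tilde J_k$ equal to the cost of the Theorem~\ref{ThDA} process. By (c2), continuity of $\ph$, \eqref{e:strongconv2}, and the rate \eqref{e:rate-L2}, we get $\Tilde J_k\to J[\ox,\ou,\oa,\T]$. All penalty terms vanish in the limit; for the distance term this uses Theorem~\ref{ThDA}(iv). Optimality then gives $\limsup_k J_k[\bar\sigma^k]\le J[\ox,\ou,\oa,\T]$. The optimal discrete solutions are uniformly bounded in $W^{1,2}$ and have bounded variations of $V^u$ and $a^k$. By Helly's selection theorem we may therefore extract a subsequence with the following properties. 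The horizons satisfy $\T^k\to\Tilde T$. The pair $(\ox^k,\ou^k)$ converges uniformly to $(\Tilde x,\Tilde u)$, after rescaling time to a common interval. The derivatives converge weakly in $L^2$, and $\oa^k\to\Tilde a$ strongly in $L^1$ and hence in $L^2$. We then verify that $(\Tilde x,\Tilde u,\Tilde a,\Tilde T)$ satisfies \eqref{e8}. Uniform convergence and closedness of $\Xi_x,\Xi_T$ handle the endpoint constraints, and $\delta_k\dn0$ handles the control constraints. For the dynamics, apply Mazur's lemma to the weakly convergent velocities and use that $F(x,u,a)$ is convex-valued with closed graph; this passes the discrete inclusion to the limit. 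Weak lower semicontinuity of the convexified integral functional, which rests on (c1) and the convexity of $\Hat\ell_F$ in velocities, gives
\[
\Hat J[\Tilde x,\Tilde u,\Tilde a,\Tilde T]+\tfrac12(\Tilde T-\T)^2+\tfrac12\liminf_k\int\|\cdots\|^2\le J[\ox,\ou,\oa,\T].
\]
The localization \eqref{e:Pk-loc}, the choice of $\rho$, and $\ve\le1$ place the limit in the neighborhood \eqref{local}. The r.i.l.m.\ property then forces $\Tilde T=\T$, and it forces the $\liminf$ of the velocity-control penalty to vanish. This is exactly the $L^2$ strong convergence in \eqref{e:discconv}. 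By the Sobolev embedding, strong $L^2$ convergence of the velocities with fixed initial data upgrades to uniform convergence, and the whole sequence converges because the limit is unique. Finally, \eqref{e18} follows because the distance-squared penalty tends to zero, so $\limsup_k\sum_j\|V^u_j-V^u_{j-1}\|\le\Tilde\mu$.

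\textbf{Main obstacle.} I expect the delicate step to be the lower semicontinuity passage for the running cost under only weak convergence of the velocities. This is where the relaxation $\Hat\ell_F$ and conditions (c1)--(c2) are essential. A second difficulty is the bookkeeping for the variable horizons $\T^k\ne\T$. There I would first rescale everything to $[0,\T]$ via $s=t\T/T^k$, as in the observation after \eqref{e:CV}, so that all comparisons take place on a fixed interval.
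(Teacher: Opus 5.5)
Your architecture matches the paper's proof. The sampled process of Theorem~\ref{ThDA} gives the upper bound, Helly's theorem gives strong convergence of $\oa^k$, and the limit is feasible and lies in the neighborhood \eqref{local}. The r.i.l.m.\ property then forces the penalties to vanish, which yields \eqref{e:discconv} and \eqref{e18}.

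The gap is in the step you yourself single out, namely the lower bound $\liminf_k\bigl[\ph(\ox^k_k,\T^k)+h_k\sum_j\ell(t^k_j,\ox^k_j,\ou^k_j,\oa^k_j,V^x_j,V^u_j)\bigr]\ge\Hat J[\Tilde x,\Tilde u,\Tilde a,\Tilde T]$. Ioffe-type weak lower semicontinuity theorems for $\int L(t,z^k,w^k)\,dt$, with $z^k\to z$ strongly and $w^k\rightharpoonup w$ weakly, need three things: $L$ is l.s.c.\ \emph{jointly} in $(z,w)$, it is convex in $w$, and it has a minorant valid for all $w$.
\begin{itemize}
\item \emph{Joint l.s.c.\ fails.} $\Hat\ell_F$ is convex and l.s.c.\ in the velocities for each fixed $(t,x,u,a)$, but it need not be l.s.c.\ in $(x,u,a)$. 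Take the continuous nonnegative integrand $\ell=\max\{1-|x_1|\,\|\dot u\|,0\}$. Its convexification in $\dot u$ is identically $0$ when $x_1\ne0$ and identically $1$ when $x_1=0$. Along states with $x_1\equiv1/k$, the integrals of the convexified integrand vanish, while at the limit $x_1\equiv0$ the integral equals the length of the interval. The integrals of $\ell$ itself do converge to that length for $L^2$-bounded $\dot u$. So bounding the discrete costs by $\Hat\ell_F$ at the discrete data and then passing to the limit loses exactly the mass you need.
\item \emph{No global minorant.} (c1) gives the affine minorant $\la c_K,\dot x\ra-\beta_K$ only when $\dot u$ ranges over a bounded set. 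So it is not a minorant of $\ell_F$, let alone of $\Hat\ell_F$, over all velocities.
\end{itemize}

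The missing ideas are a truncation and a Young measure argument in which Jensen's inequality is used only at the limit point.
\begin{itemize}
\item \emph{Truncation.} By \eqref{e:CV}, $\|V^u_j\|\le C_V$. For $R>C_V$, the discrete costs are therefore values of $\ell_{F,R}:=\ell_F+\delta_{\overline{B}_R}(\dot u)$. For this integrand $\la c_K,\dot x\ra-\beta_K$ is a global minorant, and $\Hat\ell_F\le\Hat\ell_{F,R}$.
\item \emph{Telescoping.} The subtracted affine parts sum to $\la c_K,\ox^k_k-x_0\ra-\beta_K\T^k$, which converges. The remainder $H:=\ell_{F,R}-\la c_K,\dot x\ra+\beta_K$ is nonnegative and jointly l.s.c.\ by (c1) and the closedness of $\gph F$ and of the ball.
\item \emph{Young measures.} Let $z^k$ be the piecewise constant nodal data, converging in measure to $\Tilde z$, and let $\nu_t$ be the Young measure of the velocities, with barycenter $(\dot{\Tilde x},\dot{\Tilde u})(t)$. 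The fundamental theorem for Young measures gives $\liminf_k\int H(z^k,w^k)\,dt\ge\int\!\int H(\Tilde z(t),w)\,d\nu_t(w)\,dt$.
\item \emph{Jensen at the limit.} Jensen's inequality for $\Hat\ell_{F,R}(\Tilde z(t),\cdot)$ then gives the bound without any joint regularity of the convexification.
\end{itemize}
With this step replaced, the rest of your proof goes through.

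A lesser point concerns the dynamics. Convex values and a closed graph plus Mazur's lemma do not by themselves pass an inclusion to the limit for an unbounded-valued map such as $N_C$, which is not upper hemicontinuous at curved boundary points. The limit inclusion does hold here, but for a reason specific to $N_C$. One such reason is the hypomonotonicity inequality of the prox-regular set $C$, which the paper invokes.
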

{{\bf Proof}. The feasible set of $(P_k)$ is closed and bounded in a finite-dimensional space, and the cost \eqref{e:Pk-cost} is lower semicontinuous by \textbf{(H5)}. By Step~1 below, this set is nonempty for large $k$. This gives us {\bf(i)}. To prove {\bf(ii)}, fix an arbitrary sequence $z^k:=(\ox^k,\ou^k,\oa^k,\T^k)$ of optimal solutions to $(P_k)$. Split the cost as $J_k=F_k+Q_k$, where
\[
F_k[z^k]:=\ph\bigl(\ox^k_k,\T^k\bigr)+h_k\sum_{j=0}^{k-1}\ell\bigl(t^k_j,\ox^k_j,\ou^k_j,\oa^k_j,V^x_j,V^u_j\bigr)
\]
is the discrete counterpart of the original cost, and $Q_k\ge0$ collects the three penalty terms in \eqref{e:Pk-cost}. For convenience, we split the proof into the following four steps.

{\bf Step 1}. {\em Upper bound.} Let $(x^k,u^k,a^k)$ be the approximations of the reference process given by Theorem~\ref{ThDA} on the mesh $\Delta_k(\T)$, i.e., with $T^k=\T$. They satisfy \eqref{e:Pk-dyn} and \eqref{e:Pk-bc}. By \eqref{e:strongconv2}, they satisfy the localization constraints \eqref{e:Pk-loc} with the thresholds $\ve/4$ for large $k$. By Theorem~\ref{ThDA}{\bf(iv)}, they satisfy the first inequality in \eqref{e:Pk-bv}. They also satisfy the last one, since the construction in \cite{CaoMordukhovich2026} takes $a^k_j=\oa(\tau_j)$ with $\tau_j\in[t^k_j,t^k_{j+1})$, which gives us $\sum_j\|a^k_j-a^k_{j-1}\|\le\var(\oa;[0,\T])\le\mu$. Hence they satisfy \eqref{e:Pk-bv}, and the distance penalty vanishes on them. The horizon penalty also vanishes. Since $T^k=\T$, the rescaled localization term coincides with the original one, and it tends to zero by \eqref{e:strongconv2}. The continuity of $\ph$, condition (c2), and the dominated convergence theorem give us $F_k[x^k,u^k,a^k,\T]\to J[\ox,\ou,\oa,\T]$. The optimality of $z^k$ yields
\[
\limsup_{k\to\infty}J_k[z^k]\le\limsup_{k\to\infty}J_k[x^k,u^k,a^k,\T]=J[\ox,\ou,\oa,\T].
\]

{\bf Step 2}. {\em Compactness and locality.} Take any subsequence of $\{z^k\}$. Since $|\T^k-\T|\le\rho$, we may assume that $\T^k\to\Tilde T\in[\T-\rho,\T+\rho]$. Extend $z^k$ to $[0,\T+\rho]$ by the convention after \eqref{local}. The constraints \eqref{e:Pk-loc} and the compactness of $A$ give uniform $L^2$ bounds on $(\dot\ox^k,\dot\ou^k,\oa^k)$. Passing to a further subsequence, we get $(\dot\ox^k,\dot\ou^k,\oa^k)\to(\dot{\Tilde x},\dot{\Tilde u},\Tilde a)$ weakly in $L^2$ and $(\ox^k,\ou^k)\to(\Tilde x,\Tilde u)$ uniformly. The same holds for the piecewise constant interpolations of the nodal states. Moreover, the convergence of $\dot\ou^k$ is strong. Indeed, set $M:=\Tilde\mu+1$. By \eqref{e:Pk-bv} and \eqref{e:CV}, we have $\sum_{j=1}^{k-1}\|V^u_j-V^u_{j-1}\|\le M$ and $\|V^u_j\|\le C_V$ for all $j$. The extended velocity $\dot\ou^k$ is therefore bounded by $C_V$, and its total variation on $[0,\T+\rho]$ is at most $M+C_V$, including the jump at $\T^k$. By Helly's selection theorem, a subsequence converges pointwise. By the dominated convergence theorem, $\dot\ou^k\to\dot{\Tilde u}$ strongly in $L^2$. The convergence of $\oa^k$ is strong as well. Indeed, the last inequality in \eqref{e:Pk-bv} and the compactness of $A$ show that the extended controls $\oa^k$ are uniformly bounded and have uniformly bounded total variation, including the jump at $\T^k$. By Helly's selection theorem and the dominated convergence theorem, $\oa^k\to\Tilde a$ pointwise and strongly in $L^2$ along a subsequence. Since $A$ is closed, we have $\Tilde a(t)\in A$ for $t\in[0,\Tilde T)$. Since $f$ is continuous and the states converge uniformly, $f(\ox^k,\oa^k)\to f(\Tilde x,\Tilde a)$ strongly in $L^2$. Note that a weak limit of the controls would not suffice here, since it need not reproduce the limit drift when $f$ is nonlinear in $a$. Passing to the limit in the discrete inclusion \eqref{e:Pk-dyn} as in \cite{CaoMordukhovich2026}, by the closedness and hypomonotonicity of the normal cone graph of the prox-regular set $C$, we see that $(\Tilde x,\Tilde u,\Tilde a,\Tilde T)$ is feasible for $(P)$. Indeed, the endpoint and control constraints pass to the limit since $\mu^x_k\dn0$ and $\delta_k\dn0$. The weak lower semicontinuity of the norm and the thresholds $\ve/4$ in \eqref{e:Pk-loc} give us
\[
\int_0^{\Tilde T}\Bigl(\|\dot{\Tilde x}-\dot\ox\|^2+\|\dot{\Tilde u}-\dot\ou\|^2+\|\Tilde a-\oa\|^2\Bigr)dt\le\frac{\ve}{2}.
\]
If $\Tilde T<\T$, the convention after \eqref{local} adds at most the tail integral in the choice of $\rho$, which is less than $\ve/4$. Since $|\Tilde T-\T|^2\le\rho^2<\ve/4$, the left-hand side of \eqref{local} for $(\Tilde x,\Tilde u,\Tilde a,\Tilde T)$ is less than $\ve$.

{\bf Step 3}. {\em Lower bound and vanishing penalties.} By \eqref{e:Pk-dyn}, the discrete velocities satisfy $-V^x_j\in F(\ox^k_j,\ou^k_j,\oa^k_j)$. Hence the discrete running costs coincide with the values of $\ell_F$. Fix $R>C_V$ with $\|\dot\ou\|\le R$, and put $\ell_{F,R}:=\ell_F+\delta_{\overline{B}_R}(\dot u)$, where $\delta$ is the indicator function. Let $\Hat\ell_{F,R}$ denote the convexification of $\ell_{F,R}$ in the velocity variables $(\dot x,\dot u)$, defined as $\Hat\ell_F$. Since $\ell_F\le\ell_{F,R}$, we have $\Hat\ell_F\le\Hat\ell_{F,R}$. By \eqref{e:CV}, $\|V^u_j\|\le C_V<R$, so the discrete running costs are also values of $\ell_{F,R}$, and they are not less than the values of $\Hat\ell_{F,R}$. Let $K$ be a bounded set that contains the arguments $(t,x,u,a)$ of all the discrete and limit processes together with the ball $\|\dot u\|\le R$. By (c1), the affine function $\la c_K,\dot x\ra-\beta_K$ is majorized by $\ell_{F,R}$ for all velocities, since $\ell_{F,R}=\infty$ when $\|\dot u\|>R$. Hence it is also a minorant of $\Hat\ell_{F,R}$. Subtracting it, we obtain nonnegative integrands. The subtracted terms are $h_k\sum_j\bigl(\la c_K,V^x_j\ra-\beta_K\bigr)=\la c_K,\ox^k_k-x_0\ra-\beta_K\T^k$, and they converge. We now pass to the limit by using Young measures. Let $z^k$ collect the nonvelocity variables of the discrete running costs, i.e., the piecewise constant functions equal to $(t^k_j,\ox^k_j,\ou^k_j,\oa^k_j)$ on $[t^k_j,t^k_{j+1})$, and put $w^k:=(\dot\ox^k,\dot\ou^k)$. By Step~2, $z^k$ converges in measure to the function $\Tilde z$ that collects $(t,\Tilde x,\Tilde u,\Tilde a)$. Note that the time argument converges uniformly, since $|t-t^k_j|\le h_k$ on $[t^k_j,t^k_{j+1})$. The velocities $w^k$ converge only weakly in $L^2$ to $\Tilde w:=(\dot{\Tilde x},\dot{\Tilde u})$. Put $H:=\ell_{F,R}-\la c_K,\dot x\ra+\beta_K$. This function is nonnegative on the arguments considered. It is lower semicontinuous in all its variables, since $\ell$ is lower semicontinuous by (c1), the graph of $F$ is closed, and the ball $\overline B_R$ is closed. By the fundamental theorem for Young measures \cite{Ball1989}, a subsequence of $\{w^k\}$ generates a family of probability measures $\nu_t$, and the pairs $(z^k,w^k)$ generate the measures $\delta_{\Tilde z(t)}\otimes\nu_t$. Since $\{w^k\}$ is bounded in $L^2$, the barycenter of $\nu_t$ equals $\Tilde w(t)$ for a.e.\ $t$. Fix $b\in(0,\Tilde T)$. Then $b<\T^k$ for all large $k$. The nonnegativity and the lower semicontinuity of $H$ give us
\[
\liminf_{k\to\infty}\int_0^{\T^k}H\bigl(z^k(t),w^k(t)\bigr)\,dt\ge\int_0^b\int H\bigl(\Tilde z(t),w\bigr)\,d\nu_t(w)\,dt .
\]
For a.e.\ fixed $t$, Jensen's inequality for the convexification in the velocity variables yields
\[
\int H\bigl(\Tilde z(t),w\bigr)\,d\nu_t(w)\ge\Hat\ell_{F,R}\bigl(\Tilde z(t),\Tilde w(t)\bigr)-\la c_K,\dot{\Tilde x}(t)\ra+\beta_K .
\]
Let us now take $b\uparrow\Tilde T$ and add back the converging affine terms. The inequality $\Hat\ell_F\le\Hat\ell_{F,R}$ and the lower semicontinuity of $\ph$ give us $\liminf_kF_k[z^k]\ge\ph(\Tilde x(\Tilde T),\Tilde T)+\int_0^{\Tilde T}\Hat\ell_F\,dt=\Hat J[\Tilde x,\Tilde u,\Tilde a,\Tilde T]$. Observe that only the convexity and lower semicontinuity of $\Hat\ell_{F,R}$ in the velocity variables are used here. No convexity of $\ell$ itself is needed, since the relaxed cost already contains the convexification. By Step~2 and Definition~\ref{Def2}, we have $\Hat J[\Tilde x,\Tilde u,\Tilde a,\Tilde T]\ge J[\ox,\ou,\oa,\T]$. Combining this with the upper bound of Step~1 gives us
\[
\limsup_{k\to\infty}Q_k[z^k]\le\limsup_{k\to\infty}J_k[z^k]-\liminf_{k\to\infty}F_k[z^k]\le0.
\]
Thus $Q_k[z^k]\to0$ along the chosen subsequence. Since every subsequence also has a subsequence with this property, we get $Q_k[z^k]\to0$ for the whole sequence. In particular, it yields $\T^k\to\T$, and the rescaled localization term tends to zero as $k\to\infty$.

{\bf Step~4}. {\em Strong convergence and estimates.} By Step~3, the $L^2(0,\T^k)$-distance between $(\dot\ox^k,\dot\ou^k,\oa^k)$ and the rescaled reference $(\dot\ox,\dot\ou,\oa)(\cdot\,\T/\T^k)$ tends to zero. Since $\T^k\to\T$, the continuity of dilations in $L^2$ gives us the last convergence in \eqref{e:discconv}. Integrating the velocities from the common initial data $(x_0,\ou(0))$ gives us the uniform convergence of the states. Finally, the distance penalty tends to zero, which is equivalent to the upper limit estimate \eqref{e18}. Observe where the bounded variation assumption enters these estimates. By \eqref{e:var} and Theorem~\ref{ThDA}{\bf(iv)}, the comparison processes of Step~1 have total variation of the $u$-velocities at most $\Tilde\mu$. Thus the distance penalty vanishes on them, and the upper bound of Step~1 does not contain it. This is why Step~3 forces the penalty along $z^k$ to vanish in the limit.$\h$}

The next theorem follows from \cite[Theorem~7.2]{CaoMordukhovich2026} providing necessary conditions for optimal solutions to the discrete approximation problems $(P_k)$. Its notation is summarized in Table~\ref{tab:disc-notation}.

\begin{table}[ht]
\centering
\footnotesize
\renewcommand{\arraystretch}{1.25}
\begin{tabular}{@{}l@{\quad}l@{\quad}p{8.0cm}@{}}
\hline\noalign{\smallskip}
\textbf{Symbol} & \textbf{Range / defined at} & \textbf{Meaning} \\
\noalign{\smallskip}\hline\noalign{\smallskip}
$V^x_j,\,V^u_j$ & \eqref{e:disc-vel} & forward-difference velocities of $(x^k,u^k)$ at node $t^k_j$ \\
$\sigma^k$ & \eqref{e:disc-var} & full discrete decision vector of $(P_k)$ \\
$\th^{Xk}_j,\,\th^{Uk}_j,\,\th^{ak}_j$ & \eqref{d1} & averaged residuals on $[t^k_j,t^k_{j+1}]$: deviation of the discrete velocities and perturbation from the reference $(\dot\ox,\dot\ou,\oa)$ \\
$\lm^k$ & $\ge 0$ & cost normalization scalar (Lagrange multiplier of the running cost in $(P_k)$) \\
$\al^k$ & $\in\R^m_+$ & complementarity multiplier for the terminal moving-set constraint $x^k_k-u^k_k\in C$ \\
$p^k_j=(p^{xk}_j,p^{uk}_j)$ & $\in\R^n\times\R^n$ & discrete adjoint (costate) sequence dual to $(x^k,u^k)$ \\
$\psi^{uk}_j,\,\psi^{ak}_j$ & \eqref{e:psiu} and \eqref{e:psia} & multipliers for the control admissibility $u^k_j\in U_k$ and $a^k_j\in A$ \\
$\eta^k_j$ & $\in\R^m_+$ & KKT multipliers from $\Lambda(\ox^k_j,\ou^k_j,\oa^k_j,\,\cdot\,)$ representing the sweeping residual $V^x_j-f(\ox^k_j,\oa^k_j)$ as a nonnegative combination of active gradients, cf.\ \eqref{d4} \\
$\gg^k_j$ & $\in\R^m$ & coderivative multipliers from Theorem~\ref{Prop1}; sign pattern dictated by the active index set $I(\ox^k_j-\ou^k_j)$ \\
$w^k_j,\,v^k_j$ & \eqref{d2} & subgradient selections from $\partial\ell$ at the discrete reference points (the state/control-side and velocity-side components, respectively) \\
$\Lambda^k_j,\,\xi^k_j$ & \eqref{d9} & effective discrete costate entering the coderivative argument, and the Hessian coupling vector $\xi^k_j:=Q_{\eta^k_j}\Lm^k_j$ \\
$\mathcal H^k$ & \eqref{e:Hk} & averaged discrete Hamiltonian along the node sequence (cell-averaged form intrinsic to the BV/measure passage) \\
$\varrho^k$ & \eqref{e:varrho} & localization residual: minus the derivative of the localization penalty in the horizon $T^k$, i.e., minus the penalty divided by $T^k$; vanishes as $k\to\infty$ (Lemma~\ref{lem:free-time-rates}) \\
$\delta_k,\,\mu^x_k$ & $\dn 0$ & tolerances on the relaxed control admissibility and on endpoint localization in \eqref{e:Pk-bc} \\
$\Tilde\mu$ & \eqref{e34} & outer BV bound on the discrete velocity differences, penalized in \eqref{e:Pk-cost} \\
\noalign{\smallskip}\hline
\end{tabular}
\caption{Discrete-variable notation for the optimality system of $(P_k)$.}
\label{tab:disc-notation}
\end{table}

\begin{theorem}[necessary optimality conditions for discrete approximation problems]\label{ThDiscNOC}
Let $(\ox(\cdot),\ou(\cdot),\oa(\cdot),\T)$ be an r.i.l.m.\ for problem $(P)$  and let, for each large $k\in\N$, the quadruple $(\ox^k(\cdot),\ou^k(\cdot),\oa^k(\cdot),\T^k)$ be an optimal solution to $(P_k)$ extended on the entire interval $[0,\T_k]$ as in Theorem~{\rm\ref{conver}}. In addition to the assumptions of Theorem~{\rm\ref{ThDA}}, impose the qualification conditions of 
Theorem~{\rm\ref{Prop1}}, the $C^1$-smoothness of $f$ with respect to $(x,a)$, and the local Lipschitz continuity of terminal and running costs around the optimal points. Moreover, assume that the running cost $\ell$ does not depend on the time variable near the optimal points. This autonomy is not a mere simplification. Without it, differentiating the discrete cost with respect to the horizon $T^k$ produces an additional term with $\partial_t\ell$ in the free-time component of \eqref{d16}. Assume finally that the distance penalty term in \eqref{e:Pk-cost} vanishes at the optimal solution, and that the last constraint in \eqref{e:Pk-bv} is inactive there, i.e.,
\begin{equation}\label{e:BV-pen-zero}
\sum\limits_{j=1}^{k-1}\bigl\|V^u_j-V^u_{j-1}\bigr\|\le\Tilde\mu,\quad\sum\limits_{j=1}^{k-1}\bigl\|\oa^k_j-\oa^k_{j-1}\bigr\|<\Tilde\mu+1\quad\mbox{with}\quad V^u_j=\frac{\ou^k_{j+1}-\ou^k_j}{h_k}.
\end{equation} Introduce the averaged residuals
\begin{equation}\label{d1}
\begin{aligned}
\th^{Xk}_j&:=\int_{t^k_j}^{t^k_{j+1}}\!\left(\frac{\ox^k_{j+1}-\ox^k_j}{h_k}-{\dot\ox(t\T/\T^k)}\right)dt,\qquad
\th^{Uk}_j:=\int_{t^k_j}^{t^k_{j+1}}\!\left(\frac{\ou^k_{j+1}-\ou^k_j}{h_k}-{\dot\ou(t\T/\T^k)}\right)dt,\\
\th^{ak}_j&:=\int_{t^k_j}^{t^k_{j+1}}\!\bigl(\oa^k_j-{\oa(t\T/\T^k)}\bigr)dt
\end{aligned}
\end{equation}
for $j=0,\ldots,k-1$. Then there exist dual elements
\[
\lm^k\ge 0,\qquad
\al^k=(\al^k_1,\ldots,\al^k_m)\in\R^m_+,\qquad
p^k_j=(p^{xk}_j,p^{uk}_j)\in\R^n\times\R^n\ (j=0,\ldots,k),
\]
\[
\psi^{uk}_j=(\psi^{uk}_{j1},\ldots,\psi^{uk}_{js})\in\R^s_+\ (j=0,\ldots,k-1),\qquad
\psi^{ak}_j\in\R^d\ (j=0,\ldots,k-1),
\]
together with complementarity multipliers $\eta^k_j\in\R^m_+$, coderivative multipliers $\gg^k_j\in\R^m$ for $j=0,\ldots,k-1$, and subgradient selections for the running cost function
\begin{equation}\label{d2}
\bigl(w^{xk}_j,w^{uk}_j,w^{ak}_j,v^{xk}_j,v^{uk}_j\bigr)\in\partial_{x,u,a,X,U}\ell\!\left(t^k_j,\ox^k_j,\ou^k_j,\oa^k_j,\frac{\ox^k_{j+1}-\ox^k_j}{h_k},\frac{\ou^k_{j+1}-\ou^k_j}{h_k}\right),\quad j=0,\ldots,k-1,
\end{equation}
such that the following necessary optimality conditions are satisfied:

\smallskip
\noindent{\sc Nontriviality Condition}
\begin{equation}\label{d3}
\lm^k+\bigl\|\al^k\bigr\|+\sum\limits_{j=0}^{k}\bigl\|p^{xk}_j\bigr\|+\bigl\|p^{uk}_0\bigr\|+\sum\limits_{j=0}^{k-1}\bigl\|\psi^{uk}_j\bigr\|\neq 0.
\end{equation}

\smallskip
\noindent{\sc Primal--Dual Dynamic Relationships} 
\begin{subequations}\label{d:dyn}
\begin{align}
\label{d4}
&\frac{\ox^k_{j+1}-\ox^k_j}{h_k}-f\bigl(\ox^k_j,\oa^k_j\bigr)=\sum\limits_{i\in I(\ox^k_j-\ou^k_j)}\eta^k_{ji}\,\nabla g_i(\ox^k_j-\ou^k_j),\\[0.3ex]
\label{d5}
&\frac{p^{xk}_{j+1}-p^{xk}_j}{h_k}-\lm^k w^{xk}_j=-\nabla_x f(\ox^k_j,\oa^k_j)^*\Lm^k_j-\xi^k_j-\sum\limits_{i=1}^{m}\gg^k_{ji}\,\nabla g_i(\ox^k_j-\ou^k_j),\\[0.3ex]
\label{d6}
&\frac{p^{uk}_{j+1}-p^{uk}_j}{h_k}-\lm^k w^{uk}_j-\frac{1}{h_k}\sum\limits_{i=1}^{s}\psi^{uk}_{ji}\,\nabla\nu_i(\ou^k_j)=\xi^k_j+\sum\limits_{i=1}^{m}\gg^k_{ji}\,\nabla g_i(\ox^k_j-\ou^k_j),\\[0.3ex]
\label{d7}
&\lm^k w^{ak}_j+\frac{\lm^k}{h_k}\th^{ak}_j+\frac{\psi^{ak}_j}{h_k}=\nabla_a f(\ox^k_j,\oa^k_j)^*\Lm^k_j,\\[0.3ex]
\label{d8}
&p^{uk}_{j+1}=\lm^k\!\left(v^{uk}_j+\frac{\th^{Uk}_j}{h_k}\right)\quad\mbox{ for j=0,\ldots,k-1},
\end{align}
\end{subequations}
where the adjoint normal $\Lm^k_j$ and the 
Hessian coupling vector $\xi^k_j$ are defined by
\begin{equation}\label{d9}
\Lm^k_j:=p^{xk}_{j+1}-\lm^k\!\left(h_k^{-1}\th^{Xk}_j+v^{xk}_j\right),\qquad
\xi^k_j:=\!\left(\sum\limits_{i=1}^{m}\eta^k_{ji}\,\nabla^2 g_i(\ox^k_j-\ou^k_j)\right)\!\Lm^k_j,
\end{equation}
and  where the control multipliers $\psi^{uk}_j$ and $\psi^{ak}_j$ satisfy the adjoint inclusions
\begin{equation}\label{e:psiu}
\psi^{uk}_{ji}\in N_{(-\infty,L_\nu\delta_k]}\!\bigl(\nu_i(\ou^k_j)\bigr),\quad i=1,\ldots,s,\ j=0,\ldots,k-1,
\end{equation}
\begin{equation}\label{e:psia}
\psi^{ak}_j\in N_A(\oa^k_j),\quad j=0,\ldots,k-1.
\end{equation}

\smallskip
\noindent{\sc Transversality Conditions}
\begin{equation}\label{d16}
\begin{cases}
p^{uk}_k=-\disp\sum\limits_{i=1}^{m}\al^k_i\,\nabla g_i(\ox^k_k-\ou^k_k),\\[0.8ex]
\Bigl(-p^{xk}_k+\disp\sum\limits_{i=1}^{m}\al^k_i\,\nabla g_i(\ox^k_k-\ou^k_k),\ \mathcal H^k+\lm^k(\T-\T^k)+\lm^k\varrho^k\Bigr)\\[0.5ex]
\qquad\in\ \lm^k\,\partial\ph\bigl(\ox^k_k,\T^k\bigr)+N_{\Xi^k_x\times\Xi^k_T}\!\bigl(\ox^k_k,\T^k\bigr),
\end{cases}
\end{equation}
where $\Xi^k_x:=\Xi_x+\mu^x_k \overline{B}(0,1)$ and $\Xi^k_T:=\Xi_T+\mu^x_k[-1,1]$ are the relaxed endpoint sets of \eqref{e:Pk-bc}, and the averaged discrete Hamiltonian $\mathcal H^k$ and the auxiliary cost correction $\varrho^k$ are given by
\begin{equation}\label{e:Hk}
\mathcal H^k:=\frac{1}{k}\sum\limits_{j=0}^{k-1}\!\left[\la p^{xk}_{j+1},V^x_j\ra+\la p^{uk}_{j+1},V^u_j\ra
-\lm^k\,\ell\!\bigl(t^k_j,\ox^k_j,\ou^k_j,\oa^k_j,V^x_j,V^u_j\bigr)\right],
\end{equation}
\begin{equation}\label{e:varrho}
\varrho^k:=-\frac{1}{2\T^k}\sum\limits_{j=0}^{k-1}\int_{t^k_j}^{t^k_{j+1}}\Bigl\|(\oa^k_j,V^x_j,V^u_j)-(\oa,\dot\ox,\dot\ou)\bigl(t\T/\T^k\bigr)\Bigr\|^2\,dt
\end{equation}
with the vectors $V^x_j,V^u_j$ defined in \eqref{e:disc-vel}.

\smallskip
\noindent{\sc Complementary slackness conditions} meaning that for $j=0,\ldots,k-1$ and $i=1,\ldots,m$ we have
\begin{subequations}\label{d:cs}
\begin{align}
\label{d10}
g_i(\ox^k_j-\ou^k_j)>0&\ \Lto\ \eta^k_{ji}=0,\\[0.3ex]
\label{d11}
\bigl[i\in I_1(\beta^k_j)\bigr]&\ \Lto\ \gg^k_{ji}=0,\\[0.3ex]
\label{d12}
\bigl[i\in I_2(\beta^k_j)\bigr]&\ \Lto\ \gg^k_{ji}\ge 0
\end{align}
\end{subequations}
with $\beta^k_j:=-\nabla g(\ox^k_j-\ou^k_j)\Lm^k_j\in\R^m$ and the index sets $I_1(\cdot),I_2(\cdot)$ from \eqref{e17} evaluated at $\bar x=-g(\ox^k_j-\ou^k_j)$ and $\bar v=\eta^k_j$, together with the terminal and active  gradient implications
\begin{equation}\label{d14}
g_i(\ox^k_k-\ou^k_k)>0\ \Lto\ \al^k_i=0,\quad i=1,\ldots,m,
\end{equation}
\begin{equation}\label{d15}
\eta^k_{ji}>0\ \Lto\ \la\nabla g_i(\ox^k_j-\ou^k_j),\Lm^k_j\ra=0,\quad j=0,\ldots,k-1,\ i=1,\ldots,m.
\end{equation}

\smallskip
\noindent{\sc Enhanced nontriviality condition.} If the active constraint gradients $\{\nabla g_i(\ox^k_j-\ou^k_j)\}_{i\in I(\ox^k_j-\ou^k_j)}$ are linearly independent (equivalently, the active-constraint Jacobian has full row rank) for every $j=0,\ldots,k$, then
\begin{equation}\label{d17}
\lm^k+\sum\limits_{j=0}^{k-1}\bigl\|\psi^{uk}_j\bigr\|+\bigl\|p^{xk}_k\bigr\|+\bigl\|p^{uk}_0\bigr\|\neq 0.
\end{equation}
\end{theorem}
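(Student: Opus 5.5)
The plan is to treat $(P_k)$, for each fixed large $k$, as a finite-dimensional mathematical program in the decision vector $\sigma^k$ from \eqref{e:disc-var}, augmented by the velocity variables \eqref{e:disc-vel}. We then apply the generalized Lagrange multiplier rule for nonsmooth programs with geometric constraints, in the form of \cite[Theorem~5.24]{Mordukhovich2006I}, to its optimal solution. The constraints are as follows:
\begin{itemize}
\item the graph constraints $\Omega_j:=\{(x_j,u_j,a_j,V^x_j)\mid -V^x_j\in F(x_j,u_j,a_j)\}$ coming from \eqref{e:Pk-dyn};
\item the terminal set constraint $x_k-u_k\in C$;
\item the relaxed endpoint constraint $(x_k,T^k)\in\Xi^k_x\times\Xi^k_T$;
\item the control constraints $\nu_i(u_j)\le L_\nu\delta_k$ and $a_j\in A$;
\item the localization constraints \eqref{e:Pk-loc} and the BV constraints \eqref{e:Pk-bv}.
\end{itemize}
By Theorem~\ref{conver}, the horizon bound $|T^k-\T|\le\rho$ and \eqref{e:Pk-loc} are inactive for large $k$. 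By assumption \eqref{e:BV-pen-zero}, the BV constraints and the distance penalty are inactive as well, and the penalty is differentiable with zero derivative there. None of these produce multipliers. The cost is locally Lipschitz, and the terms in $(T^k-\T)^2$ and in the rescaled localization integral are smooth in all variables, including $T^k$.

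The multiplier rule gives $\lm^k\ge0$, normals $(x^*_j,u^*_j,a^*_j,-p^{xk}_{j+1})\in N_{\Omega_j}(\cdot)$, the multipliers $\psi^{uk}_j,\psi^{ak}_j$ from \eqref{e:psiu}--\eqref{e:psia}, the vector $\al^k\in\R^m_+$ from $N_C(\ox^k_k-\ou^k_k)$ written via \eqref{F-rep}, and an endpoint normal. We then write the stationarity equations componentwise. The subgradients of the running cost produce the selections \eqref{d2}. Differentiating the localization penalty in $V^x_j,V^u_j,a_j$ gives the residual terms $\th^{Xk}_j/h_k$, $\th^{Uk}_j/h_k$ and $\th^{ak}_j/h_k$ from \eqref{d1}. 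Because $V^x_j,V^u_j$ depend on differences of consecutive nodes, summation by parts defines the adjoint $p^k_j$ through the equations in $x_j,u_j$. Since $V^u_j$ enters no graph constraint, the $u$-velocity equation is exactly \eqref{d8}. The $V^x_j$ equation identifies $\Lm^k_j$ in \eqref{d9} as the coderivative argument.

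Next we apply Theorem~\ref{Prop1} to $N_{\Omega_j}$, i.e.\ to $D^*F(\ox^k_j,\ou^k_j,\oa^k_j,-V^x_j)(\Lm^k_j)$. This yields $\eta^k_j\in\Lambda(\cdot)$, which gives \eqref{d4}, together with $\gg^k_j\in\Gamma$. The components $\alpha_x,\alpha_u,\alpha_a$ of \eqref{e:prop-coderiv} then give \eqref{d5}, \eqref{d6} and \eqref{d7}. Here $\xi^k_j=Q_{\eta^k_j}\Lm^k_j$, and the conditions \eqref{d10}--\eqref{d12} and \eqref{d15} follow from \eqref{e:prop-DNR}, \eqref{e17} and the domain estimate. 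The terminal equations in $x_k,u_k$ give \eqref{d16} with \eqref{d14}. For the $T^k$-component, we use that $h_k=T^k/k$, so $V_j$ scales like $1/T^k$ and $h_k\sum\ell$ depends on $T^k$ both through the factor $h_k$ and through $V_j$. Using autonomy of $\ell$, the Euler-type identity for $\partial_{T^k}$ produces the averaged Hamiltonian $\mathcal H^k$ of \eqref{e:Hk}, after substituting $p^k_{j+1}$ from the velocity stationarity. The horizon penalty gives $\lm^k(\T-T^k)$. The linear-in-$T^k$ rescaled localization term gives $\lm^k\varrho^k$ with $\varrho^k$ as in \eqref{e:varrho}.

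Finally, nontriviality \eqref{d3} follows from the nontriviality in the multiplier rule. If every quantity listed in \eqref{d3} vanished, backward recursion through \eqref{d5}--\eqref{d8} would force all remaining multipliers to vanish, a contradiction. For the enhanced condition \eqref{d17}, we use linear independence of the active gradients: when $\lm^k$, all $\psi^{uk}_j$, $p^{xk}_k$ and $p^{uk}_0$ vanish, \eqref{d16} gives $\al^k=0$. Backward induction with \eqref{d15} and \eqref{d6} then kills $\gg^k_j$, $\Lm^k_j$ and $p^{xk}_j$.

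I expect the main obstacle to be the $T^k$-derivative. One must track carefully how $h_k$, the velocities and the rescaled reference enter the cost, and verify that the sum collapses exactly to the cell-averaged $\mathcal H^k$ using the velocity stationarity. Everything else is a direct transcription of \cite[Theorem~7.2]{CaoMordukhovich2026}.
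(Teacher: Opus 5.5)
Your overall route is the one behind the paper's statement. The paper takes Theorem~\ref{ThDiscNOC} from \cite[Theorem~7.2]{CaoMordukhovich2026}, whose proof (recalled in Step~1 of the proof of Lemma~\ref{lem:first-cell}) proceeds as follows. It writes $(P_k)$ as the minimization of a locally Lipschitz function over finitely many closed sets, with the velocities as independent variables tied to the nodes by the evolution constraints $x_{j+1}-x_j-(T^k/k)V^x_j=0$ and $u_{j+1}-u_j-(T^k/k)V^u_j=0$. It then applies the Fritz John/intersection rule and evaluates the graph normals by Theorem~\ref{Prop1}. Your inactivity arguments, and your use of \eqref{e:BV-pen-zero}, of autonomy, and of the linearity of the rescaled localization in $T^k$, all match this.

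The sign of the coderivative argument, however, is exactly where the source is wrong, and your displayed normal reproduces that error. Since $\Omega_j$ is the preimage of $\gph F$ under $(x,u,a,V)\mapsto(x,u,a,-V)$, a vector $(x^*,u^*,a^*,V^*)$ lies in $N_{\Omega_j}$ if and only if $(x^*,u^*,a^*)\in D^*F(x,u,a,-V)(V^*)$. So the coderivative argument is the $V$-component itself. Let $p^{xk}_{j+1}$ be the multiplier of the $j$-th evolution constraint. Then the $V^x_j$-stationarity reads $\lm^kh_kv^{xk}_j+\lm^k\th^{Xk}_j+V^*_j-h_kp^{xk}_{j+1}=0$, i.e., $V^*_j=h_k\Lm^k_j$ with $\Lm^k_j$ as in \eqref{d9}, and with this sign the $x_j$-equations become \eqref{d5}. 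Your normal $(x^*_j,u^*_j,a^*_j,-p^{xk}_{j+1})$ instead gives the argument $-p^{xk}_{j+1}$ when $\lm^k=0$. That is the opposite sign of $\Lm^k_j$, the sign used in \cite{CaoMordukhovich2026} and corrected in Remark~\ref{rem:Lambda-sign}. Relabeling at the end cannot repair this. $D^*N_{\R^m_-}$ is not odd and $N_A$ is a one-sided cone, so the wrong sign alters the sign pattern in \eqref{d11}--\eqref{d12} and turns \eqref{d7} into a condition that selects maximizers, as in the test example of that remark.

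The $T^k$-component also needs different bookkeeping from what you describe. With independent velocities, neither $\ell$ (by autonomy) nor $\Omega_j$ depends on $T^k$, and no Euler identity is needed. The factor $h_k$ gives $\frac{\lm^k}{k}\sum_j\ell_j$, and the evolution constraints give $-\frac1k\sum_j\bigl(\la p^{xk}_{j+1},V^x_j\ra+\la p^{uk}_{j+1},V^u_j\ra\bigr)$ directly. If instead you eliminate the velocities, $V^x_j=k(x_{j+1}-x_j)/T^k$, as your description suggests, then the graph sets depend on $T^k$ as well. Their pulled-back normals contribute $-\frac{1}{T^k}\la h_k\Lm^k_j,V^x_j\ra=-\frac1k\la\Lm^k_j,V^x_j\ra$ to the $T^k$-equation. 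If you differentiate only the cost and substitute $\lm^k(v^{xk}_j+h_k^{-1}\th^{Xk}_j)=p^{xk}_{j+1}-\Lm^k_j$, you are left with the extra term $\frac1k\sum_j\la\Lm^k_j,V^x_j\ra=\frac1k\sum_j\la\Lm^k_j,f(\ox^k_j,\oa^k_j)\ra$ (by \eqref{d4} and \eqref{d15}), which is nonzero in general. The constraint contribution is what cancels it and leaves exactly $\mathcal H^k+\lm^k(\T-\T^k)+\lm^k\varrho^k$ in \eqref{d16}.
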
\vspace*{-0.2in}

{\begin{remark}[on the BV penalty]\label{rem:BV-pen}
Assumption \eqref{e:BV-pen-zero} is technical. It concerns the optimal solutions of $(P_k)$, not the data of $(P)$. It removes the subgradient contribution of the distance penalty term in \eqref{e:Pk-cost} from relation \eqref{d8}. The estimate \eqref{e18} gives only an upper limit bound. It allows, for example, the sums in \eqref{e:BV-pen-zero} to equal $\Tilde\mu+1/k$. The first hard constraint in \eqref{e:Pk-bv} is inactive for all large $k$ by \eqref{e18}, since its upper bound is $\Tilde\mu+1$. The last one is inactive by the last condition in \eqref{e:BV-pen-zero}, so it adds no multiplier to Theorem~\ref{ThDiscNOC}. Note that the strong convergence of $\oa^k$ does not imply the convergence of their total variations, so this condition is not automatic. However, the penalty may remain positive. In this case, \eqref{d8} acquires the additional subgradient term
\[
\frac{2\lm^kd_k}{h_k}\bigl(n_j-n_{j+1}\bigr),\qquad d_k:=\max\Big\{\sum\limits_{i=1}^{k-1}\|V^u_i-V^u_{i-1}\|-\Tilde\mu,\,0\Big\},
\]
where $n_j\in\partial\|\cdot\|(V^u_j-V^u_{j-1})$ for $j=1,\ldots,k-1$, and $n_0=n_k:=0$. At a nonzero velocity difference, $n_j$ is its unit direction. At a zero difference, $n_j$ belongs to the closed unit ball. The factor $1/h_k$ prevents us from discarding this term merely because $d_k\to0$. In particular, the bounded variation estimate in Step~{\bf 4a} requires further control of this contribution. Assumption \eqref{e:BV-pen-zero} removes it. The condition on $u$ in \eqref{e:BV-pen-zero} holds automatically if the moving set control is fixed at zero in both $(P)$ and $(P_k)$, so that every $V^u_j$ is zero. This is the fixed moving set specialization used in both models of Section~\ref{S7}. A reference control $\ou\equiv0$ alone does not imply this property for the discrete optimal controls. Replacing the condition on $u$ in \eqref{e:BV-pen-zero} by a condition on the data of $(P)$ remains an open question. The difficulty is a mismatch of scales. The variation of the $u$-velocities is a quantity of order one, while the stationarity condition \eqref{d8} lives on the scale $h_k$. Hence any penalty or constraint on this variation enters \eqref{d8} with the factor $1/h_k$, and only its inactivity removes it. A bound of the form $d_k=O(h_k)$ would be enough, but Theorem~\ref{conver} gives no rate for $d_k\to0$.

The last condition in \eqref{e:BV-pen-zero} is not needed when the control set $A$ is convex. In this case, the multiplier rule gives $\zeta^k\ge0$ with $\zeta^k\bigl(\sum_{j=1}^{k-1}\|\oa^k_j-\oa^k_{j-1}\|-\Tilde\mu-1\bigr)=0$, and \eqref{d7} holds with $\psi^{ak}_j$ replaced by $\psi^{ak}_j+\zeta^k(m_j-m_{j+1})$. Here $m_j\in\partial\|\cdot\|(\oa^k_j-\oa^k_{j-1})$ for $j=1,\ldots,k-1$, and $m_0=m_k:=0$. Conditions \eqref{d3} and \eqref{d17} keep their form. Indeed, if all multipliers in \eqref{d3} vanish, then $\Lm^k=0$. Pairing \eqref{d7} with $\oa^k_j-c$ for a fixed $c\in A$ and summing over $j$, we get $\zeta^k\sum_{j=1}^{k-1}\|\oa^k_j-\oa^k_{j-1}\|\le0$ by the convexity of $A$. Thus $\zeta^k=0$ and then $\psi^{ak}=0$. Step~{\bf 5} of the proof of Theorem~\ref{ThNOC} shows how the term with $\zeta^k$ is handled in the limit. Both models of Section~\ref{S7} have $u\equiv0$ and a convex set $A$, so they need no part of \eqref{e:BV-pen-zero}.
\end{remark}}\vspace*{-0.2in}

\begin{remark}[on the sign of the effective costate]\label{rem:Lambda-sign}
In \cite[Theorem~7.2]{CaoMordukhovich2026}, the vector $\Lm^k_j$ in \eqref{d9} is written with the opposite sign. The sign used here is the correct one. Indeed, the discrete Euler--Lagrange inclusion in \cite{CaoMordukhovich2026} is a normal cone inclusion to $\gph F$ whose last component equals $\lm^k(h_k^{-1}\th^{Xk}_j+v^{xk}_j)-p^{xk}_{j+1}$. By the definition \eqref{cod} of the coderivative, the argument of $D^*F$ is the negative of this vector, i.e., the vector $\Lm^k_j$ from \eqref{d9}. A simple test confirms this choice. Let $n=d=1$, $f(x,a)=x+a$, $A=[-1,1]$, $\ell\equiv0$, $\varphi(x,T)=x$, and let $T$ be fixed. The optimal control is $\oa\equiv-1$. With the sign of \eqref{d9}, relations \eqref{d5} and \eqref{d7} give the classical adjoint equation and the inclusion $\psi^{ak}_j\in N_A(-1)$. With the opposite sign, they give the wrong growth of the adjoint and select the maximizer $\oa\equiv1$.
\end{remark}

{The degenerate multipliers of Example~\ref{ex:CQ0} below already appear at the discrete level. Their source is the first mesh cell. The proof of Theorem~\ref{ThDiscNOC} treats the fixed initial point $(x_0,\ou(0))$ as a variable inside the graph of $F$ at $j=0$, while a separate set fixes its value. The free normal of that set can cancel the coderivative term with an arbitrary $\gg^k_0$. The following lemma removes this source. We group the two sets before the multiplier rule is applied. Put $y_0:=x_0-\ou(0)$ and $I_0:=I(y_0)$.\vspace*{-0.1in}

\begin{lemma}[selection of multipliers at the initial cell]\label{lem:first-cell}
Let the assumptions of Theorem~{\rm\ref{ThDiscNOC}} hold, except that the last condition in \eqref{e:BV-pen-zero} is not required when $A$ is convex. Let the optimal solutions of $(P_k)$ belong to a sequence that converges as in Theorem~{\rm\ref{conver}}, and let the gradients $\{\nabla g_i(y_0)\}_{i\in I_0}$ be linearly independent. Then, for all large $k$, there exist multipliers satisfying all conclusions of Theorem~{\rm\ref{ThDiscNOC}}, with \eqref{d7} modified as in Remark~{\rm\ref{rem:BV-pen}} when $A$ is convex and the last constraint in \eqref{e:Pk-bv} is active. In addition, these multipliers satisfy:
\begin{itemize}
\item[\bf(a)] $\gg^k_0=0$ and $\psi^{uk}_0=0$.
\item[\bf(b)] $\la\nabla g_i(y_0),\Lm^k_0\ra\le0$ for all $i\in I_0$.
\item[\bf(c)] $p^{uk}_0=p^{uk}_1-h_k\lm^kw^{uk}_0-h_k\xi^k_0$.
\item[\bf(d)] $\lm^k+\|p^{xk}_k\|+\sum_{j=1}^{k-1}\|\psi^{uk}_j\|>0$.
\end{itemize}
\end{lemma}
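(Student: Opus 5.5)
We will redo the derivation of Theorem~\ref{ThDiscNOC} with one change in how the first mesh cell is encoded. In the proof taken from \cite{CaoMordukhovich2026}, the discrete problem $(P_k)$ is written as a mathematical program whose geometric constraints include the graphs $\gph F$ at every node $j=0,\ldots,k-1$, together with a separate equality set fixing $(x^k_0,u^k_0)=(x_0,\ou(0))$. We will instead treat $(x^k_0,u^k_0)$ as parameters rather than variables. The $j=0$ constraint then becomes $-V^x_0\in F(x_0,\ou(0),a^k_0)$. This is a constraint on $(x^k_1,a^k_0)$ only, namely $x^k_1\in x_0-h_kF(x_0,\ou(0),a^k_0)$. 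By (H2.3) and the linear independence of $\{\nabla g_i(y_0)\}_{i\in I_0}$, it is the polyhedral-cone condition
\[
x^k_1-x_0-h_kf(x_0,a^k_0)\in\Bigl\{\textstyle\sum_{i\in I_0}\eta_i\nabla g_i(y_0)\ \Bigm|\ \eta_i\ge0\Bigr\}=:K_0 .
\]
Here we use the fact that, by the convergence of Theorem~\ref{conver}, $y_0$ is the same for all $k$, and the cone $N_C(y_0)$ is fixed. We also drop $u^k_0\in U_k$ from the constraints, since it is a given feasible datum.

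\emph{Step 1: multiplier rule for the regrouped program.} We apply the same Lagrange multiplier rule (the limiting-normal-cone version from \cite{Mordukhovich2006I}) to the regrouped program. The nodes $j\ge1$ are handled exactly as before, so \eqref{d4}--\eqref{d8}, \eqref{d16} and \eqref{d:cs} for $j\ge1$ are unchanged. At $j=0$, the only dual object is a normal to $K_0$ at the point given by \eqref{d4}, transported by the smooth map $(x_1,a_0)\mapsto x_1-x_0-h_kf(x_0,a_0)$. It is a vector $q\in N_{K_0}(h_k\sum_i\eta^k_{0i}\nabla g_i(y_0))$. We identify $q$ with $h_k^{-1}$ times the multiplier paired with $x^k_1$. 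The stationarity in $x^k_1$ then gives $q=\Lm^k_0$, up to the sign convention of Remark~\ref{rem:Lambda-sign}. The stationarity in $a^k_0$ gives \eqref{d7} at $j=0$.

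\emph{Step 2: reading off (a)--(c).} The normal cone to the convex cone $K_0$ is its polar intersected with the orthogonal complement of the point. This gives $\la\nabla g_i(y_0),\Lm^k_0\ra\le0$ for all $i\in I_0$, which is (b), and equality whenever $\eta^k_{0i}>0$, which is \eqref{d15} at $j=0$. The variables $x^k_0,u^k_0$ no longer appear, so no coderivative multipliers arise at $j=0$. We set $\gg^k_0:=0$, and \eqref{d11}, \eqref{d12} hold trivially. Likewise no $\psi^{uk}_0$ appears, so we set $\psi^{uk}_0:=0$, which is (a). The Hessian term $\xi^k_0$ does not arise from a fixed-point cone either. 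We nevertheless keep \eqref{d5}--\eqref{d6} at $j=0$ as \emph{definitions} of $p^{xk}_0$ and $p^{uk}_0$ with $\gg^k_0=0$ and $\psi^{uk}_0=0$, since these are free quantities not constrained by any stationarity. Then \eqref{d6} at $j=0$ reads exactly as (c). The convex-$A$ modification of \eqref{d7} is inherited verbatim from Remark~\ref{rem:BV-pen}, because the variation constraint does not involve $x^k_0,u^k_0$.

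\emph{Step 3: nontriviality (d).} This is the main obstacle. The multiplier rule gives nontriviality of the full collection: $\lm^k$, $\al^k$, the endpoint normals, $q$, the node multipliers for $j\ge1$, the $\psi$'s, and the $\gg$'s. Suppose $\lm^k=0$, $p^{xk}_k=0$ and $\psi^{uk}_j=0$ for all $j\ge1$. We then propagate backward. First, \eqref{d16} with $\lm^k=0$ makes $\al^k$ enter only through $p^{uk}_k$. Next, using \eqref{d8}, which gives $p^{uk}_{j+1}=0$ for all $j$, we obtain $p^{uk}_k=0$. Then linear independence of the active gradients near $y_0$ gives $\al^k=0$. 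Here we use that by Theorem~\ref{conver} the terminal point is near $\ox(\T)-\ou(\T)$; where this fails, we invoke the argument behind \eqref{d17}. Running \eqref{d5}--\eqref{d6} backward, we get $\Lm^k_j=p^{xk}_{j+1}$. Summing \eqref{d5} and \eqref{d6} yields $p^{xk}_{j+1}-p^{xk}_j=-h_k\nabla_xf^*\Lm^k_j$, since $p^u\equiv0$. Starting from $p^{xk}_k=0$, induction gives $p^{xk}_j=0$ for all $j$, and then all $\Lm^k_j=0$ and $\xi^k_j=0$. Then \eqref{d6} forces $\sum_i\gg^k_{ji}\nabla g_i=0$, hence $\gg^k_j=0$ by linear independence. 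Then \eqref{d7} gives $\psi^{ak}=0$ (with $\zeta^k=0$ by the convexity argument of Remark~\ref{rem:BV-pen}), and finally $q=\Lm^k_0=0$. This contradicts nontriviality. The delicate points are the exact bookkeeping of which multiplier corresponds to which $p$, and the guarantee of linear independence at all nodes for large $k$. The latter we expect to obtain from uniform convergence together with (H2.3) and linear independence at $y_0$ and along the reference, as assumed in Theorem~\ref{ThDiscNOC}.
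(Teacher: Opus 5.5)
Your route is essentially the paper's. The paper groups the $j=0$ graph, the initial condition and $u^k_0\in U_k$ into one set $\Omega_0$ and keeps $(x^k_0,u^k_0)$ as variables with free normal components; you eliminate them as parameters. The rest coincides: the fixed cone $K_0$ pulled back by a $C^1$ map in $a^k_0$, (b) and \eqref{d15} at $j=0$ from $N_{K_0}(z)=K_0^\circ\cap z^\perp$, setting $\gg^k_0=\psi^{uk}_0=0$, defining $p^{xk}_0,p^{uk}_0$ through \eqref{d5}, \eqref{d6}, and backward propagation for (d). Your elimination even spares the paper's check that the free $(x^k_0,u^k_0)$-components of the normal vanish.

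One step, however, fails as written. The constraint $x^k_1-x_0-h_kf(x_0,a^k_0)\in K_0$ is not a constraint on $(x^k_1,a^k_0)$ only. Indeed, $h_k=T^k/k$ and $T^k$ is a decision variable. After your substitution, the cell-$0$ running cost and localization term also depend on $T^k$ through $V^x_0=k(x^k_1-x_0)/T^k$. So your claim that \eqref{d16} is unchanged is unjustified. Its time slot contains the $j=0$ summand of $\mathcal H^k$, and in your encoding that summand must be assembled from the $T^k$-components of the cone normal and of the cell-$0$ cost. There is no longer an evolution-equation multiplier that supplies it.

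The computation does close. Let $\omega=\Lm^k_0\in N_{K_0}(\Phi)$ with $\Phi:=x^k_1-x_0-h_kf(x_0,a^k_0)$. Complementarity $\la\omega,\Phi\ra=0$ turns the $T^k$-component $-\tfrac1k\la\omega,f(x_0,a^k_0)\ra$ into $-\tfrac1k\la\Lm^k_0,V^x_0\ra$. The chain rule through $V^x_0$ adds $-\tfrac{\lm^k}{k}\la v^{xk}_0+h_k^{-1}\th^{Xk}_0,V^x_0\ra$, and the sum is exactly $-\tfrac1k\la p^{xk}_1,V^x_0\ra$. You must either carry this out, or keep the velocity $X_0$ as a variable with the evolution equation $x^k_1-x^k_0-h_kX_0=0$, as the paper does. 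Then the cone constraint $X_0-f(x_0,a^k_0)\in K_0$ is free of $T^k$, and the free-time block is literally that of Theorem~\ref{ThDiscNOC}.

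There are also smaller points in your Step 3.
\begin{itemize}
\item $\al^k=0$ does not come from independence near $y_0$, since the terminal node lies near $\ox(\T)-\ou(\T)$. It follows from \eqref{d14} and \textbf{(H2.3)} at the terminal node. Positive linear independence suffices because $\al^k\ge0$, so no case distinction is needed.
\item $\gg^k_j=0$ for $j\ge1$ uses linear independence of the active gradients at the nodes. This is part of the qualification conditions of Theorem~\ref{Prop1} assumed in Theorem~\ref{ThDiscNOC}. \textbf{(H2.3)} alone would not do, since $\gg^k_j$ is sign-indefinite, and LICQ along the reference arc is not a hypothesis of this lemma.
\item To close the contradiction you must also note that the endpoint normals vanish. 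The state slot of \eqref{d16} is $-p^{xk}_k+\sum_i\al^k_i\nabla g_i=0$. The time slot reduces to $\mathcal H^k=0$, because $\lm^k=0$ and $p^{xk}_{j+1}=p^{uk}_{j+1}=0$ for all $j$.
\end{itemize}
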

{\bf Proof}. For convenience, we split the proof into the following four steps.

{\bf Step~1}. {\em Grouping the initial constraints.} The proof of \cite[Theorem~7.2]{CaoMordukhovich2026} writes $(P_k)$ as the minimization of a locally Lipschitz function over the intersection of finitely many closed sets. The variables are $(x^k_j,u^k_j,a^k_j,X_j,U_j,T^k)$, where $X_j$ and $U_j$ are the velocity variables. These sets are:
\begin{itemize}
\item the graphs of $F$ at the cells $j=0,\ldots,k-1$;
\item the control sets $u^k_j\in U_k$ and $a^k_j\in A$;
\item the evolution equations $x^k_{j+1}-x^k_j-h_kX_j=0$ and $u^k_{j+1}-u^k_j-h_kU_j=0$ with $h_k=T^k/k$;
\item the terminal moving set constraint and the relaxed endpoint set in \eqref{e:Pk-bc}, which also contain the initial condition $(x^k_0,u^k_0)=(x_0,\ou(0))$;
\item the localization constraints \eqref{e:Pk-loc}, the horizon constraint $|T^k-\T|\le\rho$, and the two constraints in \eqref{e:Pk-bv}.
\end{itemize}
For all large $k$, the localization constraints and the horizon constraint are inactive by \eqref{e:discconv}, since the optimal solutions converge as in Theorem~\ref{conver}. The first hard constraint in \eqref{e:Pk-bv} is inactive by \eqref{e18}. The second one is inactive by \eqref{e:BV-pen-zero}, or, when $A$ is convex, it keeps a scalar multiplier $\zeta^k\ge0$. The distance penalty in \eqref{e:Pk-cost} has zero subgradient by \eqref{e:BV-pen-zero}. Inactive constraints contribute no multipliers. The bound \eqref{e:CV} is an estimate while not being a constraint.

We replace the graph at $j=0$, the initial condition, and the redundant constraint $u^k_0\in U_k$ by the single closed set defined by
\[
\Omega_0:=\bigl\{(x^k_0,u^k_0,a^k_0,X_0)\bigm|(x^k_0,u^k_0)=(x_0,\ou(0)),\ X_0-f(x_0,a^k_0)\in K_0\bigr\},\qquad
K_0:=\Bigl\{\sum_{i\in I_0}\eta_i\nabla g_i(y_0)\Bigm|\eta_i\ge0\Bigr\}.
\]
Here $x_0$ and $\ou(0)$ are the given data, while $x^k_0$ and $u^k_0$ are variables. The constraint $u^k_0\in U_k$ is redundant, since $\ou(0)\in U\subset U_k$. The feasible set does not change. Indeed, the LICQ gives us $K_0=-N_C(y_0)$. Hence, at the fixed point $(x_0,\ou(0))$, the inclusion $-X_0\in F(x_0,\ou(0),a_0)$ is equivalent to $X_0-f(x_0,a_0)\in K_0$.

{\bf Step~2}. {\em The normal cone to $\Omega_0$.} The cone $K_0$ is finitely generated, and hence it is closed and convex. The mapping $(a,X)\mapsto(a,X-f(x_0,a))$ is a $C^1$ diffeomorphism with the $C^1$ inverse $(a,r)\mapsto(a,r+f(x_0,a))$. It maps the set $G:=\{(a,X)\mid X-f(x_0,a)\in K_0\}$ onto $\R^d\times K_0$. Thus \cite[Theorem~1.17]{Mordukhovich2006I} gives us the normal cone formula
\[
N_G(a,X)=\bigl\{\bigl(-\nabla_af(x_0,a)^*\omega,\ \omega\bigr)\bigm|\omega\in N_{K_0}\bigl(X-f(x_0,a)\bigr)\bigr\}.
\]
The $(x^k_0,u^k_0)$-components of the normals to $\Omega_0$ are arbitrary. No coderivative of the mapping $y\mapsto N_C(y)$ appears at the first cell since the point $y_0$ is fixed.

{\bf Step~3}. {\em The multiplier rule and the relations at $j=0$.} We apply the Fritz John rule for finitely many closed sets to the grouped family. There are two cases. If the normal qualification condition for the family fails, there exist normals to the sets, not all zero, whose sum vanishes. Then we put $\lm^k:=0$. If this condition holds, the intersection rule of \cite[Volume~II, Corollary~5.6]{Mordukhovich2006I} applies, and the stationarity condition for the cost holds with $\lm^k:=1$. In both cases, we obtain a new nonzero collection of multipliers. It is not obtained by changing a collection given by Theorem~\ref{ThDiscNOC}. The relations at the cells $j\ge1$, relation \eqref{d8} at $j=0$, the relation for $a_0$, and the endpoint and free time relations have the same form as in Theorem~\ref{ThDiscNOC}. The $X_0$-component gives us $\omega=h_k\Lm^k_0$. Since $\omega\in N_{K_0}(X_0-f(x_0,a_0))$ and $X_0-f(x_0,a_0)=\sum_{i\in I_0}\eta^k_{0i}\nabla g_i(y_0)$ by \eqref{d4}, we get
\[
\la\nabla g_i(y_0),\Lm^k_0\ra\le0\quad\mbox{and}\quad\eta^k_{0i}\la\nabla g_i(y_0),\Lm^k_0\ra=0\quad\mbox{for all}\ i\in I_0.
\]
This is {\bf(b)} and \eqref{d15} at $j=0$. Now we define $\gg^k_0:=0$, $\psi^{uk}_0:=0$,
\[
p^{xk}_0:=p^{xk}_1-h_k\lm^kw^{xk}_0+h_k\nabla_xf(x_0,a^k_0)^*\Lm^k_0+h_k\xi^k_0,
\]
and $p^{uk}_0$ by {\bf(c)}. Then \eqref{d5} and \eqref{d6} hold at $j=0$. Moreover, $\gg^k_0=0$ is admissible in \eqref{d11} and \eqref{d12}. Indeed, the vector $\beta^k_0:=-\nabla g(y_0)\Lm^k_0$ satisfies $\beta^k_{0i}\ge0$ and $\eta^k_{0i}\beta^k_{0i}=0$ for $i\in I_0$. For such arguments, zero belongs to the coderivative of the normal cone mapping $N_{\R^m_-}$ in \eqref{e:prop-coderiv}. The stationarity with respect to $(x^k_0,u^k_0)$ only determines the free components of the normal to $\Omega_0$.

{\bf Step~4}. {\em Nontriviality.} Suppose that $\lm^k=0$, $p^{xk}_k=0$, and $\psi^{uk}_j=0$ for $j\ge1$. By \eqref{d8}, we have $p^{uk}_j=0$ for $j=1,\ldots,k$. Then the first relation in \eqref{d16} gives us $\sum_i\al^k_i\nabla g_i=0$ with $\al^k_i\ge0$. Hence $\al^k=0$ by {\bf(H2.3)}. For $j\ge1$, relation \eqref{d6} gives us $\xi^k_j+\nabla g^*\gg^k_j=0$, and then \eqref{d5} yields $p^{xk}_j=(I+h_k\nabla_xf^*)p^{xk}_{j+1}$. Going backward from $p^{xk}_k=0$, we get $p^{xk}_j=0$ for all $j\ge1$. Thus $\Lm^k_j=0$ and $\xi^k_j=0$ for all $j$. The LICQ condition at the nodes from Theorem~\ref{Prop1} gives us $\gg^k_j=0$ for $j\ge1$. Hence all graph normals vanish. Moreover, $\omega=h_k\Lm^k_0=0$, and the free components of the normal to $\Omega_0$ vanish by the stationarity with respect to $(x^k_0,u^k_0)$. Since $\lm^k=0$ and $\Lm^k=0$, relation \eqref{d7} gives us $\psi^{ak}=0$ when the last constraint in \eqref{e:Pk-bv} is inactive. If $A$ is convex and this constraint is active, we get only $\psi^{ak}_j+\zeta^k(m_j-m_{j+1})=0$ with the vectors $m_j$ from Remark~\ref{rem:BV-pen}. We first remove $\zeta^k$. Fix $c\in A$. Pairing with $a^k_j-c$, summing over $j$, and using the convexity of $A$ and summation by parts, we get $\zeta^k\sum_{j=1}^{k-1}\|a^k_j-a^k_{j-1}\|\le0$. If $\zeta^k>0$, then the constraint is active, and this sum equals $\Tilde\mu+1>0$, a contradiction. Thus $\zeta^k=0$, and then $\psi^{ak}=0$. The endpoint normals vanish as well. The horizon component vanishes, since its cost and evolution terms are zero and the horizon constraint is inactive. Therefore, the whole collection is zero. This contradiction verifies {\bf(d)}. $\h$}

\section{Necessary Conditions for Sweeping Optimal Solutions}\label{S4}
\setcounter{equation}{0}

In this section, we formulate and partly discuss the main results of the paper, Theorem~\ref{ThNOC} and Proposition~\ref{prop:coderiv-selection}, providing novel necessary optimality conditions for the original free-time sweeping control problem $(P)$. Their proofs are given in the next Section~\ref{S6a}. Important consequences are given in Section~\ref{S6b}, and the endpoint nondegeneracy is studied in Section~\ref{S9b}.
\vspace*{-0.05in}

\begin{theorem}[necessary optimality conditions for controlled sweeping processes]\label{ThNOC}
Let $(\ox(\cdot),\ou(\cdot),\oa(\cdot),\T)$ be an r.i.l.m.\ for problem $(P)$ with $\T>0$ under the fulfillment of the assumptions in Theorem~{\rm\ref{ThDiscNOC}} {$($in particular, the running cost $\ell$ doesn't depend on $t$ near the optimal points, the assumptions of Theorem~{\rm\ref{conver}{\bf(ii)}} hold, and condition \eqref{e:BV-pen-zero} holds for all large $k$ along some sequence of optimal solutions to $(P_k)$, where its last condition is not needed if $A$ is convex$)$}. Suppose in addition that:

\begin{enumerate}
\item[\textnormal{\bf(a)}] \textsc{LICQ along the trajectory.} The active gradients $\{\nabla g_i(\ox(t)-\ou(t))\}_{i\in I(\ox(t)-\ou(t))}$ are linearly independent for every point $t\in[0,\T]$.
\item[\textnormal{\bf(b)}] \textsc{{BV velocities.}} The functions $\dot\ox(\cdot),\dot\ou(\cdot),\oa(\cdot)$ are of bounded variation on $[0,\T]$ and satisfy \eqref{e:var}. No further regularity of them is needed; see Remark~\ref{rem:Riemann-integrable}.
\item[\textnormal{\bf(c)}] \textsc{{Uniform} Lipschitz continuity of the running cost.} For some $\ve>0$ and every bounded set $K\subset\R^{3n+d}$, there is a constant $L_K>0$ such that $\ell$ is Lipschitz continuous with constant $L_K$ on the $\ve$-neighborhood of the set of points $(x,u,a,\dot x,\dot u)$ from the feasible velocity set
\begin{equation}\label{feas-vel}
\mathcal V:=\bigl\{(x,u,a,\dot x,\dot u)\in\R^{4n+d}\bigm|\,(-\dot x,u,a)\in F(x,u,a)\times(U+\varepsilon\B)\times(A+\varepsilon\B)\bigr\}
\end{equation}
with $(x,u,a,\dot u)\in K$. Here $\dot x$ is not restricted, i.e., the constant is uniform with respect to the velocity $\dot x$.
\item[\textnormal{\bf(d)}] \textsc{Separated structure of the running cost.} The function $\ell$ admits the decomposition
\begin{equation}\label{e:ell-rep}
\ell(t,x,u,a,\dot x,\dot u)=\ell_1(t,x,u,a,\dot x)+\ell_2(t,\dot u),
\end{equation}
where $\ell_2$ is $C^1$-smooth in $\dot u$ satisfying the estimates
\begin{equation}\label{l2-est}
\|\nabla_{\dot u}\ell_2(t,\dot u)\|\le L_{\ell_2}(1+\|\dot u\|),\qquad
\|\nabla_{\dot u}\ell_2(t,\dot u_1)-\nabla_{\dot u}\ell_2(s,\dot u_2)\|\le L_{\ell_2}\bigl(|t-s|+\|\dot u_1-\dot u_2\|\bigr)
\end{equation}
with some positive constant $L_{\ell_2}$.
\end{enumerate}
Then there exist a cost multiplier $\lm\ge 0$, a terminal multiplier $\alpha\in\R^m_+$, adjoint arcs
\[
p(\cdot)=\bigl(p^x(\cdot),p^u(\cdot)\bigr)\in W^{1,2}\bigl([0,\T];\R^{2n}\bigr),\qquad
q(\cdot)=\bigl(q^x(\cdot),q^u(\cdot)\bigr)\in BV\bigl([0,\T];\R^{2n}\bigr),
\]
a vector-valued Borel signed measure $\gamma\in C^*\bigl([0,\T];\R^m\bigr)$, a nonnegative vector-valued Borel measure $\psi^u\in C^*\bigl([0,\T];\R^s_+\bigr)$, a control multiplier $\psi^a\in L^2\bigl([0,\T];\R^d\bigr)$, and subgradient selections
\[
w(\cdot)\in L^2\bigl([0,\T];\R^{2n+d}\bigr),\qquad v(\cdot)\in L^2\bigl([0,\T];\R^{2n}\bigr)
\]
satisfying the convexified subdifferential inclusion
\begin{equation}\label{NOC:subg}
\bigl(w(t),v(t)\bigr)\in\co\,\partial\ell\bigl(t,\ox(t),\ou(t),\oa(t),\dot{\ox}(t),\dot{\ou}(t)\bigr)
\quad\text{for a.e.\ }t\in[0,\T],
\end{equation}
where $w=(w^x,w^u,w^a)$ and $v=(v^x,v^u)$, such that the following {\sc necessary optimality conditions} hold:
\begin{itemize}
\item[\bf(i)] \textsc{Primal representation and pointwise complementary slackness.} There exists a unique nonnegative multiplier $\eta\in L^2\bigl([0,\T];\R^m_+\bigr)$ such that
\begin{equation}\label{NOC:par}
\dot{\ox}(t)=f\bigl(\ox(t),\oa(t)\bigr)+\sum\limits_{i=1}^m\eta_i(t)\,\nabla g_i\bigl(\ox(t)-\ou(t)\bigr)\quad\text{for a.e.\ }t\in[0,\T],
\end{equation}
and we have for every $i=1,\ldots,m$ and a.e.\ $t\in[0,\T]$ the implications
\begin{equation}\label{NOC:cs}
g_i\bigl(\ox(t)-\ou(t)\bigr)>0\ \Lto\ \eta_i(t)=0,\quad
\eta_i(t)>0\ \Lto\ \la\nabla g_i(\ox(t)-\ou(t)),\,\lm v^x(t)-q^x(t)\ra=0.
\end{equation}
\item[\bf(ii)] \textsc{Adjoint systems.} The absolutely continuous component $p(\cdot)$ satisfies
\begin{equation}\label{NOC:adj-ac}
\dot p^x(t)=\lm w^x(t)-\nabla_x f\bigl(\ox(t),\oa(t)\bigr)^*\!{\bigl(q^x(t)-\lm v^x(t)\bigr)},\qquad
\dot p^u(t)=\lm w^u(t)
\end{equation}
for a.e.\ $t\in[0,\T]$, where $q(\cdot)$ is taken right-continuous on $(0,\T]$, $q(0)$ is the boundary value retained from the discrete approximation $($see Step~{\bf 4f}$)$, and where the BV component $q(\cdot)$ is determined pointwise on its second coordinate by
\begin{equation}\label{NOC:qu}
q^u(t)=\lm\,\nabla_{\dot u}\ell_2\bigl(t,\dot{\ou}(t)\bigr)\quad\text{for a.e.\ }\;t\in[0,\T].
\end{equation}
Writing the Hessian coupling density as
\[
\xi(t):=\Bigl(\sum_{i=1}^m\eta_i(t)\nabla^2 g_i(\ox(t)-\ou(t))\Bigr){\bigl(q^x(t)-\lm v^x(t)\bigr)},
\]
the two $u$-adjoints $(p^u,q^u)$ are coupled by the equality
\begin{equation}\label{NOC:puqu}
p^u(t)=q^u(t)+\int_t^{\T}\xi(s)\,ds+\int_{J_t}\nabla g\bigl(\ox(s)-\ou(s)\bigr)^*\,d\gamma(s)+\int_{J_t}\sum\limits_{j=1}^s\nabla\nu_j\bigl(\ou(s)\bigr)\,d\psi^u_j(s)
\end{equation}
for every $t\in[0,\T]$, while the first adjoint components $(p^x,q^x)$ are coupled by
\begin{equation}\label{NOC:qx}
\begin{aligned}
q^x(t)=&p^x(t)+\int_{J_t}\Bigl[\nabla g\bigl(\ox(s)-\ou(s)\bigr)^{\!*}d\gamma(s)\\[-0.2ex]
&+\Bigl(\sum_{i=1}^m\eta_i(s)\nabla^2 g_i(\ox(s)-\ou(s))\Bigr){\bigl(q^x(s)-\lm v^x(s)\bigr)}ds\Bigr]\quad\mbox{for every }\;t\in[0,\T].
\end{aligned}
\end{equation}
Here $J_t:=(t,\T]$ for $t\in(0,\T]$ and $J_0:=[0,\T]$. Thus an atom of $\gamma$ at $0$ contributes to the boundary value $q^x(0)$, while an atom at $\T$ appears as the jump $q^x(\T-)-p^x(\T)$, and $q^x(\T)=p^x(\T)$.
\item[\bf(iii)] \textsc{Adjoint inclusion for the perturbation control.} For a.e.\ $t\in[0,\T]$, we have
\begin{equation}\label{NOC:psi-a}
\psi^a(t):=-\lm w^a(t)+\nabla_a f\bigl(\ox(t),\oa(t)\bigr)^*\!{\bigl(q^x(t)-\lm v^x(t)\bigr)}\in\co\,N_A\bigl(\oa(t)\bigr).
\end{equation}
\item[\bf(iv)] \textsc{Transversality and free-time conditions.} The terminal multiplier $\alpha\in\R^m_+$ satisfies the relationships
\begin{equation}\label{NOC:alpha}
p^u(\T)=-\sum\limits_{i=1}^m\alpha_i\,\nabla g_i\bigl(\ox(\T)-\ou(\T)\bigr)\in N_C\bigl(\ox(\T)-\ou(\T)\bigr)
\end{equation}
together with the endpoint complementarity condition
$$
g_i(\ox(\T)-\ou(\T))>0\Lto\alpha_i=0\;\mbox{ for }\;i
=1,\ldots,m.
$$
Defining further the {\sc averaged Hamiltonian}
\begin{equation}\label{NOC:Hbar}
\mathcal H:=\frac{1}{\T}\int_0^{\T}\bigl[\la q^x(t),\dot\ox(t)\ra+\la q^u(t),\dot\ou(t)\ra-\lm\,\ell(t,\ox(t),\ou(t),\oa(t),\dot\ox(t),\dot\ou(t))\bigr]\,dt,
\end{equation}
we have the joint endpoint inclusion
\begin{equation}\label{NOC:trans}
\bigl(-p^x(\T)-p^u(\T),\ \mathcal H\bigr)\in\lm\,\partial\varphi\bigl(\ox(\T),\T\bigr)+N_{\Xi_x\times\Xi_T}\bigl(\ox(\T),\T\bigr).
\end{equation}
\item[\bf(v)] \textsc{Support of the sweeping and control constraint measures.} Each scalar component $\gamma^i$ is supported on the contact set $\{t\in[0,\T]:g_i(\ox(t)-\ou(t))=0\}$, and each $\psi^u_i$ is supported on $\{t\in[0,\T]:\nu_i(\ou(t))=0\}$.
\item[\bf(vi)] \textsc{Nontriviality conditions.} We always have the {\sc standard nontriviality condition}
\begin{equation}\label{NOC:NT}
\lm+\|q^u(0)\|+\|p(\T)\|+\|\psi^u\|_{TV}>0
\end{equation}
and the {\sc Arutyunov form of nontriviality} 
\begin{equation}\label{NOC:NT-meas}
\lm+\|p(\T)\|+\|\gamma\|_{TV}+\|\psi^u\|_{TV}>0.
\end{equation}
If in addition $g_i(x_0-\ou(0))>0$ for every $i=1,\ldots,m$, then the {\sc enhanced nontriviality}
\begin{equation}\label{NOC:NT-enh}
\lm+\|p(\T)\|+\|\psi^u\|_{TV}>0
\end{equation}
holds with dropping the measure term from \eqref{NOC:NT-meas}.
\end{itemize}
\end{theorem}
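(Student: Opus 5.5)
We pass to the limit in the discrete optimality system of Theorem~\ref{ThDiscNOC}, using the multipliers selected by Lemma~\ref{lem:first-cell}, along the sequence of optimal solutions $(\ox^k,\ou^k,\oa^k,\T^k)$ of $(P_k)$ for which \eqref{e:BV-pen-zero} holds. Theorem~\ref{conver} gives $\T^k\to\T$, uniform convergence of the states, and $L^2$ convergence of $(\dot\ox^k,\dot\ou^k,\oa^k)$. The LICQ in (a), together with compactness and the uniform convergence, implies that for large $k$ the active gradients at all nodes are linearly independent uniformly.

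First we normalize so that
\[
\lm^k+\|p^{xk}_k\|+\sum_{j\ge1}\|\psi^{uk}_j\|=1,
\]
which is admissible by Lemma~\ref{lem:first-cell}(d). We then introduce piecewise interpolants.
\begin{itemize}
\item $p^k$ is piecewise linear and collects the ``smooth'' parts of \eqref{d5}--\eqref{d6}, namely the terms $\lm^k w^k$ and $\nabla_xf^*\Lm^k$.
\item $q^k$ is piecewise constant, with $q^{xk}:=\Lm^k_j+\lm^k v^{xk}_j$ on each cell, so that $q^x-\lm v^x$ is the limit of $\Lm^k$.
\item The measures $\gamma^k:=\sum_j\gg^k_j\delta_{t_j}$, $\psi^{uk}:=\sum_j\psi^{uk}_j\delta_{t_j}$ and the densities $\eta^k_j$ complete the system.
\end{itemize}

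Next come the uniform bounds, which are the core of the argument. Relation \eqref{d4} and the LICQ give $\eta^k$ bounded in $L^2$, with $\eta^k$ converging strongly to the unique $\eta$ in \eqref{NOC:par}. Uniqueness of $\eta$ follows from the LICQ, and the implications \eqref{NOC:cs} follow from \eqref{d10} and \eqref{d15}. Relation \eqref{d8} with (d) and \eqref{l2-est} gives
\[
p^{uk}_{j+1}=\lm^k\nabla_{\dot u}\ell_2(V^u_j)+\lm^k\th^{Uk}_j/h_k .
\]
The residual term tends to zero in $L^2$ by \eqref{e:discconv}, and this yields \eqref{NOC:qu}.

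The main obstacle is the total-variation bound on $q^{xk}$, equivalently $\|\gamma^k\|_{TV}+\|\psi^{uk}\|_{TV}\le C$. To obtain it, we combine \eqref{d5} and \eqref{d6} so that the coderivative terms $\xi^k_j+\nabla g^*\gg^k_j$ are eliminated in the sum $p^{xk}+p^{uk}$. The coderivative terms are then expressed through the increments of $p^{uk}$, which equal $\lm^k$ times increments of $\nabla\ell_2(V^u_j)$ plus residuals. These increments are controlled by the BV bound $\sum\|V^u_j-V^u_{j-1}\|\le\Tilde\mu$ from \eqref{e:BV-pen-zero} and by the Lipschitz estimate \eqref{l2-est}. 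The cell residual terms $\th^{Uk}_j/h_k$ need care: their variation must be controlled via the BV property of $\dot\ou$ in (b) (the rescaled averaging in \eqref{d1}). The LICQ then converts the bound on $\sum_j\|\nabla g^*\gg^k_j+\xi^k_j\|$ into a bound on $\sum_j\|\gg^k_j\|$. Using (H3.3) and (H3.2) in the same way bounds $\sum_j\|\psi^{uk}_j\|$. Finally, (c) gives bounded subgradients $w^k,v^k$ uniformly in the velocity, so $\Lm^k$ is bounded in $L^\infty$ by a discrete Gronwall argument backward from $p^{xk}_k$. Should the elimination fail to close, we would first try a discrete Gronwall estimate applied directly to $\sum_j\|\Lm^k_j-\Lm^k_{j+1}\|$.

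With these bounds in hand, we extract limits: Helly's theorem for $q^k$, weak-$*$ convergence of $\gamma^k$ and $\psi^{uk}$ in $C^*$, and weak $L^2$ convergence for $w^k,v^k$ and $\psi^{ak}$. Mazur's lemma and the robustness of $\partial\ell$ give \eqref{NOC:subg} with convex hulls, and similarly \eqref{NOC:psi-a} follows from \eqref{d7}, since $\th^{ak}_j/h_k\to0$ in $L^2$. Summing \eqref{d5} and \eqref{d6} backward from $k$ yields \eqref{NOC:adj-ac}, \eqref{NOC:puqu} and \eqref{NOC:qx} after passing to the limit. The atoms at $0$ and $\T$ are handled by the right-continuous normalization and the convention for $J_0$. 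The support property (v) follows from \eqref{d11} and \eqref{e:psiu}, since the inactive indices are stable under uniform convergence.

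For the transversality conditions, \eqref{d16} passes to the limit via the robustness of $\partial\ph$ and of the normal cones, with $\mu^x_k\to0$; this gives \eqref{NOC:alpha} and \eqref{NOC:trans}. The convergence $\mathcal H^k\to\mathcal H$ follows from the strong $L^2$ convergence of the velocities, the weak-$*$/pointwise a.e. convergence of $q^k$ with uniform bounds, and continuity of $\ell$. The correction $\varrho^k\to0$ and $\lm^k(\T-\T^k)\to0$.

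Nontriviality is proved by contradiction. If $\lm=0$, $p(\T)=0$ and $\psi^u=0$, then $q^u=0$ and \eqref{NOC:puqu} forces the remaining terms to vanish. This contradicts the normalization, which passes to the limit because the masses of $\psi^{uk}$ cannot escape under weak-$*$ convergence on the compact interval. This yields \eqref{NOC:NT} and \eqref{NOC:NT-meas}. When the initial point is interior, the measure $\gamma$ vanishes near $0$, and the enhanced nontriviality \eqref{NOC:NT-enh} follows from Lemma~\ref{lem:first-cell}(d) directly.
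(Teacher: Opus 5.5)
Your architecture is the paper's: the BV bound on the $u$-adjoint from \eqref{d8}, \eqref{e:BV-pen-zero}, \eqref{l2-est} and $\var(\dot\ou)$; a backward Gr\"onwall bound for $p^{xk}_j$ from the sum of \eqref{d5} and \eqref{d6}; LICQ for the sweeping multipliers; then Helly, weak$^*$ compactness, Mazur, and the passage in \eqref{d16}. Normalizing with the multipliers of Lemma~\ref{lem:first-cell} is a legitimate variant. It is the selection used in Theorem~\ref{thm:initial-atom}, and your normalization then passes to $\lm+\|p^x(\T)\|+\|\psi^u\|_{TV}=1$, which gives \eqref{NOC:NT-enh} without interiority. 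However, $p^{uk}_0$ is not in your normalization, so you must still show it stays bounded. This follows from Lemma~\ref{lem:first-cell}(c) together with $h_k\xi^k_0\to0$, as in Theorem~\ref{thm:initial-atom}(i).

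\textbf{The genuine gap is conclusion (iii) in a case the statement explicitly allows: $A$ convex with the last condition of \eqref{e:BV-pen-zero} dropped.} Then the BV constraint on $a$ may be active, and \eqref{d7} carries the extra term $\zeta^k(m_j-m_{j+1})/h_k$ of Remark~\ref{rem:BV-pen}. The sequence $\psi^{ak}_j/h_k$ is then not bounded in $L^2$. The combination $(\psi^{ak}_j+\zeta^k(m_j-m_{j+1}))/h_k$ is bounded, but it does not lie in $N_A(\oa^k_j)$. So neither weak compactness nor outer semicontinuity of $N_A$ gives \eqref{NOC:psi-a}. You need the variational-inequality argument of Step~5:
\begin{itemize}
\item test against $b\in A^k$ with $\sum_j\|b_j-b_{j-1}\|\le\Tilde\mu+1$;
\item use convexity of $A$, summation by parts with $m_0=m_k=0$, and activity of the constraint when $\zeta^k>0$ to discard the $\zeta^k$ term;
\item pass to $\int_0^{\T}\langle\psi^a,b-\oa\rangle\,dt\le0$ for BV test functions $b$;
\item localize with needle variations $b=\oa+s\chi_{[\tau,\tau+\delta)}(c-\oa)$.
\end{itemize}

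\textbf{Several scaling claims are also wrong as written.}
\begin{itemize}
\item $\gg^k_j$ enters \eqref{d5}--\eqref{d6} as a rate. The discrete sweeping measure is therefore $\sum_jh_k\gg^k_j\delta_{t^k_j}$ (or $\gg^k(t)\,dt$ as in \eqref{e:DM}), not $\sum_j\gg^k_j\delta_{t^k_j}$. A bound on $\sum_j\|\gg^k_j\|$ is false in general.
\item What \eqref{d6} actually gives is a bound on $\sum_jh_k\|\xi^k_j+\nabla g^*\gg^k_j\|$. LICQ converts this into a bound on $\sum_jh_k\|\gg^k_j\|$ only after a separate $L^1$ bound on $\xi^k$. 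That bound comes from H\"older with $\|\eta^k\|_{L^2}$ and $\|\Lm^k\|_{L^2}$.
\item Your claim that $\Lm^k$ is bounded in $L^\infty$ is unjustified. $\Lm^k_j$ contains $-\lm^kh_k^{-1}\th^{Xk}_j$, and the discrete velocities $V^x_j$ are bounded only in $L^2$. The $L^2$ bound is all you have, and it suffices.
\item For the same reason, your piecewise constant $q^{xk}:=\Lm^k_j+\lm^kv^{xk}_j=p^{xk}_{j+1}-\lm^kh_k^{-1}\th^{Xk}_j$ need not have uniformly bounded variation, since $(P_k)$ bounds no variation of $V^x_j$. Apply Helly to the forward nodal values $p^{xk}_{j+1}$ instead, and identify the limit using $h_k^{-1}\th^{Xk}\to0$ in $L^2$.
\item The convergence $\mathcal H^k\to\mathcal H$ needs the Lipschitz bound in (c), which is uniform in $\dot x$, to get $L^1$ convergence of the running cost. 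Continuity of $\ell$ alone is not enough.
\end{itemize}
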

A complete proof of Theorem~\ref{ThNOC} will be given in Section~\ref{S6a}. In the remainder of this section, we discuss some aspects of the main result and present auxiliary statements used in the subsequent proof.

Observe first that the support conditions in {\bf(v)} admit a {\em pointwise} refinement coupling the density behavior of the sweeping measure $\gamma$ to the limiting coderivative of $N_{\R^m_-}$, i.e., Mordukhovich's second-order subdifferential from \eqref{2nd},  along the optimal arc. Unlike conclusions {\bf(i)--(vi)}, such a refinement is \emph{conditional}. Indeed, its proof requires a stability property of the discrete coderivative multipliers of Theorem~\ref{ThDiscNOC} that is not delivered by the discrete approximation itself, since the dual pairings below involve the velocity subgradients $v^{xk}$ converging only weakly. To formulate the required condition, we extend the discrete multipliers $\gg^k_j$, $\eta^k_j$, $\Lm^k_j$ of Theorem~\ref{ThDiscNOC} as piecewise constant functions $\gg^k(\cdot)$, $\eta^k(\cdot)$, $\Lm^k(\cdot)$ on the mesh cells $[t^k_j,t^k_{j+1})$ (the convention formalized in \eqref{e:ADE} below), let $d\gamma^k:=\gg^k(t)\,dt$ be the associated discrete sweeping measures from \eqref{e:DM}, and introduce the discrete and limiting dual pairings by
\begin{equation}\label{e:beta-def}
\beta^k_i(t):=-\bigl\langle\nabla g_i\bigl(\ox^k(\vartheta^k(t))-\ou^k(\vartheta^k(t))\bigr),\,\Lm^k(t)\bigr\rangle,\qquad
\beta_i(t):=\bigl\langle\nabla g_i\bigl(\ox(t)-\ou(t)\bigr),\,\lm v^x(t)-q^x(t)\bigr\rangle,
\end{equation}
for $i=1,\ldots,m$, where $\vartheta^k(t)$ denotes the left endpoint of the mesh cell containing $t$.

\begin{definition}[multiplier stability]\label{def:MS}
The discrete multipliers of Theorem~\ref{ThDiscNOC} are said to be {\sc stable along the optimal arc} provided that, along the subsequence fixed in the proof of Theorem~\ref{ThNOC}, we have the properties:
\begin{enumerate}\itemsep0.2ex
\item[\textnormal{\bf(MS1)}] \textsc{Sign stability:} for each $i=1,\ldots,m$, it holds that $\beta^k_i\to\beta_i$ as $k\to\infty$ in Lebesgue measure on the set $\{t\in[0,\T]:\beta_i(t)\neq 0\}$.
\item[\textnormal{\bf(MS2)}] \textsc{No concentration:} the densities $\{\gg^k(\cdot)\}_{k\in\N}$, extended by zero outside $[0,\T^k]$, are uniformly integrable on $[0,\T+\ve]$, which is equivalent to saying that the discrete sweeping measures $\{d\gamma^k\}$ are uniformly absolutely continuous with respect to the Lebesgue measure.
\end{enumerate}
\end{definition}

The introduced multiplier stability is used in the following proposition the verification of which is given in Step~8 of the proof of Theorem~\ref{ThNOC} in Section~\ref{S6a}. 

\begin{proposition}[conditional coderivative selection]\label{prop:coderiv-selection}
In addition to the assumption of Theorem~\ref{ThNOC}, suppose that the discrete multipliers are stable in the sense of Definition~\ref{def:MS}. Denote by
\begin{equation}\label{NOC:dens}
D_\gamma(t):=\Limsup_{\substack{S\ni t\\ |S|\downarrow 0}}\frac{\gamma(S)}{|S|},\qquad t\in[0,\T],
\end{equation}
the symmetric Radon--Nikodym-type derivative of $\gamma$ defined as the set of all $y\in\R^m$ for which there exists a sequence of intervals $S_n\ni t$ with $|S_n|\downarrow 0$ and $\gamma(S_n)/|S_n|\to y$. Then $\gamma$ is absolutely continuous with respect to the Lebesgue measure, $D_\gamma(t)=\bigl\{\tfrac{d\gamma}{dt}(t)\bigr\}$ for a.e.\ $t\in[0,\T]$, and the {\sc coderivative selection condition}
\begin{equation}\label{NOC:coderiv}
D_\gamma(t)\cap D^*N_{\R^m_-}\!\bigl(-g(\ox(t)-\ou(t)),\,\eta(t)\bigr)\bigl(\nabla g(\ox(t)-\ou(t)){(\lm v^x(t)-q^x(t))}\bigr)\neq\varnothing
\end{equation}
is satisfied for a.e.\ $t\in[0,\T]$.
\end{proposition}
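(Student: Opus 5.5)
The plan is to work along the subsequence fixed in the proof of Theorem~\ref{ThNOC}, where the limits $\gamma$, $q^x$, $v^x$, $\eta$ are already constructed. The discrete relations \eqref{d11}--\eqref{d12} and \eqref{d15} are then passed to the limit pointwise a.e., with (MS1) and (MS2) supplying the compactness that weak convergence alone does not give.

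\textbf{Step 1: absolute continuity of $\gamma$.} By (MS2), the densities $\gg^k$ are uniformly integrable on $[0,\T+\ve]$. The Dunford--Pettis theorem then gives a subsequence with $\gg^k\rightharpoonup\tilde\gg$ weakly in $L^1$. The measures $d\gamma^k=\gg^k\,dt$ converge weakly$^*$ to $\gamma$ by construction, and testing against continuous functions identifies $d\gamma=\tilde\gg\,dt$. Hence $\gamma$ is absolutely continuous, and no atoms occur, in particular at $0$ and $\T$. The Lebesgue differentiation theorem shows that $\gamma(S_n)/|S_n|\to\tilde\gg(t)$ for every sequence of intervals $S_n\ni t$ shrinking to $t$, at every Lebesgue point $t$. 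So $D_\gamma(t)=\{\tilde\gg(t)\}=\{\tfrac{d\gamma}{dt}(t)\}$ a.e., and \eqref{NOC:coderiv} reduces to showing that $\tilde\gg(t)$ belongs to the indicated coderivative for a.e.\ $t$.

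\textbf{Step 2: the coderivative set a.e.} By \eqref{e:prop-DNR} and \eqref{e17}, evaluated at $\bar x=-g(\ox-\ou)$, $\bar v=\eta$ and $\bar y=\nabla g(\ox-\ou)(\lm v^x-q^x)=(\beta_i)_i$, the required membership consists of three parts:
\begin{itemize}
\item[(1)] $\eta_i\beta_i=0$, which is the second implication in \eqref{NOC:cs};
\item[(2)] $\tilde\gg_i=0$ where $g_i>0$, or where $\eta_i=0$ and $\beta_i<0$;
\item[(3)] $\tilde\gg_i\ge0$ where $g_i=0$, $\eta_i=0$ and $\beta_i>0$.
\end{itemize}

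The sign is consistent with the discrete side. Indeed, $\Lm^k_j\to q^x-\lm v^x$ in the appropriate weak sense, so the discrete argument $\beta^k_j=-\nabla g\Lm^k_j$ corresponds to $\beta$ in \eqref{e:beta-def}. The case $g_i>0$ follows from the uniform convergence of states: $g_i(\ox^k-\ou^k)>0$ eventually on compact subsets of the open set $\{g_i(\ox-\ou)>0\}$, so $\gg^k_i=0$ there by \eqref{d11}, and the weak limit vanishes a.e.

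For the remaining cases, (MS1) lets me pass to a further subsequence such that $\beta^k_i\to\beta_i$ a.e.\ on $E_i^\pm:=\{\pm\beta_i>0\}$. Egorov's theorem then gives large sets on which $\mathrm{sign}\,\beta^k_i=\mathrm{sign}\,\beta_i$ uniformly for large $k$.

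\textbf{Step 3: the sets where $\eta_i=0$ (main obstacle).} The index sets $I_1,I_2$ also depend on $\eta^k_{ji}=0$, and $\eta^k\to\eta$ only in $L^2$, so $\eta_i(t)=0$ does not force $\eta^k_i=0$. I expect this to be the main obstacle. The first thing I would try is to use \eqref{d15}: $\eta^k_{ji}>0$ implies $\beta^k_{ji}=0$. On the Egorov sets where $\beta^k_i$ keeps a strict sign, this forces $\eta^k_i=0$ automatically. There \eqref{d11} gives $\gg^k_i=0$ on $E_i^-$, and \eqref{d12}, combined with the activity $g_i(\ox^k-\ou^k)=0$ from the representation \eqref{d4}, gives $\gg^k_i\ge0$ on $E_i^+$.

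Weak $L^1$ limits preserve the constraints $\gg_i=0$ and $\gg_i\ge0$ on fixed measurable sets. Exhausting $E_i^\pm$ by the Egorov sets yields (2) and (3) a.e. A residual difficulty is that, on $E_i^+$, near-activity at the discrete level must be upgraded to exact activity $g_i(\ox^k_j-\ou^k_j)=0$. Where this fails we have $g_i>0$ and hence $\gg^k_i=0$ anyway, so the inequality still holds. On $\{\beta_i=0\}$, or where $g_i=0$ and $\eta_i>0$, the component $\tilde\gg_i$ is unrestricted. Combining these cases gives \eqref{NOC:coderiv} a.e.
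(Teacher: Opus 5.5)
Your proposal is correct and follows essentially the same route as the paper's proof (Step~8 of the proof of Theorem~\ref{ThNOC}). It uses (MS2) with Dunford--Pettis to get absolute continuity and to identify $D_\gamma$ at Lebesgue points, (MS1) for a.e.\ sign convergence, \eqref{d15} to make $\eta^k_i=0$ automatic wherever $\beta^k_i\neq0$, and discrete feasibility $g_i\ge0$ to cover the near-active case. The differences are technical: you pass the sign constraints to the limit through weak $L^1$ closedness on Egorov/exhaustion sets instead of the paper's Mazur convex combinations (to get a uniform strict sign there, intersect with $\{|\beta_i|>1/n\}$), and you read off $\gamma\ll\mathrm{Leb}$ directly from the weak $L^1$ limit rather than from the modulus/outer-regularity argument.
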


\begin{remark}[sufficient conditions for multiplier stability]\label{rem:MS-suff}
Neither {\bf(MS1)} nor {\bf(MS2)} follows from the discrete approximation alone, but both are implied by the following verifiable structural conditions.

\smallskip
{\bf(a)} {\em Sign stability.} Along the subsequence in the proof, we have that $\nabla g_i(\ox^k(\vartheta^k(\cdot))-\ou^k(\vartheta^k(\cdot)))\to\nabla g_i(\ox-\ou)$ uniformly, $q^{xk}\to q^x$ pointwise on $[0,\T]$ by Helly's theorem, and $h_k^{-1}\th^{Xk}\to 0$ strongly in $L^2$. Thus the only obstruction to {\bf(MS1)} is the velocity subgradients $v^{xk}$, which in general converge only weakly in $L^2$. Consequently, {\bf(MS1)} holds whenever $\lm^kv^{xk}\to\lm v^x$ in measure. We consider the two verifiable cases:

\textnormal{\bf(a$_1$)} \emph{Abnormal regime $\lm=0$.} Lemma~\ref{lem:subgradient-bound} gives us $\sup_k\|v^{xk}\|_{L^\infty}<\infty$, and hence $\|\lm^kv^{xk}\|_{L^\infty}\to0$. 

\textnormal{\bf(a$_2$)} \emph{Strict differentiability in $\dot x$.} Suppose that $\ell$ is $C^1$-smooth in $\dot x$ near the reference data, with $\nabla_{\dot x}\ell$ jointly continuous in all the arguments there. Since $\ell$ is locally Lipschitzian, the convexified joint limiting subdifferential in \eqref{NOC:subg} reduces to the Clarke subdifferential. By the gradient formula \cite[Theorem~2.5.1]{Clarke1983}, the latter is the convex hull of all limits of gradients of $\ell$. The $\dot x$-components of these limits equal $\nabla_{\dot x}\ell$ due to the joint continuity of $\nabla_{\dot x}\ell$. Hence the projection of the Clarke subdifferential onto the $\dot x$-slot is the singleton $\{\nabla_{\dot x}\ell\}$. Hence $v^{xk}(t)=\nabla_{\dot x}\ell$ evaluated at the discrete data, and the joint continuity of $\nabla_{\dot x}\ell$ together with the a.e.\ convergence of the discrete arguments tells us that $v^{xk}\to v^x$ a.e.\ We stress that merely pointwise differentiability of $\ell$ in $\dot x$ would not suffice: it neither pins the $\dot x$-component of a \emph{joint} limiting subgradient nor stabilizes it under convergence of the arguments. A Mayer cost ($\ell\equiv0$){,} as in the polyhedral reduction of Remark~\ref{rem:polyhedral}{, and a running cost independent of $\dot x$, as in the minimum time problem of Section~\ref{sec:obstacle}, are special cases} of {\bf(a$_2$)}.

\smallskip
{\bf(b)} {\em No concentration.} {\bf(MS2)} holds under a uniform $L^p([0,\T+\ve];\R^m)$ bound on $\{\gg^k\}$ with $p>1$. In particular, any uniform $L^\infty$ bound on the discrete sweeping densities implies {\bf(MS2)}. We emphasize that the polyhedrality of $C$ alone yields \emph{neither} condition: it fixes the gradients $\nabla g_i$ but controls neither the velocity subgradients nor a possible concentration of the discrete densities. Multiplier stability is automatic only in the structured specializations considered in this paper --- Mayer or velocity differentiable running costs combined with bounded multiplier densities. Conversely, {\bf(MS2)} excludes impulsive regimes{, in which the sweeping measure $\gamma$ has atoms}.

\smallskip
{\bf(c)} {\em Role of the assumptions.} Without {\bf (MS1)} and {\bf(MS2)}, the cellwise sign patterns of $\gg^k$ may oscillate in $k$ on sets of positive measure while the discrete masses concentrate on the oscillation set. The cell averages of $\gg^k$ then need not respect the coderivative value at the limiting data, and the passage of the cellwise inclusion to \eqref{NOC:coderiv} breaks down. On the other hand, the support relationships of Theorem~\ref{ThNOC}(v) require no stability and constitute the unconditional content of the limiting second-order information.
\end{remark}\vspace*{-0.05in}

Now we turn to the other nonstandard object in Theorem~\ref{ThNOC}, which is the {\em averaged Hamiltonian} appearing in the free-time transversality conditions.

\begin{remark}[on the averaged Hamiltonian]\label{rem:Hbar}
The three features of the Hamiltonian \eqref{NOC:Hbar} merit comments.

\smallskip
{\bf(a)} {\em Time averaging is intrinsic to the BV passage.} The discrete approximation framework provides the averaged discrete Hamiltonian $\mathcal H^k$ defined by \eqref{e:Hk} as a discrete Ces\`aro average over mesh slots. Moreover, Step~{\bf 6b} in the proof of Theorem~\ref{ThNOC} establishes the convergence $\mathcal H^k\to\mathcal H$ as $k\to\infty$. A pointwise Hamiltonian identity along the optimal arc of the form available in the classical free-time PMP for smooth differential inclusions, would require pointwise convergence of the Hamiltonian density and a corresponding pointwise transversality slot. Both fail in the present BV/measure setting. The function $\la q^x,\dot{\ox}\ra+\la q^u,\dot{\ou}\ra$ is integrable on $[0,\T]$. However, the limiting procedure does not provide a pointwise Hamiltonian identity or maximization condition for it. The Ces\`aro time average is the level that is kept during the limiting procedure.

\smallskip
{\bf(b)} {\em Free-time content of} \eqref{NOC:trans}. The second component of \eqref{NOC:trans} reduces, after projecting onto the time coordinate, to the following inclusion
\[
\mathcal H\in\lm\,\partial_T\varphi(\ox(\T),\T)+N_{\Xi_T}(\T).
\]
{Here $\partial_T\varphi$ denotes the projection of $\partial\varphi$ onto the time coordinate. Heuristically, $\mathcal H$ plays the role classically taken by $-\partial_T V$, where $V(x_0,T)$ denotes the optimal value of $(P)$ as a function of the initial state $x_0$ and the terminal time $T$. We do not prove such a sensitivity relation here.} When $\Xi_T=\{\T\}$ (fixed terminal time), the condition $N_{\Xi_T}(\T)=\R$ leaves the $\mathcal H$-slot unconstrained, and the free-time identity agrees with the standard fixed-horizon transversality. Conversely, when $\T\in\inter\Xi_T$, the condition $N_{\Xi_T}(\T)=\{0\}$ and the identity reduces to the inclusion $\mathcal H\in\lm\,\partial_T\varphi(\ox(\T),\T)$. When $\varphi$ is smooth, this is the standard free-time Hamiltonian condition.

\smallskip
{\bf(c)} {\em Distinction from the measure modified Hamiltonian.} The averaged Hamiltonian $\mathcal H$ is a scalar associated with the entire optimal arc, coupling the free-time slot of \eqref{NOC:trans} and the terminal time sensitivity of the cost; it carries no pointwise structural information. The two objects play complementary roles in the optimality system. Namely, $\mathcal H$ governs the boundary condition that concerns the optimal time $\T$, while a pointwise maximum condition for the control pair $(\ou(t),\oa(t))$ requires a measure modified Hamiltonian, which is not developed in this paper.
\end{remark}

Let us summarize the new content of Theorem~\ref{ThNOC} and Proposition~\ref{prop:coderiv-selection}. It also gives new information for fixed-time controlled sweeping problems; cf.\ \cite{CaoMordukhovich2019,CaoColomboMordukhovich2021a,CaoColomboMordukhovich2021b,CaoMordukhovich2026,Mordukhovich2026} and the references therein.
\begin{itemize}\itemsep0.15ex
\item[\textnormal{\bf(N1)}] \emph{Free terminal time.} The transversality conclusion {\bf(iv)} carries the averaged Hamiltonian $\mathcal H$ in its second slot. This free-endpoint condition selects the optimal horizon $\T$. At the discrete level, it originates in the term $\lm^k(\T-\T^k)$ of the discrete transversality \cite[Theorem~7.2]{CaoMordukhovich2026}. This term is absent from the fixed-time discretization of \cite{CaoColomboMordukhovich2021a,CaoColomboMordukhovich2021b}.
\item[\textnormal{\bf(N2)}] \emph{Support and coderivative selection.} The support relationships in conclusion~{\bf(v)} are unconditional. The measure-valued coderivative selection of Proposition~\ref{prop:coderiv-selection} couples the sweeping multiplier $\gamma$ with the limiting coderivative of $N_{\R^m_-}$ along the optimal arc. It holds under the multiplier stability of Definition~\ref{def:MS}.
\item[\textnormal{\bf(N3)}] \emph{Nonconvex moving set and relaxation.} Conclusion~{\bf(v)} handles the nonconvex prox-regular set $C=\bigcap_{i=1}^m\{g_i\ge0\}$ under LICQ via the coderivative formula of Theorem~\ref{Prop1}. Its second-order term is absent from the convex polyhedral framework of \cite{ColomboMordukhovichNguyen2020,ColomboMordukhovichNguyen2024}; see Remark~\ref{rem:polyhedral}. Moreover, the r.i.l.m.\ notion of Definition~\ref{Def2} permits a running cost that is nonconvex in the velocity variables.
\item[\textnormal{\bf(N4)}] \emph{Nontriviality.} Besides the standard condition \eqref{NOC:NT}, conclusion~{\bf(vi)} gives the Arutyunov form \eqref{NOC:NT-meas}, which is unconditional, and the interior data form \eqref{NOC:NT-enh}. For the multipliers selected in Lemma~\ref{lem:first-cell}, Section~\ref{S9b} removes the interiority assumption and bounds the endpoint atoms of $\gamma$. To the best of our knowledge, no analogue of such nondegeneracy conditions is available in the literature on sweeping optimal control.
\end{itemize}
Conclusions {\bf(i)}--{\bf(iii)} and the standard nontriviality \eqref{NOC:NT} are structurally inherited from the fixed-time framework, being however carried through the variable-mesh passage that the given proof renders quantitatively.\vspace*{0.05in}

The following remark explains the mechanisms behind {\bf(N1)} and {\bf(N2)} in more detail.

\begin{remark}[mechanisms behind the new content]\label{rem:newcontent}
The two major ingredients of the optimality system in Theorem~\ref{ThNOC} are absent from the prior controlled sweeping analysis; namely, the {\em free-time Hamiltonian transversality} conclusion~{\bf(iv)} and the {\em support relationships} in conclusion~{\bf(v)} together with the {\em conditional coderivative selection} in Proposition~\ref{prop:coderiv-selection}. Neither of them appears in the fixed-horizon nonconvex framework \cite{CaoMordukhovich2019,CaoColomboMordukhovich2021a,CaoColomboMordukhovich2021b}, or in the convex polyhedral framework \cite{ColomboMordukhovichNguyen2020,ColomboMordukhovichNguyen2024}, or in the continuous penalization route \cite{NourZeidan2022,NourZeidan2023a,PinhoEtAl2023}. We comment on each of them in the following discussions.

\smallskip
{\bf(I)}{\em Averaged Hamiltonian relationships.} The second component of the discrete transversality condition \eqref{d16}, isolated at an optimal solution to $(P_k)$, can be rewritten via the {\em discrete averaged Hamiltonian inclusion}
\begin{equation}\label{e:disc-pontryagin-free-time}
\mathcal H^k\in\lm^k\,\partial_T\varphi\bigl(\ox^k_k,\T^k\bigr)+N_{\Xi^k_T}(\T^k)-\lm^k(\T-\T^k)-\lm^k\varrho^k
\end{equation}
with $\mathcal H^k$ taken from \eqref{e:Hk} and with $\partial_T\varphi$ signifying the projection of $\partial\varphi$ onto the time coordinate.

Observe that the correction terms $\lm^k(\T-\T^k)$ and $\lm^k\varrho^k$ in the averaged discrete Hamiltonian inclusion \eqref{e:disc-pontryagin-free-time} vanish in the limit by Theorem~\ref{conver} and Lemma~\ref{lem:free-time-rates}{\bf(i)}, respectively, with the explicit rate $|\varrho^k|=\mathcal O(h_k)$ supplied by Lemma~\ref{lem:free-time-rates}{\bf(ii)} under the residual rate assumption stated therein. In the limit, the discrete inclusion \eqref{e:disc-pontryagin-free-time} converges to the {\em continuous free-time averaged Hamiltonian condition}
\[
\mathcal H\in\lm\,\partial_T\varphi(\ox(\T),\T)+N_{\Xi_T}(\T),
\]
which is the second slot of the endpoint inclusion in \eqref{NOC:trans}.

\smallskip
{\bf(II)} {\em Support and coderivative selection conditions.} Conclusion {\bf(v)} of Theorem~\ref{ThNOC} --- the unconditional support relationships --- together with the conditional coderivative selection \eqref{NOC:coderiv} of Proposition~\ref{prop:coderiv-selection} is the {\em second-order} measure-valued reflection of the discrete coderivative inclusion. The following three structural observations make this claim to be more precise.

{\bf(II.a)} {\em Passage to the limit of measure multipliers.} Writing $z^k(t):=\ox^k(\vartheta^k(t))-\ou^k(\vartheta^k(t))$ and extending $\gg^k,\eta^k,\Lm^k$ as piecewise constant functions on $[t^k_j,t^k_{j+1})$, the discrete relationships in \eqref{d11} and \eqref{d12} together with the coderivative formula of Theorem~\ref{Prop1} yield the cellwise inclusions
\[
\gg^k(t)\in D^*N_{\R^m_-}\!\bigl(-g(z^k(t)),\,\eta^k(t)\bigr)\bigl(-\nabla g(z^k(t))\,\Lm^k(t)\bigr)\quad\text{for each }\;t\in(t^k_j,t^k_{j+1}),\ j=0,\ldots,k-1.
\]
As $k\to\infty$, the discrete multiplier $\gg^k$ accumulates only as a Borel measure $\gamma\in C^*([0,\T];\R^m)$ while not a BV function. The {\em symmetric outer derivative} $D_\gamma(t)$ defined in \eqref{NOC:dens} is the right object for the limit: it captures the cluster points of averaged densities of $\gamma$ on shrinking neighborhoods of $t$, and the intersection condition \eqref{NOC:coderiv} states that at least one such cluster point lies in the limiting coderivative output. This intersection condition is the exact measure-theoretic counterpart of the discrete cellwise inclusion displayed above being a precise mechanism by which the second-order discrete content survives the loss of regularity. The passage to the limit  from discrete approximations is not free. As Proposition~\ref{prop:coderiv-selection} shows, it requires the sign stability of the discrete pairings together with the no 
concentration of the discrete densities in Definition~\ref{def:MS}). The {\em intrinsic density level selection} in \eqref{NOC:coderiv} provides an essential refinement compared to the fixed-horizon setting (see, e.g., \cite[Theorem~4.1]{CaoColomboMordukhovich2021a}), which is achieved under the {\em multiplier stability} formulated in Definition~\ref{def:MS}.

{\bf(II.b)} {\em Active sets in the prox-regular setting}. In the convex polyhedral case \cite{ColomboMordukhovichNguyen2020,ColomboMordukhovichNguyen2024}, we have that $\nabla^2 g_i\equiv 0$, the Hessian coupling vector $\xi$ vanishes identically, the coderivative $D^*N_{\R^m_-}$ reduces to a sign-pattern set, and hence the coderivative selection \eqref{NOC:coderiv} reduces to a familiar componentwise complementarity condition already known in that setting. In the continuous penalization framework of \cite{NourZeidan2022,NourZeidan2023a,PinhoEtAl2023}, the normal cone $N_C$ is replaced by the smooth surrogate condition $\varepsilon^{-1}\nabla[\tfrac{1}{2}\dist^2(\,\cdot\,;C)]$ before passing to the corresponding PMP version. In this case, the limit as $\varepsilon\downarrow 0$ doesn't generate a Radon measure $\gamma$ concentrated on the active contact set, and no analog of the coderivative selection \eqref{NOC:coderiv} appears in the resulting optimality system. Condition \eqref{NOC:coderiv} is therefore an {\em active-set, second-order characteristics} that distinguishes the {\em prox-regular} framework associated with discrete approximations from both the polyhedral and the smooth-penalization alternatives.

{\bf(II.c)} {\em Bridge to the Hamiltonian formalism.} Under the assumptions of Proposition~\ref{prop:coderiv-selection}, the intersection set on the right-hand side of \eqref{NOC:coderiv} provides, via the standard measurable selection results (see, e.g., \cite[Chapter~14]{Rockafellar1998}) applied to the measurable set-valued mapping
\[
t\mapsto D_\gamma(t)\cap D^*N_{\R^m_-}\!\bigl(-g(\ox-\ou),\eta\bigr)\bigl(\nabla g(\ox-\ou){(\lm v^x-q^x)}\bigr),
\]
a measurable selection $\nu(t)\in\R^m$ satisfying the coderivative inclusion pointwise a.e.\ on $[0,\oT]$. The existence of such a selection is the structural prerequisite for any Hamiltonian form refinement of the optimality system that absorbs the sweeping geometry information into a single maximum condition. The modified Hamiltonian framework of \cite{HoangMordukhovich2019} produces one such refinement; more in this direction will be obtained in our future research.
\end{remark}

\section{Proofs of the Main Results}\label{S6a}

Although the proofs of the main necessary optimality conditions of Theorem~\ref{ThNOC} and Proposition~\ref{prop:coderiv-selection}, which are given in this section, use the discrete approximation strategy developed for fixed-horizon controlled sweeping processes, the passage to the limit here differs structurally from the fixed-time argument. The terminal time is an optimization variable, and consequently the discrete mesh $h_k=\T^k/k$, the endpoint localization in the product space for $(x,T)$, the adjoint normalization, and the transversality relationships all depend on the varying horizons $\T^k$. This dependence creates additional limiting terms absent from the fixed-horizon theory, most notably the horizon discrepancy $\lm^k(\T-\T^k)$, the localization residual $\lm^k\varrho^k$, and the averaged discrete Hamiltonian $\mathcal H^k$ whose convergence is established in Lemma~\ref{lem:free-time-rates} and in Step~{\bf 6} of the proof of Theorem~\ref{ThNOC}.\\[1ex]
Let us present three auxiliary results used in the proof. The first one bounds subgradients of the running cost.

\begin{lemma}[uniform bound for running cost subgradients]\label{lem:subgradient-bound}
Under the assumptions in {\bf(b)} and {\bf(c)} of Theorem~{\rm\ref{ThNOC}}, there exists a constant $L_\ell>0$ such that every selected subgradient $(w^k,v^k)$ along these discrete data satisfies the estimate
\begin{equation}\label{vel-est}
\|(w^k(t),v^k(t))\|\le L_\ell\;\mbox{ for a.e.\ }\;t\in[0,\T]\;\mbox{ and large }\;k.
\end{equation}
In particular, the sequence of pairs $(w^k,v^k)$ is bounded in $L^\infty$ and hence weakly compact in $L^2$. Moreover, if $\lm^k\to0$, then $\lm^k v^{xk}\to0$ as $k\to\infty$ in the norm topology of $L^\infty$.
\end{lemma}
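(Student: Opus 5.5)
The plan is to exploit the fact that a limiting subgradient of a function that is Lipschitz with constant $L$ near a point has norm at most $L$; see \cite[Theorem~1.22]{Mordukhovich2006I} or \cite[Theorem~9.13]{Rockafellar1998}. It then suffices to show that all discrete arguments at which the subgradients \eqref{d2} are taken lie, for large $k$, inside one fixed region to which assumption {\bf(c)} of Theorem~\ref{ThNOC} applies. That is, they must lie in the set $\mathcal V$ from \eqref{feas-vel}, with their nonvelocity part $(x,u,a,\dot u)$ in a fixed bounded set $K$. We then take $L_\ell:=L_K$. The velocity $\dot x$ need not be bounded, because the constant in {\bf(c)} is uniform in $\dot x$. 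This matters, since the $V^x_j$ are controlled only in $L^2$.

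\textbf{Building $K$.} First, the states $(\ox^k_j,\ou^k_j)$ converge uniformly to $(\ox,\ou)$ by Theorem~\ref{conver}, so they stay in a fixed ball for large $k$. Second, $\oa^k_j\in A$, which is compact by {\bf(H4)}. Third, the $u$-velocities satisfy $\|V^u_j\|\le C_V$ by \eqref{e:CV}, where $C_V$ depends only on $\mu$, $\Tilde\mu$, $\T$ and $\rho$. This uses the BV data in {\bf(b)} together with \eqref{e:var}. Hence $(x,u,a,\dot u)$ ranges over a fixed bounded set $K\subset\R^{3n+d}$.

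\textbf{Membership in $\mathcal V$.} The discrete inclusion \eqref{e:Pk-dyn} gives $-V^x_j\in F(\ox^k_j,\ou^k_j,\oa^k_j)$. We also have $\ou^k_j\in U_k$. Since $\nu_i(\ou^k_j)\le L_\nu\delta_k\to0$ and $\ou^k$ is uniformly close to $\ou(t)\in U$, we get $\ou^k_j\in U+\ve\B$ for large $k$. Finally, $\oa^k_j\in A$. Hence the argument $(\ox^k_j,\ou^k_j,\oa^k_j,V^x_j,V^u_j)$ lies in $\mathcal V$. By {\bf(c)}, $\ell$ is $L_K$-Lipschitz on an open $\ve$-neighborhood of that point, and the time variable is frozen since $\ell$ is autonomous near the optimal points. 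The subgradient $(w^k_j,v^k_j)$ therefore has norm at most $L_K$.

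Extending the subgradients piecewise constantly on the cells $[t^k_j,t^k_{j+1})$ gives \eqref{vel-est} a.e. on $[0,\T]$; beyond $\T^k$ we use the zero or constant extension. This yields boundedness in $L^\infty$, hence in $L^2$ on a bounded interval, and weak compactness in $L^2$ follows from reflexivity (Banach--Alaoglu). The last claim follows from $\|\lm^kv^{xk}\|_{L^\infty}\le\lm^kL_\ell\to0$.

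The main obstacle is the membership step. Its delicate points are the need for the $\dot u$-bound \eqref{e:CV}, which is where the BV assumption {\bf(b)} enters, and the fact that no bound on $V^x_j$ is available. The uniformity of {\bf(c)} in $\dot x$ is precisely what overcomes the latter.
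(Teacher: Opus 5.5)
Your proposal is correct and follows essentially the same route as the paper: you place every discrete argument of \eqref{d2} in the feasible velocity set $\mathcal V$ with its nonvelocity components in one bounded set $K$, using \eqref{e:discconv}, the compactness of $A$, and the bound \eqref{e:CV} on $V^u_j$. You then use the fact that limiting subgradients of an $L_K$-Lipschitz function have norm at most $L_K$, and the uniformity of assumption {\bf(c)} in $\dot x$ handles the unbounded velocities $V^x_j$. Your appeal to $\nu_i(\ou^k_j)\le L_\nu\delta_k$ is redundant, since uniform convergence to $\ou(t)\in U$ already gives $\ou^k_j\in U+\ve\B$; the paper also remarks that the bound survives convexification, which is used later in Step~1 but is not needed for the lemma itself.
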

{\bf Proof}.
{The discrete velocities $V^x_j$ need not be uniformly bounded, so we do not use a compactness argument for them. The uniform convergence in \eqref{e:discconv} keeps $(\ox^k,\ou^k)$ in a bounded set, and $\oa^k_j\in A$. Moreover, \eqref{e:CV} gives us $\|V^u_j\|\le C_V$. The reference data $(\ox,\ou,\oa,\dot\ou)$ are bounded by \eqref{e31} and assumption {\bf(b)}. Hence the components $(x,u,a,\dot u)$ of all the arguments lie in one bounded set $K$. The discrete arguments belong to $\mathcal V$, since $-V^x_j\in F(\ox^k_j,\ou^k_j,\oa^k_j)$ by \eqref{e:Pk-dyn} and $\ou^k_j\in U+\ve\B$ for large $k$ by \eqref{e:discconv}. Assumption {\bf(c)} provides a Lipschitz constant $L_K$ near the points in question, whatever the size of the $\dot x$-component. The limiting subgradients of a function that is Lipschitz with constant $L_K$ near a point are bounded by $L_K$, and this bound is preserved under the convexification. This verifies \eqref{vel-est} with $L_\ell:=L_K$.} Finally, the last two assertions immediately follow from the above discussions. $\h$
\vspace*{0.05in}

The next two lemmas are a discrete Gr\"onwall inequality, which yields the uniform bound on the discrete adjoint vectors $p^{xk}_j$, and a free-time lemma, which establishes the convergence $\varrho^k\to0$. The convergence $\mathcal H^k\to\mathcal H$ of the averaged Hamiltonians is established in Step~{\bf 6b} below.

\begin{lemma}[discrete Gr\"onwall inequality]\label{lem:gronwall}
Let $h>0$, $L>0$, $k\in\N$, and let $\{a_j\}_{j=0}^{k}\subset[0,\infty)$ and $\{B_j\}_{j=0}^{k-1}\subset[0,\infty)$ satisfy the inequalities
\begin{equation}\label{e:gronwall-hyp}
a_j\le(1+hL)\,a_{j+1}+B_j\qquad\text{for }\;j=0,\ldots,k-1.
\end{equation}
Then for every $j\in\{0,\ldots,k\}$, we have the estimates
\begin{equation}\label{e:gronwall-concl}
a_j\le e^{Lh(k-j)}\,a_k+e^{Lh(k-j)}\!\sum_{i=j}^{k-1}B_i.
\end{equation}
In particular, if $kh\le T$ for some $T>0$, $\sum\limits_{i=0}^{k-1}B_i\le M_1$, and $a_k\le 1$, then $\max_{0\le j\le k}a_j\le e^{LT}(1+M_1)$.
\end{lemma}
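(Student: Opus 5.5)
I will prove \eqref{e:gronwall-concl} by backward induction on $j$, starting from $j=k$ and stepping down to $j=0$. At $j=k$ the sum is empty and the exponential factor equals $1$, so \eqref{e:gronwall-concl} reads $a_k\le a_k$. For the inductive step, assume \eqref{e:gronwall-concl} at index $j+1$ and insert it into the hypothesis \eqref{e:gronwall-hyp}:
\[
a_j\le(1+hL)\Bigl(e^{Lh(k-j-1)}a_k+e^{Lh(k-j-1)}\sum_{i=j+1}^{k-1}B_i\Bigr)+B_j .
\]
The only estimate needed is the elementary inequality $1+hL\le e^{hL}$, which gives $(1+hL)e^{Lh(k-j-1)}\le e^{Lh(k-j)}$. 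The remaining term $B_j$ is nonnegative and $e^{Lh(k-j)}\ge1$, so $B_j\le e^{Lh(k-j)}B_j$. Adding it to the sum therefore extends the sum from $i=j+1$ down to $i=j$ under the same factor $e^{Lh(k-j)}$, which is exactly \eqref{e:gronwall-concl} at index $j$. The nonnegativity of $a_j$ and $B_j$ is what makes these bounds monotone, so both substitutions are legitimate.

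For the corollary, I use $k-j\le k$ together with $kh\le T$ to get $e^{Lh(k-j)}\le e^{LT}$. Since $\sum_{i=j}^{k-1}B_i\le\sum_{i=0}^{k-1}B_i\le M_1$ (again by $B_i\ge0$) and $a_k\le1$, this yields $a_j\le e^{LT}(1+M_1)$ uniformly in $j$. Taking the maximum over $j$ completes the argument.

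There is no real obstacle here. The only point requiring care is keeping the exponent bookkeeping consistent: each step picks up exactly one factor $1+hL$. An alternative would be to unroll the recursion explicitly as $a_j\le(1+hL)^{k-j}a_k+\sum_{i=j}^{k-1}(1+hL)^{i-j}B_i$ and then bound every power by $(1+hL)^{k-j}\le e^{Lh(k-j)}$. I prefer the induction because it is shorter.
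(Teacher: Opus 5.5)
Your proof is correct and is essentially the paper's argument: backward induction on the recursion, with the only difference being that the paper first unrolls it to $a_j\le(1+hL)^{k-j}a_k+\sum_{i=j}^{k-1}(1+hL)^{i-j}B_i$ and then applies $(1+hL)^{k-j}\le e^{Lh(k-j)}$. That is exactly the alternative you mention, whereas you absorb $1+hL\le e^{hL}$ at each inductive step; the final assertion is handled the same way in both, using $kh\le T$, $B_i\ge0$, and $0\le a_k\le1$.
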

{\bf Proof}.
Backward induction on \eqref{e:gronwall-hyp} gives us
\[
a_j\le(1+hL)^{k-j}a_k+\sum_{i=j}^{k-1}(1+hL)^{i-j}B_i\qquad\text{for }j\in\{0,\ldots,k\}.
\]
Then the elementary bound $(1+hL)^{k-j}\le e^{Lh(k-j)}$ yields \eqref{e:gronwall-concl}. The final assertion immediately follows from the assumption $kh\le T$ and the assumed bounds on $a_k$ and $\sum B_i$. $\h$

\begin{lemma}[convergence of localization residuals]\label{lem:free-time-rates}
Let $(\ox(\cdot),\ou(\cdot),\oa(\cdot),\T)$ be an r.i.l.m.\ for the sweeping control problem $(P)$, and let $(\ox^k,\ou^k,\oa^k,\T^k)$ be optimal solutions to the discrete approximation problems $(P_k)$, $k\in\N$, under the assumptions of Theorem~{\rm\ref{conver}}. Then the localization residual \eqref{e:varrho} satisfies $\varrho^k=-\tfrac12R^k\le0$, where
\begin{equation}\label{e:rho-asymp}
R^k:=\frac{1}{\T^k}\int_0^{\T^k}\Bigl\|\bigl(\oa^k(t),\dot\ox^k(t),\dot\ou^k(t)\bigr)-(\oa,\dot\ox,\dot\ou)\bigl(t\T/\T^k\bigr)\Bigr\|^2\,dt.
\end{equation}
Moreover, the following assertions hold:
\begin{enumerate}
\item[\bf(i)] $\varrho^k\to 0$ as $k\to\infty$.
\item[\bf(ii)] If $R^k=\mathcal O(h_k)$, then $|\varrho^k|=\mathcal O(h_k)$ as $k\to\infty$.
\end{enumerate}
\end{lemma}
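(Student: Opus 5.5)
The plan is to identify $\varrho^k$ with $-\tfrac12R^k$ by direct inspection, and then to read off both assertions from the strong convergence in Theorem~\ref{conver}.

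For the identity, I use the convention that $\ox^k$ and $\ou^k$ are piecewise linear and $\oa^k$ is piecewise constant on the mesh cells of $\Delta_k(\T^k)$. On $(t^k_j,t^k_{j+1})$ we then have $\dot\ox^k(t)=V^x_j$, $\dot\ou^k(t)=V^u_j$ and $\oa^k(t)=\oa^k_j$. Hence the sum of cell integrals in \eqref{e:varrho} equals the single integral over $[0,\T^k]$ in \eqref{e:rho-asymp}. Comparing prefactors gives $\varrho^k=-\tfrac{1}{2}R^k$, and $R^k\ge0$ because the integrand is a squared norm.

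For \textbf{(i)}, I would use Step~3 of the proof of Theorem~\ref{conver}. There it is shown that the penalty part $Q_k[z^k]$ tends to zero, and in particular the rescaled localization term $\tfrac12\int_0^{\T^k}\|\cdots\|^2dt$ in \eqref{e:Pk-cost} tends to zero. This term is exactly $\tfrac12\T^kR^k$. Since $\T^k\to\T>0$, we have $\T^k\ge\T/2$ for large $k$, so $R^k\le 2\cdot(\tfrac12\T^kR^k)\cdot 2/\T\to0$. An alternative route is to split
\[
\bigl(\oa^k,\dot\ox^k,\dot\ou^k\bigr)-(\oa,\dot\ox,\dot\ou)(t\T/\T^k)
=\bigl[\bigl(\oa^k,\dot\ox^k,\dot\ou^k\bigr)-(\oa,\dot\ox,\dot\ou)(t)\bigr]+\bigl[(\oa,\dot\ox,\dot\ou)(t)-(\oa,\dot\ox,\dot\ou)(t\T/\T^k)\bigr].
\]
The first bracket tends to zero in $L^2$ by the last relation in \eqref{e:discconv}. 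The second bracket tends to zero by continuity of dilations in $L^2$, applied to the reference functions extended by the stated convention, together with $\T^k\to\T$. I expect the first route to suffice, and I would cite it directly.

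Assertion \textbf{(ii)} is immediate from the identity: $|\varrho^k|=\tfrac12R^k=\mathcal O(h_k)$.

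The only delicate point is bookkeeping. The integrand must be interpreted on the cells of the variable mesh $\Delta_k(\T^k)$, and the reference functions must be extended past $\T$ when $\T^k>\T$. Because the rescaled argument $t\T/\T^k$ stays in $[0,\T]$, however, no extension is actually needed here.
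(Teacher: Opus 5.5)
Your proof is correct. The identity $\varrho^k=-\tfrac12R^k$ and assertion (ii) are handled exactly as in the paper. Your ``alternative route'' for (i) is precisely the paper's argument. It bounds $(\T^kR^k)^{1/2}$ by two terms: the $L^2(0,\T^k)$ distance appearing in the last relation of \eqref{e:discconv}, and $\|r-r(\cdot\,\T/\T^k)\|_{L^2(0,\T^k)}$, where $r:=(\oa,\dot\ox,\dot\ou)$ is extended by zero beyond $\T$. The second term is then handled by continuity of dilations.

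Your primary route instead invokes Step~3 of the proof of Theorem~\ref{conver}. There the rescaled localization term, which equals $\tfrac12\T^kR^k$ at the optimal solution, is shown to vanish, and you then divide by $\T^k\ge\T/2$. This is the more direct argument. Step~4 of that proof derives the last relation of \eqref{e:discconv} from exactly this vanishing via continuity of dilations, so the paper's proof of the lemma effectively runs that step backward. As you observe, your route also needs no extension of the reference functions, since $t\T/\T^k\in[0,\T]$.

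The advantage of the paper's route is that it relies only on the stated conclusion \eqref{e:discconv} of Theorem~\ref{conver}, not on an intermediate step inside its proof. Both versions use $\T^k\to\T>0$ to pass from $\T^kR^k\to0$ to $R^k\to0$, a point you make explicit.
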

{\bf Proof}. The identity $\varrho^k=-\tfrac12R^k$ is a restatement of \eqref{e:varrho}, since the discrete velocities and controls are constant on each mesh cell. To prove {\bf(i)}, denote $r:=(\oa,\dot\ox,\dot\ou)$, extended by zero beyond $\T$, and $r^k(t):=r(t\T/\T^k)$. Then 
\[
\bigl(\T^kR^k\bigr)^{1/2}\le\bigl\|(\oa^k,\dot\ox^k,\dot\ou^k)-r\bigr\|_{L^2(0,\T^k)}+\bigl\|r-r^k\bigr\|_{L^2(0,\T^k)}.
\]
The first term tends to zero by the strong convergence \eqref{e:discconv} of Theorem~\ref{conver}. The second term tends to zero since $\T^k\to\T$ and dilations are continuous in $L^2$. Thus $R^k\to0$, and so $\varrho^k\to0$ as $k\to\infty$. Assertion {\bf(ii)} follows directly from $\varrho^k=-\tfrac12R^k$. $\h$

\begin{remark}[on the rescaled localization]\label{rem:Riemann-integrable}
The localization term in \eqref{e:Pk-cost} uses the reference velocities rescaled to the horizon $T^k$. This makes the term linear in $T^k$. Hence its derivative in the free-time condition \eqref{d16} is the penalty itself divided by $T^k$, which tends to zero by the strong convergence of the discrete optimal solutions. Without the rescaling, as in \cite{CaoMordukhovich2026}, the derivative contains nodal values of the reference velocities. These values need not converge when the reference velocities jump inside the mesh cells. Such jumps occur, e.g., in the corridor model of Section~\ref{sec:corridor} at the contact time. With the rescaling, no regularity of the reference velocities beyond the BV condition \eqref{e:var} is needed.
\end{remark}

Finally, we summarize in Table~\ref{tab:disc-to-cont} the discrete-to-continuous correspondence underlying the nine-step limiting procedure in the proof of Theorem~\ref{ThNOC}.
The table reads as a companion to Table~\ref{tab:disc-notation} (which records the meaning of each discrete object) and Remark~\ref{rem:newcontent} (which discusses the structural mechanisms of the passage to the limit from discrete approximations).

\begin{table}[ht]
\centering
\footnotesize
\renewcommand{\arraystretch}{1.30}
\begin{tabular}{@{}p{3.5cm}@{\quad}p{3.8cm}@{\quad}p{4.5cm}@{}}
\hline\noalign{\smallskip}
\textbf{Discrete object (from Theorem~\ref{ThDiscNOC}} & \textbf{Continuous limit (in Theorem~\ref{ThNOC})} 
& \textbf{Limit mechanism} \\
\noalign{\smallskip}\hline\noalign{\smallskip}
$V^x_j,\,V^u_j$ in \eqref{e:disc-vel} & $\dot\ox(t),\,\dot\ou(t)$ & strong $L^2$ convergence \eqref{e:strongconv2} \\
$\lm^k\in[0,1]$ & $\lm\in[0,1]$ & normalization \eqref{e:dnc} + finite-dimensional compactness \\
$\al^k\in\R^m_+$ & $\al\in\R^m_+$ & finite-dimensional convergence under LICQ \\
$p^k_j=(p^{xk}_j,p^{uk}_j)$ & $p(\cdot)=(p^x,p^u)\in W^{1,2}([0,\T];\R^{2n})$ & weak $L^2$-compactness on $\dot p^{xk},\dot p^{uk}$, Mazur, Helly \\
$\Lm^k_j$ in \eqref{d9} & $q^x(t)-\lm\,v^x(t)$ & combination of rows above + $L^2$ vanishing of $h_k^{-1}\theta^{Xk}$ \\
$\eta^k_j\in\R^m_+$ & $\eta(t)\in L^2([0,\T];\R^m_+)$ & LICQ + inverse-triangle \textbf{(H2.3)} \\
$\gg^k_j\in\R^m$ & sweeping measure $\gamma\in C^*([0,\T];\R^m)$ & Banach--Alaoglu weak$^*$ compactness \\
$\xi^k_j=Q_{\eta^k_j}\Lm^k_j$ in \eqref{d9} & 
Hessian coupling density $\xi\in L^1([0,\T];\R^n)$ (measure $\xi(t)\,dt$) & 
product of $\eta^k$ and $\Lm^k$ rows + weak$^*$ compactness \\
$\psi^{uk}_{ji},\,\psi^{ak}_j$ & $\psi^u\in C^*([0,\T];\R^s_+)$, $\psi^a\in L^2([0,\T];\R^d)$ & Banach--Alaoglu (BV side) / Mazur ($L^2$ side) \\
$(\mathcal H^k,\varrho^k,\T^k)$ & $(\mathcal H,\,0,\,\T)$ & Lemma~\ref{lem:free-time-rates}(i), Step~6b + Theorem~\ref{conver} \\
\noalign{\smallskip}\hline
\end{tabular}
\caption{Discrete-to-continuous dictionary for the limiting analysis. Each discrete object produced by Theorem~\ref{ThDiscNOC} is mapped to its continuous-time counterpart in Theorem~\ref{ThNOC}, together with the calculus tool driving the convergence. Used as a navigation aid throughout the nine step proof below.}
\label{tab:disc-to-cont}
\end{table}\vspace*{-0.12in}

{\bf Proof} (of Theorem~\ref{ThNOC}). Our approach extends the discrete approximation method developed in \cite{CaoMordukhovich2019} for the fixed-horizon controlled sweeping processes to the free-time setting. This requires nontrivial developments. The variable horizon $\T^k$ enters at every level of the result and its proof: the discrete mesh $\Delta_k(\T^k)$, the endpoint localization \eqref{e:Pk-loc}, the adjoint normalization \eqref{e:dnc}, and the additional free-time block $(\lm^k(\T-\T^k),\lm^k\varrho^k,\mathcal H^k)$ in the discrete transversality conditions \eqref{d16}. The proof below provides structural ingredients absent from the fixed-horizon proof. Steps~{\bf 1, 2, 3--4, 5, 7}, and {\bf 9a} inherit the template of the fixed-horizon arguments. At the same time, the major elements of the proof are specific for the free-time setting and are established here for the first time. Namely, Steps~{\bf 3--4} using the modified Gr\"onwall inequality of Lemma~\ref{lem:gronwall}), the transversality block (Step~{\bf 6}, via Lemma~\ref{lem:free-time-rates}), the conditional coderivative selection (Step~{\bf 8} proving Proposition~\ref{prop:coderiv-selection} under the multiplier stability of Definition~\ref{def:MS}), the Arutyunov form nontriviality (Step~{\bf 9b}, and the enhanced nontriviality under the interiority assumption (Step~{\bf 9c}). The nine steps structure the proof of the theorem are as follows:
\begin{gather*}
\text{Step~1}\to\eqref{NOC:subg},\quad
\text{Step~2}\to\text{(i)},\quad
\text{Steps~3, 4}\to\text{(ii)},\quad
\text{Step~5}\to\text{(iii)},\\[0.2ex]
\text{Step~6}\to\text{(iv)},\quad
\text{Step~7}\to\text{(v)},\quad
\text{Step~8}\to\text{Proposition~\ref{prop:coderiv-selection}},\quad
\text{Step~9}\to\text{(vi)}.
\end{gather*}

By Theorem~\ref{conver}, the given r.i.l.m.\ $(\ox(\cdot),\ou(\cdot),\oa(\cdot),\T)$ is strongly approximated in the $W^{1,2}([0,\T];\R^{2n})\times L^2([0,\T];\R^d)\times[0,\infty)$ topology by a sequence $\{(\ox^k(\cdot),\ou^k(\cdot),\oa^k(\cdot),\T^k)\}$ of extended optimal solutions to $(P_k)$. We take the sequence along which \eqref{e:BV-pen-zero} holds for all large $k$. This is possible because Theorem~\ref{conver}(ii) applies to an arbitrary sequence of optimal solutions. The piecewise linear and piecewise constant extensions of the discrete arcs are defined on $[0,\T]$ by
\[
\big(\ox^k(t),\ou^k(t),\oa^k(t)\big):=\big(\ox^k(\T^k),\ou^k(\T^k),\oa^k(\T^k)\big)\qquad\text{for }t\in(\T^k,\T]
\]
if $\T^k<\T$, and by the restriction to $[0,\T]$ if $\T^k\ge\T$, where the arcs are given on $[0,\T^k]\supset[0,\T]$ in the latter case. Theorem~\ref{conver} tells us that $\T^k\to\T$, and thus both cases occur only for finitely many indices outside any neighborhood of $\T$, and all the integrals below are taken over $[0,\min\{\T^k,\T\}]$. For each fixed $k\in\N$, Theorem~\ref{ThDiscNOC} furnishes the full discrete dual system \eqref{d1}--\eqref{d17}. We pass to the limit as $k\to\infty$ in the following nine steps.

\medskip
\noindent\textbf{Step~1}. \emph{Subgradient inclusion \eqref{NOC:subg}.}
For all $k\in\N$, extend the discrete subgradients $(w^k_j,v^k_j)$ from \eqref{d2} to piecewise constant functions on $[0,\T]$ by
$$
(w^k(t),v^k(t)):=(w^k_j,v^k_j)\;\mbox{ if }\;t\in[t^k_j,t^k_{j+1})\;\mbox{ and }\;(w^k(t),v^k(t)):=(w^k(\T^k),v^k(\T^k))\;\mbox{ if }\;(\T^k,\T].
$$
Lemma~\ref{lem:subgradient-bound} gives us the uniform subgradient bound and hence ensures the weak compactness of $(w^k,v^k)$ in $L^2([0,\T];\R^{2n+d}\times\R^{2n})$. Passing to a subsequence, we get $(w^k,v^k)\rightharpoonup(w,v)$ in $L^2$. By Mazur's theorem, a sequence of convex combinations of $(w^k,v^k)$ converges strongly in $L^2$ and hence pointwise a.e.\ (again along a subsequence) to $(w,v)$. Since $\ell$ is measurable in $t$ and, by \textbf{(H5)} and assumption~{\bf(c)}, lower semicontinuous and locally Lipschitz in the remaining variables, its limiting subdifferential $\partial\ell(t,\cdot)$ has closed graph for a.e.\ $t$. Combining the latter with the a.e.\ pointwise convergence of the arguments $(\ox^k,\ou^k,\oa^k,\dot\ox^k,\dot\ou^k)\to(\ox,\ou,\oa,\dot\ox,\dot\ou)$ and of the above subgradient selections readily verifies \eqref{NOC:subg}.

\medskip
\noindent\textbf{Step~2}. \emph{Primal equation \eqref{NOC:par} and first complementary slackness condition in \eqref{NOC:cs}.} Define 
\[
\th^{Xk}(t):=\frac{\th^{Xk}_j}{h_k},\qquad
\th^{Uk}(t):=\frac{\th^{Uk}_j}{h_k},\qquad
\th^{ak}(t):=\frac{\th^{ak}_j}{h_k}
\qquad\text{for }t\in[t^k_j,t^k_{j+1})
\]
with $(\th^{Xk}_j,\th^{Uk}_j,\th^{ak}_j)$ as in \eqref{d1}. Direct estimation using \eqref{d1} and the Cauchy--Schwarz inequality yields
\begin{equation}\label{e:theta-L2}
\begin{aligned}
\int_0^{\T^k}\|\th^{Xk}(t)\|^2\,dt
&=\sum_{j=0}^{k-1}\frac{\|\th^{Xk}_j\|^2}{h_k}\\
&\le\sum_{j=0}^{k-1}\int_{t^k_j}^{t^k_{j+1}}
\left\|\frac{\ox^k_{j+1}-\ox^k_j}{h_k}-{\dot\ox(t\T/\T^k)}\right\|^2dt
=\int_0^{\T^k}\|\dot\ox^k(t)-{\dot\ox(t\T/\T^k)}\|^2dt\to0.
\end{aligned}
\end{equation}
Similar estimates hold for $\th^{Uk}$ and $\th^{ak}$. Hence the strong convergence in \eqref{e:discconv}, together with the continuity of dilations in $L^2$ as in the proof of Lemma~\ref{lem:free-time-rates}, ensures that
\begin{equation}\label{e:theta-zero}
\int_0^{\T^k}\|\th^{Xk}(t)\|^2\,dt,\ \int_0^{\T^k}\|\th^{Uk}(t)\|^2\,dt,\ \int_0^{\T^k}\|\th^{ak}(t)\|^2\,dt\;\to\;0,
\end{equation}
and the corresponding subsequences converge pointwise a.e.\ to zero.

Introduce further the piecewise constant multiplier extensions
\[
\eta^k(t):=\eta^k_j\ \text{for }t\in[t^k_j,t^k_{j+1}),\qquad\eta^k(\T^k):=\eta^k_{k-1},
\]
where the multipliers $\eta^k_j$ are taken from  Theorem~\ref{ThDiscNOC}.
The discrete primal relationship in \eqref{d4} reads as
\begin{equation}\label{e:par-dis}
\dot\ox^k(t)-f\bigl(\ox^k(\vartheta^k(t)),\oa^k(t)\bigr)=\sum\limits_{i=1}^m\eta_i^k(t)\,\nabla g_i\bigl(\ox^k(\vartheta^k(t))-\ou^k(\vartheta^k(t))\bigr)\;\mbox{ for every }\;t\in[t^k_j,t^k_{j+1}).
\end{equation}
Since the velocity $\dot\ox^k(t)$ is constant on each slot, \eqref{e:par-dis} is exactly the nodal identity \eqref{d4}.
Applying \textbf{(H2.2)} and \textbf{(H2.3)} to the right-hand side of \eqref{e:par-dis} leads us to the inequalities
\[
\|\dot\ox^k(t)\|+\|f(\ox^k(\vartheta^k(t)),\oa^k(t))\|
\ge\Bigl\|\sum\limits_{i=1}^m\eta_i^k(t)\,\nabla g_i(\ox^k(\vartheta^k(t))-\ou^k(\vartheta^k(t)))\Bigr\|
\ge\frac{M_1}{\beta}\sum\limits_{i=1}^m\eta_i^k(t).
\]
Combining the latter with the linear growth bound \eqref{e11} in \textbf{(H1)} yields
\begin{equation}\label{e:eta-bound}
\|\dot\ox^k(t)\|+M\bigl(1+\|\ox^k(\vartheta^k(t))\|\bigr)\ge\frac{M_1}{\beta}\sum\limits_{i=1}^m\eta_i^k(t).
\end{equation}
The $W^{1,2}$-strong convergence of $\ox^k$ to $\ox$ from {Theorem~\ref{conver}} gives us an $L^2$-bound of $\dot\ox^k$ uniformly in $k$, which being combined with \eqref{e:eta-bound} and \textbf{(H1)} establishes the uniform $L^2$-estimate
\begin{equation}\label{e:eta-L2bd}
\|\eta^k\|_{L^2([0,\T];\R^m_+)}\le C_\eta,\quad\mbox{where }\;C_\eta:=\tfrac{\beta}{M_1}\,\sup_{k\in\N}\bigl(\|\dot\ox^k\|_{L^2}+M\sqrt{\T}+M\|\ox^k\|_{L^2([0,\T])}\bigr)<\infty.
\end{equation}
Observe that the supremum in  \eqref{e:eta-L2bd} is finite due to the $W^{1,2}$-convergence $\ox^k\to\ox$ in {Theorem~\ref{conver}}. The weak compactness of bounded sets in $L^2$ ensures the existence of a cluster point 
$\eta\in L^2([0,\T];\R^m_+)$ of $\{\eta^k\}$. In fact, we get the {\em strong} convergence $\eta^k\to\eta$ in $L^2$. To see this, we use the LICQ in {\bf(a)} and a finite cover of $[0,\T]$. For each $t_l\in[0,\T]$, the gradients $\nabla g_i$ with $i\in I_l:=I(\ox(t_l)-\ou(t_l))$ are linearly independent near $\ox(t_l)-\ou(t_l)$, and the indices $i\notin I_l$ stay inactive nearby. By compactness, finitely many intervals $J_l\ni t_l$ cover $[0,\T]$. On each $J_l$ and for all large $k$, we get from \eqref{d10} that $\eta^k_i(t)=0$ for $i\notin I_l$, while \eqref{e:par-dis} gives us
\[
\eta^k_{I_l}(t)=G^k_l(t)^{-1}\nabla g_{I_l}\bigl(\ox^k(\vartheta^k(t))-\ou^k(\vartheta^k(t))\bigr)\bigl(\dot\ox^k(t)-f(\ox^k(\vartheta^k(t)),\oa^k(t))\bigr).
\]
Here $G^k_l(t)$ is the Gram matrix of $\{\nabla g_i\}_{i\in I_l}$ at the nodal argument. It converges uniformly to an invertible limit on $J_l$. Since the last factor converges strongly in $L^2$, so does $\eta^k$ on each $J_l$, and hence on $[0,\T]$. Thus a subsequence of $\{\eta^k\}$ (no relabeling) exhibits the a.e.\ pointwise convergence $\eta^k(t)\to\eta(t)$. Passing to the limit in \eqref{e:par-dis} by using the continuity of $\nabla g_i$ and the uniform convergence $(\ox^k(\vartheta^k(\cdot)),\ou^k(\vartheta^k(\cdot)))\to(\ox,\ou)$ verifies \eqref{NOC:par}.

To justify now the first relationship in \eqref{NOC:cs}, suppose that $g_i(\ox(t)-\ou(t))>0$. By the continuity of $g_i$ and the pointwise convergence $(\ox^k(t),\ou^k(t))\to(\ox(t),\ou(t))$, we get $g_i(\ox^k(t)-\ou^k(t))>0$ for all large $k$. Then \eqref{d10} forces $\eta_i^k(t)=0$, and hence $\eta_i(t)=0$ in the limit. The uniqueness of $\eta\in L^2\bigl([0,\T];\R^m_+\bigr)$ in representation \eqref{NOC:par} follows directly from the imposed LICQ in ${\bf(a)}$.

\medskip
\noindent\textbf{Step~3}. \emph{Construction of approximating dual elements on $[0,\T]$.} Let $q^k(t):=(q^{xk}(t),q^{uk}(t))$ be the piecewise linear interpolant of the nodal values $q^k(t^k_j):=p^k_j$, $j=0,\ldots,k$. For $t\in[t^k_j,t^k_{j+1})$, $j=0,\ldots,k-1$, define
\begin{equation}\label{e:ADE}
\psi^k(t):=\frac{1}{h_k}\psi^k_j,\quad
\gg^k(t):=\gg^k_j,\quad
\widetilde\gg^k(t):=\sum\limits_{i=1}^m\gg^k_{ji}\,\nabla g_i(\ox^k_j-\ou^k_j),\quad
\Lm^k(t):=\Lm^k_j,\quad
\xi^k(t):=\xi^k_j,
\end{equation}
with the endpoint values $\psi^{uk}(\T^k):=0$, $\psi^{ak}(\T^k):=0$, and $\gg^k(\T^k):=0$. Let $\vartheta^k(t):=\max\{t^k_j:t^k_j\le t\}$ denote the left mesh endpoint containing $t$. Rewriting \eqref{d5}, \eqref{d6} in terms of \eqref{e:ADE} yields 
\begin{equation}\label{e:ADE1}
\begin{aligned}
\dot q^{xk}(t)&=\lm^k w^{xk}(t)-\nabla_xf\bigl(\ox^k(\vartheta^k(t)),\oa^k(\vartheta^k(t))\bigr)^*\Lm^k(t)-\xi^k(t)-\widetilde\gg^k(t),\\
\dot q^{uk}(t)&=\lm^k w^{uk}(t)+\xi^k(t)+\widetilde\gg^k(t)+\sum\limits_{i=1}^s\psi^{uk}_i(t)\,\nabla\nu_i\bigl(\ou^k(\vartheta^k(t))\bigr),\quad t\in[t^k_j,t^k_{j+1}).
\end{aligned}
\end{equation}
Extend now the discrete adjoint arcs to the continuous-time interval by
\begin{equation}\label{e:pk-def}
\begin{aligned}
p^{xk}(t)&:=q^{xk}(t)+\int_{[t,\T^k]}\bigl[-\xi^k(\tau)-\widetilde\gg^k(\tau)\bigr]\,d\tau,\\
p^{uk}(t)&:=q^{uk}(t)+\int_{[t,\T^k]}\Bigl[\xi^k(\tau)+\widetilde\gg^k(\tau)\Bigr]\,d\tau+\int_{[t,\T^k]}\sum\limits_{i=1}^s\nabla\nu_i\bigl(\ou^k(\vartheta^k(\tau))\bigr)\,d\psi^{uk}_i(\tau),
\end{aligned}
\end{equation}
for $t\in[0,\T^k]$. In particular, $p^k(\T^k)=q^k(\T^k)=p^k_k$, i.e., the construction is anchored at the actual terminal node. Beyond $\T^k$, we extend $q^k$ and $p^k$ to $[0,\T+\ve]$ by the constant value $p^k_k$.
Differentiating \eqref{e:pk-def} and combining it with \eqref{e:ADE1} gives us
\begin{equation}\label{e:ADE3}
\dot p^{xk}(t)=\lm^k w^{xk}(t)-\nabla_xf\bigl(\ox^k(\vartheta^k(t)),\oa^k(\vartheta^k(t))\bigr)^*\Lm^k(t),\qquad
\dot p^{uk}(t)=\lm^k w^{uk}(t)\;\mbox{ for a.e. }\;t\in[0,\T^k].
\end{equation}
Define further the vector measures
\begin{equation}\label{e:DM}
\int_S d\gamma^k_i:=\int_S\sum\limits_{j=0}^{k-1}\gg^k_{ji}\chi_{[t^k_j,t^k_{j+1})}(t)\,dt\ \ (i=1,\ldots,m),\quad
\int_S d\psi^{uk}_i:=\int_S\psi^{uk}_i(t)\,dt,\quad
\int_S d\xi^k:=\int_S\xi^k(t)\,dt
\end{equation}
for every Borel subset $S\subset[0,\T+\ve]$, with the piecewise constant densities extended by zero outside $[0,\T^k]$. Observe that the discrete sweeping measures are defined in \eqref{e:DM} with the same density that enters the construction \eqref{e:pk-def} of the absolutely continuous adjoint component, so that the measure limit and the BV-adjoint limit below are taken on the same scale. All the three discrete measures above are regarded as elements of $C^*([0,\T+\ve])$, where $\T^k\in[0,\T+\ve]$ for large $k$ by the localization condition \eqref{e:Pk-bc}. The weak$^*$ passages in what follows are taken in $C^*([0,\T+\ve])$, and the limiting measures are restricted to $[0,\T]$ afterwards. Working on the enlarged interval ensures that any mass the densities placed near $\T^k$ is retained in the limit even when $\T^k>\T$. In particular, a limiting atom of $\psi^u$ at $\T$, if present, arises from concentration of the running densities $\psi^{uk}_i(t)\,dt$ rather than from any discrete terminal coefficient. We now justify the normalization. Assumption {\bf(a)} of Theorem~\ref{ThNOC} and the uniform convergence in \eqref{e:discconv} transfer the LICQ to all mesh nodes for large $k$; see {\bf 4d} below. Hence the enhanced nontriviality \eqref{d17} holds. Suppose that the left-hand side of \eqref{e:dnc} vanishes. Then $\lm^k=0$, $\psi^{uk}=0$, $p^{uk}_0=q^{uk}(0)=0$, and $p^{xk}_k=0$ since $p^k(\T^k)=p^k_k$. This contradicts \eqref{d17}. Thus the left-hand side of \eqref{e:dnc} is positive. By the positive homogeneity of the dual elements, we can impose the normalization condition
\begin{equation}\label{e:dnc}
\lm^k+\|p^k(\T^k)\|+\|q^{uk}(0)\|+\sum\limits_{i=1}^s\|\psi^{uk}_i\|_{TV}=1.
\end{equation}\vspace*{-0.1in}

\noindent\textbf{Step~4}. \emph{Verification of the adjoint dynamic system \eqref{NOC:adj-ac}--\eqref{NOC:qx}.}
First we extract from \eqref{e:dnc} a subsequence (no relabeling) along which $\lm^k\to\lm\ge 0$ as $k\to\infty$.

{\bf 4a}. {\em Uniform bounded variation of $q^{uk}$.} Combining \eqref{d8} with the estimates \eqref{e18} and \eqref{e:BV-pen-zero} for the optimal solutions of $(P_k)$ gives us the relationships
\begin{equation}\label{e:quk-est1}
\begin{aligned}
\sum\limits_{j=0}^{k-1}\|q^{uk}(t^k_{j+1})-q^{uk}(t^k_j)\|
&=\sum\limits_{j=0}^{k-1}\|p^{uk}_{j+1}-p^{uk}_j\|\\
&\le\lm^k(\mu+\widetilde\mu)+\frac{\lm^k}{h_k}\|\th^{Uk}_0\|+\lm^k\sum\limits_{j=1}^{k-1}\|v^{uk}_j-v^{uk}_{j-1}\|+\lm^k\|v^{uk}_0\|+\|p^{uk}_0\|.
\end{aligned}
\end{equation}
It follows from \eqref{d8} that $v^{uk}_j=\nabla_{\dot u}\ell_2(t^k_j,V^u_j)$ with $V^u_j=(\ou^k_{j+1}-\ou^k_j)/h_k$. The Lipschitz estimate in \eqref{l2-est} combined with \eqref{e:BV-pen-zero} and $\T^k\le\T+\ve$ from \eqref{e:Pk-bc} leads us to the inequality
\[
\lm^k\sum\limits_{j=1}^{k-1}\|v^{uk}_j-v^{uk}_{j-1}\|\le\lm^k L_{\ell_2}(\T{+\ve}+\widetilde\mu),
\]
while the growth bound in \eqref{l2-est} together with $\|V^u_0\|\le C_V$ from \eqref{e:CV} tells us that
\[
\lm^k\|v^{uk}_0\|\le\lm^k L_{\ell_2}\bigl(1+\|V^u_0\|\bigr)\le\lm^k L_{\ell_2}(1+{C_V}).
\]
Moreover, \eqref{e:var} gives us $\|\dot\ou(t)\|\le2\mu$ for a.e.\ $t\in[0,\T]$. Indeed, the norm of the average velocity over the first reference mesh cell is at most $\mu$, and its distance from any velocity value is at most $\var(\dot\ou;[0,\T])\le\mu$. This bound and \eqref{e:CV} yield $\lm^k\|\th^{Uk}_0\|/h_k\le\lm^k({2\mu+C_V})$. Substituting this into \eqref{e:quk-est1} and using \eqref{e:dnc}, we arrive at 
\begin{equation}\label{e:quk-est2}
\sum\limits_{j=0}^{k-1}\|q^{uk}(t^k_{j+1})-q^{uk}(t^k_j)\|
\le {3\mu+\widetilde\mu+C_V}+L_{\ell_2}(\T{+\ve}+\widetilde\mu)+L_{\ell_2}(1+{C_V})+1,
\end{equation}
which verifies that the sequence $\{q^{uk}\}$ is 
of bounded variation uniformly on $[0,\T]$.

{\bf 4b}. {\em Boundedness of $\{p^{xk}_j\}$ via the discrete Gr\"onwall inequality.} Here we derive a recursion for $\|p^{xk}_j\|$ and then apply the discrete Gr\"onwall inequality from Lemma~\ref{lem:gronwall}.

We start with the \emph{derivation of the recursion.} Summing up the discrete adjoint relations \eqref{d5} and \eqref{d6} and using definition \eqref{d9} of $\Lm^k_j$ (which contains $p^{xk}_{j+1}$) bring us to
\begin{equation}\label{e:pxk-puk}
p^{xk}_{j+1}-p^{xk}_j=\lm^kh_k\bigl(w^{xk}_j+w^{uk}_j\bigr)+\sum\limits_{i=1}^s\psi^{uk}_{ji}\,\nabla\nu_i(\ou^k_j)-h_k\nabla_xf(\ox^k_j,\oa^k_j)^*\Lm^k_j-(p^{uk}_{j+1}-p^{uk}_j).
\end{equation}
Isolating the term $-h_k\nabla_xf^*p^{xk}_{j+1}$ from $\Lm^k_j$, moving $p^{xk}_{j+1}$ to the left, and using the triangle inequality yield
\begin{equation}\label{e:pxk-rec}
\|p^{xk}_j\|\le(1+h_kL_f)\,\|p^{xk}_{j+1}\|+B^k_j,
\end{equation}
where the numbers $B^k_j$ for $j=0,\ldots, k-1$ and $k\in\N$ are defined by
\begin{equation}\label{e:Bkj-def}
\begin{aligned}
B^k_j:={}&\lm^kh_k\bigl(\|w^{xk}_j\|+\|w^{uk}_j\|\bigr)+L_\nu\sum\limits_{i=1}^s|\psi^{uk}_{ji}|\\
&+\lm^kh_kL_f\bigl(\|v^{xk}_j\|+\|\th^{Xk}(t^k_j)\|\bigr)+\|q^{uk}(t^k_{j+1})-q^{uk}(t^k_j)\|.
\end{aligned}
\end{equation}
Next we obtain a \emph{bound on $\sum\limits_{j=0}^{k-1}B^k_j$} considering the four summand in \eqref{e:Bkj-def} separately.\vspace*{-0.1in}

\begin{enumerate}
\item[\bf(i)] \emph{Subgradient terms.} Since $(w^{xk}_j,w^{uk}_j)\in\partial_{x,u}\ell(t^k_j,\cdot)$ and Lemma~\ref{lem:subgradient-bound} provides a uniform bound $L_\ell$, the Riemann sum estimate tells us that
\[
\sum\limits_{j=0}^{k-1}h_k\|w^{xk}_j\|\le (\T+\ve)\,L_\ell=:L_{\text{cum}},
\]
and similarly $\sum\limits_j h_k\|w^{uk}_j\|\le L_{\text{cum}}$. Using $\lm^k\le 1$ from \eqref{e:dnc} ensures  the estimate
\[
\sum\limits_{j=0}^{k-1}\lm^kh_k\bigl(\|w^{xk}_j\|+\|w^{uk}_j\|\bigr)\le 2L_{\text{cum}}.
\]
\item[\bf(ii)] \emph{Control constraint multiplier terms.} The normalization condition \eqref{e:dnc} yields
\[
\sum\limits_{j=0}^{k-1}L_\nu\sum\limits_{i=1}^s|\psi^{uk}_{ji}|=L_\nu\sum\limits_{i=1}^s\int_0^{\T^k}|\psi^{uk}_i(t)|\,dt\le L_\nu.
\]
\item[\bf(iii)] \emph{Discrete velocity subgradient terms.} Since the subgradient sequence  $v^{xk}_j\in\partial_{X}\ell(\cdot)$ is a.e.\ bounded by $L_\ell$ and since $\sum\limits_j h_k\|\th^{Xk}(t^k_j)\|\to 0$ by \eqref{e:theta-zero}, we deduce from the Cauchy--Schwarz inequality that
\[
\sum\limits_{j=0}^{k-1}\lm^kh_kL_f\bigl(\|v^{xk}_j\|+\|\th^{Xk}(t^k_j)\|\bigr)\le L_f\Bigl(2L_{\text{cum}}+\sqrt{\T}\,\|\th^{Xk}\|_{L^2}\Bigr),
\]
which provides a uniform estimate with respect to $k\in\N$.
\item[\bf(iv)] \emph{$q^{uk}$ increments.} The estimate in \eqref{e:quk-est2} implies that
\[
\sum\limits_{j=0}^{k-1}\|q^{uk}(t^k_{j+1})-q^{uk}(t^k_j)\|\le {3\mu+\widetilde\mu+C_V}+L_{\ell_2}(\T{+\ve}+\widetilde\mu)+L_{\ell_2}(1+{C_V})+1.
\]
\end{enumerate}
Summing up {\bf(i)--(iv)} gives us a constant $\widetilde M_1>0$ depending only on $L_{\text{cum}}$, $L_\nu$, $L_f$, $L_{\ell_2}$, $\T$, $\ve$, $\mu$, and $\widetilde\mu$ together with the prefactor in \eqref{e:theta-zero} ensuring the desired upper bound
\begin{equation}\label{e:Bkj-sum}
\sum\limits_{j=0}^{k-1}B^k_j\le\widetilde M_1\qquad\text{for every }\;k\in\N.
\end{equation}
Observe further that the normalization condition \eqref{e:dnc} provides $\|p^{xk}_k\|\le 1$. Apply now  Lemma~\ref{lem:gronwall} with the data  $a_j:=\|p^{xk}_j\|$, $h:=h_k$, $L:=L_f$, $B_j:=B^k_j$, and the bound $kh_k=\T^k\le\T+\varepsilon$ from \eqref{e:Pk-bc} together with \eqref{e:Bkj-sum}. This leads us to the uniform estimate
\begin{equation}\label{e:pxk-bdd}
\max_{0\le j\le k}\|p^{xk}_j\|\le e^{L_f(\T+\varepsilon)}\bigl(1+\widetilde M_1\bigr)=:\widetilde M_2,\quad k\in\N.
\end{equation}
Replacing $\T^k$ by the upper horizon $\T+\varepsilon$ in the exponent makes $\widetilde M_2$ uniform over the entire range $\T^k\in[0,\T+\varepsilon]$, which is what the limiting arguments of Step~{\bf 6} require.

{\bf 4c}. {\em Uniform bounded variations of $q^{xk}$.} From \eqref{e:pxk-puk} and \eqref{e:pxk-bdd}, we get
\[
\sum\limits_{j=0}^{k-1}\|q^{xk}(t^k_{j+1})-q^{xk}(t^k_j)\|
=\sum\limits_{j=0}^{k-1}\|p^{xk}_{j+1}-p^{xk}_j\|
\le h_kL_f\sum\limits_{j=0}^{k-1}\|p^{xk}_{j+1}\|+\sum\limits_{j=0}^{k-1}B^k_j\le\T L_f\widetilde M_2+\widetilde M_1.
\]
Combined with $\|q^{xk}(\T^k)\|=\|p^{xk}_k\|\le 1$, this yields the uniform boundedness and the uniform BV property of the sequence $\{q^k(\cdot)\}$ on $[0,\T+\ve]$. Then Helly's selection theorem ensures the (subsequence) convergence $q^k(t)\to q(t)$ at every continuity point of $q\in BV([0,\T];\R^{2n})$ (the complement of this set is at most countable) as well as the weak$^*$ convergence of the associated vector measures $dq^k$  to some $dq\in C^*([0,\T];\R^{2n})$.

{\bf 4d}. {\em $L^1$-boundedness of $\{\xi^k\}$.} Using the classical H\"older's inequality together with \eqref{e12} and \eqref{e:eta-bound} yields
\[
\int_0^{\T^k}\|\xi^k(t)\|\,dt
\le\frac{M_3\beta}{M_1}\sqrt{\int_0^{\T^k}\bigl[M+M\|\ox^k(t)\|+\|\dot\ox^k(t)\|\bigr]^2dt}\cdot\sqrt{\int_0^{\T^k}\|\Lm^k(t)\|^2dt}.
\]
The first integral is bounded due to the $W^{1,2}$-convergence of $\ox^k$ to $\ox$. For the second integral, we get
\[
\int_0^{\T^k}\|\Lm^k(t)\|^2dt\le 3\left[\int_0^{\T^k}\|v^{xk}(t)\|^2dt+\int_0^{\T^k}\|\th^{Xk}(t)\|^2dt+\int_0^{\T^k}\|q^{xk}(t)\|^2dt\right],
\]
where all the three summands are bounded by {\bf 4b, 4c}, and \eqref{e:theta-zero}. Hence $\{\xi^k\}$ is uniformly bounded in $L^1$, and so is $\{\widetilde\gg^k\}$ by the first line of \eqref{e:ADE1}. The total variation bound on $\gamma^k$ then follows from LICQ along the trajectory. Indeed, since $\widetilde\gg^k=\sum\limits_{i=1}^m\gg^k_{ji}\nabla g_i$ is bounded in $L^1$, and the discrete coderivative description in \eqref{e:prop-DNR} and \eqref{e17} forces $\gg^k_{ji}=0$ for every inactive index $i\notin I(\ox^k_j-\ou^k_j)$, we have that effectively
$$
\widetilde\gg^k=\sum\limits_{i\in I(\ox^k_j-\ou^k_j)}\gg^k_{ji}\nabla g_i.
$$
The linear independence of the \emph{active} gradients $\{\nabla g_i(\ox^k_j-\ou^k_j)\}_{i\in I(\ox^k_j-\ou^k_j)}$ under LICQ yields 
$$
\sum\limits_{i\in I(\ox^k_j-\ou^k_j)}|\gg^k_{ji}|\le c\,\|\widetilde\gg^k\|,
$$
where the uniformity of the constant $c>0$ in $j,k$ readily follows from the standard compactness device and the imposed LICQ. Therefore, the sequence $\gamma^k$ has uniformly bounded total variation. The total variation bounds on $\psi^{uk}_i$ and $\xi^k$ follow from the normalization \eqref{e:dnc} and from the established $L^1$-bound, respectively. The complementarity  conditions in \eqref{d11} and \eqref{d12} ensure that, in the weak$^*$ limit, the supports of $\gamma$ are confined to the active set. Passing to a further subsequence, we obtain the weak$^*$ convergence
\[
\gg^k\stackrel{w^*}{\to}\gamma,\qquad \psi^{uk}_i\stackrel{w^*}{\to}\psi^u_i,\qquad \xi^k\stackrel{w^*}{\to}\xi
\]
in $C^*([0,\T+\ve];\R^m)$, $C^*([0,\T+\ve];\R_+)$, and $C^*([0,\T+\ve];\R^n)$, respectively, and then restricted to $[0,\T]$.

Moreover, the weak$^*$ limit of $\xi^k\,dt$ is \emph{absolutely continuous} with the explicit density used in \eqref{NOC:qx}. Indeed, the LICQ pseudoinverse representation of the sweeping residual in \eqref{e:par-dis} expresses $\eta^k$ as a uniformly convergent linear image of $(V^x_j-f(\ox^k_j,\oa^k_j))$, which converges strongly in $L^2$ by \eqref{e:discconv}. Hence the convergence $\eta^k\to\eta$ is strong in $L^2$, which ensures in turn the strong $L^2$-convergence of  
$$
\sum_i\eta^k_i\nabla^2 g_i\big(\ox^k(\vartheta^k(\cdot))-\ou^k(\vartheta^k(\cdot))\big)\to\sum_i\eta_i\nabla^2 g_i(\ox-\ou)
$$
since the Hessians are bounded by \textbf{(H2.2)} and their arguments converge uniformly. On the other hand, we clearly have the weak convergence
$$
\Lm^k(t)=q^{xk}(\vartheta^k(t)+h_k)-\lm^k(\th^{Xk}(t)+v^{xk}(t))\rightharpoonup q^x-\lm v^x
$$
in $L^2$ due to $\lm^k\to\lm$, the weak $L^2$-convergence $v^{xk}\rightharpoonup v^x$, the strong vanishing $\th^{Xk}\to 0$ from \eqref{e:theta-zero}, and the strong $L^2$-convergence $q^{xk}\to q^x$. The last convergence is spelled out as follows: the jump set of the BV limit $q^x$ is at most countable, so Helly's theorem in {\bf 4c} gives us $q^{xk}(t)\to q^x(t)$ for a.e.\ $t\in[0,\T]$, the uniform BV bound yields a uniform $L^\infty$-bound on $\{q^{xk}\}$, and the bounded convergence theorem then upgrades the a.e.\ convergence to the strong one $q^{xk}\to q^x$ in $L^2([0,\T];\R^n)$. We know that the product of a strongly $L^2$-convergent factor with a weakly $L^2$-convergent one converges weakly in $L^1$, and thus we arrive at 
\begin{equation}\label{e:xi-density}
\xi^k\ \rightharpoonup\ \Bigl(\sum_{i=1}^m\eta_i\,\nabla^2 g_i(\ox-\ou)\Bigr){\bigl(q^x-\lm v^x\bigr)}\qquad\text{weakly in }L^1([0,\T];\R^n).
\end{equation}
By the uniqueness of limits, the measure $\xi$ agrees with this $L^1$-density times $dt$; this is exactly the absolutely continuous form of $\xi$ entering \eqref{NOC:qx}. Moreover, $\xi^k$ carries no mass on $(\T,\T^k]$ in the limit. Indeed, $\|\xi^k\|_{L^1(\T,\T^k)}\le C\|\eta^k\|_{L^2(\T,\T^k)}\|\Lm^k\|_{L^2}\to0$, since $\eta^k$ is controlled there by $\dot\ox^k$ and $f$ through \eqref{e:eta-bound}, while $\dot\ox^k\to0$ in $L^2(\T,\T^k)$ by \eqref{e:discconv} with the zero extension of the reference velocity.

{\bf 4e}. {\em Boundedness of $\{\psi^{ak}\}$ in $L^2$.} {Here we assume the last condition in \eqref{e:BV-pen-zero}. When $A$ is convex and this condition is dropped, this step is not used; see Step~{\bf 5}.} Denoting $\psi^{ak}(t):=\psi^{ak}_j/h_k$ for $t\in[t^k_j,t^k_{j+1})$, we get by \eqref{d7} the equality
\[
\psi^{ak}(t)=\nabla_a f\bigl(\ox^k(\vartheta^k(t)),\oa^k(\vartheta^k(t))\bigr)^{\!*}\Lm^k(t)-\lm^k w^{ak}(t)-\lm^k\,\th^{ak}(t).
\]
It follows from \eqref{e10} in {\bf(H1)} combined with $\lm^k\le1$ that
\begin{equation}\label{psi-bound}
\|\psi^{ak}\|_{L^2}\le L_f\,\|\Lm^k\|_{L^2}+\|w^{ak}\|_{L^2}+\|\th^{ak}\|_{L^2}.
\end{equation}
The right-hand side of \eqref{psi-bound} is uniformly bounded. Indeed, $\|\Lm^k\|_{L^2}$ by~{\bf 4b} and {\bf 4c}, $\|w^{ak}\|_{L^2}$ by the subgradient bound of Step~{\bf 1}, and $\|\th^{ak}\|_{L^2}\to 0$ by \eqref{e:theta-zero}. Hence $\{\psi^{ak}\}$ is bounded in $L^2$.

{\bf 4f}. {\em Weak convergence and passage to the limit.} We deduce from \eqref{e:dnc}{, \eqref{e:ADE3},} and the bounds in {\bf 4b--4{d}} that the sequence $\{p^k\}$ is bounded in $W^{1,2}([0,\T+\ve];\R^{2n})$. Mazur's theorem produces convex combinations of $\dot p^k$ {converging a.e.\ pointwise to $\dot p\in L^2$. Under the assumption of Step~{\bf 4e}, it also produces convex combinations of $\psi^{ak}$ converging a.e.\ to $\psi^a\in L^2$. Otherwise, the bounded sequence is the combined force on the right-hand side of the formula in Step~{\bf 4e}, and $\psi^a$ is obtained in Step~{\bf 5}}. Passing to the limit in \eqref{e:ADE3} yields \eqref{NOC:adj-ac}. Construction \eqref{e:pk-def} combined with \eqref{d8} tells us that
\begin{equation}\label{e:quk-ptw}
q^{uk}(\vartheta^k(t)+h_k)=\lm^k\bigl(v^{uk}(t)+\th^{Uk}(t)\bigr).
\end{equation}
Passing to the limit with the usage of \eqref{e:theta-zero}, \eqref{NOC:subg}, and the structure in \eqref{e:ell-rep} verifies \eqref{NOC:qu}. To obtain the second implication in \eqref{NOC:cs}, observe that \eqref{d15} yields $\eta^k_i(t)\la\nabla g_i(\ox^k(\vartheta^k(t))-\ou^k(\vartheta^k(t))),\Lm^k(t)\ra=0$. Here $\eta^k_i\to\eta_i$ strongly in $L^2$ by Step~{\bf 2}, while the pairing converges weakly in $L^2$ to $\la\nabla g_i(\ox-\ou),q^x-\lm v^x\ra$ by {\bf 4d}. Hence the products converge weakly in $L^1$, and we get $\eta_i\la\nabla g_i(\ox-\ou),q^x-\lm v^x\ra=0$ a.e., which verifies \eqref{NOC:cs}.

To get \eqref{NOC:qx}, we pass to the limit in \eqref{e:pk-def}. The measure weak$^*$ convergence of $\gg^k$ to $\gamma$ combined with the uniform convergence of $\nabla g_i(\ox^k(\vartheta^k(\cdot))-\ou^k(\vartheta^k(\cdot)))$ to $\nabla g_i(\ox(\cdot)-\ou(\cdot))$ imply that
\[
\int_t^{\T^k}\sum\limits_{i=1}^m\gg^k_i(s)\,\nabla g_i(\ox^k(\vartheta^k(s))-\ou^k(\vartheta^k(s)))\,ds\to\int_{(t,\T]}\sum\limits_{i=1}^m\nabla g_i(\ox(s)-\ou(s))\,d\gamma_i(s)
\]
for all $t\in[0,\T]$ except a countable subset. The analogous passage holds for the $\psi^{uk}_i$ term due to the weak$^*$ convergence of $\psi^{uk}_i$ to $\psi^u_i$ and the uniform convergence of $\nabla\nu_i(\ou^k(\vartheta^k(\cdot)))$ to $\nabla\nu_i(\ou(\cdot)$), as well as for the term $\xi^k$ due to the weak $L^1$-convergence \eqref{e:xi-density} tested against the indicator $\mathbf 1_{(t,\T]}\in L^\infty$. These three passages, together with \eqref{e:pk-def}, justify \eqref{NOC:qx}. Applied instead to the second line of \eqref{e:pk-def}, they establish the $u$-side identity \eqref{NOC:puqu} of conclusion {\bf(ii)} valid for every $t\in[0,\T]$. In more detail, at the continuity points $t\in(0,\T)$ of the limit, the integrals over $[t,\T^k]$ converge to the integrals over $(t,\T]$, and we pass to the right-continuous representative on $(0,\T]$. At $t=0$, the integrals over $[0,\T^k]$ are integrals against the constant test function $1$ on $[0,\T+\ve]$, since the densities vanish beyond $\T^k$. Hence they converge to the integrals over $J_0=[0,\T]$. The boundary value $q(0)$ is the limit of the bounded vectors $q^k(0)$ along the subsequence fixed above. At $t=\T$, we put $q(\T):=q(\T+)=p(\T)$, the right limit on $[0,\T+\ve]$, since $q=p$ on $(\T,\T+\ve]$. This completes Step~{\bf 4} of the proof.

\medskip
\noindent\textbf{Step~5}. \emph{Adjoint inclusion for the perturbation control \eqref{NOC:psi-a} and maximization conditions.}
Rearranging \eqref{d7} and using definition \eqref{d9} of $\Lm^k_j$ tell us, for $t\in[t^k_j,t^k_{j+1})$, that
\begin{equation*}
\begin{aligned}
\psi^{ak}(t):=\frac{\psi^{ak}_j}{h_k}=&-\lm^k w^{ak}(t)-\lm^k\th^{ak}(t)\\
&+\nabla_af\bigl(\ox^k(\vartheta^k(t)),\oa^k(\vartheta^k(t))\bigr)^*{\bigl(q^{xk}(h_k+\vartheta^k(t))-\lm^k(v^{xk}(t)+\th^{Xk}(t))\bigr)}.
\end{aligned}
\end{equation*}
Mazur's theorem applied to the weak $L^2$-compact sequences $\{w^{ak}\}$ and $\{v^{xk}\}$, vanishing \eqref{e:theta-zero} for $\th^{ak}$ and $\th^{Xk}$, the strong convergence $\oa^k\to\oa$, the a.e.\ pointwise convergence $q^{xk}(h_k+\vartheta^k(t))\to q^x(t)$, and the robustness (outer semicontinuity) of the normal cone mapping $N_A$, together with the discrete inclusion $\psi^{ak}_j\in N_A(\oa^k_j)$ from \eqref{e:psia}, readily verify \eqref{NOC:psi-a} via the passage to the limit as $k\to\infty$.

{{\em The case of convex $A$ without the last condition in \eqref{e:BV-pen-zero}.} Then \eqref{d7} contains the additional term $\zeta^k(m_j-m_{j+1})/h_k$ from Remark~\ref{rem:BV-pen}, so Step~{\bf 4e} and the argument above do not apply. We use a variational inequality instead. Let $g^k(t)$ denote the right-hand side of the formula for $\psi^{ak}(t)$ above. By \eqref{d7}, we have $\psi^{ak}_j+\zeta^k(m_j-m_{j+1})=h_kg^k_j$. Take any $b=(b_0,\ldots,b_{k-1})\in A^k$ with $\sum_{j=1}^{k-1}\|b_j-b_{j-1}\|\le\Tilde\mu+1$. Since $A$ is convex, $\la\psi^{ak}_j,b_j-\oa^k_j\ra\le0$. Summation by parts with $m_0=m_k=0$ gives us
\[
\sum_{j=0}^{k-1}\la m_j-m_{j+1},b_j-\oa^k_j\ra=\sum_{j=1}^{k-1}\la m_j,(b_j-b_{j-1})-(\oa^k_j-\oa^k_{j-1})\ra\le\sum_{j=1}^{k-1}\|b_j-b_{j-1}\|-\sum_{j=1}^{k-1}\|\oa^k_j-\oa^k_{j-1}\|.
\]
If $\zeta^k>0$, the last constraint in \eqref{e:Pk-bv} is active, and the right-hand side is not positive. In both cases, we arrive at the discrete variational inequality
\[
\int_0^{\T^k}\la g^k(t),b^k(t)-\oa^k(t)\ra\,dt\le0,
\]
where $b^k$ is the piecewise constant function with the values $b_j$. Now let $b\colon[0,\T]\to A$ be of bounded variation with $\var(b;[0,\T])\le\Tilde\mu+1$, extended by $b(\T)$ beyond $\T$. Its values at the rescaled nodes $t^k_j\T/\T^k$ give admissible vectors $b$ above, and the corresponding $b^k$ converge to $b$ in $L^2$. The bound on $\Lm^k$ in $L^2$ from Steps~{\bf 4a--4c}, the weak convergence of $\Lm^k$ and $w^{ak}$, the vanishing of $\th^{ak}$ by \eqref{e:theta-zero}, and the strong convergence $\oa^k\to\oa$ provide
\begin{equation}\label{e:VI-a}\tag{7.45$'$}
\int_0^{\T}\la\psi^a(t),b(t)-\oa(t)\ra\,dt\le0
\end{equation}
with $\psi^a$ defined in \eqref{NOC:psi-a}. Fix $c\in A$, $\tau\in(0,\T)$, $\delta>0$, and $s\in(0,1]$. Put $b:=\oa+s\chi_{[\tau,\tau+\delta)}(c-\oa)$. Its values lie in $A$ by convexity, and $\var(b;[0,\T])\le\var(\oa;[0,\T])+2s\,{\rm diam}\,A\le\mu+2s\,{\rm diam}\,A$. This is at most $\Tilde\mu+1$ for small $s>0$, since $\Tilde\mu\ge3\mu+1$ by \eqref{e34}. Inserting $b$ into \eqref{e:VI-a}, dividing by $s\delta$, and letting $\delta\dn0$ at the Lebesgue points of $\psi^a$ and $\oa$, we get $\la\psi^a(\tau),c-\oa(\tau)\ra\le0$. Taking $c$ in a countable dense subset of $A$, we arrive at $\psi^a(\tau)\in N_A(\oa(\tau))=\co\,N_A(\oa(\tau))$ for a.e.\ $\tau$, which is \eqref{NOC:psi-a}.}

\medskip
\noindent\textbf{Step~6}. \emph{Transversality and endpoint complementary slackness in \eqref{NOC:alpha}--\eqref{NOC:trans}.}
Substep~{\bf 6a} transports the endpoint normal cone inclusion \eqref{NOC:alpha} to the limit, substep~{\bf 6b} establishes the averaged Hamiltonian convergence $\mathcal H^k\to\mathcal H$; and substep~{\bf 6c} passes the free-time block of \eqref{d16} to \eqref{NOC:trans}.

{\bf 6a}. {\em Endpoint normal cone passage.} The first relationship in \eqref{d16} combined with \eqref{e:dnc} gives us
\[\left\|\sum\limits_{i=1}^m\al^k_i\nabla g_i(\ox^k_k-\ou^k_k)\right\|=\|p^{uk}_k\|\le \|p^k(\T^k)\|\le 1.
\]
The inverse triangle inequality \eqref{e13} from \textbf{(H2.3)} yields
\[
\al^k_i\le\frac{1}{M_1}\!\!\sum\limits_{j\in I(\ox^k_k-\ou^k_k)}\!\!\al^k_j\|\nabla g_j(\ox^k_k-\ou^k_k)\|
\le\frac{\beta}{M_1}\Bigl\|\sum\limits_{i=1}^m\al^k_i\nabla g_i(\ox^k_k-\ou^k_k)\Bigr\|\le\frac{\beta}{M_1}.
\]
Thus we get, along a subsequence, that $\al^k_i\to\al_i\ge 0$ for each $i$. Indeed, for inactive indices $i\notin I(\ox(\T)-\ou(\T))$, it follows from \eqref{d14} and the continuity of $g_i$ that $\al^k_i=0$ for large $k$, and so the equalities $\al_i=0$ hold trivially. For active indices $i$, the uniqueness of the limit follows from the linear independence of the active gradients $\{\nabla g_i(\ox(\T)-\ou(\T))\}_{i\in I(\ox(\T)-\ou(\T))}$ under the imposed LICQ.
Combining $\ou^k_k\to\ou(\T)$ and $\al^k_i\to\al_i$, the passage to the limit in the first relationship of \eqref{d16} produces \eqref{NOC:alpha}, where the endpoint complementarity conditions $g_i(\ox(\T)-\ou(\T))>0\Lto\al_i=0$ follow directly from \eqref{d14} by the continuity of $g_i$.

{\bf 6b}. {\em Averaged Hamiltonian convergence $\mathcal H^k\to\mathcal H$.} This convergence requires the usage of Helly's theorem for the convergence $q^{xk}\to q^x$ and $q^{uk}\to q^u$ of the BV adjoints, which is available from Step~{\bf 4c}. We form the Ces\`aro average \eqref{e:Hk} relating $\mathcal H^k$ to these BV adjoints. Namely, by \eqref{d8} and \eqref{d9}, the forward-node values $p^{uk}_{j+1}$ and $p^{xk}_{j+1}=\Lm^k_j+\lm^k(h_k^{-1}\th^{Xk}_j+v^{xk}_j)$ carry the Step~{\bf 4} limits $q^u$ and $q^x$ at a.e.\ interior node, the single terminal node having the Ces\`aro weight $1/k\to0$, and the resulting average agrees in the limit with \eqref{e:Hk}. Set
\[
I^k(t):=\la q^{xk}(\vartheta^k(t)+h_k),\dot\ox^k(t)\ra+\la q^{uk}(\vartheta^k(t)+h_k),\dot\ou^k(t)\ra-\lm^k\,\ell\bigl(t,\ox^k,\ou^k,\oa^k,\dot\ox^k,\dot\ou^k\bigr)(t),
\]
and let $I(t)$ be obtained by deleting all superscripts $k$. Decomposing
\begin{equation}\label{hamil}
\mathcal H^k-\mathcal H=\Bigl(\tfrac{1}{\T^k}-\tfrac{1}{\T}\Bigr)\int_0^{\T^k}I^k\,dt+\tfrac{1}{\T}\int_0^{\T^k}(I^k-I)\,dt-\tfrac{1}{\T}\int_{\T^k}^{\T}I\,dt,
\end{equation}
observe that the three summands in \eqref{hamil} tend to zero separately. The first is bounded by $|\T-\T^k|/(\T\,\T^k)$ times the uniform $L^1$-bound on $I^k$ (from the $W^{1,2}$-bound on $p^k$ in {\bf 4b} and {\bf 4c}, the $L^2$-bound on the subgradient selections $v^k$, and the Lipschitz continuity of $\ell$) with the convergence $1/\T^k\to 1/\T$ and $|\T-\T^k|\to 0$ by Theorem~\ref{conver}. The second term in \eqref{hamil} is bounded by the $L^2$-to-$L^1$ Cauchy--Schwarz inequality applied to $(\dot\ox^k-\dot\ox$, $\dot\ou^k-\dot\ou$, $\ell^k-\ell)$, the pointwise convergence $q^{xk}\to q^x$ from Helly's selection theorem (Step~{\bf 4c}), and the strong convergence \eqref{e:discconv}. The third term in \eqref{hamil}, $\tfrac1\T\int_{\T^k}^{\T}I\,dt$, tends to zero by the absolute continuity of the Lebesgue integral. Namely, the integrand $I=\la q^x,\dot\ox\ra+\la q^u,\dot\ou\ra-\lm\ell$ is a sum of products of the bounded BV arcs $q^x,q^u$ with the $L^2$ velocities $\dot\ox,\dot\ou$ and the integrable running cost, hence $I\in L^1([0,\T])$. Therefore, $\int_E I\,dt\to 0$ as the Lebesgue measure $|E|\to 0$, where $E$ is the interval between $\T^k$ and $\T$ of length $|\T-\T^k|\to 0$ by Theorem~\ref{conver}. Summing this up yields $\mathcal H^k\to\mathcal H$ as $k\to\infty$.

{\bf~6c}. {\em Passage to the limit of the free-time transversality block.} It remains to pass the limit in the second relationship of \eqref{d16}, i.e., the discrete free-time transversality inclusion
\begin{equation}\label{e:disc-trans-full}
\Bigl(-p^{xk}_k+\textstyle\sum\limits_{i=1}^m\al^k_i\nabla g_i(\ox^k_k-\ou^k_k),\ \mathcal H^k+\lm^k(\T-\T^k)+\lm^k\varrho^k\Bigr)\in\lm^k\,\partial\varphi(\ox^k_k,\T^k)+N_{\Xi^k_x\times\Xi^k_T}(\ox^k_k,\T^k),
\end{equation}
to the continuous transversality condition \eqref{NOC:trans}. We split our consideration into three parts.

\emph{Time slot.} The free-time triple converges. Indeed, $\lm^k\in[0,1]$ by \eqref{e:dnc}, $|\lm^k(\T-\T^k)|\le|\T-\T^k|\to 0$ by Theorem~\ref{conver}, $|\lm^k\varrho^k|\le|\varrho^k|\to 0$ by Lemma~\ref{lem:free-time-rates}{\bf(i)}, and $\mathcal H^k\to\mathcal H$ by {\bf 6b}. Hence the second slot of the left-hand side of \eqref{e:disc-trans-full} converges to $\mathcal H$ as $k\to\infty$. 

\emph{State slot.} The endpoint convergence $\ox^k_k\to\ox(\T)$ follows from the uniform convergence $\ox^k\to\ox$ together with $\T^k\to\T$. The state slot of the left-hand side of \eqref{e:disc-trans-full} converges by {\bf 6a}, i.e.,
\[
-p^{xk}_k+\sum_{i=1}^m\al^k_i\nabla g_i(\ox^k_k-\ou^k_k)=-p^{xk}_k-p^{uk}_k\longrightarrow -p^x(\T)-p^u(\T)          
\]
by using the convergence $\ou^k_k\to\ou(\T)$ and the uniform convergence $p^k\to p$.

\emph{Outer limit passage.} The sets $\Xi_x,\Xi_T$ are closed by \textbf{(H5)}. Since $\mu^x_k\downarrow 0$ for the localization radius 
in \eqref{e:Pk-bc}, the perturbed endpoint constraint sets $\Xi^k_x:=\Xi_x+\mu^x_k \overline{B}(0,1)$ and $\Xi^k_T:=\Xi_T+\mu^x_k[-1,1]$ are closed. We claim that
$$\Limsup_{k\to\infty}N_{\Xi^k_x\times\Xi^k_T}(\ox^k_k,\T^k)\subset N_{\Xi_x\times\Xi_T}(\ox(\T),\T).
$$
Since the normal cone to a product is the product of normal cones, it suffices to consider $\Xi^k_x=\{z\mid\dist(z;\Xi_x)\le\mu^x_k\}$; the set $\Xi^k_T$ is treated in the same way. Let $v^k\in N_{\Xi^k_x}(z^k)$ with $z^k\to\ox(\T)$ and $v^k\to v$. If $\dist(z^k;\Xi_x)<\mu^x_k$, then $v^k=0$. Otherwise, $\dist(z^k;\Xi_x)=\mu^x_k>0$. The subgradients of the distance function at $z^k\notin\Xi_x$ are the unit vectors $(z^k-w)/\mu^x_k$ with $w\in\Pi(z^k;\Xi_x)$. Hence $0\notin\partial\dist(z^k;\Xi_x)$, and the normal cone to the sublevel set gives us $v^k=\al_k(z^k-w^k)$ with $\al_k\ge0$ and $w^k\in\Pi(z^k;\Xi_x)$; see \cite[Chapters~8 and~10]{Rockafellar1998}. Since $z^k\to\ox(\T)$, the definition \eqref{lnc} of the limiting normal cone yields $v\in N_{\Xi_x}(\ox(\T))$.
For the subdifferential term, we recall that the terminal cost $\varphi$ is locally Lipschitz around the optimal endpoint $(\ox(\T),\T)$, which readily ensures the convergence of limiting subgradients; see \cite[Theorem~1.17]{Mordukhovich2006I}. Therefore, the right-hand side of \eqref{e:disc-trans-full} passes to $\lm\,\partial\varphi(\ox(\T),\T)+N_{\Xi_x\times\Xi_T}(\ox(\T),\T)$.

Combining the three slots above, we pass from \eqref{e:disc-trans-full} to \eqref{NOC:trans} and thus verify {\bf(iv)} in Theorem~\ref{ThNOC}.

\medskip
\noindent
\textbf{Step~7}. \emph{Measure support relationships in Theorem~{\rm\ref{ThNOC}}{\bf(v)}.}\\
Fix an index $i$ and a point $t\in[0,\T]$ with $g_i(\ox(t)-\ou(t))>0$. By continuity, there exists a relatively open neighborhood $V_t$ of $t$ in $[0,\T+\ve]$ on which $g_i(\ox(s)-\ou(s))>0$ (with $\ox,\ou$ extended constantly beyond $\T$). The neighborhood is one-sided only at $t=0$. The uniform convergence $(\ox^k,\ou^k)\to(\ox,\ou)$ and the equicontinuity of the discrete arcs, which are bounded in $W^{1,2}$, give us $g_i(\ox^k(t^k_j)-\ou^k(t^k_j))>0$ whenever $t^k_j\in V_t$, for all large $k$. The coderivative descriptions in \eqref{e:prop-DNR} and \eqref{e17} place this inactive index $i$ in $I_1(\beta^k_j)$, and \eqref{d11} yields $\gg^k_{ji}=0$ for such $j$. Therefore, we have $\int_V d\|\gg^k_i\|=0$ for every Borel subset $V$ of $V_t$, and the lower semicontinuity of the total variation on open sets under weak$^*$ convergence gives us $\|\gamma_i\|(V_t)=0$. Consequently, we get
$$
\supp(\gamma_i)\subset\big\{t\;\big|\;g_i(\ox(t)-\ou(t))=0\big\}.
$$
The same device applied with $\nu_i$ in place of $g_i$ ensures the support of $\psi^u_i$ in $\{t\;|\;\nu_i(\ou(t))=0\}$.

\medskip
\noindent\textbf{Step~8}. \emph{Conditional coderivative selection: proof of Proposition~\ref{prop:coderiv-selection}.}
Assume the multiplier stability in {\bf(MS1)} and {\bf(MS2)} of Definition~\ref{def:MS}. Rewrite the discrete relationships in \eqref{d11} and \eqref{d12} through the coderivative formula of Theorem~\ref{Prop1} as the cellwise inclusion
\[
\gg^k(t)\in D^*N_{\R^m_-}\bigl(-g(z^k(t)),\,\eta^k(t)\bigr)\bigl(\beta^k(t)\bigr)
\quad\;\mbox{with}\quad z^k(t):=\ox^k\big(\vartheta^k(t))-\ou^k(\vartheta^k(t)\big)
\]
for $t\in(t^k_j,t^k_{j+1})$, $j=0,\ldots,k-1$, where $\beta^k=(\beta^k_1,\ldots,\beta^k_m)$ is the discrete pairing \eqref{e:beta-def}. Due to \eqref{e:prop-DNR} and \eqref{e17}, this inclusion is written componentwise as
\begin{equation}\label{e:sign-law}
\gg^k_i(t)=0\ \text{ if }\ g_i(z^k(t))>0\ \text{ or }\ \bigl[\eta^k_i(t)=0,\ \beta^k_i(t)<0\bigr],\qquad
\gg^k_i(t)\ge 0\ \text{ if }\ g_i(z^k(t))=0,\ \eta^k_i(t)=0,\ \beta^k_i(t)>0
\end{equation}
with $\gg^k_i(t)$ being unrestricted in the remaining cases. Note that $g_i(z^k(t))\ge 0$ throughout by the dynamics \eqref{e:Pk-dyn} since $N_C(\ox^k_j-\ou^k_j)=\emptyset$ outside $C$. Moreover, it follows from \eqref{d15} that $\eta^k_i(t)>0$ yields $\beta^k_i(t)=0$. Hence, wherever $\beta^k_i(t)\neq 0$, the alternative $\eta^k_i(t)=0$ in \eqref{e:sign-law} is automatic.

{\bf 8a}. {\em Absolute continuity of $\gamma$ and identification of $D_\gamma$.} By {\bf(MS2)}, there exists a modulus $\epsilon(\cdot)$ with $\epsilon(\delta)\downarrow0$ as $\delta\downarrow0$ such that $\sup_k|\gamma^k|(B)\le\epsilon(|B|)$ for every Borel set $B\subset{[0,\T+\ve]}$. The total variation is weak$^*$ lower semicontinuous on open sets, which readily yields the inequalities
$$
|\gamma|(G)\le\liminf_k|\gamma^k|(G)\le\epsilon(|G|)
$$ 
for every relatively open set $G{\subset[0,\T+\ve]}$. For a Borel set $B\subset[0,\T]$ and any $\rho>0$, the outer regularity of $|\gamma|$ provides an open set $G\supset B$ with $|G|\le|B|+\rho$, and hence $|\gamma|(B)\le\epsilon(|B|+\rho)$. Since $\epsilon(\delta)\downarrow0$ as $\delta\downarrow0$, it follows that $|\gamma|(B)\to 0$ as $|B|\to0$. Therefore, $\gamma\ll\mathrm{Leb}$ (Lebesgue measure) with the density $h:=\frac{d\gamma}{dt}\in L^1([0,\T];\R^m)$), and at each Lebesgue point $t$ of $h$ (i.e., almost everywhere in $[0,\T]$) every interval sequence $S_p\ni t$ with $|S_p|\downarrow 0$ satisfies the condition $\gamma(S_p)/|S_p|\to h(t)$ as $p\to\infty$. Thus we get $D_\gamma(t)=\{h(t)\}$ for a.e.\ $t\in[0,\T]$.

{\bf 8b}. {\em Weak convergence of measures in $L^1$.} By {\bf (MS2)} and the Dunford--Pettis theorem, the densities $\{\gg^k\}$ are relatively weakly compact in $L^1([0,\T];\R^m)$. Along a subsequence (no relabeling), we get that $\gg^k\rightharpoonup\widetilde h$ weakly in $L^1$. This allows us, by taking into account the weak$^*$ convergence $\gamma^k\to\gamma$, to identify $\widetilde h=h$ a.e. Using {\bf(MS1)}, we have without loss of generality that $\beta^k_i\to\beta_i$ a.e.\ on $\{\beta_i\ne 0\}$ for every $i$. By Mazur's theorem, for every $N\in\N$ there exists a convex combination $v^N=\sum_{k\in F_N}c^N_k\,\gg^k$ with $F_N\subset\{N,N+1,\ldots\}$ and $\|v^N-h\|_{L^1}\le N^{-1}$. This ensures that $v^{N_j}(t)\to h(t)$ a.e.\ on $[0,\T]$ along a subsequence $N_j\to\infty$.

{\bf 8c}. {\em Componentwise sign passage.} Fix $i\in\{1,\ldots,m\}$, and let $t\in\{\beta_i>0\}$ be a point at which $\beta^k_i(t)\to\beta_i(t)$ and $v^{N_j}(t)\to h(t)$ for a.e.\ $t\in[0,\T]$. For all $k$ beyond a threshold $K(t)$, we have $\beta^k_i(t)>0$ enduring that $\eta^k_i(t)=0$ by \eqref{d15}. Moreover, the sign law \eqref{e:sign-law} tells us that $\gg^k_i(t)\ge0$. Hence $v^N_i(t)\ge 0$ for every $N\ge K(t)$, and in the limit $h_i(t)$  is nonnegative a.e.\ on $\{\beta_i>0\}$. The same device applied on $\{\beta_i<0\}$ gives us $\gg^k_i(t)=0$ for all large $k$. Thus both alternatives of \eqref{e:sign-law} yield the zero value therein, and so $h_i(t)=0$ a.e.\ on $\{\beta_i<0\}$. 
Finally, at any $t$ with $g_i(\ox(t)-\ou(t))>0$, the uniform convergence of $z^k\to\ox-\ou$ ensures that $g_i(z^k(\tau))>0$ for all $\tau$ near $t$ and all large $k$. This implies that the sequence $\{\gg^k_i\}$ vanishes in the limit near $t$ and that $h_i(t)=0$, which recovers therefore the support relationship of Step~{\bf 7} at the density level.

{\bf 8d}. {\em Conclusion of Step~{\bf 8}.} It follows from Step~{\bf 8c} that for a.e.\ $t\in[0,\T]$ we get 
\[
h_i(t)=0\quad\text{if }i\in I_1\bigl(\beta(t)\bigr),\qquad h_i(t)\ge0\quad\text{if }i\in I_2\bigl(\beta(t)\bigr)
\]
with the index sets of \eqref{e17} evaluated at $\bar x=-g(\ox(t)-\ou(t))$, $\bar v=\eta(t)$, $\bar y=\beta(t)$. The domain condition of \eqref{e:prop-DNR} holds a.e.\ because the second complementary-slackness relation in \eqref{NOC:cs} yields $\eta_i(t)\,\beta_i(t)=0$ a.e. Hence
\[
h(t)\in D^*N_{\R^m_-}\bigl(-g(\ox(t)-\ou(t)),\,\eta(t)\bigr)\bigl(\beta(t)\bigr)\quad\text{for a.e.\ }t\in[0,\T].
\]
Since $D_\gamma(t)=\{h(t)\}$ a.e.\ by Step~{\bf 8a}, the intersection in \eqref{NOC:coderiv} is nonempty for a.e.\ $t\in[0,\T]$. This verifies Proposition~\ref{prop:coderiv-selection} and thus completes the proof in Step~{\bf 8}.

\medskip
\noindent\textbf{Step~9}. \emph{Nontriviality conditions in \eqref{NOC:NT}, \eqref{NOC:NT-meas}, and \eqref{NOC:NT-enh}.}

{\bf 9a}. {\em Standard nontriviality.} We first show that the limiting multiplier collection $(\lm,p,q,\gamma,\psi^u)$ is nontrivial by passing the discrete normalization \eqref{e:dnc} to the limit 
term-by-term along the subsequence fixed in Steps~{\bf 1--8}. This point requires some care since the 
total variation and BV boundary terms in \eqref{e:dnc} are only weak$^*$ lower semicontinuous, so a priori mass could be lost in the limit. We verify that no term loses mass.

The cost multipliers clearly converges $\lm^k\to\lm$ along a subsequence. The absolutely continuous adjoint endpoint vectors $\|p^k(\T^k)\|\to\|p(\T)\|$ converge by the uniform convergence $p^k\to p$ on $[0,\T+\ve]$ established in Step~{\bf 4f}, the equicontinuity of $\{p^k\}$, and $\T^k\to\T$. The left endpoint values converge, $\|q^{uk}(0)\|\to\|q^u(0)\|$, since $q^u(0)$ is defined as the limit of $q^{uk}(0)$ along the fixed subsequence; see Step~{\bf 4f}. We do not use the right-continuous representative at this point.

For the control constraint mass, we use the \emph{nonnegativity} condition $\psi^{uk}_i\ge 0$, which ensures by \eqref{e:dnc} and the weak$^*$ convergence $\psi^{uk}_i\stackrel{w^*}\to\psi^u_i$that the first and last equalities in
\[
\|\psi^{uk}_i\|_{TV}=\langle\mathbf 1,\psi^{uk}_i\rangle\longrightarrow\langle\mathbf 1,\psi^u_i\rangle=\|\psi^u_i\|_{TV}
\]
hold because the measures are nonnegative. Passing to the limit in \eqref{e:dnc} yields therefore the equality
\begin{equation}\label{e:NT-limit}
\lm+\|p(\T)\|+\|q^u(0)\|+\sum_{i=1}^s\|\psi^u_i\|_{TV}=1
\end{equation}
whose left-hand side is the standard nontriviality functional \eqref{NOC:NT}. Thus the limiting collection is nontrivial.

{\bf 9b}. {\em Arutyunov form  of nontriviality}. Contrary to \eqref{NOC:NT-meas}, suppose that $\lm=0$, $\|p(\T)\|=0$, $\|\gamma\|_{TV}=0$, and $\|\psi^u\|_{TV}=0$. We intend to show that $q^u(0)=0$, which contradicts the standard nontriviality \eqref{NOC:NT} of Step~{\bf 9a}.

{\bf(i)} {\em Vanishing $p^u$ on $[0,\T]$.} The condition $\|p(\T)\|=0$ gives us $p^x(\T)=p^u(\T)=0$. The adjoint equation \eqref{NOC:adj-ac} for $p^u$ with $\lm=0$ reduces to $\dot p^u(t)\equiv 0$ a.e.\ on $[0,\T]$, and the absolute continuity $p^u\in W^{1,2}$ together with $p^u(\T)=0$ yields $p^u\equiv 0$. In particular, $p^u(0)=0$.

{\bf(ii)} {\em Vanishing $q^x$ and the Hessian coupling vector $\xi$.} The condition $\|\gamma\|_{TV}=0$ forces $\gamma\equiv 0$ as a vector measure on $[0,\T]$. Substituting $\lm=0$ and $\gamma=0$ into \eqref{NOC:qx} reduces it to the backward Volterra integral equation
\begin{equation}\label{e:9b-volterra}
q^x(t)=p^x(t){+}\int_t^{\T}K(s)\,q^x(s)\,ds,\qquad K(s):=\sum\limits_{i=1}^m\eta_i(s)\,\nabla^2 g_i\bigl(\ox(s)-\ou(s)\bigr),
\end{equation}
while the condition \eqref{NOC:adj-ac} for $p^x$ with $\lm=0$ becomes the equation
\begin{equation}\label{e:9b-pdot}
\dot p^x(t)={-}\nabla_x f\bigl(\ox(t),\oa(t)\bigr)^*\,q^x(t)\quad\text{a.e.\ on }[0,\T],\qquad p^x(\T)=0.
\end{equation}
The gradient bound \eqref{e12} yields $\|K(s)\|\le M_3\sum_i\eta_i(s)$, and Step~2 gives us $\eta\in L^2$, and thus $K\in L^2([0,\T])\subset L^1([0,\T])$. Combining \eqref{e:9b-volterra} and \eqref{e:9b-pdot} with $\|\nabla_x f\|\le L_f$ from \textbf{(H1)} tells us that 
\[
\|p^x(t)\|\le\int_t^{\T}L_f\,\|q^x(s)\|\,ds,\qquad
\|q^x(t)\|\le\|p^x(t)\|+\int_t^{\T}\|K(s)\|\,\|q^x(s)\|\,ds\;\mbox{ for a.e. }\;t\in[0,\T],
\]
which brings us therefore to the estimate
\[
\|q^x(t)\|\le\int_t^{\T}\bigl(L_f+\|K(s)\|\bigr)\,\|q^x(s)\|\,ds.
\]
Since $L_f+\|K\|\in L^1([0,\T])$, the backward Gr\"onwall inequality yields $q^x\equiv 0$, and then \eqref{e:9b-pdot} with $p^x(\T)=0$ gives us $p^x\equiv 0$. Consequently, the Hessian coupling vector
\[
\xi(t):=\Bigl(\sum\limits_{i=1}^m\eta_i(t)\,\nabla^2 g_i(\ox(t)-\ou(t))\Bigr){\bigl(q^x(t)-\lm v^x(t)\bigr)}
\]
vanishes identically on the entire interval $[0,\T]$.

{\bf(iii)} {\em Boundary value $q^u(0)$.} Setting $t=0$ in the identity \eqref{NOC:puqu} of {\bf(ii)} in Theorem~\ref{ThNOC} established in Step~{\bf 4} and substituting $p^u(0)=0$ from~{\bf(i)}, $\xi\equiv 0$ from~{\bf(ii)}, $\gamma\equiv 0$, and $\psi^u\equiv 0$ from the contradiction hypothesis yield
\[
q^u(0)=p^u(0)-0-0-0=0.
\]
Combined with $\lm=0$, $\|p(\T)\|=0$, and $\|\psi^u\|_{TV}=0$, the equality $q^u(0)=0$ contradicts the standard nontriviality condition \eqref{NOC:NT} of Step~{\bf 9a}. Therefore, \eqref{NOC:NT-meas} holds.

{\bf 9c.} {\em Enhanced nontriviality.} Assume that $g_i(x_0-\ou(0))>0$ for all $i$, and suppose while arguing by contradiction that $\lm=0$, $p(\T)=0$, and $\|\psi^u\|_{TV}=0$. Using the continuity of $g_i$ gives us $g_i(\ox^k_0-\ou^k_0)>0$ for all large $k$. The discrete recursion \eqref{d6} combined with \eqref{d10} and \eqref{d11} (which force $\eta^k_0=0$ and $\gg^k_{0i}=0$ for all $i$) yields
\[
p^{uk}_1-p^{uk}_0=\lm^k h_k w^{uk}_0+\sum\limits_{i=1}^s\psi^{uk}_{0i}\,\nabla\nu_i(\ou^k_0).
\]
Using now \eqref{d8}, \eqref{e:CV}, and \eqref{e:var} for $\|p^{uk}_1\|$ leads us to the estimates 
\begin{equation*}
\begin{aligned}
\|q^{uk}(0)\|=\|p^{uk}_0\|
&\le\|p^{uk}_1\|+\lm^k h_k\|w^{uk}_0\|+L_\nu\sum\limits_{i=1}^s|\psi^{uk}_{0i}|\\
&\le\lm^k\|v^{uk}_0\|+\lm^k({2}\mu+{C_V})+\lm^k h_k\|w^{uk}_0\|+L_\nu\sum\limits_{i=1}^s\|\psi^{uk}_i\|_{TV},
\end{aligned}
\end{equation*}
where each term on the right-hand side tends to zero under the hypotheses $\lm=0$, $p(\T)=0$, and $\|\psi^u\|_{TV}=0$. Hence  $q^u(0)=\lim_{k\to\infty}q^{uk}(0)=0$, which contradicts \eqref{NOC:NT} and thus justifies \eqref{NOC:NT-enh}.

\medskip
Steps~{\bf 1--7} and~{\bf 9} collectively establish the subgradient inclusion \eqref{NOC:subg}, conclusions {\bf (i)--(vi)} of Theorem~\ref{ThNOC}, and the three nontriviality forms \eqref{NOC:NT}, \eqref{NOC:NT-meas}, and \eqref{NOC:NT-enh}. Step~{\bf 8} verifies the conditional coderivative selection of Proposition~\ref{prop:coderiv-selection}, which is logically separate from the unconditional conclusions. This completes the proof of the necessary optimality condition for the controlled sweeping process under consideration. $\h$

\section{Maximization Conditions and Related Results}\label{S6b}

This section presents Hamiltonian maximization conditions and related results for the controlled sweeping problem $(P)$ that are consequences of the main ones given in Theorem~\ref{ThNOC} and Proposition~\ref{prop:coderiv-selection}. We start with the following sweeping control version of the {\em Pontryagin maximum principle} (PMP) for controls in the perturbed dynamics. In this corollary, the notation $T_A(\oa)$ stands for an arbitrary {\em tangential cone approximation} of the set $A$ at $\oa\in A$ with the only property that $T_A(\oa)$ reduces to the classical tangent cone of convex analysis whenever $A$ is convex. In particular, $T_A(\oa)$ can be taken as the (Bouligand-Severi) {\em contingent cone}; see, e.g., \cite{Mordukhovich2006I}

\begin{corollary}[tangential maximum principle for controls in the dynamics]\label{cor:SweepingMP}
Under the assumptions of 
Theorem~{\rm\ref{ThNOC}}, suppose in addition that the normal cone at $\oa(t)$ is the $($negative$)$ polar of the tangent cone
\begin{equation}\label{polar}
N_A\bigl(\oa(t)\bigr)=\bigl(T_A\bigl(\oa(t)\bigr)\bigr)^{\circ}\;\mbox{ for a.e. }\;t\in[0,\T].
\end{equation}
Then the $a$-control multiplier $\psi^a$ from \eqref{NOC:psi-a} satisfies the {\sc tangential maximization condition}
\begin{equation}\label{cor:Tmax}
\max_{v\in T_A\big(\oa(t)\big)}\la\psi^a(t),v\ra=0\;\mbox{ for a.e. }\;t\in[0,\T].
\end{equation}
If in addition the set $A$ is convex, then the {\sc global maximization condition}
\begin{equation}\label{cor:Gmax}
\la\psi^a(t),\oa(t)\ra=\max_{a\in A}\la\psi^a(t),a\ra\;\text{ for a.e.\ }t\in[0,\T]
\end{equation}
holds, which is equivalently written as $\la\psi^a(t),a-\oa(t)\ra\le 0$ for all $a\in A$.
\end{corollary}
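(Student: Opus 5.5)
The plan is to derive everything from the adjoint inclusion \eqref{NOC:psi-a} of Theorem~\ref{ThNOC}{\bf(iii)}, which gives $\psi^a(t)\in\co\,N_A(\oa(t))$ for a.e.\ $t\in[0,\T]$. We fix a full-measure set of such $t$ on which \eqref{polar} also holds. The first point is that the polar of any set is a closed convex cone. By \eqref{polar}, $N_A(\oa(t))$ is therefore already convex, so $\co\,N_A(\oa(t))=N_A(\oa(t))=(T_A(\oa(t)))^\circ$. Hence, for a.e.\ $t$ we have $\la\psi^a(t),v\ra\le0$ for all $v\in T_A(\oa(t))$.

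To get the tangential maximization condition \eqref{cor:Tmax}, it remains to see that the supremum is attained and equals zero. Since $T_A(\oa(t))$ is a cone, it contains $v=0$, which gives the value $0$. Together with the nonpositivity above, this shows that the maximum exists and equals $0$. This is the only content of \eqref{cor:Tmax}, and no measurability issue arises beyond selecting the full-measure set.

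For the global condition \eqref{cor:Gmax} with convex $A$, we use the assumption on $T_A$: it then coincides with the classical tangent cone $\overline{\mathrm{cone}}(A-\oa(t))$. For any $a\in A$, the vector $a-\oa(t)$ belongs to $T_A(\oa(t))$. By \eqref{cor:Tmax}, we get $\la\psi^a(t),a-\oa(t)\ra\le0$, which is the equivalent form stated. Taking the maximum over $a\in A$, and noting that it is attained at $a=\oa(t)\in A$, yields \eqref{cor:Gmax}. Alternatively, for convex $A$ the limiting normal cone is the convex-analysis normal cone, so $\psi^a(t)\in N_A(\oa(t))$ gives the variational inequality directly; this is consistent with Step~5 of the proof of Theorem~\ref{ThNOC}.

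The only potentially delicate step is identifying $\co\,N_A$ with $N_A$. Without \eqref{polar}, the convex hull produced by the Mazur limiting passage could be strictly larger than the limiting normal cone, and its polar relation with $T_A$ would be lost. The hypothesis \eqref{polar} is used exactly to remove this gap. I expect no further obstacle.
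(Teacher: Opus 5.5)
Your proposal is correct and follows essentially the same route as the paper. Like the paper, you use the convexity of the polar $(T_A(\oa(t)))^{\circ}$ to identify $\co\,N_A(\oa(t))$ with $N_A(\oa(t))$, derive $\la\psi^a(t),v\ra\le0$ on $T_A(\oa(t))$ with the value $0$ attained at $v=0$, and conclude the convex case from $a-\oa(t)\in T_A(\oa(t))$ for all $a\in A$.
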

{\bf Proof}. Theorem~\ref{ThNOC}{\bf(iii)} tells us that $\psi^a(t)\in\co\,N_A(\oa(t))$ for a.e.\ $t\in[0,\T]$. Since $(T_A(\oa(t)))^{\circ}$ is a convex cone independently of the convexity of the tangential approximation $T_A(\oa(t))$, the polarity relationship \eqref{polar} makes the convexification unnecessary, i.e., 
$$
\co\,N_A\big(\oa(t)\big)=N_A\big(\oa(t)\big)=\big(T_A(\oa(t))\big)^{\circ}.
$$
This readily ensures that $\psi^a(t)\in(T_A(\oa(t)))^{\circ}$, and so 
$$
\la\psi^a(t),v\ra\le 0\;\mbox{ for all }\;v\in T_A\big(\oa(t)\big).
$$
Since $T_A(\oa(t))$ is a cone containing the origin, the supremum of $v\mapsto\la\psi^a(t),v\ra$ over $T_A(\oa(t))$ equals $0$ and is attained at $v=0$; this is the {\em tangential stationarity} \eqref{cor:Tmax}. When $A$ is convex, we get that $a-\oa(t)\in T_A(\oa(t))$ for every $a\in A$, which verifies  \eqref{cor:Gmax} and thus completes the proof.$\h$

Observe that the global maximum condition \eqref{cor:Gmax} is {\em linear} with respect to the multiplier $\psi^a$. This can be used to upgrade the $a$-maximization condition via the reduced Hamiltonian. To proceed, recall---along with the (lower) subdifferential \eqref{sub}--- its {\em upper subdifferential} (superdifferential) counterpart
\begin{equation}\label{e:superdiff}
\partial^+\phi(\ox):=-\partial(-\phi)(\ox)\;\mbox{ whenever }\;|\phi(\ox)|<\infty
\end{equation}
When $\phi$ is concave, \eqref{e:superdiff} agrees with the superdifferential of convex analysis: $\zeta\in\partial^+\phi(\ox)$ if and only if
\[
\phi(x)\le\phi(\ox)+\la\zeta,\,x-\ox\ra\qquad\text{for all }\;x\in\R^n.
\]
If $\phi$ is strictly differentiable at $\ox$ (in particular, $C^1$-smooth around $\ox$), then $\partial^+\phi(\ox)=\{\nabla\phi(\ox)\}$. For functions of several variables, the partial subdifferential constructions $\partial^+_u,\partial^+_a$ are defined accordingly.

\begin{corollary}[reduced Hamiltonian maximum for controls in the dynamics]\label{cor:aHamMax} Under the assumptions of Theorem~{\rm\ref{ThNOC}}, suppose that $A$ is closed and convex, that the {\sc reduced Hamiltonian}
\begin{equation}\label{e:Ha-def}
\mathcal H_a(t,a):=\la{q^x(t)-\lm v^x(t)},\,f(\ox(t),a)\ra-\lm\,\ell\bigl(t,\ox(t),\ou(t),a,\dot{\ox}(t),\dot{\ou}(t)\bigr)
\end{equation}
is concave in $a$ on $A$ for a.e. $t\in[0,\T]$, and that the selected subgradient of the running cost satisfies the inclusion
\begin{equation}\label{reduce-max}
-\lm w^a(t)\in\partial_a^+\Big[-\lm\,\ell\bigl(t,\ox(t),\ou(t),\cdot,\dot{\ox}(t),\dot{\ou}(t)\bigr)\Big]\bigl(\oa(t)\bigr),
\end{equation}
Both requirements hold if $f$ is affine and $\ell$ is convex in the $a$-variable. 
Then we have the {\sc reduced maximization condition}
\begin{equation}\label{max-hamil}
\oa(t)\in\arg\max_{a\in A}\,\mathcal H_a(t,a)\qquad\text{for a.e.\ }\;t\in[0,\T].
\end{equation}
\end{corollary}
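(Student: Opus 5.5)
The plan is to combine the global maximization condition \eqref{cor:Gmax} of Corollary~\ref{cor:SweepingMP} with a supergradient inequality for the concave function $\mathcal H_a(t,\cdot)$. Since $A$ is closed and convex, the normal cone $N_A(\oa(t))$ is convex and polar to the classical tangent cone. Hence \eqref{polar} holds, and Corollary~\ref{cor:SweepingMP} gives, for a.e.\ $t$,
\[
\la\psi^a(t),a-\oa(t)\ra\le0\quad\mbox{for all }\;a\in A,
\]
where $\psi^a(t)=-\lm w^a(t)+\nabla_af(\ox(t),\oa(t))^*\bigl(q^x(t)-\lm v^x(t)\bigr)$ by \eqref{NOC:psi-a}. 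We fix such a $t$, which we also take to be a point where \eqref{NOC:subg} and \eqref{reduce-max} hold, and work with $t$ frozen.

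Next I will show that $\psi^a(t)\in\partial^+_a\mathcal H_a(t,\cdot)(\oa(t))$. The first term of $\mathcal H_a$, namely $a\mapsto\la q^x(t)-\lm v^x(t),f(\ox(t),a)\ra$, is $C^1$ because $f$ is $C^1$-smooth in $a$ by the assumptions of Theorem~\ref{ThDiscNOC}. Its upper subdifferential is therefore the singleton $\{\nabla_af(\ox(t),\oa(t))^*(q^x(t)-\lm v^x(t))\}$. By \eqref{reduce-max}, the vector $-\lm w^a(t)$ lies in the upper subdifferential of the second term. The sum rule for a strictly differentiable function plus an arbitrary function holds with equality for limiting sub/superdifferentials (see \cite[Proposition~1.107]{Mordukhovich2006I}). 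It yields $\psi^a(t)\in\partial^+_a\mathcal H_a(t,\cdot)(\oa(t))$.

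The main obstacle is converting this limiting supergradient into a global inequality, since the superdifferential of convex analysis is only available for functions concave on all of $\R^d$, while here concavity holds only on $A$. To handle this, I will use the fact that for a function concave on a convex set, the limiting superdifferential at a point of $A$ is contained in the convex-analysis superdifferential modulo $N_A(\oa(t))$. I would first try to avoid this technicality by working with the restriction to $A$ directly. Take the one-sided directional derivative of the concave function $\mathcal H_a(t,\cdot)$ along a segment $\oa(t)+s(a-\oa(t))\subset A$. The supergradient inclusion gives that this derivative is at most $\la\psi^a(t),a-\oa(t)\ra$. Concavity along the segment then gives
\[
\mathcal H_a(t,a)\le\mathcal H_a(t,\oa(t))+\la\psi^a(t),a-\oa(t)\ra\le\mathcal H_a(t,\oa(t)).
\]
This is \eqref{max-hamil}. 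The local Lipschitz continuity of $\ell$ from assumption~{\bf(c)} of Theorem~\ref{ThNOC} ensures the directional derivative bound is valid. Finally, if $f$ is affine in $a$ and $\ell$ is convex in $a$, then $\mathcal H_a$ is concave. In that case $-\lm w^a(t)$ is a convex-analysis supergradient of $-\lm\ell$, because $w^a(t)$ is a subgradient of the convex function $\ell(t,\ldots,\cdot,\ldots)$ obtained by projecting \eqref{NOC:subg}. This confirms the stated sufficient conditions.
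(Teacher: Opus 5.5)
\paragraph{Verdict.} Your proposal has a genuine gap in its third step. The two steps before it match the paper's proof.

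\paragraph{Where the argument breaks.} Convexity of $A$ gives $\psi^a(t)\in N_A(\oa(t))$; you get this via Corollary~\ref{cor:SweepingMP}, the paper reads it off \eqref{NOC:psi-a} directly. The sum rule with the $C^1$ summand $a\mapsto\la q^x(t)-\lm v^x(t),f(\ox(t),a)\ra$, together with \eqref{reduce-max}, then gives $\psi^a(t)\in\partial^+_a\mathcal H_a(t,\cdot)(\oa(t))$. The failing step is your claim that this inclusion bounds the one-sided derivative of $\mathcal H_a(t,\cdot)$ at $\oa(t)$ in the direction $a-\oa(t)$ by $\la\psi^a(t),a-\oa(t)\ra$.

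That bound holds for a \emph{regular} (Fr\'echet) upper subgradient. Here, however, $\partial^+$ is the \emph{limiting} construction $-\partial(-\cdot)$, and limiting upper subgradients give no such bound. Local Lipschitz continuity from {\bf(c)} does not help.

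Concretely, let $d=1$, $A=[0,1]$, $\phi(a)=|a|$ and $\bar a=0$.
\begin{itemize}
\item $\phi$ is linear, hence concave, on $A$.
\item $\partial^+\phi(0)=-\partial(-|\cdot|)(0)=\{-1,1\}$, so $\zeta=-1\in\partial^+\phi(0)\cap N_A(0)$.
\item Yet $\phi'(0;1)=1>\la\zeta,1\ra$, and $0\notin\arg\max_A\phi$.
\end{itemize}
Every pointwise relation your argument uses is realized by this configuration. Take $\lm=1$, a vanishing $f$-term, $\ell=-|a|$ in the $a$-slot, and $w^a=1\in\co\partial(-|\cdot|)(0)=[-1,1]$.

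So at a boundary point of $A$, the three facts ``limiting upper subgradient'', ``normal to $A$'' and ``concave on $A$ only'' do not imply maximality. Neither your segment argument nor the ``containment modulo $N_A$'' idea can close the step.

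\paragraph{How to repair it.} The step works if $\mathcal H_a(t,\cdot)$ is concave on a \emph{neighborhood} of $\oa(t)$. Concavity on all of $\R^d$ suffices, and this is exactly what the sufficient condition ``$f$ affine and $\ell$ convex in $a$'' delivers. The argument then runs as follows.
\begin{itemize}
\item $-\mathcal H_a(t,\cdot)$ is convex near $\oa(t)$, so its limiting subdifferential is the convex-analysis one.
\item Hence $\psi^a(t)$ satisfies the supergradient inequality near $\oa(t)$.
\item Concavity along the segment in $A$ then gives $\mathcal H_a(t,a)\le\mathcal H_a(t,\oa(t))+\la\psi^a(t),a-\oa(t)\ra\le\mathcal H_a(t,\oa(t))$.
\end{itemize}

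This is what the paper's one-line step ``since $\mathcal H_a(t,\cdot)$ is concave, this upper subgradient is a supergradient in the sense of convex analysis'' relies on. It is automatic when $\oa(t)\in\inter A$. At boundary points it tacitly uses concavity beyond $A$. So the subtlety you flagged is real; it is only your remedy that fails.

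Alternatively, read \eqref{reduce-max} with the regular upper subdifferential. The Fr\'echet sum rule then makes $\psi^a(t)$ a regular upper subgradient of $\mathcal H_a(t,\cdot)$. In that case your directional-derivative argument is correct with concavity on $A$ alone.

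\paragraph{A smaller point.} In your sufficiency remark, the $a$-component of an element of $\co\partial\ell$ is not in general a partial subgradient. Here it is one, because $\ell$ is jointly continuous and convex in $a$ for all values of the other arguments, so $\partial_a\ell$ has closed graph. This needs a line of proof.
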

{\bf Proof}. Fix $t\in[0,\oT]$ at which \eqref{NOC:psi-a} holds along with the assumptions imposed above. It follows from the convexity of $A$ that $\psi^a(t)\in\co N_A(\oa(t))=N_A(\oa(t))$. The $f$-summand of \eqref{e:Ha-def} is smooth in $a$ with the gradient $\nabla_af(\ox(t),\oa(t))^*(q^x(t)-\lm v^x(t))$. Then the inclusion in \eqref{reduce-max} combined with the sum rule for the upper subdifferential with a smooth summand leads us to
\[
\psi^a(t)=\nabla_af(\ox(t),\oa(t))^*{\bigl(q^x(t)-\lm v^x(t)\bigr)}-\lm w^a(t)\in\partial^+_a\mathcal H_a\bigl(t,\oa(t)\bigr)\cap N_A(\oa(t)),
\]
{Since $\mathcal H_a(t,\cdot)$ is concave, this upper subgradient is a supergradient in the sense of convex analysis. A supergradient lying in $N_A(\oa(t))$ ensures that $\oa(t)$ maximizes $\mathcal H_a(t,\cdot)$ over the convex set $A$. This verifies} \eqref{max-hamil}. $\h$\vspace*{0.05in}

\section{Major Specifications and Further Relationships}\label{S6c}

This section relates Theorem~\ref{ThNOC} to known results. In appropriate special regimes, the optimality system in Theorem~\ref{ThNOC} agrees with the known results as discussed in the next two remarks; namely, for the convex polyhedral free-time conditions of \cite{ColomboMordukhovichNguyen2024} (see Remark~\ref{rem:polyhedral}) and for the fixed-time convex conditions of \cite{CaoColomboMordukhovich2021a,CaoColomboMordukhovich2021b} (see Remark~\ref{rem:fixed-time-convex}).

\begin{remark}[specializations in the convex polyhedral setting]\label{rem:polyhedral}
The free-time {\em convex polyhedral} sweeping control problem from Colombo--Mordukhovich--Nguyen--Nguyen \cite{ColomboMordukhovichNguyen2020,ColomboMordukhovichNguyen2024} (CMNN), with the moving polyhedron $C(t)=\bigcap_{j=1}^s\{\la x^*_j(t),x\ra\le c_j(t)\}$ and the Mayer cost $\varphi(x(\T),\T)$, contains a subclass that is a polyhedral specialization of $(P)$; namely, the subclass in which the polyhedron moves by translations, $C(t)=C+u(t)$, with $g_i(y)=c_i-\la b_i,y\ra$. The two frameworks are different in:

{\bf(i)} {\em Geometry}. The moving set in CMNN is a convex polyhedron, while Theorem~\ref{ThNOC} allows a nonconvex prox-regular sweeping set of the type $C=\bigcap_i\{g_i\ge0\}$.

{\bf(ii)} {\em Time variation}. CMNN varies facet orientations and offsets directly in $t$, while in Theorem~\ref{ThNOC} the sweeping set is shifted by the control $C_u(t)=C+u(t)$.

{\bf(iii)} {\em Data regularity}. CMNN imposes the Lipschitzian assumptions on $\varphi$ and $f$, while Theorem~\ref{ThNOC} requires $\varphi$ to be locally Lipschitz around the optimal endpoint, $\ell$ to be Lipschitz uniformly in $\dot x$ as in assumption {\bf(c)}, and $f$ to be $C^1$-smooth. When $C$ is a fixed polyhedron ($\nabla^2 g_i\equiv0$), $\ell\equiv0$, and ${u\equiv 0}$, Theorem~\ref{ThNOC} agrees with the results in CMNN, i.e., the Hessian coupling $\xi$ vanishes, the primal, adjoint, and transversality relationships reduce to those in \cite[Theorem~5.1]{ColomboMordukhovichNguyen2024}, and the three nontriviality forms of conclusion~{\bf(vi)} reduce to the single condition $\lm+\|p(\T)\|>0$. Outside this regime, the two ingredients have {\em no CMNN analogue}. They are: the second-order {\em measured coderivative} selection \eqref{NOC:coderiv} in Proposition~\ref{prop:coderiv-selection} carried by the Hessian coupling tensor $Q_\eta$ (which vanishes in the polyhedral case), and the {\em unconditional Arutyunov form of nontriviality} formulated in \eqref{NOC:NT-meas}.
\end{remark}

\begin{remark}[specializations in the fixed-time convex setting]\label{rem:fixed-time-convex}
Imposing $\Xi_T=\{\T\}$ and the convexity of $C$ recovers the fixed-time convex conditions of \cite{CaoColomboMordukhovich2021a,CaoColomboMordukhovich2021b}. In this setting, the free-time slot for $\mathcal H$ in conclusion~{\bf(iv)} of Theorem~\ref{ThNOC} becomes unconstrained and the free-endpoint identity disappears, the sweeping measure $\gamma$ in conclusion~{\bf(v)} is supported on the active set, and the nontriviality condition \eqref{NOC:NT} recovers the form $\lm+\|p(\T)\|+\|\psi^u\|_{TV}>0$ from  \cite{CaoColomboMordukhovich2021b}. The discrete structure of \cite[\S7]{CaoMordukhovich2026} is a natural bridge; namely, fixing $\T^k\equiv\T$ suppresses both the $\lm^k(\T-\T^k)$ term of the discrete transversality \cite[eq.~(7.24)]{CaoMordukhovich2026} and the variable mesh residual $\varrho^k$ giving us the discrete system of \cite{CaoColomboMordukhovich2021a}.
\end{remark}\vspace*{-0.05in}

Let us now consider specifications of the obtained results in the case of {\em polyhedral} sweeping sets $C$, i.e., when $g_i(x)=\la a_i,x\ra-b_i$. {Here and below, the polyhedral case means that the functions $g_i$ describing $C$ are affine. A polyhedral set may also be described by nonaffine functions, e.g., $[0,\infty)=\{z\mid z+\epsilon(1-\cos z)\ge0\}$ with $0<\epsilon<1$, and then $\nabla^2g_i$ need not vanish. In the polyhedral case,} we have
\[
\xi^k_j=\Bigl(\textstyle\sum_i\eta^k_{ji}\nabla^2 g_i\Bigr)\Lm^k_j=0,
\]
so the curvature term in \eqref{NOC:qx} drops from the adjoint system{. In \cite{ColomboMordukhovichNguyen2024}, the sweeping measure satisfies a sign condition on the active set. We do not claim such a sign here. It need not hold for every collection of multipliers satisfying the conclusions of Theorem~\ref{ThNOC}, even in one dimension with an affine $g$. A sign could hold only for a suitable choice of multipliers. In the classical theory of state constraints, endpoint degeneracy is excluded by polyhedral inward pointing conditions; see Arutyunov--Aseev \cite{ArutyunovAseev1997} and Lopes--Fontes--de Pinho \cite{LopesFontesPinho2011}. The endpoint results of Section~\ref{S9b} need neither polyhedrality nor such a condition.} In the general nonconvex prox-regular case, the curvature terms reenter and the measure $\gamma$ becomes sign indefinite.

\section{Endpoint Nondegeneracy}\label{S9b}
\setcounter{equation}{0}

This section is devoted to the nondegeneracy of the optimality conditions at the endpoints of the time interval. Among the three forms of nontriviality in conclusion~{\bf(vi)} of Theorem~\ref{ThNOC}, the enhanced relationship \eqref{NOC:NT-enh} is the strongest. It excludes the degenerate multipliers whose entire nontrivial mass concentrates in a boundary atom of the sweeping measure $\gamma$. However, it holds only under the interior initial data hypothesis $g_i(x_0-\ou(0))>0$. The normal cone dynamics in controlled sweeping processes typically drives trajectories to the boundary of $C_u(t)$, so this hypothesis is restrictive. The results below remove it for suitably selected multipliers.

Without this hypothesis, the following phenomenon may occur. The boundary contact mechanism is a free-time sweeping version of the classical Gamkrelidze--Arutyunov endpoint degeneracy phenomenon (see \cite{ArutyunovAseev1997,Arutyunov2000,Vinter2000}): a {\em Dirac mass} of the sweeping measure $\gamma$ at the active endpoint time $\tau\in\{0,\T\}$ saturates the standard nontriviality condition \eqref{NOC:NT} {\em on its own}, with no information transmitted to the optimal arc on $(0,\T)$.

{In the classical theory of state constraints, this degeneracy is excluded by a {\em controllability type qualification} at the endpoints; see \cite{ArutyunovAseev1997,Arutyunov2000,FerreiraFontesVinter1999,LopesFontesPinho2011,ArutyunovKaramzinPereira2011}. Such a condition asks for an admissible control that induces a velocity pointing strictly into the constraint set along every active gradient. This first-order program has been extended to equality type state constraints in \cite{ArutyunovKaramzin2016} and to higher-order Lie bracket relaxations in \cite{ColomboKhalilRampazzo2022}. Arroud and Colombo \cite{ArroudColombo2018} used such a pointing condition in a maximum principle for a controlled sweeping process with a smooth and strictly convex moving set. Our results do not need such a condition. For the sweeping dynamics with the moving set control, the degeneracy is excluded by a selection of multipliers at the first mesh cell.}

For $\tau\in\{0,\T\}$, let $I_\tau:=I(\ox(\tau)-\ou(\tau))$ stand for the {\em active set at the endpoint} $\tau$.

{Put $y_0:=x_0-\ou(0)$ and $I_0:=I(y_0)$. If $I_0\ne\varnothing$, put
\[
\kappa_0:=\min_{\|z\|_1=1}\Bigl\|\sum_{i\in I_0}z_i\nabla g_i(y_0)\Bigr\|,
\]
which is positive by the LICQ in assumption {\bf(a)} of Theorem~\ref{ThNOC}. The following theorem bounds the boundary value $q^u(0)$ and the initial atom of the sweeping measure for the multipliers selected at the first mesh cell.

\begin{theorem}[initial value and initial atom]\label{thm:initial-atom}
Let the assumptions of Theorem~{\rm\ref{ThNOC}} hold, and build the multipliers in the proof of Theorem~{\rm\ref{ThNOC}} from the discrete multipliers of Lemma~{\rm\ref{lem:first-cell}}. Then all the conclusions of Theorem~{\rm\ref{ThNOC}} hold, and moreover we have
\begin{itemize}
\item[\bf(i)] $\|q^u(0)\|\le C_1\lm$ with $C_1:=L_{\ell_2}(1+C_V)+C_V+2\mu$, where $C_V$ is taken from \eqref{e:CV};
\item[\bf(ii)] if $I_0\ne\varnothing$, then the initial atom of $\gamma$ satisfies the estimate
\begin{equation}\label{e:disc-CQ-bd}
\sum_{i\in I_0}\bigl|\gamma_i(\{0\})\bigr|\le M_0\Bigl[\lm+\sum_{j=1}^s\psi^u_j(\{0\})\Bigr]\quad\mbox{with}\quad M_0:=\kappa_0^{-1}\max\bigl\{C_1+L_{\ell_2}(1+2\mu),\,L_\nu\bigr\},
\end{equation}
and if $I_0=\varnothing$, then $\gamma(\{0\})=0$.
\end{itemize}
\end{theorem}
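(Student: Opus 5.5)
The plan is to rerun the nine-step limiting procedure from the proof of Theorem~\ref{ThNOC}, but with the discrete dual elements taken from Lemma~\ref{lem:first-cell} in place of those of Theorem~\ref{ThDiscNOC}.

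\textbf{Validity of the conclusions.} Lemma~\ref{lem:first-cell} supplies every relation of Theorem~\ref{ThDiscNOC}, so Steps~1, 2 and 4--9 apply verbatim (with the convex-$A$ variant of Step~5 where needed). The only place where the choice of multipliers enters differently is the normalization in Step~3. There, Lemma~\ref{lem:first-cell}{\bf(d)} gives $\lm^k+\|p^{xk}_k\|+\sum_{j\ge1}\|\psi^{uk}_j\|>0$. This already forces the left-hand side of \eqref{e:dnc} to be positive, so the normalization \eqref{e:dnc} can still be imposed. Hence all conclusions of Theorem~\ref{ThNOC} persist.

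\textbf{Assertion (i).} Recall that $q^u(0)=\lim_k p^{uk}_0$ by Step~{\bf 4f}. By Lemma~\ref{lem:first-cell}{\bf(c)},
\[
p^{uk}_0=p^{uk}_1-h_k\lm^kw^{uk}_0-h_k\xi^k_0,
\]
and by \eqref{d8}, $p^{uk}_1=\lm^k\bigl(v^{uk}_0+\th^{Uk}_0/h_k\bigr)$. The terms are bounded as follows.
\begin{itemize}
\item The growth bound in \eqref{l2-est} together with \eqref{e:CV} gives $\|v^{uk}_0\|\le L_{\ell_2}(1+C_V)$.
\item As in Step~{\bf 4a}, $\|\th^{Uk}_0\|/h_k\le C_V+2\mu$.
\item The term $h_k\lm^kw^{uk}_0$ tends to zero by Lemma~\ref{lem:subgradient-bound}.
\end{itemize}
The delicate term is $h_k\xi^k_0=Q_{h_k\eta^k_0}\Lm^k_0$. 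By \eqref{d4}, \textbf{(H2.3)} and \eqref{e:eta-bound}, we have $h_k\|\eta^k_0\|\lesssim h_k\bigl(\|V^x_0\|+1\bigr)$. By \eqref{d9}, $\|\Lm^k_0\|\lesssim 1+\|V^x_0\|$, using \eqref{e:pxk-bdd}, Lemma~\ref{lem:subgradient-bound} and $\|\th^{Xk}_0\|/h_k\le\|V^x_0\|+\|\dot\ox\|_\infty$ (the reference velocity being BV, hence bounded). Consequently,
\[
\|h_k\xi^k_0\|\lesssim h_k+\int_0^{h_k}\|\dot\ox^k\|^2\,dt .
\]
The right-hand side tends to zero, because $\dot\ox^k$ converges strongly in $L^2$ by \eqref{e:discconv}, so the family $\{\|\dot\ox^k\|^2\}$ is uniformly integrable. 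Letting $k\to\infty$ then yields $\|q^u(0)\|\le C_1\lm$.

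\textbf{Assertion (ii).} The plan is to compare \eqref{NOC:puqu} at $t=0$, where $J_0=[0,\T]$, with its right limit $t\downarrow0$, where $J_t=(t,\T]$. The absolutely continuous terms $p^u$ and $\int_t^{\T}\xi\,ds$ are continuous, so subtracting the two identities isolates the atoms:
\[
q^u(0+)-q^u(0)=\nabla g(y_0)^*\gamma(\{0\})+\sum_j\nabla\nu_j(\ou(0))\,\psi^u_j(\{0\}).
\]
The right limit $q^u(0+)$ is a limit of a.e.\ values of \eqref{NOC:qu}. Since $\|\dot\ou\|\le2\mu$ (Step~{\bf 4a}), the growth bound gives $\|q^u(0+)\|\le\lm L_{\ell_2}(1+2\mu)$. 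Combined with (i) and \textbf{(H3.1)}, this yields
\[
\Bigl\|\sum_{i}\gamma_i(\{0\})\nabla g_i(y_0)\Bigr\|\le\bigl(C_1+L_{\ell_2}(1+2\mu)\bigr)\lm+L_\nu\sum_j\psi^u_j(\{0\}).
\]
By the support property in Theorem~\ref{ThNOC}{\bf(v)}, only indices $i\in I_0$ can carry an atom at $0$. The definition of $\kappa_0$ then gives \eqref{e:disc-CQ-bd}. If $I_0=\varnothing$, the same support property yields $\gamma(\{0\})=0$ directly.

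\textbf{Main obstacle.} The hardest points are the vanishing of $h_k\xi^k_0$ in (i), since $\Lm^k_0$ is not uniformly bounded, and the justification of the jump identity in (ii). For the latter, the atoms at $0$ must arise only from the concentration of the densities on cells $j\ge1$ near $0$, because $\gg^k_0=0$ and $\psi^{uk}_0=0$ by Lemma~\ref{lem:first-cell}{\bf(a)}. I would verify this using the convention $J_0=[0,\T]$ adopted in Step~{\bf 4f} for the weak$^*$ limits on $[0,\T+\ve]$.
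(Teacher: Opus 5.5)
Your proposal is correct and follows the paper's proof essentially step by step. The matching steps are: the normalization via Lemma~\ref{lem:first-cell}(d); the same representation of $p^{uk}_0$ through Lemma~\ref{lem:first-cell}(c) and \eqref{d8}, with the same bounds on each term; and the trace identity obtained by comparing \eqref{NOC:puqu} at $t=0$ with its limit as $t\downarrow0$. The only cosmetic difference is how you show $h_k\xi^k_0\to0$: you use $h_k\|V^x_0\|^2=\int_0^{h_k}\|\dot\ox^k\|^2dt\to0$, whereas the paper phrases the same estimate through $r_k=\|\th^{Xk}_0\|/h_k$ and $h_kr_k^2\le\|\th^{Xk}\|^2_{L^2}\to0$.
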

{\bf Proof}. By Lemma~\ref{lem:first-cell}(d), the left-hand side of \eqref{e:dnc} is positive, so we may normalize as in \eqref{e:dnc}. Then Steps~{\bf 1}--{\bf 9} of the proof of Theorem~\ref{ThNOC} apply to these multipliers. When $A$ is convex and the last constraint in \eqref{e:Pk-bv} is active, Step~{\bf 4e} is replaced by the variational inequality device of Step~{\bf 5}. Moreover, it follows from Step~{\bf 4f} that $q^u(0)=\lim_kp^{uk}_0$.

To verify {\bf(i)}, we use Lemma~\ref{lem:first-cell}(c) and \eqref{d8}. This gives us
\[
p^{uk}_0=\lm^k\Bigl(v^{uk}_0+\frac{\th^{Uk}_0}{h_k}\Bigr)-h_k\lm^kw^{uk}_0-h_k\xi^k_0 .
\]
By Step~{\bf 4a}, we have $\|v^{uk}_0\|\le L_{\ell_2}(1+C_V)$ and $\|\th^{Uk}_0\|/h_k\le C_V+2\mu$. Moreover, $h_k\lm^k\|w^{uk}_0\|\to0$. It remains to show that $h_k\xi^k_0\to0$. Put $r_k:=\|\th^{Xk}_0\|/h_k$. Then $h_kr_k^2\le\|\th^{Xk}\|^2_{L^2}\to0$ by \eqref{e:theta-zero}. By \eqref{e:eta-bound}, we have $\sum_i\eta^k_{0i}\le c(1+r_k)$. By \eqref{e:pxk-bdd}, we have $\|\Lm^k_0\|\le c(1+r_k)$. Here $c$ denotes constants independent of $k$. Since the Hessians of $g_i$ are bounded near $y_0$, we obtain the estimate
\[
h_k\|\xi^k_0\|\le c\,h_k(1+r_k)^2\le 2c\bigl(h_k+h_kr_k^2\bigr)\to0 .
\]
Passing to the limit as $k\to\infty$ gives us (i).

To verify {\bf(ii)}, we compare \eqref{NOC:puqu} at $t=0$, where $J_0=[0,\T]$, with its limit as $t\downarrow0$, where $J_t=(t,\T]$. This gives us the trace identity
\[
\sum_{i\in I_0}\gamma_i(\{0\})\nabla g_i(y_0)=q^u(0+)-q^u(0)-\sum_{j=1}^s\psi^u_j(\{0\})\nabla\nu_j(\ou(0)).
\]
By \eqref{NOC:qu}, the bounded variation of $\dot\ou$, and \eqref{l2-est}, we have $\|q^u(0+)\|\le\lm L_{\ell_2}(1+2\mu)$. By the support relationship of Theorem~\ref{ThNOC}{\bf(v)}, we have $\gamma_i(\{0\})=0$ for $i\notin I_0$. Now the definition of $\kappa_0$, assertion {\bf(i)}, and \textbf{(H3.1)} yield {\bf(ii)}. $\h$

Note that the bound \eqref{e:disc-CQ-bd} does not hold for every collection of multipliers satisfying the conclusions of Theorem~\ref{ThNOC}. Indeed, the degenerate multipliers of Example~\ref{ex:CQ0} below violate it. Thus \eqref{e:disc-CQ-bd} is a property of the multipliers selected in Lemma~\ref{lem:first-cell}. Observe also that $\gg^k_0=0$ alone does not exclude an atom at $t=0$, since mass may concentrate on the cells $j=1,2,\ldots$ near $0$. The bound \eqref{e:disc-CQ-bd} comes from the trace identity and assertion (i).}

The following theorem strengthens the Arutyunov form of nontriviality \eqref{NOC:NT-meas} by removing the measure term $\|\gamma\|_{TV}$. Observe that the asymmetry between the two endpoints in the theorem below is intrinsic for the standard nontriviality condition \eqref{NOC:NT}, which carries the BV value $\|q^u(0)\|$ on the initial side and the value $\|p(\T)\|$ on the terminal side. Only the initial slot is degenerable through a sweeping atom at $\tau=0${, so the selection of Lemma~\ref{lem:first-cell} is needed only at the initial endpoint}. The terminal endpoint statement is handled at the level of the sweeping measure itself by the subsequent Theorem~\ref{ThNOCcq-atom}, which bounds the terminal atom $\gamma(\{\T\})$ by the 
nondegenerate endpoint data {without any qualification condition}.

\begin{theorem}[nondegenerate nontriviality]\label{ThNOCcq}
{Let the assumptions of Theorem~{\rm\ref{ThNOC}} be fulfilled. Then the multipliers of Theorem~{\rm\ref{thm:initial-atom}} satisfy} the enhanced nontriviality condition
\begin{equation}\label{e:NTcq}
\lm+\|p(\T)\|+\|\psi^u\|_{TV}>0
\end{equation}
without imposing the interior initial data assumption $g_i(x_0-\ou(0))>0$. 
\end{theorem}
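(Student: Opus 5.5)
The plan is to argue by contradiction, combining the standard nontriviality \eqref{NOC:NT}, which holds for the multipliers of Theorem~\ref{thm:initial-atom}, with the bound of Theorem~\ref{thm:initial-atom}(i). Suppose that $\lm=0$, $p(\T)=0$, and $\|\psi^u\|_{TV}=0$. Theorem~\ref{thm:initial-atom} tells us that all conclusions of Theorem~\ref{ThNOC} remain valid for the multipliers built from Lemma~\ref{lem:first-cell}. In particular, the normalized limit identity \eqref{e:NT-limit} from Step~{\bf 9a} holds:
\[
\lm+\|p(\T)\|+\|q^u(0)\|+\sum_{i=1}^s\|\psi^u_i\|_{TV}=1 .
\]
Under the contradiction hypothesis, this forces $\|q^u(0)\|=1$.

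On the other hand, assertion {\bf(i)} of Theorem~\ref{thm:initial-atom} yields $\|q^u(0)\|\le C_1\lm=0$. This contradicts $\|q^u(0)\|=1$ and proves \eqref{e:NTcq}. The argument uses no interiority of $x_0-\ou(0)$. The only place where the initial active set could have produced a degenerate contribution is the coderivative multiplier $\gg^k_0$ on the first cell, and Lemma~\ref{lem:first-cell}(a) sets it to zero. This is what makes Theorem~\ref{thm:initial-atom}(i) available.

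The main point to double-check is that the normalization \eqref{e:dnc} can be imposed for the selected multipliers and that \eqref{e:NT-limit} passes to the limit with equality. For the normalization, Lemma~\ref{lem:first-cell}(d) gives $\lm^k+\|p^{xk}_k\|+\sum_{j\ge1}\|\psi^{uk}_j\|>0$, and this is dominated by the left-hand side of \eqref{e:dnc}. For the limit, note that $q^u(0)$ is defined as $\lim_k p^{uk}_0$ along the fixed subsequence rather than through a right-continuous representative. Also, the masses of $\psi^{uk}$ are preserved under weak$^*$ convergence by nonnegativity, exactly as in Step~{\bf 9a}.

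An alternative route, useful if one prefers not to rely on equality in \eqref{e:NT-limit}, goes through \eqref{NOC:NT-meas}. Under the hypothesis, it suffices to show that $\gamma=0$. Rerunning Step~{\bf 9b}(i)--(ii) for a possibly nonzero $\gamma$ is harder, so I would keep the direct argument above as the primary proof.
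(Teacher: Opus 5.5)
Your proof is correct and follows the paper's argument essentially verbatim. You pass the normalization \eqref{e:dnc} to the limit with equality as in Step~\textbf{9a}, then observe that $\lambda=0$ forces $q^u(0)=0$ by Theorem~\ref{thm:initial-atom}(i), contradicting the normalized identity; your check that Lemma~\ref{lem:first-cell}(d) permits the normalization is exactly how the paper justifies it in the proof of Theorem~\ref{thm:initial-atom}.
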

{\bf Proof}. {Passing to the limit in \eqref{e:dnc} gives us $\lm+\|p(\T)\|+\|q^u(0)\|+\sum_{j=1}^s\|\psi^u_j\|_{TV}=1$, as in Step~{\bf 9a} of the proof of Theorem~\ref{ThNOC}. Suppose that $\lm+\|p(\T)\|+\|\psi^u\|_{TV}=0$. Then $\lm=0$, and Theorem~\ref{thm:initial-atom}{\bf(i)} yields $q^u(0)=0$. This contradicts the normalization and verifies \eqref{e:NTcq}. $\h$

\begin{example}[inward escape from an active boundary at $t=0$]\label{ex:CQ0}
We illustrate the endpoint degeneracy and its removal on a one-dimensional variable-time sweeping control problem. Take $n=m=d=1$, $C=[0,\infty)$ with $g_1(x)=x$, $U=\R$, $A=[-1,1]$, $f(x,a)=a$, $\Xi_x=\R$, $\Xi_T=[\tfrac{1}{2},2]$, terminal cost $\varphi(x,T)=(x-1)^2$, running cost $\ell\equiv 0$, and the initial data $x_0=0$, $\ou(0)=0$. We have the following.

$\bullet$ {\em Reference r.i.l.m.} The tuple $\ox(t)=t,\ \ou(t)\equiv 0,\ \oa(t)\equiv 1,\ \T=1$ is feasible with $\ox(t)-\ou(t)=t\ge 0$, $\dot{\ox}=\oa=1\in -N_C(\ox-\ou)+\oa$, and the cost value $(\ox(\T)-1)^2=0$ proving the global minimum. Hence $(\ox,\ou,\oa,\T)$ is a global optimal solution and therefore an r.i.l.m. for this problem.

$\bullet$ {\em Active set at $t=0$ and failure of the interiority assumption.} At the initial point, we have $g_1(x_0-\ou(0))=g_1(0)=0$, which shows that $I_0=\{1\}$ and the interiority condition $g_i(x_0-\ou(0))>0$ fails.

$\bullet$ {\em Degenerate multiplier under standard nontriviality.} The standard form \eqref{NOC:NT} admits the degenerate multiplier set given by
\[
\lm=0,\quad \alpha=0,\quad p(\cdot)\equiv 0,\quad \psi^u\equiv 0,\quad \gamma=c\,\delta_{\{0\}}\ \text{for some }c>0,
\]
with $\eta\equiv 0$ a.e.\ on $[0,\T]$. The integral identity \eqref{NOC:puqu} tells us that $q^u\equiv 0$ on $(0,1)$, while
\[
q^u(0)=p^u(0)-\nabla g_1(0)\cdot\gamma(\{0\})=-c\neq 0
\]
on the boundary. The full nontriviality is concentrated in the boundary atom $q^u(0)$. This is the variable-time sweeping counterpart of the Gamkrelidze--Arutyunov endpoint degeneracy phenomenon \cite{ArutyunovAseev1997,Arutyunov2000,Vinter2000}.

$\bullet$ {\em Selected multipliers.} {Theorem~\ref{ThNOCcq} provides multipliers different from this degenerate set. Indeed, the enhanced nontriviality \eqref{e:NTcq} requires that $\lm+\|p(\T)\|+\|\psi^u\|_{TV}>0$, which the degenerate triple $(\lm,p(\T),\psi^u)=(0,0,0)$ violates. In this example, the selection can be seen directly. The sampled reference process has zero discrete cost, and every term of the discrete cost is nonnegative. Hence it is the unique optimal solution of $(P_k)$. Along it, $x^k_j-u^k_j=j/k>0$ for $j\ge1$, so $\gg^k_j=0$ for $j\ge1$. Moreover, $\gg^k_0=0$ by Lemma~\ref{lem:first-cell}. Thus the selected multipliers have $\gamma=0$, and the Dirac multiplier $c\,\delta_{\{0\}}$ does not arise. The optimality conditions deliver a nondegenerate multiplier set even though the trajectory begins on the boundary.}
\end{example}

The next theorem ensures nondegeneracy by excluding the atomic mass on the boundary.

{\begin{theorem}[endpoint atom bounds]\label{ThNOCcq-atom}
Let the assumptions of Theorem~{\rm\ref{ThNOC}} be satisfied. For $\tau\in\{0,\T\}$, consider the estimate
\begin{equation}\label{e:gamma-atom}
\sum_{i=1}^m\bigl|\gamma_i(\{\tau\})\bigr|\ \le\ M_\tau\Bigl[\lm+\bigl\|p(\tau)\bigr\|+\sum_{i=1}^s\psi^u_i(\{\tau\})\Bigr].
\end{equation}
{\bf(i)} {\sc Terminal endpoint.} Every collection of multipliers satisfying the conclusions of Theorem~{\rm\ref{ThNOC}} satisfies \eqref{e:gamma-atom} at $\tau=\T$ with
\[
M_{\T}:=\kappa_{\T}^{-1}\max\bigl\{1,\,L_{\ell_2}(1+2\mu),\,L_\nu\bigr\},\qquad
\kappa_{\T}:=\min_{\|z\|_1=1}\Bigl\|\sum_{i\in I_{\T}}z_i\nabla g_i\bigl(\ox(\T)-\ou(\T)\bigr)\Bigr\|,
\]
provided that $I_{\T}\ne\varnothing$. If $I_{\T}=\varnothing$, then $\gamma(\{\T\})=0$.

{\bf(ii)} {\sc Initial endpoint.} The multipliers of Theorem~{\rm\ref{thm:initial-atom}} satisfy \eqref{e:gamma-atom} at $\tau=0$ with the constant $M_0$ from \eqref{e:disc-CQ-bd}. In particular, the endpoint atom vanishes, i.e., $\gamma(\{\tau\})=0$, whenever $\lm=0$, $p(\tau)=0$, and $\psi^u(\{\tau\})=0$.
\end{theorem}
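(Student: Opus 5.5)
Part (ii) is already contained in Theorem~\ref{thm:initial-atom}{\bf(ii)}. Indeed, the right-hand side of \eqref{e:disc-CQ-bd} is $M_0[\lm+\sum_j\psi^u_j(\{0\})]$, and adding the nonnegative term $\|p(0)\|$ only enlarges it. The case $I_0=\varnothing$ gives $\gamma(\{0\})=0$ directly. The final assertion is immediate: if $\lm=0$, $p(\tau)=0$ and $\psi^u(\{\tau\})=0$, then the right-hand side of \eqref{e:gamma-atom} vanishes. So the real work is part (i), and the plan is to mimic the trace identity used in the proof of Theorem~\ref{thm:initial-atom}, now at $t=\T$.

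For (i), take any multipliers satisfying the conclusions of Theorem~\ref{ThNOC} and compare \eqref{NOC:puqu} at $t=\T$ with its limit as $t\uparrow\T$. At $t=\T$ we have $J_{\T}=(\T,\T]=\varnothing$, so \eqref{NOC:puqu} gives $p^u(\T)=q^u(\T)$. For $t<\T$, the integrals over $(t,\T]$ converge as $t\uparrow\T$ to the atoms at $\T$. The term $\int_t^{\T}\xi\,ds\to0$ because $\xi\in L^1$, and $p^u$ is continuous. This yields the trace identity
\[
\sum_{i}\gamma_i(\{\T\})\nabla g_i\bigl(\ox(\T)-\ou(\T)\bigr)=p^u(\T)-q^u(\T-)-\sum_{j=1}^s\psi^u_j(\{\T\})\nabla\nu_j\bigl(\ou(\T)\bigr).
\]
The limit $q^u(\T-)$ exists since $q^u$ is BV. By the support property of Theorem~\ref{ThNOC}{\bf(v)}, only the indices $i\in I_{\T}$ contribute on the left. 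If $I_{\T}=\varnothing$, the left-hand side is absent, and applying the same support property directly gives $\gamma(\{\T\})=0$.

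Next, each term on the right is bounded. First, $\|p^u(\T)\|\le\|p(\T)\|$. Second, $q^u(\T-)$ is a limit of a.e.\ values of \eqref{NOC:qu}, namely $\lm\nabla_{\dot u}\ell_2(t,\dot\ou(t))$. Combined with $\|\dot\ou\|\le2\mu$ a.e.\ (shown in Step~{\bf 4a}) and the growth bound \eqref{l2-est}, this gives $\|q^u(\T-)\|\le\lm L_{\ell_2}(1+2\mu)$. Third, $\|\nabla\nu_j\|\le L_\nu$ by {\bf(H3.1)}, and $\psi^u_j(\{\T\})\ge0$. For the left-hand side, the definition of $\kappa_{\T}$ gives a lower bound of $\kappa_{\T}\sum_{i\in I_{\T}}|\gamma_i(\{\T\})|$, with $\kappa_{\T}>0$ by the LICQ in assumption {\bf(a)}. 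Dividing by $\kappa_{\T}$ and collecting the coefficients of $\|p(\T)\|$, $\lm$ and $\sum_j\psi^u_j(\{\T\})$ gives the constant $M_{\T}$.

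The main obstacle I expect is the careful justification of the one-sided limit at $\T$. Here I have to use the convention $J_t=(t,\T]$ for $t>0$, and the fact that the measures were restricted to $[0,\T]$ after the weak$^*$ passage on $[0,\T+\ve]$. I also have to check that no extra atom of $\psi^u$ or $\gamma$ is produced beyond $\T$, and that the a.e.\ bound on $q^u$ carries over to the left limit. This argument uses only the limiting relations of Theorem~\ref{ThNOC}, with no selection at the discrete level, which is why (i) holds for every collection of multipliers.
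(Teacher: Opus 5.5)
Your proposal is correct and follows essentially the same route as the paper. Part~(ii) is read off from Theorem~\ref{thm:initial-atom}(ii); you should add explicitly the support property $\gamma_i(\{0\})=0$ for $i\notin I_0$, since \eqref{e:disc-CQ-bd} sums only over $I_0$. Part~(i) comes from letting $t\uparrow\T$ in \eqref{NOC:puqu} to get the terminal trace identity, then bounding each term through \eqref{l2-est}, $\|\dot\ou\|\le 2\mu$, \textbf{(H3.1)}, and $\kappa_{\T}$. The only cosmetic difference is that you bound the BV left limit $q^u(\T-)$ directly by the growth estimate, whereas the paper identifies it as $\lm\nabla_{\dot u}\ell_2(\T,\dot\ou(\T-))$ via the Lipschitz part of \eqref{l2-est}; both yield the same constant $M_{\T}$.
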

{\bf Proof}. By the support relationship of Theorem~\ref{ThNOC}{\bf(v)}, we have $\gamma_i(\{\tau\})=0$ for $i\notin I_\tau$. Assertion {\bf(ii)} therefore follows from Theorem~\ref{thm:initial-atom}(ii), since the right-hand side of \eqref{e:disc-CQ-bd} does not exceed that of \eqref{e:gamma-atom}. To verify {\bf(i)}, let $I_{\T}\ne\varnothing$. Then $\kappa_{\T}>0$ by the LICQ in assumption {\bf(a)} of Theorem~\ref{ThNOC}. Since $\dot\ou$ is of bounded variation, it has the left limit $\dot\ou(\T-)$, and $\|\dot\ou(\T-)\|\le2\mu$ by Step~{\bf 4a} of the proof of Theorem~\ref{ThNOC}. By \eqref{NOC:qu} and the Lipschitz estimate in \eqref{l2-est}, we get $q^u(t)\to\lm\nabla_{\dot u}\ell_2(\T,\dot\ou(\T-))$ as $t\uparrow\T$ along a set of full measure. Let us pass to the limit in \eqref{NOC:puqu} along such $t$. The function $p^u$ is continuous, the integral of $\xi$ over $[t,\T]$ tends to zero, and the sets $J_t=(t,\T]$ shrink to $\{\T\}$. Hence the continuity of $\nabla g$, $\nabla\nu_j$, $\ox$, and $\ou$ gives us the identity
\[
\sum_{i\in I_{\T}}\gamma_i(\{\T\})\nabla g_i\bigl(\ox(\T)-\ou(\T)\bigr)=p^u(\T)-\lm\nabla_{\dot u}\ell_2\bigl(\T,\dot\ou(\T-)\bigr)-\sum_{j=1}^s\psi^u_j(\{\T\})\nabla\nu_j\bigl(\ou(\T)\bigr).
\]
The definition of $\kappa_{\T}$, the growth estimate in \eqref{l2-est}, and \textbf{(H3.1)} imply that
\[
\kappa_{\T}\sum_{i=1}^m\bigl|\gamma_i(\{\T\})\bigr|\le\|p^u(\T)\|+\lm L_{\ell_2}(1+2\mu)+L_\nu\sum_{j=1}^s\psi^u_j(\{\T\}).
\]
This is \eqref{e:gamma-atom} at $\tau=\T$. The final assertion follows from \eqref{e:gamma-atom}. Note that neither {\bf(i)} nor {\bf(ii)} uses the polyhedrality of $C$, a sign of $\gamma$, or an inward pointing condition. $\h$}

\begin{remark}[on the quantitative form of \eqref{e:gamma-atom}]\label{rem:atom-bound-form}
{At $\tau=\T$, estimate \eqref{e:gamma-atom} holds for every collection of multipliers. At $\tau=0$, it holds for the multipliers selected in Lemma~\ref{lem:first-cell}. Note that \eqref{e:gamma-atom} is a {\em transfer} bound, not an unconditional elimination. The sweeping measure cannot carry an atom {\em of its own} at $\tau$, but mass {\em borrowed} from the nondegenerate endpoint data $(\lm,p(\tau),\psi^u(\{\tau\}))$ may persist. The atom vanishes when the borrowed mass is absent. {At the initial endpoint, \eqref{e:disc-CQ-bd} shows that this happens when $\lm=0$ and $\psi^u(\{0\})=0$, for instance, when $\ou(0)\in{\rm int}\,U$, since then $\psi^u(\{0\})=0$ by the support condition in Theorem~\ref{ThNOC}{\bf(v)}. At the terminal endpoint, it happens when $\lm=0$, $p(\T)=0$, and $\psi^u(\{\T\})=0$.} We do not claim that $\gamma(\{\tau\})=0$ in general.}
\end{remark}

\begin{corollary}[two-sided endpoint atom control]\label{cor:NOCcq2sided}
{Let the assumptions of Theorem~{\rm\ref{ThNOC}} be satisfied. Then, for the multipliers of Theorem~{\rm\ref{thm:initial-atom}},} the endpoint atoms of the sweeping measure $\gamma\in C^*([0,\T];\R^m)$ are bounded by the nondegenerate endpoint data, i.e.,
\begin{equation}\label{2bound}
\sum_{i=1}^m|\gamma_i(\{\tau\})|\le M\Bigl[\lm+\|p(\tau)\|+\sum_{i=1}^s\psi^u_i(\{\tau\})\Bigr]\quad\text{for }\;\tau\in\{0,\T\}{\ \text{ with }\ M:=\max\{M_0,M_{\T}\}}.
\end{equation}
If in addition the right-hand side of \eqref{2bound} vanishes at both endpoints {$($see Remark~\ref{rem:atom-bound-form}$)$}, then we have $\gamma(\{0\})=\gamma(\{\T\})=0$, which excludes the Gamkrelidze--Arutyunov endpoint degeneracy phenomenon at both endpoints simultaneously.
\end{corollary}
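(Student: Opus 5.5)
The corollary follows by combining the two endpoint statements of Theorem~\ref{ThNOCcq-atom}, applied to one and the same collection of multipliers. I would take the multipliers built in Theorem~\ref{thm:initial-atom} from the discrete multipliers of Lemma~\ref{lem:first-cell}. By that theorem they satisfy all conclusions of Theorem~\ref{ThNOC}.

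\emph{Terminal endpoint.} Since these multipliers satisfy all conclusions of Theorem~\ref{ThNOC}, part~{\bf(i)} of Theorem~\ref{ThNOCcq-atom} applies to them directly. This gives \eqref{e:gamma-atom} at $\tau=\T$ with the constant $M_{\T}$ when $I_{\T}\ne\varnothing$. If $I_{\T}=\varnothing$, it gives $\gamma(\{\T\})=0$, which trivially satisfies any bound with a nonnegative right-hand side.

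\emph{Initial endpoint.} Part~{\bf(ii)} of Theorem~\ref{ThNOCcq-atom} gives \eqref{e:gamma-atom} at $\tau=0$ with the constant $M_0$ when $I_0\ne\varnothing$. Again the case $I_0=\varnothing$ yields $\gamma(\{0\})=0$ by Theorem~\ref{thm:initial-atom}(ii). The right-hand side of \eqref{e:disc-CQ-bd} omits $\|p(0)\|$, so enlarging it by that nonnegative term preserves the inequality.

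\emph{Common constant.} Both right-hand sides are nonnegative, so replacing $M_0$ and $M_{\T}$ by $M:=\max\{M_0,M_{\T}\}$ preserves each inequality. This gives \eqref{2bound}. When an active set is empty, the corresponding constant is not defined. In that case the atom vanishes, and the bound holds for any $M\ge0$; I would interpret $M$ as the maximum over the constants that are defined.

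\emph{Vanishing atoms.} If the bracket in \eqref{2bound} is zero at both endpoints, then $\sum_i|\gamma_i(\{\tau\})|\le0$, so $\gamma(\{0\})=\gamma(\{\T\})=0$.

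There is no genuine obstacle here. The only points requiring care are the following:
\begin{itemize}
\item the same multiplier collection must serve both endpoints, which holds because part~{\bf(i)} is valid for every collection;
\item the empty active set cases must be treated separately;
\item $\|p(0)\|$ must be inserted harmlessly in the initial bound.
\end{itemize}
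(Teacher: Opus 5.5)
Your proposal is correct and follows the paper's proof. The bound at $\tau=0$ comes from Theorem~\ref{ThNOCcq-atom}{\bf(ii)}, and the bound at $\tau=\T$ comes from Theorem~\ref{ThNOCcq-atom}{\bf(i)}, which holds for every multiplier collection and hence for the selected one; the vanishing of both atoms is then read off directly. Your explicit handling of empty active sets, where $M_0$ or $M_{\T}$ is undefined and the corresponding atom vanishes, is a small point of care that the paper leaves implicit.
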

{\bf Proof}. {The bound at $\tau=0$ follows from Theorem~\ref{ThNOCcq-atom}{\bf(ii)}. The bound at $\tau=\T$ follows from Theorem~\ref{ThNOCcq-atom}{\bf(i)}, which holds for every collection of multipliers, in particular for the selected one.} When both right-hand sides vanish, both atoms vanish too. {Hence $|\gamma|(\{0,\T\})=0$. Note that this does not imply $\supp\gamma\subset(0,\T)$, since a support is closed.} $\h$

\begin{remark}[reading the endpoint atom bounds]\label{rem:disc-CQ-sketch}
{The left-hand side of \eqref{e:disc-CQ-bd} is the atomic mass $\gamma(\{0\})$ of the sweeping measure at the active initial endpoint. The right-hand side collects only {\em nondegenerate} quantities, namely, the cost multiplier $\lm$ and the control constraint atom $\psi^u(\{0\})$. Thus the bound expresses a {\em transfer principle}. The sweeping measure cannot carry an atom of its own at $0$. Whatever mass it holds there is {\em borrowed} from the nondegenerate part of the system. This excludes the degenerate Dirac multiplier when the borrowed quantities vanish, as in the abnormal regime of Theorem~\ref{ThNOCcq}.

The constant $M_0$ is proportional to $\kappa_0^{-1}$, which measures the linear independence of the active gradients. It becomes large when the active gradients are nearly dependent. For example, for the gradients $(1,0)$ and $(1,\epsilon)$, the number $\kappa_0^{-1}$ is of order $1/\epsilon$.

Inward pointing conditions are the classical tool for nondegeneracy. The paper by Lopes--Fontes--de Pinho \cite{LopesFontesPinho2011} obtains nondegenerate necessary optimality conditions for polyhedral state constrained problems via such a condition. The paper by Arutyunov--Karamzin--Pereira \cite{ArutyunovKaramzinPereira2011} develops the Gamkrelidze form via a controllability type pointwise condition. In the sweeping setting with the moving set control, Theorem~\ref{thm:initial-atom} replaces these conditions by the selection of multipliers at the first mesh cell. This selection uses the adjoint relation \eqref{d8} of the moving set control.}
\end{remark}

{\smallskip
\noindent{\sc The initial control on the boundary of $U$.} Lemma~\ref{lem:first-cell} and Theorem~\ref{thm:initial-atom} do not use the interiority condition $\ou(0)\in{\rm int}\,U$. When $\ou(0)\in\partial U$, the term $\psi^u(\{0\})$ in \eqref{e:disc-CQ-bd} may be positive, and it may carry the whole nontriviality in \eqref{e:NTcq}. The following joint condition excludes this. Put $I^u_0:=\{j\mid\nu_j(\ou(0))=0\}$.

\begin{definition}[joint positive linear independence at $t=0$]\label{def:JQ}
We say that {\rm(JQ$_0$)} holds if the relations
\[
\sum_{i\in I_0}\gamma_i\nabla g_i(y_0)+\sum_{j\in I^u_0}\psi_j\nabla\nu_j\bigl(\ou(0)\bigr)=0,\qquad \gamma_i\in\R,\ \ \psi_j\ge0,
\]
imply that all the numbers $\gamma_i$ and $\psi_j$ vanish.
\end{definition}

Condition (JQ$_0$) combines the LICQ for the state constraints with the positive linear independence \textbf{(H3.2)} for the control constraints. It reduces to the LICQ at $y_0$ when $I^u_0=\varnothing$, i.e., when $\ou(0)\in{\rm int}\,U$. It reduces to the positive linear independence of $\{\nabla\nu_j(\ou(0))\}_{j\in I^u_0}$ when $I_0=\varnothing$. Under the LICQ at $y_0$, the Motzkin alternative shows that (JQ$_0$) is equivalent to the existence of a vector $w$ such that $\la\nabla g_i(y_0),w\ra=0$ for $i\in I_0$ and $\la\nabla\nu_j(\ou(0)),w\ra<0$ for $j\in I^u_0$. Thus the direction $w$ of the moving set control points strictly into $U$ and keeps the active state constraints to first order. This is a statement about the linearized geometry. It does not construct an admissible controlled trajectory. If $I_0\cup I^u_0=\varnothing$, then both atoms at $0$ vanish by the support relationships. Otherwise, (JQ$_0$) and a compactness argument give us
\[
\kappa^J_0:=\min\Bigl\{\Bigl\|\sum_{i\in I_0}\gamma_i\nabla g_i(y_0)+\sum_{j\in I^u_0}\psi_j\nabla\nu_j(\ou(0))\Bigr\|\Bigm|\sum_{i\in I_0}|\gamma_i|+\sum_{j\in I^u_0}\psi_j=1,\ \psi_j\ge0\Bigr\}>0 .
\]

\begin{corollary}[no initial atoms in the abnormal case]\label{cor:JQ}
Let the assumptions of Theorem~{\rm\ref{thm:initial-atom}} hold together with {\rm(JQ$_0$)}, and let $I_0\cup I^u_0\ne\varnothing$. Then the multipliers of Theorem~{\rm\ref{thm:initial-atom}} satisfy the estimate
\[
\sum_{i\in I_0}\bigl|\gamma_i(\{0\})\bigr|+\sum_{j\in I^u_0}\psi^u_j(\{0\})\le(\kappa^J_0)^{-1}\bigl(C_1+L_{\ell_2}(1+2\mu)\bigr)\lm .
\]
In particular, if $\lm=0$, then $\gamma(\{0\})=0$ and $\psi^u(\{0\})=0$. Moreover, we have $\lm+\|p(\T)\|+\|\psi^u\|_{TV((0,\T])}>0$.
\end{corollary}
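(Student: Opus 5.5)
The argument reuses the trace identity from the proof of Theorem~\ref{thm:initial-atom}(ii). This time the control atom $\psi^u(\{0\})$ stays on the left-hand side together with the sweeping atom, and (JQ$_0$) controls both jointly. We compare \eqref{NOC:puqu} at $t=0$, where $J_0=[0,\T]$, with its limit as $t\downarrow0$ along a set of full measure, where $J_t=(t,\T]$. Here $p^u$ is continuous and $\int_t^{\T}\xi\,ds\to\int_0^{\T}\xi\,ds$ because $\xi\in L^1$. This yields
\[
\sum_{i\in I_0}\gamma_i(\{0\})\nabla g_i(y_0)+\sum_{j\in I^u_0}\psi^u_j(\{0\})\nabla\nu_j\bigl(\ou(0)\bigr)=q^u(0+)-q^u(0).
\]
By the support relationships of Theorem~\ref{ThNOC}{\bf(v)}, only the indices $i\in I_0$ and $j\in I^u_0$ carry atoms at $0$, so the sums may be restricted to these indices. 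Moreover, $\psi^u_j(\{0\})\ge0$ because $\psi^u$ is a nonnegative measure.

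Next we apply the definition of $\kappa^J_0$. Its feasible set is compact, and (JQ$_0$) makes the norm positive on it, so $\kappa^J_0>0$. By positive homogeneity, the left-hand side of the identity has norm at least $\kappa^J_0\bigl(\sum_{i\in I_0}|\gamma_i(\{0\})|+\sum_{j\in I^u_0}\psi^u_j(\{0\})\bigr)$. For the right-hand side, Theorem~\ref{thm:initial-atom}(i) gives $\|q^u(0)\|\le C_1\lm$. As in that proof, \eqref{NOC:qu} together with $\|\dot\ou\|\le2\mu$ and the growth bound in \eqref{l2-est} gives $\|q^u(0+)\|\le\lm L_{\ell_2}(1+2\mu)$. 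Combining these estimates gives the stated bound, and setting $\lm=0$ shows that both initial atoms vanish.

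For the final nontriviality, we argue by contradiction. Suppose that $\lm=0$, $p(\T)=0$, and $\psi^u((0,\T])=0$. The bound just proved gives $\psi^u(\{0\})=0$, hence $\|\psi^u\|_{TV}=0$. This contradicts Theorem~\ref{ThNOCcq}. The main point needing care is the one-sided limit $q^u(0+)$ in the trace identity. It must be taken along $t$ where \eqref{NOC:qu} holds, which is handled exactly as in Theorem~\ref{thm:initial-atom}.
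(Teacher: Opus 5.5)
Your proposal is correct and follows the paper's own proof essentially step by step. Both arguments use the same trace identity obtained by comparing \eqref{NOC:puqu} at $t=0$ with its limit as $t\downarrow0$. Both obtain the joint lower bound through $\kappa^J_0$ using $\psi^u_j(\{0\})\ge0$, then the bounds $\|q^u(0)\|\le C_1\lm$ and $\|q^u(0+)\|\le\lm L_{\ell_2}(1+2\mu)$. The final nontriviality comes, in both, from vanishing initial atoms combined with Theorem~\ref{ThNOCcq}.
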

{\bf Proof}. By the support relationships of Theorem~\ref{ThNOC}{\bf(v)}, we have $\gamma_i(\{0\})=0$ for $i\notin I_0$ and $\psi^u_j(\{0\})=0$ for $j\notin I^u_0$. The trace identity in the proof of Theorem~\ref{thm:initial-atom} can be written as
\[
\sum_{i\in I_0}\gamma_i(\{0\})\nabla g_i(y_0)+\sum_{j\in I^u_0}\psi^u_j(\{0\})\nabla\nu_j\bigl(\ou(0)\bigr)=q^u(0+)-q^u(0).
\]
The right-hand side has norm at most $\bigl(L_{\ell_2}(1+2\mu)+C_1\bigr)\lm$ by \eqref{NOC:qu} and Theorem~\ref{thm:initial-atom}(i). Since $\psi^u_j(\{0\})\ge0$, the definition of $\kappa^J_0$ gives us the estimate. If $\lm=0$ and $p(\T)=0$, then both atoms at $0$ vanish, and Theorem~\ref{ThNOCcq} yields $\|\psi^u\|_{TV((0,\T])}>0$. $\h$

\begin{remark}[on the role of (JQ$_0$)]\label{rem:JQ}
{\bf(a)} An inward pointing condition pairs its witness velocity only with the gradients $\nabla g_i$. Thus it does not exclude the atom $\psi^u(\{0\})$. Indeed, let $U=(-\infty,0]$ in Example~\ref{ex:CQ0}, i.e., $\nu(u)=u$. Then the reference relative velocity $f(0,1)-\dot\ou(0^+)=1$ points strictly into $C$ with margin one. However, (JQ$_0$) fails, since $\nabla g(0)=\nabla\nu(0)=1$. The collection $\lm=0$, $p\equiv0$, $q^u\equiv0$, $q^x=0$ on $(0,\T]$, $q^x(0)=-\sigma$, $\gamma=-\sigma\delta_{\{0\}}$, and $\psi^u=\sigma\delta_{\{0\}}$ with $\sigma>0$ satisfies all the displayed relations of Theorems~\ref{ThNOC} and~\ref{thm:initial-atom}. However, it is not a limit of the multipliers built from Lemma~\ref{lem:first-cell} in this example. Indeed, the sampled reference process has zero discrete cost, and every term of the discrete cost is nonnegative. Hence it is the unique optimal solution of $(P_k)$. Along it, $x^k_j-u^k_j=j/k>0$ for $j\ge1$, so $\gg^k_j=0$ for $j\ge1$. Moreover, $\gg^k_0=0$ by Lemma~\ref{lem:first-cell}. Thus every selected limit has $\gamma=0$.

{\bf(b)} If (JQ$_0$) fails, then there is always a pair $(\gamma_0,\psi_0)$ with $\psi_0\ge0$, $\psi_0\ne0$, and $\nabla g(y_0)^*\gamma_0+\nabla\nu(\ou(0))^*\psi_0=0$. A collection as in (a) then satisfies all the displayed relations. It is open whether, in some problem where (JQ$_0$) fails, the multipliers built from Lemma~\ref{lem:first-cell} can have this form. The example in (a) is not evidence for this.

{\bf(c)} The proofs of Lemma~\ref{lem:first-cell} and Theorem~\ref{thm:initial-atom} use the adjoint relation \eqref{d8} of the moving set control at the cells $j\ge1$. Three situations should be distinguished: a reference control $\ou\equiv0$; the moving set control fixed at zero in both $(P)$ and $(P_k)$; and the $u$-variables and their adjoint equations removed. The first situation is covered by the results above. In the last two situations, relation \eqref{d8} is not available, and a strengthened statement requires a separate argument. There, the enhanced nontriviality \eqref{NOC:NT-enh} keeps its interiority assumption. This concerns, in particular, the models of Section~\ref{S7}.
\end{remark}}

\section{Applications to Practical Modeling}\label{S7}
\setcounter{equation}{0}

In this section, we consider some models of practical interest and formulate for them sweeping optimal control problems. Using necessary optimality conditions established in Theorem~\ref{ThNOC} allows us to determine the structures of optimal solutions and calculate their parameters under certain simplifications. In both models below, the moving set is fixed, i.e., $u\equiv0$. In this case, we apply Theorem~\ref{ThNOC} with the $u$-components $(p^u,q^u,\psi^u)$ removed; the proof is the same.

\subsection{Minimum-time navigation around a circular keep-out zone}\label{sec:obstacle}

Our first attention is paid to the following dynamic model of steering a vehicle around a circular keep-out zone (a no-fly disk, a protected area). A vehicle of position $x\in\R^2$ is driven by a bounded control $a$, $\|a\|\le1$ ($A=\overline{\B}$) and must stay in the {\em nonconvex prox-regular set}
$$
C=\big\{y\in\R^2\;\big|\;g(y):=\|y\|^2-1\ge0\big\},
$$
which is the exterior of the unit keep-out disk. Here $\nabla g(y)=2y$ and $\nabla^2 g\equiv2I\ne0$, i.e., we have the {\em curved} boundary that makes the sweeping system to be genuinely {\em nonpolyhedral}. Since $\nabla g(y)=2y$ is unbounded, we modify $g$ outside a large ball containing the motion, so that \textbf{(H2.2)} holds. This does not affect the example. This is the specialization of our basic problem $(P)$ with $u\equiv0$ and $f(x,a)=a$. Therefore, the controlled dynamics is
\[
\dot x(t)\in-N_C(x(t))+a(t),\qquad\|a(t)\|\le1
\]
giving us $N_C(x)=\{0\}$ and $\dot x=a$ in the interior of $C$ while on the boundary of $C$, the normal cone removes the inward (infeasible) velocity component and keeps the trajectory in $C$. The objective is to steer the trajectory from $x_0=(-d,0)$ to $x_1=(d,0)$, $d>1$ in {\em minimum time}. This corresponds to $(P)$ with  $\ell\equiv1$, $\varphi\equiv0$, $\Xi_x=\{x_1\}$, and $\Xi_T=[0,\infty)$. We determine the optimal trajectory directly from the necessary optimality conditions of Theorem~\ref{ThNOC} and the maximization condition of Corollary~\ref{cor:SweepingMP}.

\begin{enumerate}
\item[\textnormal{\bf(i)}] \textsc{Control law.} Here $f(x,a)=a$, $w^a=0$, and $v^x=\nabla_{\dot x}\ell=0$. Thus \eqref{NOC:psi-a} reads $q^x(t)\in N_A(\oa(t))$. Since $A$ is the unit ball, we get $\oa(t)=q^x(t)/\|q^x(t)\|$ whenever $q^x(t)\ne0$.

\item[\textnormal{\bf(ii)}] \textsc{Free arcs are straight.} Off the zone we have $\eta=\gamma=\xi=0$. Since $\nabla_xf=0$, it follows from \eqref{NOC:adj-ac}--\eqref{NOC:qx} that $q^x\equiv$const. Thus $\oa$ is constant and the motion is a straight segment at the unit speed.

\item[\textnormal{\bf(iii)}] \textsc{Boundary arcs.} The straight chord $x_0x_1$ pierces the disk, and so the trajectory must contact the boundary of $C$. The constraint acts through the {\em adjoint}. On the arc, the unit speed and $\|\oa\|\le1$ force $\eta=0$, and hence $\xi=0$. Since $\nabla g(\ox)=2\ox$, condition \eqref{NOC:qx} gives us $\dot q^x=-2\dot\gamma\,\ox$. By {\bf(i)}, the costate $q^x$ stays parallel to the unit tangent $\dot\ox$, whose derivative is $-\ox$. This forces $\|q^x\|$ to be constant and the sweeping measure to have the constant density $\dot\gamma\equiv\tfrac12\|q^x\|>0$.

\item[\textnormal{\bf(iv)}] \textsc{Tangency at the junctions.} A velocity jump at an interior junction would lengthen the path. Hence the optimal velocity is continuous while forcing each straight segment to meet the arc {\em tangentially}.

\item[\textnormal{\bf(v)}] \textsc{Free terminal time.} Along the whole path, $\dot\ox$ is a unit vector parallel to $q^x$, and $\|q^x\|$ is constant. Hence $\la q^x,\dot\ox\ra=\|q^x\|$, and the averaged Hamiltonian \eqref{NOC:Hbar} equals $\mathcal H=\|q^x\|-\lm$. Since $\varphi\equiv0$ and $\Xi_T=[0,\infty)$, the free-time component of \eqref{NOC:trans} gives us $\mathcal H=0$. Thus $\|q^x\|=\lm$. The nontriviality condition excludes $\lm=0$, and so we may take $\lm=1$. The unit speed means that the optimal time equals the path length.
\end{enumerate}

The obtained results are illustrated in Figure~\ref{fig:obstacle} with the tangent points $P_\pm=(\pm1/d,\sqrt{1-1/d^2})$, the tangent segments of length $\sqrt{d^2-1}$, and a top arc of angle $\pi-2\arccos(1/d)$. Therefore,
\[
T^*=2\sqrt{d^2-1}+\pi-2\arccos\tfrac1d .
\]
We get $T^*\to\pi$ as $d\downarrow1$ and $T^*=2d+O(d^{-1})$ as $d\to\infty$ with the arc shrinking to a point. 
More generally, for endpoints at distances $\rho_i:=\|x_i\|>1$ with angular separation $\theta$ whose connecting chord pierces the disk, the same synthesis provides the closed-form benchmark
\[
T^*=\sqrt{\rho_0^2-1}+\sqrt{\rho_1^2-1}+\theta-\arccos\tfrac1{\rho_0}-\arccos\tfrac1{\rho_1}.
\]

\begin{figure}[ht]
\centering
\begin{tikzpicture}[scale=1.5,>={Stealth[length=2.3mm,width=1.7mm]}]
\fill[BrickRed!9] (0,0) circle (1);
  \draw[very thick,black!70] (0,0) circle (1);
  \node[BrickRed!68!black,font=\footnotesize] at (0,0) {keep-out};
  \coordinate (x0) at (-2,0); \coordinate (x1) at (2,0);
  \coordinate (Pm) at (-0.5,0.8660); \coordinate (Pp) at (0.5,0.8660);
  \draw[BrickRed!70,dashed,thick] (x0) -- (x1);
  \draw[ultra thick,NavyBlue] (x0) -- (Pm);
  \draw[ultra thick,NavyBlue] (Pm) arc[start angle=120,end angle=60,radius=1];
  \draw[ultra thick,NavyBlue] (Pp) -- (x1);
  \fill[NavyBlue] (x0) circle (0.045) (x1) circle (0.045);
  \fill[BurntOrange] (Pm) circle (0.05) (Pp) circle (0.05);
  \node[font=\small,anchor=north] at (-2,-0.04) {$x_0$};
  \node[font=\small,anchor=north] at (2,-0.04) {$x_1$};
  \node[BurntOrange!80!black,font=\footnotesize,anchor=south east,inner sep=1pt] at (-0.52,0.90) {$P_-$};
  \node[BurntOrange!80!black,font=\footnotesize,anchor=south west,inner sep=1pt] at (0.52,0.90) {$P_+$};
  \node[NavyBlue!72!black,font=\footnotesize,anchor=south] at (0,1.04) {optimal path};
  \node[BrickRed!70!black,font=\footnotesize,anchor=south] at (-1.18,0.02) {forbidden};
\end{tikzpicture}
\caption{Minimum-time synthesis. The conditions force a straight tangent arc--tangent straight geodesic (blue) around the keep-out disk: straight off the zone (constant adjoint), tangent at the junctions $P_\pm$ (continuous adjoint), and a unit speed arc on the boundary of $C$ between them. The direct chord (red dashed) is infeasible. The optimal time is $T^*=2\sqrt{d^2-1}+\pi-2\arccos(1/d)$.}
\label{fig:obstacle}
\end{figure}\vspace*{-0.1in}

\subsection{Two-agent corridor crowd-motion model}\label{sec:corridor}

Following \cite[\S8]{CaoMordukhovich2026}, we formulate a two-agent corridor crowd motion \emph{benchmark} within the free-time framework $(P)$ and analyze it via the necessary optimality conditions of Theorem~\ref{ThNOC}. The treatment also {\em revisits and refines} the benchmark calculation of \cite[\S8]{CaoMordukhovich2026} by making explicit two simplifying features that are not compatible with the present free-time formulation. {\em First}, the spontaneous speeds therein were set to $s_i=\Lambda_i/T$. We instead keep them as \emph{fixed model parameters}, so that the perturbation remains of the form $f=f(x,a)$ required by the standing hypotheses rather than depending implicitly on the free terminal time $T$. {\em Second}, the absolute values in the $\ell^1$ terminal cost were removed there by the fixed terminal gradient $(-1,0,-1,0,\tau)$, which tacitly placed both agents on the same side of the destination at $T$. We retain here the absolute values and select a relevant terminal gradient from the subdifferential of the $\ell^1$-distance function only after the terminal signs of the candidate trajectory have been identified. The earlier closed form from \cite{CaoMordukhovich2026} becomes thereby a regime-dependent candidate being legitimate after an \emph{a posteriori} check of the terminal signs rather than a universal expression for the global optimal time.

\smallskip
\noindent{\bf The model}. Each agent is a rigid disk in $\R^2$ of radius $L_i>0$ and centre $x_i=(x_i^1,0)$, $i=1,2$, both moving along the horizontal axis toward a common destination $x_d=(x_d^1,0)$ with $x_1^1(0)<x_2^1(0)<x_d^1$. The planar embedding is kept (with the second coordinate frozen at $0$), and so that the colinear corridor appears as the special case of a genuinely planar model to be studied later. Writing $x=(x_1^1,0,x_2^1,0)\in\R^4$, the nonoverlapping requirement $x_2^1-x_1^1\ge L_1+L_2$ defines the admissible configuration set
\begin{equation}\label{e:corridorC}
C=\bigl\{x\in\R^4\;\big|\;g(x):=x_2^1-x_1^1-(L_1+L_2)\ge0\bigr\},\qquad \nabla g=(-1,0,1,0),
\end{equation}
a closed convex polyhedron (hence prox-regular) with $\nabla^2 g\equiv0$.

The {\em nominal/spontaneous velocity} of agent $i$ is $(s_i,0)$, where $s_1,s_2>0$ are \emph{two given constants} independent of the free terminal time $T$ (in contrast to the horizon-dependent choice $s_i=\Lambda_i/T$, which would force $f$ to depend on $T$ and leave the standing form $f=f(x,a)$). Thus the system spontaneous field is $U(x)=(s_1,0,s_2,0)$. Velocities must respect {\em nonpenetration}: they lie in the admissible velocity set
\[
V(x)=\bigl\{v=(v_1^1,0,v_2^1,0)\;\big|\;g(x)=0\Rightarrow\la\nabla g(x),v\ra=v_2^1-v_1^1\ge0\bigr\},
\]
i.e., at contact the front agent may not fall behind the rear. The {\em actual velocity} is the projection $\dot x=\Pi(U(x);V(x))$. Within the horizontal subspace $H:=\{(v_1,0,v_2,0)\}$, the polar of $V(x)$ is $N_C(x)$ at contact. Since $U(x)\in H$, Moreau's decomposition in $H$ gives us $U(x)=\Pi(U(x);V(x))+\Pi(U(x);N_C(x))$, and so we arrive at the {\em sweeping form} $\dot x\in -N_C(x)+U(x)$. Modulating each spontaneous speed by a scalar control $a_i$ through the perturbation $f(x,a)=(a_1 s_1,0,a_2 s_2,0)$ yields the {\em controlled sweeping process}
\begin{equation}\label{e:corridor-dyn}
\dot x(t)\in -N_C(x(t))+f(x(t),a(t)),\qquad x(0)=x_0\in C,\quad (x(\T),\T)\in\R^4\times[0,\infty).
\end{equation}
With target configuration $x^{\mathrm{des}}=(x_d^1,0,x_d^1,0)$, the {\em free-time optimal control problem} is to minimize the cost
\begin{equation}\label{e:corridor-P}
J[x,a,T]=\|x(T)-x^{\mathrm{des}}\|_1+\tau T+\tfrac12\int_0^T\|a(t)\|^2\,dt
\end{equation}
over $a=(a_1,a_2)\in A_1\times A_2$ and trajectories of \eqref{e:corridor-dyn}. This corresponds to the basic problem $(P)$ with $n=4$, $d=2$, the moving set $C_u(t)\equiv C$ ($u\equiv0$), the terminal cost $\varphi(x,T)=\|x-x^{\mathrm{des}}\|_1+\tau T$, and the running cost $\ell=\tfrac12\|a\|^2$. The trade-off parameter $\tau>0$ weighs arrival time against control effort.\vspace*{-0.1in}

\subsubsection*{Necessary optimality conditions via Theorem~\ref{ThNOC}}

Let $(\ox,\oa,\T)$ be an r.i.l.m.\ with $\T>0$. Because $u\equiv0$, the moving set adjoint $q^u$ and the control measure $\psi^u$ are absent. Abbreviating the two active scalar components of $q^x$ by $q^x_1$ and $q^x_2$, Theorem~\ref{ThNOC} gives us a cost multiplier $\lm\ge0$, an endpoint multiplier $\alpha\ge0$ for $x(\T)\in C$, adjoint arcs $p^x\in W^{1,2}([0,\T];\R^4)$ and $q^x\in BV([0,\T];\R^4)$, a scalar Borel measure $\gamma\in C^*([0,\T];\R)$, and subgradient selections
$w^x=v^x=0$ and $w^a=\oa$. The conclusions of the theorem are specified as follows.
\begin{enumerate}
\item[\textnormal{\bf(i)}] \textsc{Primal and complementary slackness conditions} in \eqref{NOC:par}
and \eqref{NOC:cs}. For a.e.\ $t\in[0,\T]$, we have $\dot{\ox}(t)=f(\ox(t),\oa(t))+\eta(t)\,\nabla g$ with $\eta(t)\ge0$, i.e.,
\[
\dot{\ox}_1^1(t)=\oa_1(t)\,s_1-\eta(t),\qquad \dot{\ox}_2^1(t)=\oa_2(t)\,s_2+\eta(t)
\]
together with the complementary slackness relationships $g(\ox(t))>0\Rightarrow\eta(t)=0$ and $\eta(t)>0\Rightarrow\la\nabla g,\lm v^x-q^x(t)\ra=0$, which ensure that $q^x_1(t)=q^x_2(t)$ whenever $\eta(t)>0$.

\item[\textnormal{\bf(ii)}] \textsc{Adjoint systems} in \eqref{NOC:adj-ac} and \eqref{NOC:qx}. Since $\nabla_xf=0$ and $w^x=0$, we have $\dot p^x(t)=0$ for a.e.\ $t$, and so $t\mapsto p^x(t)\equiv p^x$ is a fixed vector on $[0,\T]$. Since $\nabla^2 g\equiv0$, the Hessian coupling vector $\xi$ vanishes, and
we get for every $t\in[0,\T]$ the equality
\[
q^x(t)=p^x+\nabla g\,\gamma(J_t),\qquad\text{hence}\qquad q^x_1(t)+q^x_2(t)=p^x_1+p^x_2\ \text{is constant in }t.
\]

\item[\textnormal{\bf(iii)}] \textsc{Maximization condition} (Corollary~\ref{cor:SweepingMP}).
We take $A_1=A_2=[0,1]$ and assume that $\lm>0$. Then \eqref{NOC:psi-a} gives us $\oa_i(t)=\Pi_{[0,1]}\bigl(s_iq^x_i(t)/\lm\bigr)$. In particular, whenever the control is interior to $A_1\times A_2$, we get
\[
\oa_i(t)=\frac{s_i\,q^x_i(t)}{\lm}\;\mbox{ for a.e. }\;t\in[0,\T],\quad i=1,2.
\]
\item[\textnormal{\bf(iv)}] \textsc{Transversality conditions} in \eqref{NOC:alpha}--\eqref{NOC:trans}. We get $-p^x(\T)\in\lm\,\partial\|x(\T)-x^{\mathrm{des}}\|_1-\alpha\nabla g$, which can be componentwise written as $-p^x(\T)=\lm(\zeta_1,0,\zeta_2,0)-\alpha\nabla g$ with the regime dependent selection
\begin{equation}\label{e:corridor-subgrad}
\zeta_i\in\partial\bigl|\ox^1_i(\T)-x_d^1\bigr|=
\begin{cases}
\{-1\}, & \ox^1_i(\T)<x_d^1,\\[2pt]
[-1,1], & \ox^1_i(\T)=x_d^1,\\[2pt]
\{+1\}, & \ox^1_i(\T)>x_d^1.
\end{cases}
\end{equation}
The subgradient at the terminal point is therefore {\em not fixed} in advance while being selected from the subdifferential of the terminal $\ell^1$-distance only after the terminal signs of the candidate trajectory have been identified, never assumed. Finally, the transversality condition from \eqref{NOC:trans} yields the {\em free time identity} $\mathcal H=\lm\tau$. Wherever $\oa(t)$ is interior to $A_1\times A_2$, the maximization condition in {\bf(iii)} and the slackness relation $\eta\,(q^x_2-q^x_1)=0$ from {\bf(i)} tell us that
$$
\la q^x(t),\dot{\ox}(t)\ra=q^x_1\oa_1 s_1+q^x_2\oa_2 s_2=\lm\|\oa\|^2,
$$
and hence we arrive at the equivalent form
\begin{equation}\label{e:corridor-freetime}
\tfrac12\!\int_0^{\T}\|\oa(t)\|^2\,dt=\tau\T.
\end{equation}
\end{enumerate}

{\bf(v)--(vi)} $\;$ \textsc{Measure support and nontriviality}. The measure $\gamma$ is supported on the contact set

$\qquad\qquad\Sigma:=\{t\in[0,\T]:g(\ox(t))=0\}$, and nontriviality holds. Below we exhibit multipliers with $\lm=1$.\vspace*{0.05in}

Now we make precise the constancy of the control and the velocity used repeatedly below.\vspace*{0.05in}

\noindent\textbf{Claim (piecewise constancy of $\oa$ and $\dot{\ox}$).} \emph{{Let $\lm>0$, and let} $(t_1,t_2)\subset[0,\T]$ be an open interval on which the constraint is inactive, i.e., $g(\ox(t))>0$ for all $t\in(t_1,t_2)$. Then we have that the adjoint arc $q^x$, the control $\oa$, and the velocity $\dot{\ox}$ are constant on $(t_1,t_2)$, and thus each agent is moving in a straight line at constant speed.}\vspace*{0.05in}

\noindent{\bf Proof}. It follows from {\bf(ii)} that $p^x(t)\equiv p^x$ is a fixed vector. By {\bf(v)}, we get that $\operatorname{supp}\gamma\subset\Sigma$, and $\Sigma\cap(t_1,t_2)=\varnothing$, so $\gamma$ assigns no mass to $(t_1,t_2)$. For $t\in(t_1,t_2)$, split the interval $[t,\T]=[t,t_2)\cup[t_2,\T]$. Since $[t,t_2)\subset(t_1,t_2)$ carries no $\gamma$ mass, we have that $\gamma([t,\T])=\gamma([t_2,\T])=:c$ is independent of $t$ on $(t_1,t_2)$, and thus {\bf(ii)} tells us that the arc
\[
q^x(t)=p^x+c\,\nabla g\;\mbox{ for }\;t\in(t_1,t_2)
\]
is constant. The maximization condition from {\bf(iii)} ensures that $\oa_i(t)={\Pi_{[0,1]}(s_i q^x_i(t)/\lm)}$ is constant on $(t_1,t_2)$. Finally, the inactivity yields $\eta(t)=0$ on $(t_1,t_2)$ by the complementary slackness in {\bf(i)}, and so the derivative
\[
\dot{\ox}(t)=f(\ox(t),\oa(t))=(\oa_1 s_1,\,0,\,\oa_2 s_2,\,0)
\]
is a constant vector therein, where each $\ox^1_i$ is affine in $t$ with the constant slope $s_i\oa_i$. $\h$

\smallskip
\noindent It follows from the Claim that each agent moves along a straight line at constant speed on every interval where the agents are apart. We now use Theorem~\ref{ThNOC} to find a candidate r.i.l.m.\ in closed form. Then we verify that this candidate is indeed an r.i.l.m. Note that the necessary conditions alone do not ensure this, since they only select candidates.\vspace*{-0.1in}

\subsubsection*{The candidate from the necessary conditions}

Denote $\Lambda_i:=x_d^1-x_i^1(0)>0$ (distance to destination), $L:=L_1+L_2$, and $S:=\sqrt{s_1^2+s_2^2}$. The initial gap is $g_0:=x_2^1(0)-x_1^1(0)=\Lambda_1-\Lambda_2$. We have $g_0\ge L$ since $x_0\in C$.

\smallskip
\noindent{\sc Terminal cost floor.} The constraint $x_2^1(\T)-x_1^1(\T)\ge L$ implies that
\begin{equation}\label{e:corridor-floor}
\|x(\T)-x^{\mathrm{des}}\|_1=\bigl|x_1^1(\T)-x_d^1\bigr|+\bigl|x_2^1(\T)-x_d^1\bigr|\ \ge\ x_2^1(\T)-x_1^1(\T)\ \ge\ L
\end{equation}
for every admissible terminal configuration,
where an equality holds {\em if and only if} both inequalities are tight. The agents \emph{straddle} the destination, $x_1^1(\T)\le x_d^1\le x_2^1(\T)$ (tightness of the triangle inequality), and they {\em touch}, $x_2^1(\T)-x_1^1(\T)=L$ (tightness of the 
nonoverlap constraint). The two disks cannot both occupy the position $x_d^1$, and so the terminal cost {\em cannot vanish}; its floor is $L$, attained exactly by touching straddles.

\smallskip
\noindent{\sc Structure of the candidate.} Take $\lm=1$, and look for a candidate along which the active components of $q^x$ are equal and constant, i.e., $q^x_1=q^x_2\equiv\kappa$ on $[0,\T]$. By {\bf(ii)}, this holds with $p^x\equiv q^x$ and $\gamma=0$. Then {\bf(iii)} gives us the constant control
\[
\oa(t)\equiv\kappa\,(s_1,s_2),\quad\mbox{provided that}\quad\kappa s_1\le1.
\]
Before contact, the agents move with the velocities $\kappa s_1^2$ and $\kappa s_2^2$. Suppose that $s_1>s_2$. Then the rear agent catches up. The gap reaches $L$ at the contact time
\[
t_c:=\frac{g_0-L}{\kappa(s_1^2-s_2^2)}.
\]
After $t_c$, the agents move together with the common velocity $\kappa S^2/2$. The contact multiplier is $\eta=\kappa(s_1^2-s_2^2)/2>0$, and the slackness condition in {\bf(i)} holds because $q^x_1=q^x_2$. Recall that $\nabla g=(-1,0,1,0)$. Hence the normal cone term cancels in the sum of the two velocities, and every feasible process satisfies
\begin{equation}\label{e:corridor-sum}
\dot x_1^1(t)+\dot x_2^1(t)=s_1a_1(t)+s_2a_2(t)\quad\text{for a.e. }\;t\in[0,T].
\end{equation}
Thus the sum $\sigma:=x_1^1+x_2^1$ grows along the candidate at the rate $\kappa S^2$. The terminal cost floor suggests the touching straddle $x_1^1(\T)=x_d^1-L$, $x_2^1(\T)=x_d^1$. It is reached at $\T=(\Lambda_1+\Lambda_2-L)/(\kappa S^2)$, provided that $t_c\le\T$. A direct calculation shows that $t_c\le\T$ is equivalent to $(\Lambda_1-L)/s_1^2\le\Lambda_2/s_2^2$.

\smallskip
\noindent{\sc Free-time condition.} It remains to find $\kappa$. Along the candidate, we have $\la q^x,\dot\ox\ra=\kappa(\dot\ox_1^1+\dot\ox_2^1)=\kappa^2S^2$ and $\tfrac12\|\oa\|^2=\tfrac12\kappa^2S^2$. Hence $\mathcal H=\tfrac12\kappa^2S^2$, and the free-time identity $\mathcal H=\tau$ gives us $\kappa=\sqrt{2\tau}/S$. The endpoint relations in {\bf(iv)} hold with $\zeta_1=-1$, $\zeta_2=1-2\kappa\in[-1,1]$, $\alpha=1-\kappa\ge0$. Summarizing, we impose the conditions
\begin{equation}\label{e:corridor-regime}
s_1>s_2,\qquad \frac{\Lambda_1-L}{s_1^2}\le\frac{\Lambda_2}{s_2^2},\qquad \sqrt{2\tau}\,\max\{1,s_1\}\le\sqrt{s_1^2+s_2^2}.
\end{equation}
Under these conditions, Theorem~\ref{ThNOC} produces the candidate
\begin{equation}\label{e:Tbar-corridor}
\T=\frac{\Lambda_1+\Lambda_2-L}{\sqrt{2\tau}\,\sqrt{s_1^2+s_2^2}},\qquad \oa\equiv\frac{\sqrt{2\tau}}{\sqrt{s_1^2+s_2^2}}\,(s_1,s_2),
\end{equation}
and the value of the cost functional along this candidate is given in closed form by
\begin{equation}\label{e:Jstar-corridor}
J^\star=J[\ox,\oa,\T]=L+2\tau\T=L+\frac{\sqrt{2\tau}\,(\Lambda_1+\Lambda_2-L)}{\sqrt{s_1^2+s_2^2}}.
\end{equation}
Indeed, the three parts of the cost \eqref{e:corridor-P} are computed directly. The terminal cost equals the floor $L$, since the agents touch and straddle the destination at $\T$. The time cost is $\tau\T$. The control energy is $\tfrac12\|\oa\|^2\T=\tfrac12\kappa^2S^2\T=\tau\T$. Thus the time cost and the energy contribute equally, and their sum is $2\tau\T$. Observe that $J^\star$ coincides with the right-hand side of the lower estimate \eqref{e:corridor-lower} below.
Observe that the rear agent catches up and then pushes the front one. If the second condition in \eqref{e:corridor-regime} holds with equality, then $t_c=\T$, and the contact occurs only at the terminal time. If it fails, the front agent arrives before the rear one catches up.\vspace*{-0.1in}

\subsubsection*{Verification of the r.i.l.m.\ property}

Since $\ell=\tfrac12\|a\|^2$ does not depend on the velocities, the relaxed problem $(R)$ agrees with $(P)$; see Section~\ref{S2b}. So it suffices to show that the candidate is a local minimizer of $(P)$. In fact, we show that it is a global minimizer. The Cauchy--Schwarz inequality and \eqref{e:corridor-sum} give us $\|a\|^2\ge(\dot x_1^1+\dot x_2^1)^2/S^2$. Applying the Cauchy--Schwarz inequality once more, now in time, and then the inequality between the arithmetic and geometric means, we get
\[
\tau T+\tfrac12\int_0^T\|a(t)\|^2\,dt\ \ge\ \tau T+\frac{\bigl(\sigma(T)-\sigma(0)\bigr)^2}{2S^2T}\ \ge\ c\,\bigl|\sigma(T)-\sigma(0)\bigr|\quad\mbox{with}\quad c:=\frac{\sqrt{2\tau}}{S}\le1.
\]
Let $y_i:=x_i^1(T)-x_d^1$. Then $\sigma(T)-\sigma(0)=y_1+y_2+\Lambda_1+\Lambda_2$. Since $y_2-y_1\ge L$, the terminal cost satisfies $|y_1|+|y_2|\ge\max\{L,|y_1+y_2|\}$. Consequently, we have $J[x,a,T]\ge h(y_1+y_2)$ with $h(r):=\max\{L,|r|\}+c\,|r+\Lambda_1+\Lambda_2|$. The function $h$ is nonincreasing on $(-\infty,-L]$ and increasing on $[-L,\infty)$, so it attains its minimum at $r=-L$. In this way, we arrive at the lower estimate
\begin{equation}\label{e:corridor-lower}
J[x,a,T]\ \ge\ L+\frac{\sqrt{2\tau}\,(\Lambda_1+\Lambda_2-L)}{\sqrt{s_1^2+s_2^2}}\quad\mbox{for every feasible process}.
\end{equation}
The candidate attains this bound. Indeed, its control is parallel to $(s_1,s_2)$, its terminal time balances the two terms in the arithmetic--geometric mean inequality, and its terminal configuration is the touching straddle. Therefore, the candidate \eqref{e:Tbar-corridor} is a global minimizer, and hence an r.i.l.m. In particular, $J^\star$ in \eqref{e:Jstar-corridor} is the optimal value of the problem. If, in addition, $\sqrt{2\tau}<\sqrt{s_1^2+s_2^2}$, it is unique. Indeed, the equality cases above then determine the control, the terminal time, and the terminal configuration, while the sweeping process has a unique trajectory for each control.

\smallskip
\noindent{\sc Numerical example}. Consider the above model with the data: the radii $L_1=4$ and $L_2=2$, the initial centres $x_1^1(0)=-62$ and $x_2^1(0)=-24$, the destination $x_d^1=0$, the weight $\tau=\tfrac12$, and the nominal speeds $s_1=7$ and $s_2=4$. Thus $\Lambda_1=62$, $\Lambda_2=24$, $g_0=38$, $S=\sqrt{65}$, and $\kappa=1/\sqrt{65}$. Conditions \eqref{e:corridor-regime} hold, since $7>4$, $\tfrac{56}{49}\le\tfrac{24}{16}$, and $7\le\sqrt{65}$. Formulas \eqref{e:Tbar-corridor} and \eqref{e:Jstar-corridor} give us
\[
\T^\star=\frac{80}{\sqrt{65}}=\frac{16\sqrt{65}}{13}\approx9.923,\qquad J^\star=6+\frac{80}{\sqrt{65}}\approx15.923,\qquad \oa\equiv\frac{(7,4)}{\sqrt{65}}\approx(0.868,\,0.496).
\]
Before contact, the agents move with the velocities $49/\sqrt{65}\approx6.08$ and $16/\sqrt{65}\approx1.98$. The gap closes from $38$ to $6$ at $t_c=32\sqrt{65}/33\approx7.818$. Then the agents move together with the velocity $\sqrt{65}/2\approx4.03$, and the contact multiplier is $\eta=33/(2\sqrt{65})\approx2.05$. For comparison, consider the process with contact only at the terminal time, the constant controls $(\tfrac45,\tfrac35)$, and $T=10$. It also satisfies the necessary conditions of Theorem~\ref{ThNOC}, but it gives the larger value $J=16$. It is not even an r.i.l.m.: touching slightly earlier and then pushing lowers the cost within any localization neighborhood \eqref{local}. Figure~\ref{fig:corridor} shows the configurations of the r.i.l.m.\ before contact and at the terminal time.

\begin{figure}[ht]
\centering
\begin{tikzpicture}[>={Stealth[length=2.6mm,width=2mm]}]
  \draw[black!22] (0,0) -- (9.7,0);
  \fill[NavyBlue!16] (1.7,0) circle (0.8);
  \draw[very thick,NavyBlue!65!black] (1.7,0) circle (0.8);
  \node[NavyBlue!58!black] at (1.7,0) {$1$};
  \fill[BrickRed!16] (5.3,0) circle (0.4);
  \draw[very thick,BrickRed!72!black] (5.3,0) circle (0.4);
  \node[BrickRed!62!black,font=\small] at (5.3,0) {$2$};
  \draw[->,NavyBlue!65!black,very thick] (2.6,0) -- (4.1,0);
  \node[NavyBlue!58!black,font=\footnotesize,anchor=south,inner sep=2pt] at (3.35,0.05) {$\dot x_1^1\approx6.08$};
  \draw[->,BrickRed!72!black,very thick] (5.8,0) -- (6.3,0);
  \node[BrickRed!62!black,font=\footnotesize,anchor=south,inner sep=2pt] at (6.1,0.05) {$\dot x_2^1\approx1.98$};
  \draw[ForestGreen!50!black,dashed,thick] (9.1,-0.95) -- (9.1,0.95);
  \node[ForestGreen!42!black,font=\small,anchor=south] at (9.1,0.96) {$x_d$};
  \draw[black!40] (1.7,-0.95) -- (1.7,-1.22);
  \draw[black!40] (5.3,-0.95) -- (5.3,-1.22);
  \draw[<->,black!40] (1.7,-1.08) -- (5.3,-1.08);
  \node[black!55,font=\footnotesize,anchor=north,inner sep=2pt] at (3.5,-1.16) {$g_0=38$};
\end{tikzpicture}\\[0.6mm]
{\footnotesize (a) Before contact ($t=0$): the agents are $g_0=38$ apart and move right, agent~1 faster.}\\[3.6mm]
\begin{tikzpicture}[>={Stealth[length=2.6mm,width=2mm]}]
  \draw[black!22] (0,0) -- (9.7,0);
  \draw[ForestGreen!50!black,dashed,thick] (5.75,-1.05) -- (5.75,1.4);
  \node[ForestGreen!42!black,font=\small,anchor=south] at (5.75,1.42) {$x_d$};
  \fill[NavyBlue!16] (4.4,0) circle (0.9);
  \draw[very thick,NavyBlue!65!black] (4.4,0) circle (0.9);
  \node[NavyBlue!58!black] at (4.4,0) {$1$};
  \fill[BrickRed!16] (5.75,0) circle (0.45);
  \draw[very thick,BrickRed!72!black] (5.75,0) circle (0.45);
  \node[BrickRed!62!black,font=\small] at (5.75,0) {$2$};
  \fill[black] (5.3,0) circle (0.052);
  \node[black!72,font=\footnotesize,anchor=south,inner sep=1pt] at (5.15,0.72) {contact};
  \draw[black!50] (5.2,0.66) -- (5.29,0.07);
  \draw[black!40] (4.4,-0.95) -- (4.4,-1.22);
  \draw[black!40] (5.75,-0.95) -- (5.75,-1.22);
  \draw[<->,black!40] (4.4,-1.08) -- (5.75,-1.08);
  \node[black!55,font=\footnotesize,anchor=north,inner sep=2pt] at (5.08,-1.16) {$L_1+L_2=6$};
\end{tikzpicture}\\[0.6mm]
{\footnotesize (b) Terminal time $t=\T\approx9.92$: agent~2 at $x_d$, agent~1 at $x_d-L$, disks touching. The agents are in contact since $t_c\approx7.82$.}
\caption{The explicit instance of \S\ref{sec:corridor} (data and closed form in the text): before contact (a) and at the terminal straddle (b). The gap shrinks affinely from $g_0=38$ to $L$ at $t_c\approx7.82$. Then agent~1 pushes agent~2 until $\T\approx9.92$, where the terminal cost equals the floor $L$. Both panels are schematic, not to a common scale.}
\label{fig:corridor}
\end{figure}

\begin{remark}[Algorithmic validation]\label{rem:algorithm}
The discrete problems $(P_k)$ underlying Theorem~\ref{ThDiscNOC} may be solved by the {\em coderivative-based semi-Newton method} developed in \cite{KhanhMordukhovichPhat2026,AragonArtachoMordukhovichPerezAros2025} (cf.\ Algorithm~1 of \cite[\S8]{CaoMordukhovich2026}), which keeps the normal cone geometry of Theorem~\ref{Prop1} and exposes the contact multiplier $\eta$ as a physical contact force. The validation proceeds by: {\bf(i)}~checking convergence $\T^k\to\T$ against \eqref{e:Tbar-corridor}; {\bf(ii)}~verifying that the contact times concentrate as predicted (here, the contact interval $[t_c,\T]$); and {\bf(iii)}~confirming that the terminal configuration realizes the straddle \eqref{e:corridor-floor} rather than the spurious both-overshoot branch. Step~{\bf(iii)} is exactly where the earlier benchmark is corrected. Namely, the numerical solution reported in \cite[\S8]{CaoMordukhovich2026} (with $x_d^1=0$, $x_1^1(0)=-48$, $x_2^1(0)=-24$, $L=6$, $\tau=1$) yields a terminal configuration with $\ox^1_1(\T)<x_d^1<\ox^1_2(\T)$ and terminal contact $\ox^1_2(\T)-\ox^1_1(\T)=L$. Hence terminal cost is exactly $L$ by \eqref{e:corridor-floor} indicating that the front agent has already \emph{crossed} the destination. This straddle is incompatible with both left gradients $(-1,0,-1,0,\tau)$ used there to derive the closed form, and is reproduced consistently by the present regime aware selection \eqref{e:corridor-subgrad}.
\end{remark}

\section{Conclusions and Future Research}\label{S8}
\setcounter{equation}{0}

The paper provides a broad study of optimal control problems of the generalized Bolza type over the constrained sweeping dynamics with free time and prox-regular moving sets. Control actions of different classes enter both dynamic perturbations and moving sets. To the best of our knowledge, sweeping control problems of this generality have never been considered in the literature. By using an appropriate version of the method of discrete approximations, we establish novel necessary optimality conditions explicitly expressed in terms of the problem data. A crucial role in deriving such conditions is played by advanced tools of variational analysis and generalized differentiation married to constructive coderivative calculations for the sweeping dynamics. The obtained results are comprehensively discussed and illustrated by applications to some practical models.

Several questions remain open. First, the technical assumption \eqref{e:BV-pen-zero} concerns the discrete optimal solutions. We would like to replace it by a condition on the data of $(P)$. Second, the endpoint nondegeneracy of Section~\ref{S9b} uses the adjoint relation of the moving set control. Models in which the moving set is fixed require a separate argument. Third, it is not known whether the selected multipliers can degenerate when the joint qualification condition (JQ$_0$) fails.

Our future research will systematically address important improvements of the obtained results and the derivation of novel maximization conditions. For controls in the sweeping dynamics, we plan to significantly relax the tangential conditions in the maximum principle. Our main attention will be paid to controls in the moving sets, which have been largely underinvestigated in the literature. For such problems, we intend to establish maximization conditions in terms of a new construction of the measure modified Hamiltonian. 

The proposed theoretical developments will be applied to various models of practical interest. They include: multiagent corridor and planar crowd motion models, contact mechanics and robotic transport, obstacle avoidance, safety constrained learning, etc.

\end{document}